\numberwithin{equation}{section}
\numberwithin{figure}{section}
\numberwithin{table}{section}
\apptocmd{\thebibliography}{}{}{}
\newcommand{\R}{{\mathbb R}}
\theoremstyle{plain}
\newtheorem{theor}{Theorem}[section]
\newtheorem{propo}[theor]{Proposition}
\newtheorem{reminder*}{[theor]Reminder}
\newtheorem{details*}[theor]{Details}
\newtheorem{comm*}{Comment}
\newtheorem{defin}[theor]{Definition} 
\newtheorem{defin*}{[theor]Definition}
\newtheorem{notat*}{Notation}
\newtheorem{remar}[theor]{Remark}
\newcommand{\vertii}[1]{{\left\vert\kern-0.3ex\left\vert #1 
    \right\vert\kern-0.3ex\right\vert}}
\newcommand{\vertiii}[1]{{\left\vert\kern-0.3ex\left\vert\kern-0.3ex\left\vert #1 
    \right\vert\kern-0.3ex\right\vert\kern-0.3ex\right\vert}}
\newcommand*\circled[1]{\tikz[baseline=(char.base)]{\node[shape=circle,draw,inner sep=2pt] (char) {#1};}}
\newcommand\restr[2]{{
  \left.\kern-\nulldelimiterspace 
  #1 
  \littletaller 
  \right|_{#2} 
  }}
\newcommand{\littletaller}{\mathchoice{\vphantom{\big|}}{}{}{}}
\title{Four collapsing one-dimensional particles:\\a dynamical system approach of the spherical billiard reduction}
\author[1]{Roberto Castorrini}
\author[2]{Th\'eophile Dolmaire
}
\affil[1]{{\small{Classe di Scienze, Scuola Normale Superiore di Pisa,
and Dipartimento di Economia, Ingegneria, Società e Impresa (DEIM), Università della Tuscia, 01100, Viterbo, Italy}}}
\affil[2]{{\small{Dipartimento di Ingegneria e Scienze dell’Informazione e Matematica (DISIM), Università degli Studi dell’Aquila, Edificio Renato Ricamo, via Vetoio, Coppito, 67100 L’Aquila, Italy.}}}
\date{\today}
\begin{document}

\maketitle

\begin{abstract}
\noindent
We consider a system of four one-dimensional inelastic hard spheres evolving on the real line $\mathbb{R}$, and colliding according to a scattering law characterized by a fixed restitution coefficient $r$. We study the possible orders of collisions when the inelastic collapse occurs, relying on the so-called $\mathfrak{b}$-to-$\mathfrak{b}$ mapping, a two-dimensional dynamical system associated to the original particle system which encodes all the possible collision orders. We prove that the $\mathfrak{b}$-to-$\mathfrak{b}$ mapping is a piecewise projective transformation, which allows one to perform efficient numerical simulations of its orbits. We recover previously known results concerning the one-dimensional four-particle inelastic hard sphere system and we support the conjectures stated in the literature concerning particular periodic orbits. We discover three new families of periodic orbits that coexist depending on the restitution coefficient, we prove rigorously that there exist stable periodic orbits for the $\mathfrak{b}$-to-$\mathfrak{b}$ mapping for restitution coefficients larger than the upper bounds previously known, and we prove the existence of quasi-periodic orbits for this mapping.
\end{abstract}

\textbf{Keywords.} Inelastic Collapse; Inelastic Hard Spheres; Hard Ball Systems; Billiard Systems; Particle Systems; Dynamical Systems.\\

\tableofcontents

\section{Introduction}

\noindent
In this work, we consider a system of four identical, one-dimensional inelastic hard spheres, that evolve on the real line, and that collide according to a scattering law described with a fixed restitution coefficient $r \in\ ]0,1[$. In other words, between two collisions, the particles undergo inertial motion, and the respective velocities $v$ and $v_* \in \mathbb{R}$ of two colliding particles are immediately modified into $v' = v - \frac{(1+r)}{2}(v-v_*)$ and $v'_* = v_* + \frac{(1+r)}{2}(v-v_*)$, ensuring that the momentum is conserved during the collisions, but also that a positive amount of kinetic energy is lost at each collision. Our main objective is to describe the possible orders of collisions occurring in the particle system during an inelastic collapse, that is, when infinitely many collisions take place between the particles in finite time.\\
\newline
Such a phenomenological collision model is of central importance to describe granular media, that are composed of a large number of particles that interact in a non-conservative manner. Snow, wheat or also interstellar dust can be described in terms of granular media. The reader may refer for instance to \cite{JaNB996}, \cite{Kada999} and \cite{BrPo004} for a general introduction to the physical relevance of the model of granular media, as well as an exposition of their main properties. Compared to conservative gases, that can be effectively described by the classical, elastic Boltzmann equation, in many cases the behaviour of granular media is substantially different. One of the main peculiarities of these media is their tendency to develop spontaneously spatial inhomogeneities, hence the name of \emph{granular} media. This emergence of structure in the particle system is a phenomenon that is still poorly understood, yet fundamental, since the particle systems colliding inelastically are considered as promising models to explain, for instance, the formation of large structures in the solar system, such as planetary rings (see for instance \cite{Brah975} and \cite{SpSc006}).\\
Despite its simplicity, the collision law with a fixed restitution coefficient is the paradigmatic model of inelastic collisions. In some sense, this collision law is the simplest that can be considered, and can be derived as follows. Since the contact between two colliding particles takes place on a time scale which is much smaller than the mean free flight of the particles between two consecutive collisions, the collisions are described as instantaneous events. Therefore, the deformation mechanisms of the particles, which lead to the dissipation of the kinetic energy and a complex evolution of the relative velocity between two colliding particles, is not described in the model, and only the difference between the relative velocities immediately before and after the contact is described. For particles in general dimension $d$, the collision mechanism causes a change of the normal and tangential components (with respect to the plane of contact between the two colliding particles) of the relative velocity, and the normal (respectively, tangential) restitution coefficients is defined as the ratio of the post- and pre-collisional normal (respectively, tangential) relative velocities. In general, when post-collisional velocities are computed in terms of the pre-collisional velocities by taking into account the inelastic deformation of the particles, such restitution coefficients depend on the relative velocity (see \cite{BSHP996} for the case of the normal restitution coefficient, \cite{ScBP008} for the case of the tangential coefficient, and see \cite{BrPo004} and the references therein for a general discussion on this question), and many of the physically relevant models are such that the variable normal restitution coefficients tends to $1$ as the normal component of the relative velocity tends to $0$ (the collisions with low energy are ``quasi-elastic''). Nevertheless, for the sake of simplicity, the collision laws that are considered often neglect tangential changes for the relative velocity, and the case when the normal restitution coefficient is assumed to be a constant number is considered as a reasonably good approximation of physical particle systems (see \cite{BrPo004}), and some phenomena can even be described only if this restitution coefficient is always smaller than a certain $r_0 < 1$ (see in particular \cite{SpSc006}, in the context of planetary rings' formation).\\
\newline
Considering inelastic particle systems colliding according to a law with a fixed restitution coefficient allows the description of structure formations. In particular, it was numerically observed in \cite{GoZa993} that granular gases tend to form clusters, and the numerical simulation of the dynamics of the particles in such clusters can even present singularities, due to the inelastic collapse (see for instance \cite{McYo992} and \cite{McYo994}). The onset of such a singularity yields in turn major mathematical problems, preventing a priori the rigorous study of granular media with the tools of kinetic theory. Indeed, among the clusters, the separation of scales fails, and even worse, the dynamics of the particle system is not even defined beyond the time of collapse.\\
These difficulties explain why the rigorous derivation of inelastic versions of the Boltzmann equation, as well as understanding the properties of the solutions of such equations, constitute a very challenging problem. We point however the two following recent results, providing results in these directions. On the one hand, \cite{AlLT023} provides, to the best of our knowledge, the first well-posedness result for the inelastic Boltzmann equation in a spatially inhomogeneous regime and not necessarily close to vacuum, for particles colliding according to a law with a fixed restitution coefficient. On the other hand, \cite{DoNo025} constitutes the first rigorous derivation of the inelastic linear Boltzmann equation, from a Lorentz gas of light particles that evolve deterministically among a background of heavy scatterers, distributed according to a Poisson process, and in the low density limit. Nevertheless, the two references \cite{AlLT023} and \cite{DoNo025} deal with cases in which the clusters are not expected to appear. Indeed, in \cite{AlLT023} it is assumed that $r$, despite fixed, is close enough to $1$, which corresponds to the kinetic regime described for instance in \cite{McYo994}, so that the granular gas remains spatially homogeneous in such a regime. In \cite{DoNo025}, the fact that the light particles interact only with the fixed obstacles prevents the collapse from occurring because the particles slow down due to the collisions, and so on average the time of the mean free flight increases.\\
In the case of inelastic hard spheres with fixed restitution coefficients, which is described by the fully non-linear inelastic Boltzmann equation (see \cite{BrPo004}, \cite{Vill006} and \cite{CHMR021}), the inelastic collapse was first described mathematically for one-dimensional models in \cite{ShKa989} and \cite{BeMa990}, and later it was observed numerically in \cite{McYo994} in dimension $2$ (\cite{TrBa000} investigates the case of larger dimension, and it is proved in \cite{DoVe024} that the inelastic collapse does take place in dimension $d \geq 2$ arbitrary, in a stable manner). In particular, it was observed in \cite{McYo994} that the inelastic collapse takes place among the clusters, and that the particles involved in the collisions close to the singularity time form linear, chain-like structures inside the clusters (see also \cite{PoSc005} for more details on this property of the collapse). Therefore, and even if the inelastic collapse does take place in any dimension, it appears to be an essentially one-dimensional phenomenon. As a consequence, the one-dimensional inelastic collapse is not only a toy model, but the fundamental mechanism behind the singularities that develop in dissipative hard sphere systems.\\
After the first pioneering investigations \cite{ShKa989}, \cite{BeMa990}, \cite{McYo992} on the one-dimensional inelastic collapse, an important step was completed in \cite{CoGM995}, which provides a complete understanding of the three-particle system. In particular, it is proved that the inelastic collapse of three particles cannot take place if the restitution coefficient $r$ is larger than $7-4\sqrt{3} \simeq 0.0718$. In such a system, labelling the particles from the left to the right as \footnotesize{\circled{1}}\normalsize{}, \footnotesize{\circled{2}}\normalsize{} and \footnotesize{\circled{3}}\normalsize{}, and denoting a collision of the pair \footnotesize{\circled{1}}\normalsize{}-\footnotesize{\circled{2}}\normalsize{} by $\mathfrak{a}$, and a collision of the pair \footnotesize{\circled{2}}\normalsize{}-\footnotesize{\circled{3}}\normalsize{} by $\mathfrak{b}$, it is clear that if infinitely many collisions take place, they have to take place according to the infinite repetition of the sequence of collisions $\mathfrak{ab}$. This observation allows to study one single dynamical system, representing such a pair of consecutive collisions, which, when iterated infinitely many times, provides the final state of the particle system at the time of the inelastic collapse.\\
Concerning larger systems, only few results are known. The main difficulties for systems with $N$ particles, $N \geq 4$, lies in the fact that if infinitely many collisions take place in the system, it is not clear how to determine a priori which pairs of the particles will be involved in the consecutive collisions. Adapting the notations above to the case of four particles, \cite{CDKK999} discovered that the collapse can take place, in a stable manner, according to the infinite repetition of the periods of collisions $\big(\mathfrak{ab}\big)^n \big(\mathfrak{cb}\big)^n$, with $n$ fixed but arbitrary. Nevertheless, only such periods were discovered, the largest restitution coefficient for which one of them (here, $\mathfrak{ababcbcb}$) can be realized in a stable manner was found to be $3-2\sqrt{2} \simeq 0.1716$, and no periodic pattern, even unstable was found for $r$ larger than a certain critical value $r_\text{crit} \simeq 0.1917$.\\
A recent work \cite{DoHR025} discovered a periodic pattern different from the ones discussed in \cite{CDKK999}, namely $\mathfrak{ababcb}$, but such a pattern turned out to be unstable. In addition, it is proved in \cite{DoHR025} that it is possible to associate to the four-particle system a two-dimensional dynamical system, the so-called $\mathfrak{b}$-to-$\mathfrak{b}$ mapping, which encodes all the possible collision orders that can be achieved in the original particle system. The fact that such a dynamical system is two-dimensional allowed to perform numerical simulations and representations of its orbits, but only few rigorous results were obtained in \cite{DoHR025}. In the present article, we will study in detail the $\mathfrak{b}$-to-$\mathfrak{b}$ mapping, relying on an underlying piecewise linear structure. This structure will allow us to understand the structure of the orbits of the $\mathfrak{b}$-to-$\mathfrak{b}$ mapping, and we will in particular describe new families of periodic orbits. We present extensive numerical investigations on such orbits, and we prove the existence of stable periodic orbits, for restitution coefficients larger than the critical restitution coefficients $3-2\sqrt{2}$ and $r_\text{crit}$ of \cite{CDKK999}.\\
In order to complete the review of the literature concerning the inelastic collapse, we mention \cite{BeCa999}, which provides the construction of initial configurations leading to the inelastic collapse of $N$ hard spheres in dimension $1$, as well as lower and upper bounds on the critical restitution coefficient $r(N)$, above which no collapse of $N$ particles can take place. We mention also \cite{ChKZ022}, which provides another elegant geometric construction of the collapse of $N$ inelastic particles. \cite{BrPo004}, \cite{McNa012} and the more recent \cite{Dolm025} constitute surveys about the inelastic collapse in one-dimensional particle systems. All the previous references were dealing with particles evolving on the whole real line $\mathbb{R}$. In \cite{GrMu996}, the case of three inelastic hard spheres collapsing on a ring is studied. The case of collapsing particles in higher dimensions is considered in \cite{ZhKa996}, \cite{DoVe024} and \cite{DoVe025}. Turning now to different collision models, in \cite{GSBM998} is established the absence of the inelastic collapse in the case of three one-dimensional particles colliding with a variable restitution coefficient, with quasi-elastic collisions in the low energy regime. In \cite{ScZh996}, the case of the collapse of three particles interacting via frictional collisions is studied: the rotational movement of the particles is also described, and a tangential restitution coefficient is involved. Finally, a model inspired from quantum physics, in which only a fixed amount of kinetic energy is dissipated in any collision that is energetic enough, is considered in \cite{DoVeNot}. In particular, it is shown that in such a system composed of $N$ particles ($N$ arbitrary), almost every initial configuration of the particles leads to a well-posed dynamics globally in time, so that the inelastic collapse never occurs in such globally well-posed trajectories.

\paragraph{Outline of the article.} The plan of the present article is the following. In Section \ref{SECTIModel_&_Reducti}, we introduce the model of the four one-dimensional inelastic particle system we will consider, and we recall the dimensional reduction of the system which leads to define the $\mathfrak{b}$-to-$\mathfrak{b}$ mapping introduced in \cite{DoHR025} and which encodes all the possible orders of collisions that can take place in the original particle system. In Section \ref{SECTIMappi_as_a_PiecewiseLinea}, we prove that the $\mathfrak{b}$-to-$\mathfrak{b}$ mapping can be written as a piecewise projective transformation. In Section \ref{SECTIFirstNumerSimul}, we present a first series of numerical simulations of the orbits of the $\mathfrak{b}$-to-$\mathfrak{b}$ mapping, confirming in particular a conjecture stated in \cite{CDKK999} concerning the ranges of restitution coefficients for which the periodic patterns $\big(\mathfrak{ab}\big)^n \big(\mathfrak{cb}\big)^n$ can be observed. In Section \ref{SECTISpectStudy} we study the underlying linear mappings that describe the action of the $\mathfrak{b}$-to-$\mathfrak{b}$ mapping on the different subsets of its domain. We deduce in particular the existence of quasi-periodic orbits for the $\mathfrak{b}$-to-$\mathfrak{b}$ mapping in some ranges of restitution coefficients. In Section \ref{SECTISeconNumerSimul}, we present a second series of numerical simulations of the orbits of the $\mathfrak{b}$-to-$\mathfrak{b}$ mapping, focusing this time on the search for particular periodic orbits. We describe in particular three new families of periodic orbits. In Section \ref{SECTIMatheStudyPatte} we study rigorously two of the new periodic orbits described in the previous section. We establish in particular that the periodic orbit, with the period $(\mathfrak{ab})(\mathfrak{cb})(\mathfrak{acb})(\mathfrak{cb})(\mathfrak{ab})(\mathfrak{acb})$, is stable for any restitution coefficient $r > r_{\text{crit},132^J} \simeq 0.2200$, and we prove that the periodic orbit of period $(\mathfrak{ab})(\mathfrak{cb})(\mathfrak{acb})(\mathfrak{acb})(\mathfrak{cb})(\mathfrak{ab})(\mathfrak{acb})(\mathfrak{acb})$ can never be achieved in a stable manner. Finally, in Section \ref{SECTIperspectives}, we turn to future works that can be conducted to understand better the dynamics of the $\mathfrak{b}$-to-$\mathfrak{b}$ mapping, in terms of statistical properties, relying on tools from the theory of dynamical systems.\\
\newline
The main contributions of the present work consist of the proof that the $\mathfrak{b}$-to-$\mathfrak{b}$ mapping is a piecewise projective transformation, as well as the consequences of such a result. We perform efficient numerical simulations of the orbits of the $\mathfrak{b}$-to-$\mathfrak{b}$ mapping, recovering previously known results concerning the one-dimensional four-particle inelastic hard sphere system, such as the apparent stability of the patterns $\big(\mathfrak{ab}\big)^n\big(\mathfrak{cb}\big)^n$ in the windows of stability discussed in \cite{CDKK999}. In particular, we provide further support for the conjectures stated in \cite{CDKK999}, such as the fact that $\big(\mathfrak{ab}\big)^n\big(\mathfrak{cb}\big)^n$ appears to be stable for any value of $n\geq 2$, in an interval $I_n$ of restitution coefficients whose boundaries are described conjecturally in \cite{CDKK999} and which we confirm numerically for $2 \leq n \leq 125$. Such intervals $I_n$ accumulate at the critical value of $7-4\sqrt{3}$ as $n \rightarrow +\infty$. We discover three new families of periodic orbits for the $\mathfrak{b}$-to-$\mathfrak{b}$ mapping. In addition, we find that some of these periodic orbits coexist for some restitution coefficients. We prove rigorously that there exist stable periodic orbits for the $\mathfrak{b}$-to-$\mathfrak{b}$ mapping for restitution coefficients larger than the upper bounds $3-2\sqrt{2} \simeq 0.1716$ and $r_\text{crit} \simeq 0.1917$ previously discovered in \cite{CDKK999}. Provided that $r > 3-2\sqrt{2} \simeq 0.1716$, we prove the existence of quasi-periodic orbits for this mapping, each contained in an invariant smooth manifold. These invariant manifolds are explicit curves, whose uncountable union forms a foliation of a subset of the phase space of the $\mathfrak{b}$-to-$\mathfrak{b}$ mapping with a positive Lebesgue measure.

\paragraph{Notations.} We will denote the transpose of a vector $x$ of $\mathbb{R}^d$, and more generally, of any matrix $M \in \mathcal{M}_{n\times m}(\mathbb{R})$, by:
\begin{align}
^t\hspace{-0.25mm} x \hspace{5mm} \text{and} \hspace{5mm} ^t\hspace{-0.5mm} M.
\end{align}

\section{The model and the dimensional reductions}
\label{SECTIModel_&_Reducti}

\subsection{The model}

\paragraph{The inelastic collision law with fixed restitution coefficient.} We consider a system of four identical, one-dimensional inelastic hard spheres, colliding according to the collision law defined with a \emph{fixed restitution coefficient} $r \in [0,1]$. This means that when two particles collide with respective pre-collisional velocities $v$, $v_*$, the velocities are immediately changed into $v'$, $v_*'$ defined as:
\begin{align}
\label{EQUATScattering}
\left\{
\begin{array}{ccccc}
v' &=& \displaystyle{\frac{1-r}{2} v +\frac{1+r}{2}v_*} &=& v - \displaystyle{\frac{1+r}{2}}(v-v_*),\vspace{1mm}\\
v_*' &=& \displaystyle{\frac{1+r}{2} v + \frac{1-r}{2}v_*} &=& v_* + \displaystyle{\frac{1+r}{2}}(v-v_*).
\end{array}
\right.
\end{align}
The collision law \eqref{EQUATScattering} is designed such that the momentum is conserved during collisions ($v'+v_*' = v+v_*$), and the relative velocity is (reflected and) contracted by the factor $r$ ($v'-v_*' = -r(v-v_*)$). We recover the elastic case when $r=1$ (conservation of the kinetic energy), and when $r=0$ we obtain the sticky particle regime (specific to the dimension $d=1$, the particles do not separate after the collisions).

\paragraph{A first parametrization of the evolution of the particle system.} Between collisions, we will assume that the particles have an inertial movement, that is, they move with constant velocity.\\
We will denote the respective positions and velocities of the four particles by $x_i(t) \in \mathbb{R}$ and $v_i(t) \in \mathbb{R}$, for $1 \leq i \leq 4$. Observe that in the one-dimensional case, the size of the particles plays no role (even in the case when the particles are not identical), so we can assume without loss of generality that all the particles are point particles, that is, with a zero radius.\\
Since the momentum of the whole system is conserved for all time, the dynamics of the center of mass is trivial, and we can therefore study only the relative positions and velocities. We remark also that the order of the particles on the real axis is preserved for all times by the dynamics (at least, as long as the dynamics is well-defined): if initially we have $x_1(0) \leq x_2(0) \leq x_3(0) \leq x_4(0)$, then this chain of inequalities holds true for all positive time. Therefore, we will label the particles, from the left to the right, as \footnotesize{\circled{1}}\normalsize{} to \footnotesize{\circled{4}}\normalsize{}. Following the notations in \cite{DoHR025} we introduce:
\begin{align}
p_i(t) &= x_{i+1}(t) - x_i(t),\hspace{2mm} \forall\, 1 \leq i \leq 3,\\
q_i(t) &= v_{i+1}(t) - v_i(t),\hspace{2mm} \forall\, 1 \leq i \leq 3,
\end{align}
and we define the \emph{positions vector} $p(t) \in (\mathbb{R}_+)^3$ (where $\mathbb{R}_+ = \{x \geq 0\ /\ x \in \mathbb{R}\}$ denotes the set of the non-negative real numbers) and the \emph{velocities vector} $q(t) \in \mathbb{R}^3$ as:
\begin{align}
p(t) = \left(p_1(t),p_2(t),p_3(t)\right) \hspace{3mm} \text{and} \hspace{3mm} q(t) = \left(q_1(t),q_2(t),q_3(t)\right).
\end{align}
Between two collision times $t_k$ and $t_{k+1}$ the evolution law describing the inertial movement is:
\begin{subequations}
\label{EQUATFreeTransp}
\begin{empheq}[left=\empheqlbrace]{align}
	q(t)&=q(t_k) && \text{ for }t_k\leq t<t_{k+1}, \label{EQUATFree_TrnspVariable_q}\\ 
	p(t)&=p(t_k)+(t-t_k)q(t_k) \quad && \text{ for }t_k\leq t\leq t_{k+1} \label{EQUATFree_TrnspVariable_p}.
\end{empheq}
\end{subequations}
and when a collision takes place (that is, $p_i(t_{k+1})=0$ for some $i \in \{1,2,3\}$), the collision law \eqref{EQUATScattering} is written in matricial form:
\begin{align}\label{EQUATCollision__Law_}
	q(t_{k+1}^+)=K q(t_{k+1}^-),
\end{align}
where the matrix $K\in \R^{3\times 3}$ is equal to one of the three following \emph{collision matrices}:
\begin{align}\label{EQUATCollision_Matri}
	\begin{split}
		A&= \begin{pmatrix}
			-r & 0 & 0 \\
			\alpha & 1 & 0 \\
			0& 0 & 1 \\
		\end{pmatrix}\qquad \text{if $p_1(t_{k+1}) = 0$ (collision of type \circled{1}-\circled{2}, or type } \mathfrak{a}),\\
		B &= \begin{pmatrix}
			1 & \alpha & 0 \\
			0 & -r & 0 \\
			0 & \alpha & 1 \\
		\end{pmatrix} \qquad\text{if $p_2(t_{k+1}) = 0$ (collision of type \circled{2}-\circled{3}, or type } \mathfrak{b}),\\
		C &= \begin{pmatrix}
			1 & 0 & 0 \\
			0 & 1 & \alpha \\
			0 & 0 & -r \\
		\end{pmatrix} \qquad\text{if $p_3(t_{k+1}) = 0$ (collision of type \circled{3}-\circled{4}, or type } \mathfrak{c}),
	\end{split}
	\end{align}
where
\begin{align}
\label{EQUATDefinAlpha}
\alpha = \frac{r+1}{2}\cdotp
\end{align}
Observe that in the particular case when $p_1(t_{k+1}) = p_3(t_{k+1}) = 0$, simultaneous collisions take place, involving both pairs \footnotesize{\circled{1}}\normalsize{}-\footnotesize{\circled{2}}\normalsize{} and \footnotesize{\circled{3}}\normalsize{}-\footnotesize{\circled{4}}\normalsize{}, so that $K = CA = AC$. The case when $p_1(t_{k+1}) = p_2(t_{k+1}) = 0$ or $p_2(t_{k+1}) = p_3(t_{k+1}) = 0$ is forbidden, as it would describe a triple collision occurring at time $t_{k+1}$, and we will consider that the dynamics is not defined beyond such a time $t_{k+1}$.\\
For a given initial configuration $(p_0,q_0) \in \big(\mathbb{R}_+\big)^3 \times \mathbb{R}^3$ of particles, the time-dependent function of configurations $t \mapsto \big(p(t),q(t)\big)$, with $\big(p(0),q(0)\big) = (p_0,q_0)$, and defined according to the laws \eqref{EQUATFreeTransp}, \eqref{EQUATCollision__Law_} is called the \emph{trajectory starting from $(p_0,q_0)$}, and will be denoted by $\mathcal{T}_{p_0,q_0}$.

\paragraph{Time of existence of the trajectories.} We emphasize that a trajectory $\mathcal{T}_{p_0,q_0}:t \mapsto \big(p(t),q(t)\big)$ of the particle system is properly defined as long as, and only if, only binary collisions take place. In other words, if a particle collides with only one other particle at a time (a binary collision), the dynamics can always be extended on a non-trivial time interval. In the opposite case, when three particles or more collide together at the same time, determining the post-collisional velocities in a consistent manner with the collision law \eqref{EQUATCollision__Law_} yields an ill-posed problem.\\
Observe that, in the case of the four particles we are considering, the simultaneous collisions between the particles \footnotesize{\circled{1}}\normalsize{} and \footnotesize{\circled{2}}\normalsize{} on the one hand, and between \footnotesize{\circled{3}}\normalsize{} and \footnotesize{\circled{4}}\normalsize{} on the other hand allows also to continue the dynamics. In other words, the number of simultaneous collisions does not matter, as long as such collisions are all binary.\\
Nevertheless, if a trajectory presents only binary collisions, it is not sufficient to ensure that such a trajectory will be globally defined. Indeed, the phenomenon of \emph{inelastic collapse} can take place, meaning that infinitely many collisions take place in finite time. In such a case, a group of at least three adjacent particles will dissipate all their relative velocities through collisions by reaching the time of the collapse, so that the only reasonable way to continue the dynamics beyond is to state that they remain attached and form a cluster. Nevertheless, this yields an ill-posed problem when another particle collides with the cluster, in the same way as a triple collision cannot be described via the collision law \eqref{EQUATCollision__Law_}. We refer to \cite{ShKa989} in which the dynamics of the clusters is studied nonetheless.\\
For these reasons, to any initial configuration $(p_0,q_0)$, we associate to the trajectory $\mathcal{T}_{p_0,q_0}$ the \emph{time interval of existence} of the trajectory starting from $(p_0,q_0)$, denoted by $I_{p_0,q_0}$. This time interval $I_{p_0,q_0}$, which contains $0$, is defined as the largest interval on which the trajectory is defined, only in terms of free transport \eqref{EQUATFreeTransp} and binary collisions \eqref{EQUATCollision__Law_}.

\subsection{First dimensional reductions}

\paragraph{A first reduction: the discrete evolution between consecutive collisions.} A trajectory is defined as a time-dependent function, that is, it depends on the continuous variable $t \in I_{p_0,q_0}$. Nevertheless, it is enough to describe the trajectory only through the sequence $\big(p(t_k),q(t_k)\big)_k$, where $t_k$ is the $k$-th time of the interval of existence at which a collision takes place. Indeed, the trajectory can easily be reconstructed between two consecutive collision times $t_k$ and $t_{k+1}$, because we have $\mathcal{T}_{p_0,q_0}(t) = \big(p(t_k) + (t-t_k)q(t_k),q(t_k)\big) \ \forall t \in [t_k,t_{k+1}[$.\\
Observe that we have chosen the convention that the velocity vector $t \mapsto q(t)$ is right continuous, as it can also be seen in \eqref{EQUATFree_TrnspVariable_q}.\\
We remark also that $t_k$ is not necessarily the time of the $k$-th binary collision, since two collisions might take place at the same time. More precisely, relying on the set:
\begin{align}
K = \big\{ t \in I_{p_0,q_0}\ /\ \min_{1 \leq i \leq 3}p_i(t) = 0 \big\}
\end{align}
we define $t_k$ as the time that satisfies:
\begin{align}
\# \big([0,t_k[\ \cap\, K \big) = k-1,\hspace{5mm} \# \big([0,t_k] \cap K \big) = k
\end{align}
where $\# A$ denotes the cardinal of the set $A$. The trajectory might then be described relying on the discrete evolution given by
\begin{align}
\label{EQUATDiscrDynamSyste}
\mathbb{N} \ni k \mapsto z(k) = \big(p(t_k),q(t_k)\big) \in \big(\mathbb{R}_+\big)^3 \times \mathbb{R}^3.
\end{align}
Our objective is to understand better the collapse phenomenon, in particular through the possible orders of collisions leading to the collapse. In particular, if the collapse takes place for the trajectory starting from a particular configuration $(p_0,q_0)$ (and it certainly does for some of them, even in a stable manner: see \cite{CDKK999}, \cite{HuRo023} and \cite{DoHR025}), then \eqref{EQUATDiscrDynamSyste} defines indeed a discrete dynamical system, defined for any $k \in \mathbb{N}$.\\
We will denote the collisions between the pairs of particles \footnotesize{\circled{1}}\normalsize{}-\footnotesize{\circled{2}}\normalsize{}, \footnotesize{\circled{2}}\normalsize{}-\footnotesize{\circled{3}}\normalsize{}, \footnotesize{\circled{3}}\normalsize{}-\footnotesize{\circled{4}}\normalsize{}, respectively by $\mathfrak{a}$, $\mathfrak{b}$ and $\mathfrak{c}$. Here, we aim to understand better which sequences of collisions, written with the alphabet $\big\{\mathfrak{a}, \mathfrak{b}, \mathfrak{c}\big\}$, can exist and lead to the collapse.

\paragraph{The $\mathfrak{b}$-to-$\mathfrak{b}$ mapping.} The discrete dynamical system \eqref{EQUATDiscrDynamSyste} takes values in $\mathbb{R}^6$. There is however a first reduction of dimension that can be performed in the case when an inelastic collapse takes place. Indeed, it is clear that when the collapse occurs, then necessarily infinitely many collisions of type $\mathfrak{b}$ take place. As a consequence, we can equivalently describe the trajectory only in terms of the configurations $\big(p(t_k),q(t_k)\big)$, when $p_2(t_k) = 0$. From such a configuration, which is describing a collision of type $\mathfrak{b}$ that just took place, one can easily reconstruct the dynamics until the next collision of type $\mathfrak{b}$: the next collision is either of type $\mathfrak{a}$ or $\mathfrak{c}$. In the first case, the collision that follows is either of type $\mathfrak{b}$, closing the cycle, or it is of type $\mathfrak{c}$, but then such a collision is necessarily followed by a collision type $\mathfrak{b}$ (because after $\mathfrak{a}$ and $\mathfrak{c}$, \footnotesize{\circled{1}}\normalsize{} and \footnotesize{\circled{2}}\normalsize{} are separating, as well as \footnotesize{\circled{3}}\normalsize{} and \footnotesize{\circled{4}}\normalsize{}, so that the only pair that might collide next is \footnotesize{\circled{2}}\normalsize{}-\footnotesize{\circled{3}}\normalsize{}), closing the cycle also in this case. The case when $\mathfrak{c}$ follows immediately the first $\mathfrak{b}$ collision is treated in the same way.\\
We extract from the sequence $(t_k)_k$ the subsequence $(\widetilde{t}_k)_k$ such that $p_2(\widetilde{t}_k) = 0 \ \forall k \geq 0$, so that we reduced the dimension of the dynamical system. We will then focus our attention on the following discrete dynamical system, which we call the \emph{$\mathfrak{b}$-to-$\mathfrak{b}$ mapping}:
\vspace{3mm}
\begin{align}
&\hspace{25mm} \mathbb{N} \ni k \mapsto \widetilde{z}(k) = \big(p_1(\widetilde{t}_k),p_3(\widetilde{t}_k),q_1(\widetilde{t}_k),q_2(\widetilde{t}_k),q_3(\widetilde{t}_k)\big) \in \big(\mathbb{R}_+\big)^2 \times \mathbb{R}^3\\
&\text{with} \hspace{1mm} \widetilde{K}_{\mathfrak{b}} = \big\{ t \in I_{p_0,q_0}\ /\ p_2(t) = 0 \big\} \hspace{3mm} \text{and} \hspace{3mm} \widetilde{t}_k\text{ defined as }\# \big([0,\widetilde{t}_k[\ \cap\, \widetilde{K}_{\mathfrak{b}} \big) = k-1, \hspace{3mm} \# \big( [0,\widetilde{t}_k] \cap \widetilde{K}_{\mathfrak{b}} \big) = k.\nonumber
\end{align}
\noindent
In addition to the $\mathfrak{b}$-to-$\mathfrak{b}$ reduction, we observe also that the system admits the following natural rescaling if the main information we are interested in is the order of collisions, as it was observed for instance in \cite{BeCa999}. This rescaling consists in assuming that the vectors of positions and velocities are initially normalized. Naturally, this property will not hold during the whole evolution of the trajectory, but the rescaling has no effect on the order of collisions, so that we can reduce again the dimension (by $2$) by renormalizing the position and velocity vectors at each collision time $\widetilde{t}_k$. In other words, to investigate only the order of collisions of the trajectory starting from $(p_0,q_0)$, it is enough to consider the discrete evolution:
\begin{align}
\mathbb{N} \ni k \mapsto \big(\omega(\widetilde{t}_k),\sigma(\widetilde{t}_k)\big) \in \mathbb{S}^1 \times \mathbb{S}^2,
\end{align}
where $\omega(\widetilde{t}_{k+1})$ and $\sigma(\widetilde{t}_{k+1})$ are recursively defined as the respective normalizations of the position and velocity vectors $p(\widetilde{t}_{k+1}) \in \mathbb{R}^2$ and $q(\widetilde{t}_{k+1}) \in \mathbb{R}^3$ obtained from $p(\widetilde{t}_k) \in \mathbb{S}^1$ and $q(\widetilde{t}_k) \in \mathbb{S}^2$ when the $\mathfrak{b}$-to-$\mathfrak{b}$ cycle between $\widetilde{t}_k$ and $\widetilde{t}_{k+1}$ is completed.

\subsection{The spherical reduction and its consequences}
\label{SSECTSpherReduc}

\paragraph{The dimensional reduction to the projective sphere of \cite{DoHR025}.} Finally, one of the main results of \cite{DoHR025} allows to perform one last dimensional reduction.\\
If we consider the sequence of planes $\big(\mathcal{P}(k)\big)_k = \big(\text{Span}\big[\omega(\widetilde{t}_k),\sigma(\widetilde{t}_k)\big]\big)_k$, the datum of any of the planes $\mathcal{P}(k)$ can be identified as its normal line, whose only relevant information is the orientation, and so, $\mathcal{P}(k)$ can be naturally identified as an element of the two-dimensional sphere $\mathbb{S}^2$ quotiented by the antipodal equivalence relation $\omega \simeq -\omega$. In other words, the sequence $\big(\mathcal{P}(k)\big)_k = \big(\text{Span}\big[\omega(\widetilde{t}_k),\sigma(\widetilde{t}_k)\big]\big)_k$ can be represented as a sequence in the projective plane $\mathbb{P}_2(\mathbb{R})$.\\
The result of \cite{DoHR025} (Theorem 4.1) establishes that for two trajectories $\mathcal{T}_{p_0,q_0}$ and $\mathcal{T}_{p'_0,q'_0}$ starting respectively from $(p_0,q_0)$ and $(p'_0,q'_0) \in \big(\mathbb{R}_+\big)^3\times\mathbb{R}^3$, with $p_0 \cdot e_2 = p'_0 \cdot e_2 = 0$ (where $e_2$ is the second vector of the canonical basis in $\mathbb{R}^3$) and such that $\text{Span}\big[p_0,q_0\big] = \text{Span}\big[p'_0,q'_0\big]$, these two trajectories $\big((p(\widetilde{t}_k),q(\widetilde{t}_k))\big)_k$ and $\big((p'(\widetilde{t}_k),q'(\widetilde{t}_k))\big)_k$ generate the same sequences of planes. In other words, we have that:
\begin{align}
\text{if}\hspace{3mm} \text{Span}\big[p_0,q_0\big] = \text{Span}\big[p'_0,q'_0\big], \hspace{3mm} \text{then} \hspace{3mm} \text{Span}\big[p(\widetilde{t}_k),q(\widetilde{t}_k)\big] = \text{Span}\big[p'(\widetilde{t}_k),q'(\widetilde{t}_k)\big] \hspace{3mm} \forall k \geq 0.
\end{align}
Therefore, identifying any hyperplane of $\mathbb{R}^3$ with any unit normal vector to such hyperplane, to any trajectory of four collapsing one-dimensional inelastic hard spheres can be associated the discrete dynamical system, taking values in $\mathbb{P}_2(\mathbb{R})$:
\begin{align}
\label{EQUATSpherReducDiscrMappi}
\mathbb{N} \ni k \mapsto \text{Span}\big[\omega(\widetilde{t}_k),\sigma(\widetilde{t}_k)\big] \in \mathbb{P}_2(\mathbb{R}),
\end{align}
which encompasses in particular the order of collisions, and all the possible orders of collisions in the original four particle system are encoded in the dynamical system described by \eqref{EQUATSpherReducDiscrMappi}. Rephrasing again, there exists a mapping:
\begin{align}
\label{EQUATDomai__P__Forma}
\widehat{\mathfrak{P}}:
\mathbb{P}_2(\mathbb{R}) \rightarrow \mathbb{P}_2(\mathbb{R})
\end{align}
such that the discrete dynamical system \eqref{EQUATSpherReducDiscrMappi} can be rewritten as:
\begin{align}
\label{EQUATItera__P__}
\text{Span}\big[\omega(\widetilde{t}_k),\sigma(\widetilde{t}_k)\big] = \widehat{\mathfrak{P}}^k \big( \text{Span}\big[p_0/\vert p_0 \vert,q_0/\vert q_0 \vert\big] \big) \hspace{3mm} \forall k \in \mathbb{N},
\end{align}

\noindent
where $\widehat{\mathfrak{P}}^k$ denotes the $k$-th iteration of the mapping $\widehat{\mathfrak{P}}$. This last dimensional reduction is referred to as the \emph{spherical reduction} in \cite{DoHR025} , since the only information that is recorded to describe the orbits is the sequence of hyperplanes $\text{Span}\big[\omega(\widetilde{t}_k),\sigma(\widetilde{t}_k)\big]$. For all times $t$, the trajectory of the original four particle system, described in terms of the variables $\big(p(t),q(t)\big)$, is contained in the union of the hyperplanes (provided that we consider all the hyperplanes $\text{Span}\big[\omega(\widetilde{t}_k),\sigma(\widetilde{t}_k)\big]$ associated to any collision of the trajectory (and not only to the collisions of type $\mathfrak{b}$). Considering this sequence of planes, intersected with the unit sphere $\mathbb{S}^2$, we obtain a family of arcs of circles that constitute trajectories corresponding to a billiard defined on a portion of $\mathbb{S}^2$. In such a billiard, a reflection takes place when the trajectory on the sphere reaches the boundary, that is, when an arc of circle intersects one of the planes $\{ x = 0 \}$, $\{ y = 0 \}$ or $\{ z = 0 \}$. The reflection law is then different from the usual specular reflection. Such a billiard description, with a non-standard reflection law, was already used in \cite{CoGM995} in a similar setting, to describe completely the inelastic collapse of three particles.

\paragraph{Definition of the $\mathfrak{b}$-to-$\mathfrak{b}$ mapping $\mathfrak{P}$.} The mapping $\widehat{\mathfrak{P}}$ is defined in \cite{DoHR025} as follows. First, starting from a configuration such that \footnotesize{\circled{2}}\normalsize{} and \footnotesize{\circled{3}}\normalsize{} are in contact (that is, such that a collision of type $\mathfrak{b}$ just took place), and assuming that $p_0$, $q_0$ are normalized, and that $q_0$ belongs to the normal plane to the unit sphere $\mathbb{S}^2$ at the point $p_0$ (which we can always do, according to Theorem 4.1 in \cite{DoHR025}), we have:
\begin{align}
\label{EQUATParamInitiaDataAngle}
p_0 = \begin{pmatrix}
\sin\theta \\ 0 \\ \cos\theta
\end{pmatrix},
\hspace{10mm}
q_0 = \begin{pmatrix}
\cos\varphi\cos\theta \\ \sin\varphi \\ -\cos\varphi\sin\theta
\end{pmatrix}
\end{align}
for certain $\theta \in\ ]0,\pi/2[$ and $\varphi \in\ ]0,\pi [$. The next collision is determined by finding which plane between $\{p_1=0\}$ (collision of type $\mathfrak{a}$) and $\{p_3=0\}$ (collision of type $\mathfrak{c}$) is intersected first by the arc of circle starting from $p_0$ and defined as the intersection between the unit sphere $\mathbb{S}^2$ and the plane $\mathcal{P}(0) = \text{Span}[p_0,q_0]$. $\cos\varphi > 0$ if and only if the next collision is of type $\mathfrak{c}$, in which case the configuration of the particles right after such a collision, after normalization, becomes $\big(p(t_1),q(t_1)\big)$, with:
\begin{align}
p(t_1) = \big(p_0 \wedge q_0\big) \wedge e_z \hspace{5mm} \text{and} \hspace{5mm} q(t_1) = \pi_{p(t_1)^\perp} \big[ C q_0 \big],
\end{align}
where:
\begin{itemize}
\item $e_z = (0,0,1)$ is the third vector of the canonical basis,
\item $p(t_1)$ is the vector orientating the intersection line between the planes $\text{Span}[p_0,q_0] = \big(p_0\wedge q_0\big)^\perp$ and $\{p_3 = 0\} = e_z^\perp$,
\item $q(t_1)$ is the projection on the orthogonal plane to $p(t_1)$ of the vector $C q_0$, where $C$ is the collision matrix given by \eqref{EQUATCollision_Matri} and corresponding to a collision of type $\mathfrak{c}$.
\end{itemize}
We deduce therefore that $\mathcal{P}(t_1) = \text{Span}\big[p(t_1),q(t_1)\big]$. The construction is similar if the first collision that follows the initial configuration is of type $\mathfrak{a}$, that is, when $\cos\varphi < 0$. In this case, we define:
\begin{align}
p(t_1) = \big(p_0 \wedge q_0\big) \wedge e_x \hspace{5mm} \text{and} \hspace{5mm} q(t_1) = \pi_{p(t_1)^\perp}\big[ A q_0 \big].
\end{align}
To obtain the complete expression of the $\mathfrak{b}$-to-$\mathfrak{b}$ mapping $\widehat{\mathfrak{P}}$, it remains to repeat the construction, until the first collision of type $\mathfrak{b}$ is reached, which can happen only according to the orders $\mathfrak{ab}$, $\mathfrak{acb}$, $\mathfrak{cab}$ or $\mathfrak{cb}$.\\
\newline
For the sake of completeness, we write now $\widehat{\mathfrak{P}}$ in terms of its action on the normal vector $p_0 \wedge q_0$ to the plane $\mathcal{P}(0)$. The complete definition of $\widehat{\mathfrak{P}}$ is the following, given by the following algorithm, that terminates in finite time (between $2$ and $3$ iterations, depending on the argument $u \in \mathbb{S}^2$).

\begin{defin}[Action of the $\mathfrak{b}$-to-$\mathfrak{b}$ mapping $\widehat{\mathfrak{P}}$ on $\mathbb{P}_2(\mathbb{R})$]
\label{DEFIN_Map_b_2_b__P__}
Let $u \in \mathbb{S}^2$. We define $\widehat{\mathfrak{P}}(u)=u' \in \mathbb{S}^2$, where $u'$ is computed according to the following algorithm.
\begin{enumerate}
\item If $u \cdot e_x < 0$, replace $u$ by $-u$. If not, skip this step.
\item Define the value of the variable \emph{contact} to be $\mathfrak{b}$.
\item If $\text{contact} = \mathfrak{a}$, then:\begin{enumerate}
\item Define $p$, $q$ and $v$ as:
\begin{align}
p = u \wedge e_x, \hspace{5mm} q = p \wedge u, \hspace{5mm} v = u - \big(u \cdot e_x\big)e_x.
\end{align}
\item If $q \cdot v > 0$, update the variable $\text{contact} = \mathfrak{b}$, and compute:
\begin{align}
p' = e_y \wedge u, \hspace{5mm} q' = B q.
\end{align}
If instead $q \cdot v < 0$, update the variable $\text{contact} = \mathfrak{c}$, and compute:
\begin{align}
p' = e_z \wedge u, \hspace{5mm} q' = C q.
\end{align}
\end{enumerate}
If $\text{contact} = \mathfrak{b}$, then:\begin{enumerate}
\item Define $p$, $q$ and $v$ as:
\begin{align}
p = u \wedge e_y, \hspace{5mm} q = p \wedge u, \hspace{5mm} v = u - \big(u \cdot e_y\big)e_y.
\end{align}
\item If $q \cdot v > 0$, update the variable $\text{contact} = \mathfrak{c}$, and compute:
\begin{align}
p' = e_z \wedge u, \hspace{5mm} q' = C q.
\end{align}
If instead $q \cdot v < 0$, update the variable $\text{contact} = \mathfrak{a}$, and compute:
\begin{align}
p' = e_x \wedge u, \hspace{5mm} q' = A q.
\end{align}
\end{enumerate}
If $\text{contact} = \mathfrak{c}$, then:\begin{enumerate}
\item Define $p$, $q$ and $v$ as:
\begin{align}
p = u \wedge e_z, \hspace{5mm} q = p \wedge u, \hspace{5mm} v = u - \big(u \cdot e_z\big)e_z.
\end{align}
\item If $q \cdot v > 0$, update the variable $\text{contact} = \mathfrak{a}$, and compute:
\begin{align}
p' = e_x \wedge u, \hspace{5mm} q' = A q.
\end{align}
If instead $q \cdot v < 0$, update the variable $\text{contact} = \mathfrak{b}$, and compute:
\begin{align}
p' = e_y \wedge u, \hspace{5mm} q' = B q.
\end{align}
\end{enumerate}
\item Compute $u' = q' \wedge p'$. Replace $u$ by $u'$.
\item If $\text{contact} = \mathfrak{b}$, stop the algorithm and return $u'$ as the result of the algorithm.\\
If not, repeat the algorithm starting from step $3$.
\end{enumerate}
\end{defin}
\noindent
Observe that in the algorithm above, we implictly excluded the degenerate cases $u \cdot e_x = 0$, $u \cdot e_y = 0$, $u \cdot e_z = 0$ and $q \cdot v = 0$, which all correspond to zero-measure sets in $\mathbb{S}^2$.\\
The reader may also refer to \cite{DoHR025} for another expression of the mapping $\widehat{\mathfrak{P}}$, relying on trigonometric functions.\\
\newline
In the next section, we will study in more detail the mapping $\widehat{\mathfrak{P}}$, providing in particular an explicit and simple expression.

\begin{remar}
In the end, the different dimensional reductions we presented in this section allow to study the possible orders of collisions using a $2$-dimensional dynamical system.\\
It is important to emphasize that, given any plane $\mathcal{P} = \text{Span}\big[\omega,\sigma\big]$, it is \emph{always} possible to compute its image by the mapping $\widehat{\mathfrak{P}}$. This is an important difference with respect to the original particle system, for which some configurations of particles are such that no collision will take place in the future. On the contrary, as it can be seen with the construction that we just sketched, one can always consider infinitely many iterations of the mapping $\widehat{\mathfrak{P}}$, so that the dynamical system given by \eqref{EQUATItera__P__} is properly defined.\\
It is also important to remark that we considered the $\mathfrak{b}$-to-$\mathfrak{b}$ mapping in order to define $\widehat{\mathfrak{P}}$. Although in the original particle system an infinite number of collisions implies always that there will be also infinitely many collisions of type $\mathfrak{b}$, it is less clear that such a property holds also for the iteration we constructed on the planes $\text{Span}\big[p(t_k),q(t_k)\big]$. The property holds nevertheless for this iteration (see Proposition 4.5 and Corollary 4.6 in \cite{DoHR025}): after a collision of type $\mathfrak{b}$ follows necessarily one of the consecutive sequences of collisions $\mathfrak{ab}$, $\mathfrak{cb}$, $\mathfrak{acb}$ or $\mathfrak{cab}$, so that $\widehat{\mathfrak{P}}$ is indeed properly defined.
\end{remar}

\section{The mapping $\widehat{\mathfrak{P}}$ written as a piecewise projective transformation}
\label{SECTIMappi_as_a_PiecewiseLinea}

\noindent
We recall that the mapping $\widehat{\mathfrak{P}}:\text{dom}\big(\widehat{\mathfrak{P}}\big) \subset \mathbb{P}_2(\mathbb{R}) \rightarrow \mathbb{P}_2(\mathbb{R})$ describes how evolve the planes $\mathcal{P}(k)$, defined as $\text{Span}\big[p(\widetilde{t}_k),q(\widetilde{t}_k)\big]$ during the $\mathfrak{b}$-to-$\mathfrak{b}$ transformation. In this section, we will prove that the mapping $\widehat{\mathfrak{P}}$ can be defined as the action of a certain piecewise linear mapping $\mathfrak{P}:\mathbb{R}^3 \rightarrow \mathbb{R}^3$ on a subset of $\mathbb{P}_2(\mathbb{R})$, or equivalently, as the action of $\mathfrak{P}$ on a subset of the unit sphere $\mathbb{S}^2$ which remains invariant under the action of $\mathfrak{P}$. In other words, we will prove that $\widehat{\mathfrak{P}}$ is a piecewise projective linear transformation.

\paragraph{The domain of $\widehat{\mathfrak{P}}$.} The introduction to the construction of $\widehat{\mathfrak{P}}$, presented in the previous section, was mainly formal and illustrative. We discuss now precisely the question of the domain of $\widehat{\mathfrak{P}}$, which is essential in order to prevent triple collisions, which would forbid to define the evolution of the particle system.\\ 
\noindent
Since the mapping $\widehat{\mathfrak{P}}$ describes the $\mathfrak{b}$-to-$\mathfrak{b}$ evolution of the particle system, by assumption the central pair \footnotesize{\circled{2}}\normalsize{}-\footnotesize{\circled{3}}\normalsize{} is initially in contact, in a post-collisional configuration, that is, a collision of type $\mathfrak{b}$ just took place. Therefore, the initial configuration $\big(p_0,q_0)$ has the form:
\begin{align}
p_0 = \hspace{0.5mm}^t \hspace{-0.25mm}(p_{0,1},0,p_{0,3}), \hspace{5mm} q_0 = \hspace{0.5mm}^t \hspace{-0.25mm}(q_{0,1},q_{0,2},q_{0,3}) \hspace{5mm} \text{with} \hspace{2mm} p_{0,1}, p_{0,3} > 0 \hspace{2mm} \text{and} \hspace{2mm} q_{0,2} > 0,
\end{align}
the sign of the inequality on $q_{0,2}$ being strict, because if not, the pair \footnotesize{\circled{2}}\normalsize{}-\footnotesize{\circled{3}}\normalsize{} would not separate, yielding an ill-posed trajectory when the next collision occurs (triple collision).\\
We denote by $u= \hspace{0.25mm}^t \hspace{-0.25mm} (u_1, u_2, u_3)$ any normal vector to the plane $\mathcal{P}(0) = \text{Span}\big[p_0,q_0\big]$. We will define $\mathfrak{P}$ as a mapping acting on the vector of the initial configuration $u(0)$.\\
Since in this section we will mostly work in $\mathbb{R}^3$, we will denote by $x,y,z$ the three generic coordinates of the elements of $\mathbb{R}^3$, so that the set $\{y=0\}$ has to be understood as the hyperplane of the vectors with a zero second component.\\
\newline
Since by definition $u$ is normal to $\mathcal{P}(0)$, we have that $u$ is colinear to:
\begin{align}
p_0 \wedge q_0 = \begin{pmatrix} p_{0,1} \\ 0 \\ p_{0,3} \end{pmatrix} \wedge \begin{pmatrix} q_{0,1} \\ q_{0,2} \\ q_{0,3} \end{pmatrix} = \begin{pmatrix} -p_{0,3}q_{0,2} \\ p_{0,3}q_{0,1} - p_{0,1}q_{0,3} \\ p_{0,1}q_{0,2} \end{pmatrix}.
\end{align}
Conversely, the vector $p_0$ of initial relative positions belongs to the intersection between the two planes $\mathcal{P}(0)$ and $\{y=0\}$. By definition, the first one is normal to $u$, while the second one is normal to $e_y = \hspace{0.25mm}^t \hspace{-0.25mm} (0,1,0)$. Therefore, the vectorial line corresponding to the intersection between these two planes has to be orthogonal to both $u$ and $e_y$, and so we deduce that $p_0$ is necessarily colinear to:
\begin{align}
u \wedge e_y = \begin{pmatrix} u_1 \\ u_2 \\ u_3 \end{pmatrix} \wedge \begin{pmatrix} 0 \\ 1 \\ 0 \end{pmatrix} = \begin{pmatrix} -u_3 \\ 0 \\ u_1 \end{pmatrix}.
\end{align}
We deduce therefore two constraints:
\begin{itemize}
\item to consider meaningful initial data, the additional conditions $u_1 \neq 0$, $u_3 \neq 0$ have to be considered (in the opposite case, the initial configuration describes a collision involving three or four particles),
\item since the position vector $p_0$ takes values in $\big(\mathbb{R}_+\big)^3$, we have to assume $u_1u_3 \leq 0$.
\end{itemize}

\begin{remar}
Observe that we do not need to assume $u_3 \leq 0$, $u_1 \geq 0$, because $u$ and $-u$ define the same initial plane $\mathcal{P}(0)$. Nevertheless, in order to have a better intuition on the action of the mapping and to stay close to the action on the original position and velocity variables, it is always easier to consider $u_3 \leq 0$, $u_1 \geq 0$.\\
Concerning the case of the triple collisions that we want to avoid, which induces the restriction $u_1 \neq 0$, $u_3 \neq 0$, we will see that we will be able to define anyway the mapping $\widehat{\mathfrak{P}}$ also in the ``pathological'' situations $u_1 = 0$ or $u_3 = 0$. This will have the advantage to extend the definition of $\widehat{\mathfrak{P}}$ on a compact subset of the sphere $\mathbb{S}^2$.
\end{remar}
\noindent
We define then the domain of the mapping $\widehat{\mathfrak{P}}$ as follows.

\begin{defin}[Domain of the $\mathfrak{b}$-to-$\mathfrak{b}$ mapping $\widehat{\mathfrak{P}}$]
We define the \emph{domain of the $\mathfrak{b}$-to-$\mathfrak{b}$ mapping $\widehat{\mathfrak{P}}$} as the subset, denoted by $X$, of the two-dimensional unit sphere $\mathbb{S}^2$ defined as:
\begin{align}
\label{EQUATDefinDomai_X_P_}
X = \{x^2+y^2+z^2 = 1\}\cap \Big[ \{x\geq 0, z\leq 0\} \cup \{x\leq 0, z\geq 0\} \Big].
\end{align}
\end{defin}

\paragraph{The underlying piecewise linear structure of $\widehat{\mathfrak{P}}$.} 
\noindent
We are now in a position to state the main result of this section, which is a complete description of the mapping $\widehat{\mathfrak{P}}$.

\begin{theor}
\label{THEOR__P__FukngLinea}
Let $X\subset \mathbb{R}^3$ be defined in \eqref{EQUATDefinDomai_X_P_}. Then the $\mathfrak{b}$-to-$\mathfrak{b}$ mapping $\widehat{\mathfrak{P}}:X \to \mathbb{S}^2$ introduced in Definition \ref{DEFIN_Map_b_2_b__P__}, defined on $X$, coincides almost everywhere with a piecewise projective linear transformation, whose associated piecewise linear transformation $\mathfrak{P}$ is given by:
\begin{align}
\label{EQUATSpherReduc_btobLineaVersi}
\mathfrak{P}\begin{pmatrix} x \\ y \\ z \end{pmatrix} = \left\{
\begin{array}{cl}
\hspace{-3mm}P_1 \begin{pmatrix} x \\ y \\ z \end{pmatrix} = \begin{pmatrix}
r (\alpha y - x) \\ \alpha (\alpha y - x) - ry + r\alpha z \\ r^2 z
\end{pmatrix} & \hspace{-3mm}\text{if } x > 0,\ z < 0,\ y > 0,\ \alpha y - x > 0, \\
& \\
\hspace{-1.5mm} P_2 \begin{pmatrix} x \\ y \\ z \end{pmatrix} =
\begin{pmatrix}
-r (\alpha y -x) \\ -\alpha (\alpha y - x) - \alpha (\alpha y -z) + ry \\ -r(\alpha y - z)
\end{pmatrix} & \hspace{-3mm}\text{if } x > 0,\ z < 0,\ y > 0,\ \alpha y - x < 0, \\
& \\
\hspace{-3mm}P_3 \begin{pmatrix} x \\ y \\ z \end{pmatrix} =
\begin{pmatrix}
r^2 x \\ \alpha (\alpha y - z) - ry + r \alpha x \\ r(\alpha y - z)
\end{pmatrix} & \hspace{-3mm}\text{if } x > 0,\ z < 0,\ y < 0,\ \alpha y - z < 0, \\
& \\
\hspace{-1.5mm} P_4 \begin{pmatrix} x \\ y \\ z \end{pmatrix} = \begin{pmatrix}
-r (\alpha y - x) \\ - \alpha (\alpha y - z) - \alpha (\alpha y - x) + ry \\ -r (\alpha y - z)
\end{pmatrix} & \hspace{-3mm}\text{if } x > 0,\ z < 0,\ y < 0,\ \alpha y - z > 0,
\end{array}
\right.
\end{align}
with the matrices $P_1$, $P_2$, $P_3$ and $P_4$ respectively defined as:
\begin{align}
\label{EQUATCollisionMatric_P_i_}
P_1= \begin{pmatrix}
                -r & r\alpha & 0 \\
				 -\alpha & \alpha^2- r & r\alpha \\
				 0 & 0  & r^2 \\
	 \end{pmatrix}, \hspace{2mm} 
P_2= \begin{pmatrix}
				 r & -r\alpha & 0 \\
				 \alpha & -2\alpha^2+ r & \alpha \\
				 0 & -r\alpha  & r \\
	 \end{pmatrix}, \hspace{2mm}
&P_3= \begin{pmatrix}
				 r^2 & 0 & 0 \\
				 r\alpha & \alpha^2- r & -\alpha \\
				 0 & r\alpha  & -r \\
	\end{pmatrix}, \hspace{2mm}
P_4=P_2.
\end{align}
The case when $x < 0, z > 0$ is treated by symmetry, defining $\mathfrak{P}(u) = -\mathfrak{P}\big(-u\big)$, where $\mathfrak{P}\big(-u\big)$ is defined in \eqref{EQUATCollisionMatric_P_i_}.\\
In other words, we have:
\begin{align}
\widehat{\mathfrak{P}}(u) = \frac{\mathfrak{P}(u)}{ \big\vert \mathfrak{P}(u) \big\vert }\hspace{3mm} \forall u = \hspace{1mm}^t\hspace{-0.25mm}(x,y,z) \in X \hspace{2mm} \text{such that} \hspace{2mm} x \neq 0,\, z \neq 0,\, \alpha y - x \neq 0 \text{  and  } \alpha y - z \neq 0.
\end{align}
\end{theor}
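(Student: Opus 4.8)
The plan is to unroll the finite algorithm of Definition~\ref{DEFIN_Map_b_2_b__P__} in closed form. After Step~1 we may assume $u=(x,y,z)$ with $x>0$, and since $u\in X$ this forces $z<0$; I work separately on the four open regions of $X$ cut out in \eqref{EQUATSpherReduc_btobLineaVersi} by the signs of $y$ and of $\alpha y-x$ (respectively $\alpha y-z$). The loop is always entered with $\text{contact}=\mathfrak{b}$ and, by the admissibility of collision words recalled above, closes after one of the words $\mathfrak{ab}$, $\mathfrak{cb}$, $\mathfrak{acb}$ or $\mathfrak{cab}$; it therefore performs two or three passes, and the whole proof reduces to four explicit computations.

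To keep the nested cross products under control, I would first record two consequences of the triple product identity $a\wedge(b\wedge c)=(a\cdot c)\,b-(a\cdot b)\,c$. Writing $e$ for the canonical basis vector attached to the current contact and $(e',K)$ for the basis vector and collision matrix of the next contact, one pass of the loop produces
\begin{align}
q=(u\wedge e)\wedge u=|u|^2\,e-(u\cdot e)\,u,\qquad u'=(Kq)\wedge(e'\wedge u)=\big((Kq)\cdot u\big)\,e'-\big((Kq)\cdot e'\big)\,u.
\end{align}
These turn each pass into a handful of dot products. Moreover the branching test trivialises: since $q\cdot u=0$ and $q\cdot e=|u|^2-(u\cdot e)^2=|v|^2$, one finds $q\cdot v=-(u\cdot e)\,|v|^2$, so the sign of $q\cdot v$ is the opposite of that of the component of $u$ along $e$, and the next collision is decided by a single coordinate of the current normal.

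Running this machine, the first pass (contact $\mathfrak{b}$, $e=e_y$) branches on the sign of $y$: $y>0$ selects $\mathfrak{a}$, $y<0$ selects $\mathfrak{c}$. Applying the identity and using $2\alpha=1+r$, the a priori degree-three vector $u'$ collapses after one cancellation to $xy\,(x-\alpha y,-ry,-rz)$ in the $\mathfrak{a}$ case and to $zy\,(-rx,-ry,z-\alpha y)$ in the $\mathfrak{c}$ case, i.e. a monomial times a linear form. The second pass branches on the first (respectively third) coordinate $x-\alpha y$ (respectively $z-\alpha y$) of that linear form, which is exactly the sign of $\alpha y-x$ (respectively $\alpha y-z$) separating $P_1$ from $P_2$ (respectively $P_3$ from $P_4$). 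The two-letter words $\mathfrak{ab}$ and $\mathfrak{cb}$ terminate here and yield, after factoring a further monomial, the matrices $P_1$ and $P_3$; the three-letter words $\mathfrak{acb}$ and $\mathfrak{cab}$ require a third pass and produce the positive multiples $r\,P_2$ and $r\,P_4$, the computation making transparent why $P_4=P_2$.

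The final point, which upgrades the projective equality to the stated normalized identity, is that every scalar factored out along the way — always a product of two coordinates of the current normal — is strictly positive on the region at hand (for instance $xy>0$ at the first pass, with the intermediate coordinates inheriting definite signs at the later passes). The algorithm's output is then a \emph{positive} multiple of $P_i\,u$, whence $\widehat{\mathfrak{P}}(u)=\mathfrak{P}(u)/|\mathfrak{P}(u)|$. I expect the main difficulty to be organisational: the volume of sign bookkeeping across four regions and up to three passes. The one genuinely non-automatic step is the factorisation of each $u'$ as (positive monomial)$\times$(linear form); it is not forced by homogeneity but rests on the cancellations produced by $2\alpha=1+r$, and it is exactly this factorisation that makes $\widehat{\mathfrak{P}}$ piecewise \emph{projective}.
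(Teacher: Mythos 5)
Your proposal is correct and follows essentially the same route as the paper's proof: both unroll the algorithm of Definition \ref{DEFIN_Map_b_2_b__P__} region by region, compute the successive normal vectors $q'\wedge p'$ explicitly, and verify that the scalar factor extracted at each pass is positive on the relevant region (the paper treats only $y>0$ and obtains $y<0$ by the symmetry $(x,y,z)\mapsto(-z,-y,-x)$, whereas you propose four direct computations). Your systematic use of the triple-product identity and of the branching rule $q\cdot v=-(u\cdot e)\,|v|^2$ is a tidier organisation of the same calculation; the only inaccuracy is attributing the monomial-times-linear-form factorisation to $2\alpha=1+r$ — it actually comes from structural cancellations such as $(1+r)x-rx=x$ in the first pass, which your dot-product bookkeeping would exhibit in any case.
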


\noindent
In other words, Theorem \ref{THEOR__P__FukngLinea} establishes that there exists a piecewise linear mapping $\mathfrak{P}$ defined almost everywhere on $\Big[ \{x\geq 0, z\leq 0\} \cup \{x\leq 0, z\geq 0\} \Big] \subset \mathbb{R}^3$, which induces a mapping on $X \subset \mathbb{S}^2$ that coincides almost everywhere with the $\mathfrak{b}$-to-$\mathfrak{b}$ mapping $\widehat{\mathfrak{P}}$. In other words, seen as a mapping acting on an invariant subset of $\mathbb{P}_2(\mathbb{R})$, the $\mathfrak{b}$-to-$\mathfrak{b}$ mapping $\widehat{\mathfrak{P}}$ coincides almost everywhere with a piecewise projective linear mapping.

\begin{proof}
Given the symmetries of the system, we focus on the configurations for which $x > 0$, $z < 0$, and $y > 0$. The case $y < 0$ is deduced by considering the transformation $\mathbb{R}^3 \supset X \ni \hspace{0.5mm}^t\hspace{-0.25mm} (x,y,z) \rightarrow \hspace{0.5mm}^t\hspace{-0.25mm} (-z,-y,-x)$.\\
\newline
We assume first that $y>0$ and $\alpha y-x>0$, where $\alpha$ is defined in \eqref{EQUATDefinAlpha}. The meaning of the condition on $\alpha y -x$ will become clear after the computation of the first collision. We denote by $u(0) = \hspace{0.5mm}^t\hspace{-0.25mm} (x,y,z) \in X$ the vector normal to the plane $\mathcal{P}(0)$ describing the initial configuration of the system. After the initial configuration, corresponding to the case when a collision of type $\mathfrak{b}$ just took place, the condition $y > 0$ implies that the next collision is of type $\mathfrak{a}$. Indeed, the vector of the initial positions $p(0)$ is colinear to:
\begin{align}
u(0)\wedge e_y=\begin{pmatrix} -z\\ 0\\x\end{pmatrix}
\end{align}
(observe that the assumptions $x,-z > 0$ are such that the entries of $u(0)\wedge e_y$ are positive, in agreement with the fact that this vector represents relative distances between the particles). The vector $q(0)$ of the initial relative velocities normal to $u(0)$ is then:
\begin{align}
q(0)= p(0) \wedge u(0) = \begin{pmatrix} -xy \\ x^2+z^2 \\ -yz \end{pmatrix}
\end{align} 
(observe that the second entry of $q(0)$ is always positive, implying that the pair \footnotesize{\circled{2}}\normalsize{}-\footnotesize{\circled{3}}\normalsize{} is separating). We prove now the following statement. When evolving on the intersection between the unit sphere and the plane $\mathcal{P}(0)$ normal to $u(0)$, the first plane that is intersected between $\{x = 0\}$ and $\{z = 0\}$ is determined by the scalar product between $q(0)$ and the vector $v(0) = u(0) - \big(u(0)\cdot e_y\big)e_y =  \hspace{0.5mm}^t\hspace{-0.25mm} (x,0,z)$.\\
To see that the next collision is determined by the sign of $q(0)\cdot v(0)$, for $\lambda > 0$ we consider the line $t \mapsto p(0) + \big(q(0) - \lambda p(0)\big) t$. The term $-\lambda p(0)$ ensures that the velocity reaches at least one of the two planes $\{x = 0\}$ or $\{z = 0\}$ in finite positive time (and the choice of $\lambda$ plays no role in determining the next collisions that take place, according to Theorem 4.1 in \cite{DoHR025}). The line intersects the planes $\{x = 0\}$ and $\{z = 0\}$, respectively at times $t_x$ and $t_z$ given by:
\begin{align}
t_x = \frac{-z}{xy + \lambda(-z)}, \hspace{5mm} t_z = \frac{x}{yz+\lambda x},
\end{align}
so that in the case when $y > 0$, $t_x$ is always positive, while $t_z$ might have both signs. More precisely, if $\lambda < y(-z)/x$, $t_z < 0$, and $t_z > 0$ if $\lambda > y(-z)/x$. For $\lambda > 0$ small enough, relying on Theorem 4.1 in \cite{DoHR025}, we clearly see that only collision takes place in the future, which has to be of type $\mathfrak{a}$. To see that the first collision to take place is necessarily of this type, without relying on the result of \cite{DoHR025}, we compute:
\begin{align}
t_x - t_z = \frac{y (x^2 + z^2)}{- x(-z) \lambda^2 + \big[yz^2-x^2y\big] \lambda + xy^2(-z)} \cdotp
\end{align}
The denominator is a quadratic polynomial in $\lambda$, whose discriminant is the perfect square $y^2\big(x^2+z^2\big)^2$. Observing in addition that this polynomial vanishes at $\lambda = y(-z)/x$ and that its derivative is negative at this point, we deduce that the polynomial has no root larger than $y(-z)/x$, proving that $t_x < t_z$ in the case when $t_z$ is positive. In summary, if $y > 0$, then $0 < t_x < t_z$ or $t_z < 0 < t_x$, and conversely if $y < 0$, then $0 < t_z < t_x$ or $t_x < 0 < t_z$. In other words, the next collision to take place is of type $\mathfrak{a}$ when $y > 0$, and of type $\mathfrak{c}$ when $y < 0$.\\
Besides, since we have:
\begin{align}
q(0) \cdot v(0) = -y(x^2+z^2),
\end{align}
we deduce that a collision of type $\mathfrak{c}$ follows if $q(0)\cdot v(0) > 0$, and $\mathfrak{a}$ follows if $q(0)\cdot v(0) < 0$.\\
\newline
We compute then the normal vector $u_\mathfrak{a}(1)$ to the plane $\mathcal{P}_\mathfrak{a}(1) = \text{Span}\big[p_\mathfrak{a}(1),q_\mathfrak{a}(1)\big]$ when the first collision takes place. It is colinear to:
\[
q_{\mathfrak{a}}(1)\wedge p_{\mathfrak{a}}(1)= \underbrace{Aq(0)}_{=q_{\mathfrak{a}}(1)} \wedge \underbrace{[e_x \wedge u(0)]}_{=p_{\mathfrak{a}}(1)},
\] 
where $A$ is the first matrix given in \eqref{EQUATCollision_Matri}. We have:
\begin{align}
p_\mathfrak{a}(1) = e_x \wedge u(0) =\begin{pmatrix} 0 \\-z\\y \end{pmatrix}
\hspace{5mm} \text{and} \hspace{5mm}
q_\mathfrak{a}(1) = Aq(0)= \begin{pmatrix}
			-r & 0 & 0 \\
			\alpha & 1 & 0 \\
			0& 0 & 1 \\
		\end{pmatrix}   \begin{pmatrix} -xy \\ x^2+z^2 \\ -yz \end{pmatrix}= \begin{pmatrix} rxy \\ -\alpha xy + x^2 + z^2 \\ -yz \end{pmatrix}.
\end{align}
Hence,
\begin{align}
q_{\mathfrak{a}}(1)\wedge p_{\mathfrak{a}}(1) = \begin{pmatrix} rxy \\ -\alpha xy + x^2 + z^2 \\ -yz \end{pmatrix} \wedge \begin{pmatrix} 0 \\-z\\y \end{pmatrix} = \begin{pmatrix} -\alpha xy^2+x^2y \\ -rxy^2 \\ -rxyz\end{pmatrix}
\end{align}
which is colinear to the following vector, that we denote by $u_\mathfrak{a}(1)$:
\begin{equation}
\label{EQUATLinea__P__u_a__}
u_\mathfrak{a}(1) = \begin{pmatrix} x-\alpha y \\ -ry \\ -rz \end{pmatrix}.
\end{equation}
Turning now to the second collision, we rely on the same arguments we used for the first collision, and we determine its type by computing the scalar product $q(1)\cdot v(1) = \big(p(1)\wedge u_\mathfrak{a}(1)\big) \cdot v(1)$, with $q(1) = p(1) \wedge u_\mathfrak{a}(1)$, $p(1) = u_\mathfrak{a}(1) \wedge e_x$ and $v(1) = u_\mathfrak{a}(1) - \big( u_\mathfrak{a}(1) \cdot e_x \big)e_x$. If we have $q(1) \cdot v(1) > 0$, then the second collision is of type $\mathfrak{b}$, and if $q(1) \cdot v(1) < 0$, the second collision is of type $\mathfrak{c}$. We have:
\begin{align}
\label{EQUATCompu_q(1)}
q(1)\cdot v(1) = \big(p(1)\wedge u_\mathfrak{a}(1)\big) \cdot v(1) = \Big( \begin{pmatrix} 0 \\ -rz \\ ry \end{pmatrix} \wedge \begin{pmatrix} x-\alpha y \\ -ry \\ -rz \end{pmatrix} \Big) \cdot \begin{pmatrix} 0 \\ -ry \\ -rz \end{pmatrix} = r^2 (y^2+z^2) (\alpha y - x).
\end{align}
Therefore, the assumption $\alpha y - x > 0$ implies that the second collision is of type $\mathfrak{b}$.\\
We compute now the normal vector $u_\mathfrak{ab}(2)$ to the plane $\mathcal{P}_\mathfrak{ab}(2) = \text{Span}\big[ p_\mathfrak{ab}(2),q_\mathfrak{ab}(2) \big]$ associated to the configuration of the system after this second collision. $u_\mathfrak{ab}(2)$ is colinear, and has the same orientation as $q_\mathfrak{ab}(2) \wedge p_\mathfrak{ab}(2)$, where:
\begin{align}
p_\mathfrak{ab}(2) = e_y \wedge u_\mathfrak{a}(1) = \begin{pmatrix} 0 \\ 1 \\ 0 \end{pmatrix} \wedge \begin{pmatrix} x-\alpha y \\ -ry \\ -rz \end{pmatrix} = \begin{pmatrix} -rz \\ 0 \\ \alpha y - x \end{pmatrix}
\end{align}
and
\begin{align}
q_\mathfrak{ab}(2) = B q(1) = \begin{pmatrix} 1 & \alpha & 0 \\ 0 & -r & 0 \\ 0 & \alpha & 1 \end{pmatrix} \begin{pmatrix} r^2(y^2+z^2) \\ ry(x - \alpha y) \\ rz (x- \alpha y) \end{pmatrix} = \begin{pmatrix} r^2(y^2+z^2) + r\alpha y(x - \alpha y) \\ -r^2 y (x -\alpha y) \\ r(\alpha y + z)(x - \alpha y) \end{pmatrix}.
\end{align}
We find then:
\begin{align}
q_\mathfrak{ab}(2) \wedge p_\mathfrak{ab}(2) &= \begin{pmatrix} r^2(y^2+z^2) + r\alpha y(x - \alpha y) \\ -r^2 y (x -\alpha y) \\ r(\alpha y + z)(x - \alpha y) \end{pmatrix} \wedge \begin{pmatrix} -rz \\ 0 \\ \alpha y - x \end{pmatrix} \nonumber\\
&= \begin{pmatrix}
r^2 y (\alpha y - x)^2 \\
-r^2 z (\alpha y + z)(x-\alpha y) - r^2(y^2+z^2)(\alpha y - x) + r\alpha y (\alpha y - x)^2 \\
-r^3 y z (x - \alpha y)
\end{pmatrix},
\end{align}
which is a vector colinear and with the same orientation as:
\begin{align}
\begin{pmatrix}
ry(\alpha y - x) \\
rz(\alpha y + z) - r(y^2+z^2) + \alpha y(\alpha y - x) \\
r^2 yz
\end{pmatrix}.
\end{align}
Observing that $rz(\alpha y + z) - r(y^2 + z^2) = r\alpha yz - ry^2 = ry(\alpha z - y)$, we can simplify even more and obtain that $q_\mathfrak{ab}(2) \wedge p_\mathfrak{ab}(2)$ is colinear and has the same orientation as the following vector, that we denote by $u_\frak{ab}(2)$:
\begin{align}
\label{EQUATLinea__P__u_ab_}
u_\frak{ab}(2) =
\begin{pmatrix}
r(\alpha y - x) \\
r(\alpha z - y) + \alpha (\alpha y - x) \\
r^2 z
\end{pmatrix}.
\end{align}
The expression of \eqref{EQUATLinea__P__u_ab_}, which describes the configuration of the particle system after the first collision of type $\mathfrak{b}$, corresponds then to the expression of $\mathfrak{P}\big( \hspace{0.5mm}^t\hspace{-0.5mm}(x,y,z)\big)$, on the subset $y > 0$, $\alpha y - x > 0$. Therefore $\widehat{\mathfrak{P}}$, restricted on this subset, coincides with a projective transformation.\\
\newline
We turn now to the case when $y > 0$, $\alpha y - x < 0$. The first assumption implies that the first collision is of type $\mathfrak{a}$, already studied in the previous case, so that we restart from the normal vector $u_\mathfrak{a}(1)$ to the plane $\mathcal{P}_\mathfrak{a} = \text{Span}\big[p_\mathfrak{a}(1),q_\mathfrak{a}(1)\big]$, given by the expression \eqref{EQUATLinea__P__u_a__}. The assumption $\alpha y - x < 0$ implies this time that the second collision is of type $\mathfrak{c}$. The configuration of the system right after this second collision is given by the following position and velocity vectors:
\begin{align}
p_\mathfrak{ac}(2) = e_z \wedge u_\mathfrak{a}(1) = \begin{pmatrix} 0 \\ 0 \\ 1 \end{pmatrix} \wedge \begin{pmatrix} x-\alpha y \\ -ry \\ -rz \end{pmatrix} = \begin{pmatrix} ry \\ x-\alpha y \\ 0 \end{pmatrix}
\end{align}
and $q_\mathfrak{ac}(2) = Cq(1)$ where $q(1) = p(1) \wedge u_\mathfrak{a}(1)$ was computed in \eqref{EQUATCompu_q(1)}, so that:
\begin{align}
q_\mathfrak{ac}(2) =
\begin{pmatrix}
1 & 0 & 0 \\ 0 & 1 & \alpha \\ 0 & 0 & -r
\end{pmatrix}
\begin{pmatrix}
r^2(y^2+z^2) \\ ry(x-\alpha y) \\ rz(x-\alpha y)
\end{pmatrix}
= \begin{pmatrix}
r^2(y^2+z^2) \\ ry(x-\alpha y) + r\alpha z(x-\alpha y) \\ -r^2 z(x-\alpha y)
\end{pmatrix}
\end{align}
so that:
\begin{align}
q_\mathfrak{ac}(2) \wedge p_\mathfrak{ac}(2) &= \begin{pmatrix}
r^2(y^2+z^2) \\ ry(x-\alpha y) + r\alpha z(x-\alpha y) \\ -r^2 z(x-\alpha y)
\end{pmatrix}
\wedge
\begin{pmatrix} ry \\ x-\alpha y \\ 0 \end{pmatrix} \nonumber\\
&= r\begin{pmatrix}
rz(x-\alpha y)^2 \\ -r^2yz(x-\alpha y) \\ r(y^2+z^2)(x-\alpha y) - ry^2(x-\alpha y) - r\alpha y z(x-\alpha y)
\end{pmatrix},
\end{align}
which is colinear, and has the same direction (keeping in mind that $z < 0$) as the following vector, that we denote by $u_\mathfrak{ac}(2)$:
\begin{align}
u_\mathfrak{ac}(2) = \begin{pmatrix} \alpha y - x \\ r y \\ \alpha y - z \end{pmatrix}.
\end{align}
We compute now the third collision. We pose:
\begin{align}
p(2) = u_\mathfrak{ac}(2) \wedge e_z = \begin{pmatrix} ry \\ x-\alpha y \\ 0 \end{pmatrix}, \hspace{3mm} q(2) = p(2) \wedge u_\mathfrak{ac}(2) = \begin{pmatrix} (x-\alpha y)(\alpha y - z) \\ -ry(\alpha y - z) \\ r^2y^2 + (x-\alpha y)^2 \end{pmatrix},
\end{align}
and we deduce that the third collision, following $\mathfrak{a}$ and $\mathfrak{c}$, is necessarily of type $\mathfrak{b}$ by computing $q(2) \cdot v(2) < 0$ with $v(2) = u_\mathfrak{ac}(2) - \big(u_\mathfrak{ac} \cdot e_z \big)e_z$. We recover here the result of Proposition 4.5 in \cite{DoHR025}. The configuration of the system after this third collision is given by the two vectors:
\begin{align}
q_\mathfrak{acb}(3) = Bq(2)
&= \begin{pmatrix}
1 & \alpha & 0 \\
0 & -r & 0 \\
0 & \alpha & 1 \\
\end{pmatrix}
\begin{pmatrix} (x-\alpha y)(\alpha y - z) \\ -ry(\alpha y - z) \\ r^2y^2 + (x-\alpha y)^2 \end{pmatrix} \nonumber\\
&= \begin{pmatrix}
\big(x-(r+1)\alpha y\big)(\alpha y -z) \\
r^2 y (\alpha y - z) \\
-r\alpha y (\alpha y - z) + r^2y^2 + (x - \alpha y)^2
\end{pmatrix}
\end{align}
and
\begin{align} 
p_{acb}(3) = e_y \wedge u_{\mathfrak{ac}}(2) = \begin{pmatrix}
\alpha y - z \\ 0 \\ x - \alpha y
\end{pmatrix}.
\end{align}
Therefore, the plane spanned by $p_\mathfrak{acb}(3)$ and $q_\mathfrak{acb}(3)$ is normal to:
\begin{align}
q_\mathfrak{acb}(3) \wedge p_\mathfrak{acb}(3) &= \begin{pmatrix}
\big(x-(r+1)\alpha y\big)(\alpha y -z) \\
r^2 y (\alpha y - z) \\
-r\alpha y (\alpha y - z) + r^2y^2 + (x - \alpha y)^2
\end{pmatrix}
\wedge
\begin{pmatrix}
\alpha y - z \\ 0 \\ x - \alpha y
\end{pmatrix} \nonumber\\
&= \begin{pmatrix}
r^2 y (x-\alpha y)(\alpha y - z) \\
-r\alpha y(\alpha y - z)^2 + r^2y^2(\alpha y - z) + r\alpha y(\alpha y - z)(x - \alpha y) \\
-r^2 y(\alpha y - z)^2
\end{pmatrix}
\end{align}
which is colinear and has the same orientation of the following vector, that we denote by $u_\mathfrak{acb}(3)$:
\begin{align}
u_{\mathfrak{acb}}(3)= \begin{pmatrix} r(x-\alpha y) \\ -\alpha (\alpha y - z) + ry + \alpha (x - \alpha y) \\ -r(\alpha y- z) \end{pmatrix}.
\end{align}
The configuration described by $u_{\mathfrak{acb}}(3)$ represents the state of the system right after the first collision of type $\mathfrak{b}$ that follows the initial configuration, in the case when $y > 0$ and $\alpha y - x < 0$. We have therefore $\widehat{\mathfrak{P}}\big(\hspace{0.25mm}^t\hspace{-0.25mm}(x,y,z)\big) = u_\mathfrak{acb}(3) / \vert u_\mathfrak{acb}(3) \vert$ on this subset, which proves that the restriction of $\widehat{\mathfrak{P}}$ to $y > 0$, $\alpha y - x < 0$ is also a projective transformation.\\
The cases when $y < 0$ being treated by symmetry, the proof of Theorem \ref{THEOR__P__FukngLinea} is complete.
\end{proof}

\begin{remar}
The mapping $\mathfrak{P}$ is a piecewise linear endomorphism of $\mathbb{R}^3$, restricted to the quadrant $x \geq 0$, $z \leq 0$. This linear structure is consistent with what this mapping describes: the argument $\hspace{0.5mm}^t\hspace{-0.5mm}(x,y,z)$ has to be seen as a normal vector to the plane $\mathcal{P}(0)$ spanned by the position and velocity vectors of the configuration of the particle system, and $\widehat{\mathfrak{P}}$ describes how the orientation of the plane $\mathcal{P}(0)$ evolves. Therefore, only the orientation of the normal vector matters, so that any non zero scalar (positive or negative) multiplying the normal vector provides the same plane.
\end{remar}


\begin{remar}
We remark that the region $x \geq 0$, $z \leq 0$ is invariant under the action of $\mathfrak{P}$. Indeed, if we consider the first expression in \eqref{EQUATSpherReduc_btobLineaVersi}, $\alpha y - x$ is assumed to be positive in this region, so that the first component of $\mathfrak{P}\big( \hspace{0.25mm}^t\hspace{-0.25mm}(x,y,z) \big)$ is positive, while the third component is negative as is $z$ by assumption.\\
In the case of the second expression in \eqref{EQUATSpherReduc_btobLineaVersi}, the conclusion concerning the first component is direct. As for the third component, $-r(\alpha y - z)$ is the sum of two negative numbers. The third and the fourth expressions have the same property.
\end{remar}

\begin{remar}
The mapping $\mathfrak{P}$ is defined almost everywhere on the quadrant $\{x \geq 0, z \leq 0\}$, namely, for every $^t \hspace{-0.25mm}(x,y,z) \in \{x \geq 0, z \leq 0\}$ except if $x = 0$, or if  $z = 0$, or if $\alpha y - x = 0$, or if $\alpha y - z = 0$. Nevertheless, we can extend the definition of $\mathfrak{P}$ to the whole quadrant $\{x \geq 0, z \leq 0\}$, for instance by assuming that $\mathfrak{P}(u) = P_2u$ if $\alpha y - x = 0$ or $\alpha y - z = 0$. This extension of the domain to the whole quadrant is arbitrary, and there is no such extension which would correspond to a continuous extension of $\mathfrak{P}$ on the whole quadrant.
\end{remar}

\section{First numerical simulations of the orbits of $\widehat{\mathfrak{P}}_r$}
\label{SECTIFirstNumerSimul}

\noindent
In order to emphasize the dependence of the $\mathfrak{b}$-to-$\mathfrak{b}$ mapping $\widehat{\mathfrak{P}}$ on the restitution coefficient $r$, we will denote the mapping also by $\widehat{\mathfrak{P}}_r$. The piecewise linear expression \eqref{EQUATSpherReduc_btobLineaVersi} of $\mathfrak{P}$ allows to perform efficient numerical simulations. In this section, we discuss a first collection of plots of orbits of the dynamical system $\widehat{\mathfrak{P}}_r$.
The code that is used to construct the orbits is provided in Appendix \ref{SSECTAppenAlgoLinea} for the sake of completeness.

\paragraph{The initial configurations and the representation of the orbits.} For each of the $5000$ values of the restitution coefficient $r$, evenly spaced between two limiting values $r_\text{min}$ and $r_\text{max}$, we compute $32$ different trajectories $\big(X_i(\widetilde{t}_k,r)\big)_k = \big(\widehat{\mathfrak{P}}_r^k(X_{i,0}) \big)_k$ of the mapping $\widehat{\mathfrak{P}}_r$ (that depends on $r$), obtained from $32$ different initial configurations $X_{i,0}$ ($1 \leq i \leq 32$), of the form:
\begin{align}
X_{i,0} = \begin{pmatrix} 1 \\ -\frac{1+8}{2+g} \\ -0.1-h \end{pmatrix}
\end{align}
with $1 \leq g \leq 8$ and $1 \leq h \leq 4$. The grid is chosen such that the initial data satisfy $\big( X_{i,0} \big)_y < 0$, ensuring that the first sequence of collisions to follow are either $\mathfrak{cb}$ or $\mathfrak{cab}$. By symmetry, there is indeed no need to consider initial data such that $\big( X_{i,0} \big)_y > 0$. For each of these trajectories, we compute $5000$ iterations ($1 \leq k \leq 5000$), and we plot the configurations associated to the last $100$ iterations. Each iteration of each trajectory is represented with a single real number $\theta_k = \theta_k(r,i)$, which corresponds to the angle parametrizing the normalized position $p(\widetilde{t}_k) = \big(\sin(\theta_k),0,\cos(\theta_k)\big)$ (we recall that the collision times $\widetilde{t}_k$ are associated to collisions of type $\mathfrak{b}$, when the pair of particles \footnotesize{\circled{2}}\normalsize{}-\footnotesize{\circled{3}}\normalsize{} collides), so that:
\begin{align}
\tan(\theta_k) = \frac{p_1(\widetilde{t}_k)}{p_3(\widetilde{t}_k)} = - \frac{z_i(\widetilde{t}_k,r)}{x_i(\widetilde{t}_k,r)}
\end{align}
where we denoted $X_i(\widetilde{t}_k,r) = \hspace{0.5mm}^t\hspace{-0.25mm}\big(x_i(\widetilde{t}_k,r),y_i(\widetilde{t}_k,r),z_i(\widetilde{t}_k,r)\big)$ and where we used that $p(\widetilde{t}_k)$ is colinear to the intersection between $X_i(\widetilde{t}_k,r)^\perp$ and $\{y = 0\}$.\\
\paragraph{First simulation, with $0.0717 \leq r \leq 0.1717$.} In the case when $r_\text{min} = 0.0717$ and $r_\text{max} = 0.1717$ (so that the interval between two consecutive restitution coefficients that are plotted is $\Delta_r = 2 \times 10^{-5}$), we obtain Figure \ref{FIGURPlTy10.0717_____<r<_____0.1717}. The choice of the boundary values $r_\text{min} = 0.0717$, $r_\text{max} = 0.1717$ is motivated by the facts that, on the one hand, the largest restitution coefficient for which the collapse can take place in systems of three particles is $7 - 4 \sqrt{3} \simeq 0.0717\ 9676\ 9724$ \cite{CoGM995}, and on the other hand, the largest restitution coefficient for which a periodic sequence of collisions leading to the collapse, in a stable manner, that was found so far is $3-2\sqrt{2} \simeq 0.1715\ 7287\ 5253$ \cite{CDKK999}.

\begin{figure}[h!]
\centering
\includegraphics[trim = 0cm 0cm 0cm 0cm, width=1\linewidth]{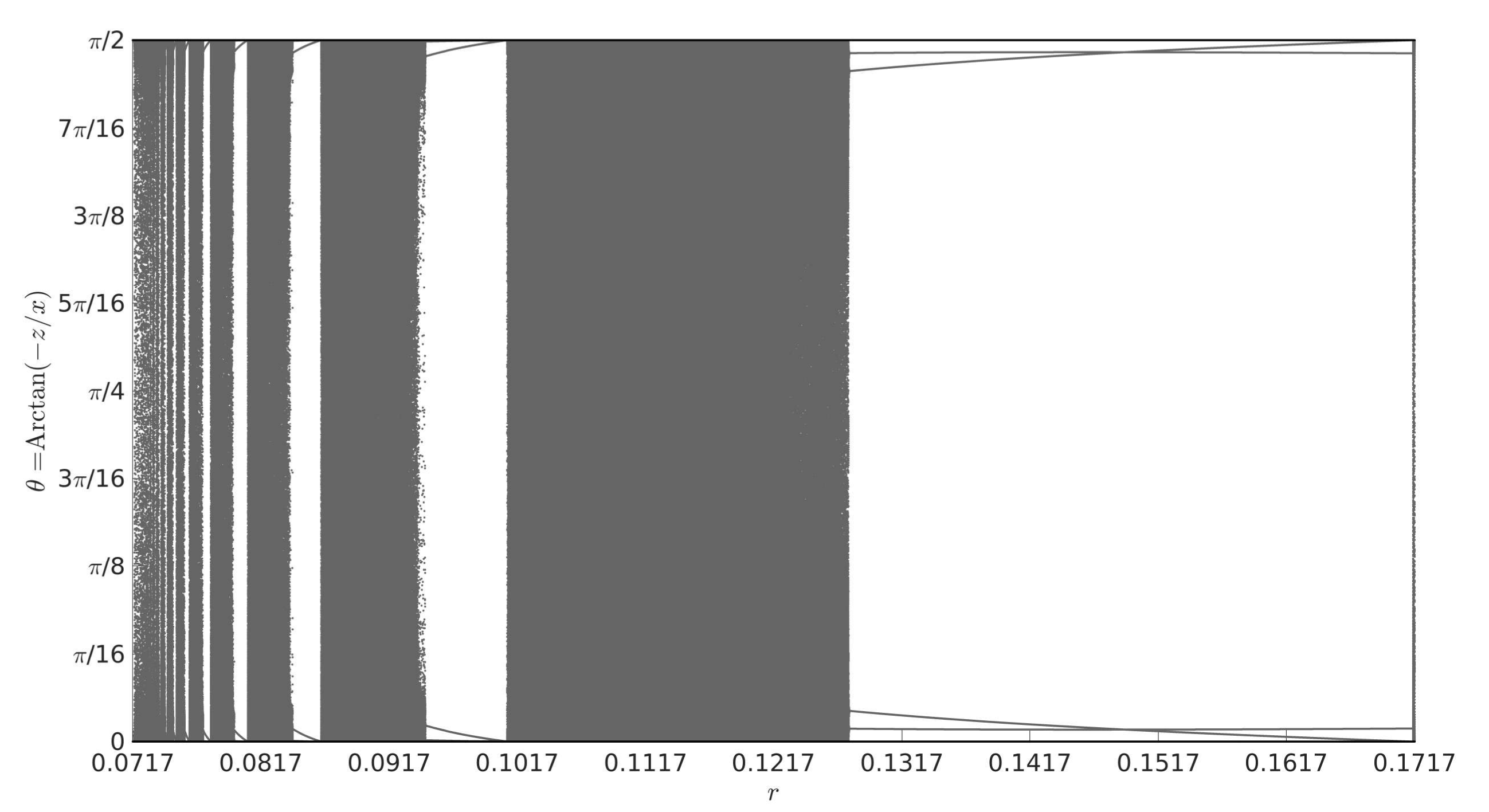} 
\caption{Plot of the tails (last $100$ iterations) of the $32$ orbits of $\widehat{\mathfrak{P}}_r$ for $0.0717 \leq r \leq 0.1717$ after $5000$ iterations, obtained with the algorithm of Appendix \ref{SSECTAppenAlgoLinea}.}
\label{FIGURPlTy10.0717_____<r<_____0.1717}
\end{figure}

\noindent
On the right of Figure \ref{FIGURPlTy10.0717_____<r<_____0.1717}, we observe an interval, approximately located at $0.12175 \leq r \leq 0.1717$ for which all the orbits accumulate on four particular values at $r$ fixed. This is consistent with the simulations already presented in \cite{DoHR025} (see for instance Figures 6 and 7 in this reference). This accumulation corresponds to the interval of existence and stability of the collapse that takes place according to the periodic sequence of collisions $\mathfrak{ababcbcb}$, which presents four collisions of type $\mathfrak{b}$ per period, hence the four accumulation points.\\
Observe that no other accumulation point appears for such initial conditions, which suggests that there is no other stable pattern that coexist in this interval.\\
The lower bound of the interval coincides with the only root in $[0,1]$ of the polynomial $5r^6 - 50r^5 + 107r^4 - 188r^3 + 107r^2 - 50r + 5$, approximately equal to $0.1275\ 4409\ 7592$, identified in \cite{CDKK999} (see also \cite{HuRo023} for more details) as the minimal value of $r$ providing the stability of the pattern $\mathfrak{ababcbcb}$, this minimal value corresponding to a loss of stability in the position variable.\\
Below, approximately for $0.1017 \leq r \leq 0.12175$, where the lower bound corresponds to $5-2\sqrt{6} \simeq 0.1010205144$ (see \cite{CDKK999}), we observe an interval where the tails of the orbits fill entirely the interval $[0,\pi/2]$ of possible values for the angle $\theta = \text{Arctan}(-z/x)$. In this range, we cannot discern orbits that accumulate around finite number of points. In other words, we observe an apparent chaos in this range. This apparent chaos was already described in \cite{CDKK999}, and observed later in \cite{DoHR025}.\\
For $r$ below $0.1017$, the pattern is repeated, alternating between windows of stability and chaos, that we can clearly discern on Figure \ref{FIGURPlTy10.0717_____<r<_____0.1717}, at least until the $8$-th window of stability. These windows of stability tend to accumulate to the value of $7-4\sqrt{3} \simeq 0.0718$. The other windows of stability are associated to other periodic patterns of the form $(\mathfrak{ab})^n(\mathfrak{cb})^n$, as we shall see with the next figure.

\paragraph{Simulation for $0.0717 \leq r \leq 0.1717$ in logarithmic scale.} In the windows of stability, the orbits accumulate around the extreme values ($\theta = 0$ and $\theta = \pi/2$). The results of the simulations obtained in Figure \ref{FIGURPlTy10.0717_____<r<_____0.1717} are symmetric with respect to the line $\theta = \pi/4$ (which is a consequence of the fact that by reverting the labels of the particles, the orbits are reflected with respect to the line $\theta = \pi/4$). We will therefore focus on the value $\theta = 0$, and we will replot the results presented in Figure \ref{FIGURPlTy10.0717_____<r<_____0.1717} in logarithmic scale for the $\theta$-variable. We obtain Figure \ref{FIGURPlTy10.0717_____<r<_____0.1717Log__}.

\begin{figure}[h!]
\centering
\includegraphics[trim = 0cm 0cm 0cm 0cm, width=1\linewidth]{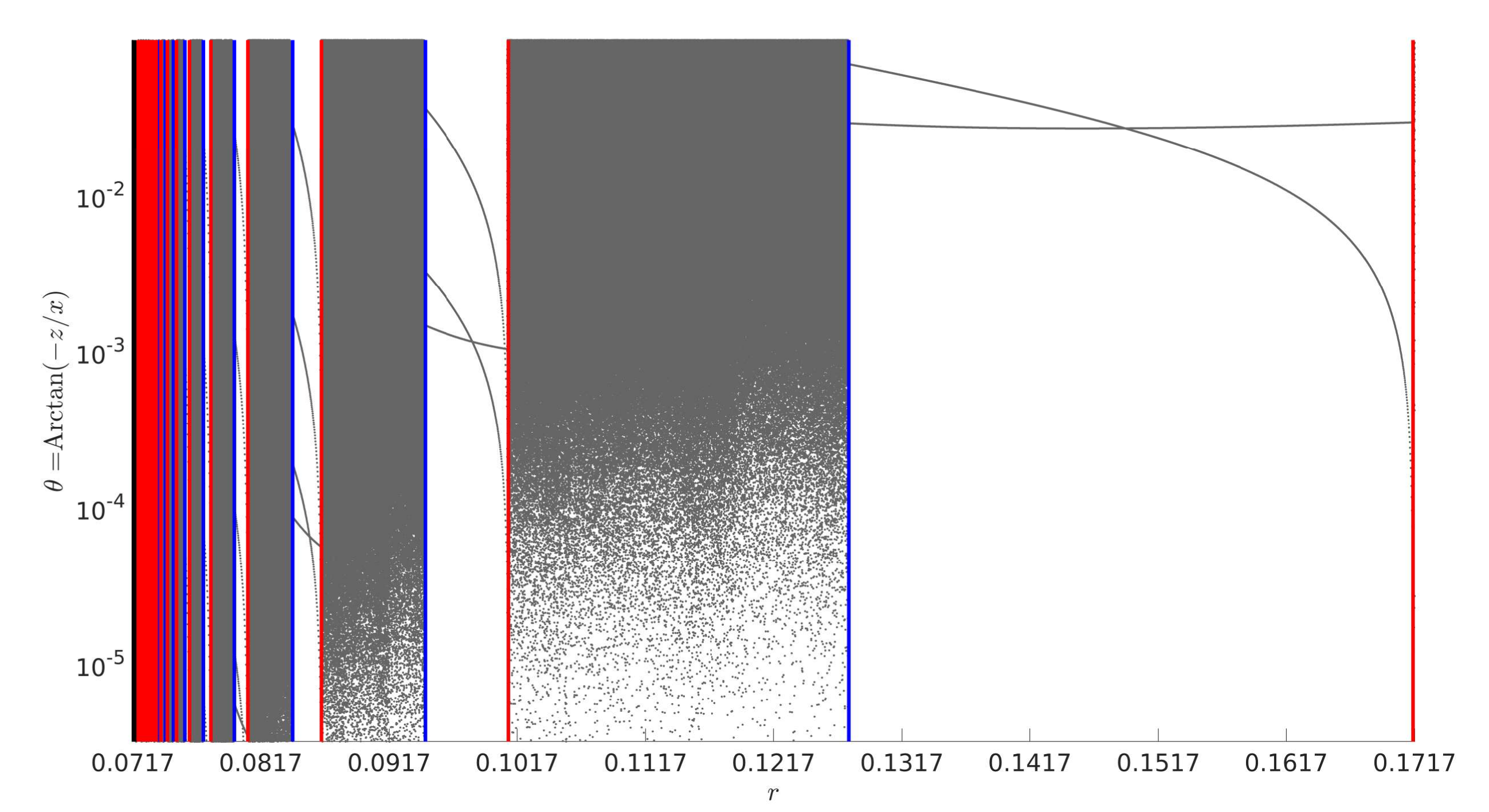} 
\caption{Plot of the tails (last $100$ iterations) of the $32$ orbits of $\widehat{\mathfrak{P}}_r$ for $0.0717 \leq r \leq 0.1717$ after $5000$ iterations, in logarithmic scale.}
\label{FIGURPlTy10.0717_____<r<_____0.1717Log__}
\end{figure}

\noindent
On Figure \ref{FIGURPlTy10.0717_____<r<_____0.1717Log__} we clearly observe that the orbits accumulate on two points in the first window of stability, then on three in the second window, and so on, up to the fourth window, where the orbits accumulates on five points. Therefore, the simulation strongly indicates that these windows of stabilities correspond indeed to the ranges of restitution coefficients associated to the stable patterns $(\mathfrak{ab})^n(\mathfrak{cb})^n$. In \cite{CDKK999}, the longest pattern that was reported corresponds to $n = 6$, that is, $(\mathfrak{ab})^6(\mathfrak{cb})^6$. Such a pattern is hard to identify on Figure \ref{FIGURPlTy10.0717_____<r<_____0.1717Log__}, but we will present below finer simulations to exhibit the $6$ accumulation points close to $\theta = 0$ associated to this pattern.\\
\paragraph{Lower and upper bounds of the windows of stability.} On Figure \ref{FIGURPlTy10.0717_____<r<_____0.1717Log__} are also represented the limiting values that enclose the different windows of stability. More precisely, it is rigorously proved in \cite{CDKK999} that if the collapse takes place in a stable manner for a certain interval of restitution coefficients (the window of stability) according to the pattern $(\mathfrak{ab})^n(\mathfrak{cb})^n$, and if for some restitution coefficient $r_\text{crit,1}$ in this interval the associated collision matrix $(BC)^n(BA)^n$ has a double eigenvalue, then necessarily $r_\text{crit,1}$ is one of the two extremities of such a window of stability. In other words, beyond this critical value $r_\text{crit,1}$, the collision pattern $(\mathfrak{ab})^n(\mathfrak{cb})^n$ does not take place in a stable manner, and so, it shouldn't be observed via numerical simulations. It is also found in \cite{CDKK999} that the critical restitution coefficients for which the respective characteristic polynomials of the collision matrices have double roots correspond to the lower bounds of the intervals of stability of the patterns $(\mathfrak{ab})^n(\mathfrak{cb})^n$. In addition, these critical values are roots of the polynomials (see \cite{CDKK999}):
\begin{align}
\label{EQUATPolynLowerBound}
Q_n(r) = r^{2n}P_n(r)P_n(1/r) - r^{2n}, 
\end{align}
where $P_n$ is the trace of the matrix $J(BA)^n$, with:
\begin{align}
\label{EQUATDefin__J__}
J =
\begin{pmatrix}
0 & 0 & 1 \\ 0 & 1 & 0 \\ 1 & 0 & 0
\end{pmatrix},
\end{align}
$P_n$ being a polynomial of degree $2n$. We provide in Appendix \ref{SSECTAppenPolynLowerBoundStabi} the expressions of the first polynomials $Q_n$ (of respective degrees $4n$), for $2 \leq n \leq 10$. The lower bounds of the windows of stability are represented in blue on Figure \ref{FIGURPlTy10.0717_____<r<_____0.1717Log__}, and for the sake of completeness we provide in Appendix \ref{SSECTAppenLowerUpperBoundWindoStabi} the list of the numerical approximations of the lower bounds of the first $100$ windows of stability (see Table \ref{TABLEDecimals_LowerUpperBounds}). To obtain these numerical approximations, the computations were performed using $512$ digits (due to the exponential growth in $n$ of the coefficients of the polynomials $Q_n$, requiring a very high precision arithmetic).\\
We emphasize that it is an open question to identify precisely the polynomials $Q_n$ and their respective roots, in terms of remarkable polynomials.\\
As for the upper bounds of the different windows of stability, their characterization remains an open question. In \cite{CDKK999}, it is conjectured that such an upper bound coincides with the critical restitution coefficient $r_\text{crit,2}$ for which the second entry of the eigenvectors of the reduced collision matrix $J(BA)^n$ is zero.\\
Here, we did not specify precisely the eigenvector whose second entry has to be zero, for the following reason. For any eigenvector of $J(BA)^n$, if the second entry of this eigenvector is zero, then the entry $(2,1)$ (second line, first column) of $J(BA)^n$ has also to be zero. Since it is shown in \cite{CDKK999} (Section 7) that, for any restitution coefficient $r$, such an entry is proportional to the $(2n-1)$-th Chebyshev polynomial of the second kind evaluated at $-\frac{1+r}{4\sqrt{r}}$, we deduce that $-\frac{1+r_\text{crit,2}}{4\sqrt{r_\text{crit,2}}}$ has to be a root of the $(2n-1)$-th Chebyshev polynomial of the second kind. Considering the smallest of these roots, which writes explicitly:
\begin{align}
\cos\Big( \frac{\pi}{2n} \Big),
\end{align}
we consider then the following expression for the critical restitution coefficient $r_\text{crit,2}$:
\begin{align}
\label{EQUATCritiRestiCoeffUpperWindo}
r_\text{crit,2}(n) = \Big( 2\cos\Big(\frac{\pi}{2n}\Big) - \sqrt{4\cos^2\Big(\frac{\pi}{2n}\Big) - 1} \ \Big)^2,
\end{align}
which constitutes one candidate to be the upper bound of the $n$-th window of stability (there are indeed several candidates, as the Chebyshev polynomials have several roots). In \cite{CDKK999}, it was conjectured that the upper bounds of the $n$-th windows of stability are given by the numbers $r_\text{crit,2}(n)$, and such a conjecture was supported by numerical simulations, for $2 \leq n \leq 6$ (we recall that the pattern $\mathfrak{abcb}$, obtained when $n=1$, is never stable, as it was proved in \cite{CDKK999}, Section 5). Figure \ref{FIGURPlTy10.0717_____<r<_____0.1717Log__} is in agreement with this conjecture and the simulations reported in \cite{CDKK999}. We provide in Appendix \ref{SSECTAppenLowerUpperBoundWindoStabi} (Table \ref{TABLEDecimals_LowerUpperBounds}) the first $12$ decimals of the numerical approximations of the values of $r_{\text{crit},2}(n)$, for $2 \leq n \leq 100$.\\
We will now see that the piecewise linear expression of $\mathfrak{P}$ allows us to support the conjecture, for $n$ much larger than what was established previously in the literature.

\paragraph{Simulations for $0.0717 \leq r \leq 0.0817$, $0.0717 \leq r \leq 0.0727$, $0.07179 \leq r \leq 0.07189$ and $0.07183 \leq r \leq 0.07184$ in logarithmic scale.} On Figure \ref{FIGURPlTy10.0717_____<r<_____0.1717Log__}, the fourth window, associated to $n = 5$, is already hard to see. On Figure \ref{FIGURPlTy10.0717_____<r<_____0.0817Log__} below, we consider a smaller range of restitution coefficients, namely $0.0717 \leq r \leq 0.0817$, with an interval $\Delta_r$ equal to $2 \times 10^{-6}$ between two consecutive restitution coefficients.

\begin{figure}[h!]
\centering
\includegraphics[trim = 0cm 0cm 0cm 0cm, width=1\linewidth]{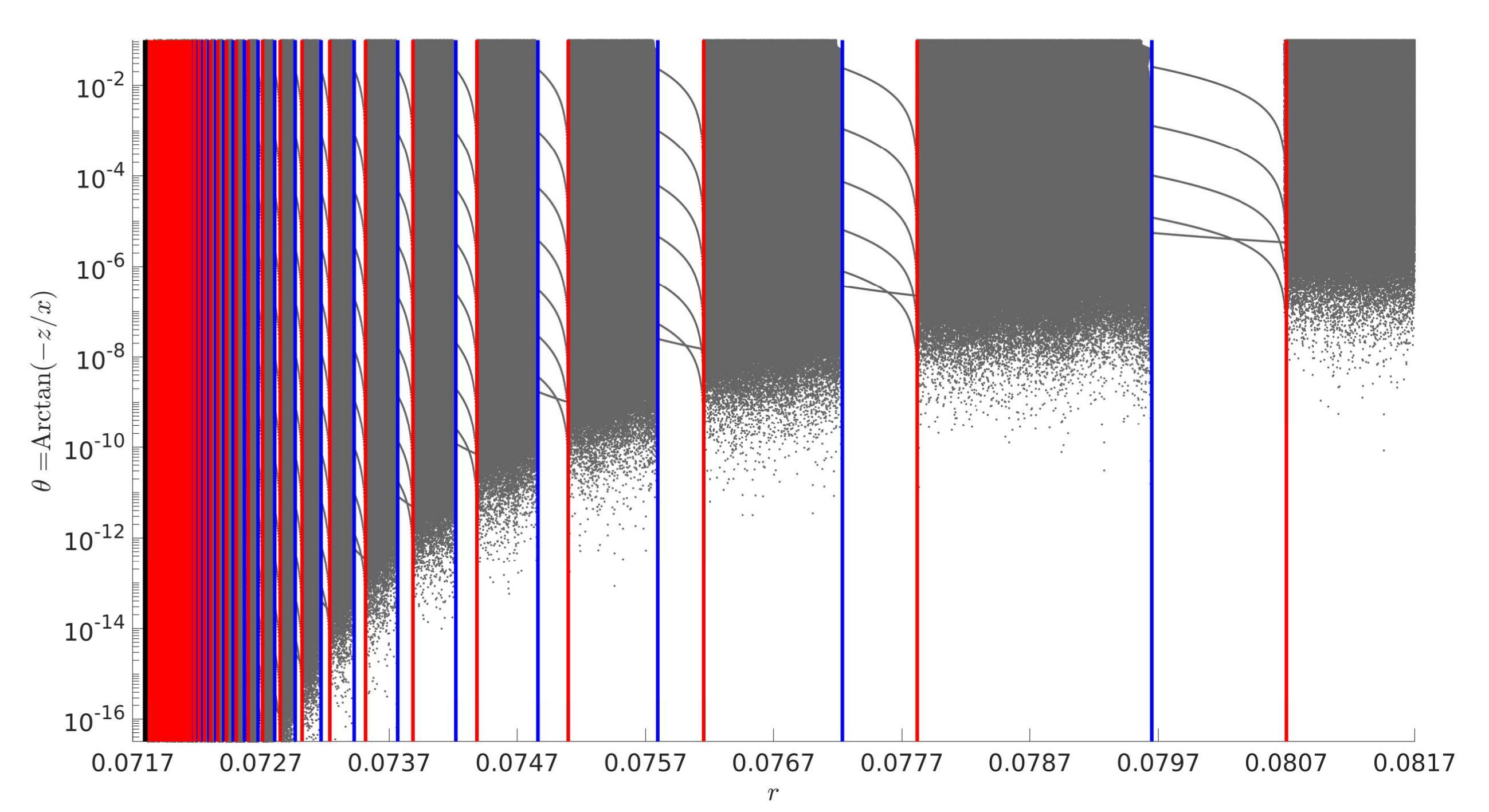} 
\caption{Plot of the tails of the orbits of $\widehat{\mathfrak{P}}_r$ for $0.0717 \leq r \leq 0.0817$ after $5000$ iterations, in logarithmic scale.}
\label{FIGURPlTy10.0717_____<r<_____0.0817Log__}
\end{figure}

\noindent
On Figure \ref{FIGURPlTy10.0717_____<r<_____0.0817Log__}, we clearly observe now the windows of stability associated to $n$, from $n = 5$ (the first from the right) to, at least, $n = 14$ (around $ r = 0.0727$). The piecewise linear expression of $\mathfrak{P}$ allows already to reach much larger values than $n = 6$. We observe also that, on the one hand, the lower bounds of the different windows of stability that we can see match perfectly with the theoretical bound given in \cite{CDKK999} as the roots of the polynomials \eqref{EQUATPolynLowerBound}, and on the other hand, the conjectured upper bound given by the expression \eqref{EQUATCritiRestiCoeffUpperWindo} is also an excellent match.\\

\begin{figure}[h!]
\centering
\includegraphics[trim = 0cm 0cm 0cm 0cm, width=1\linewidth]{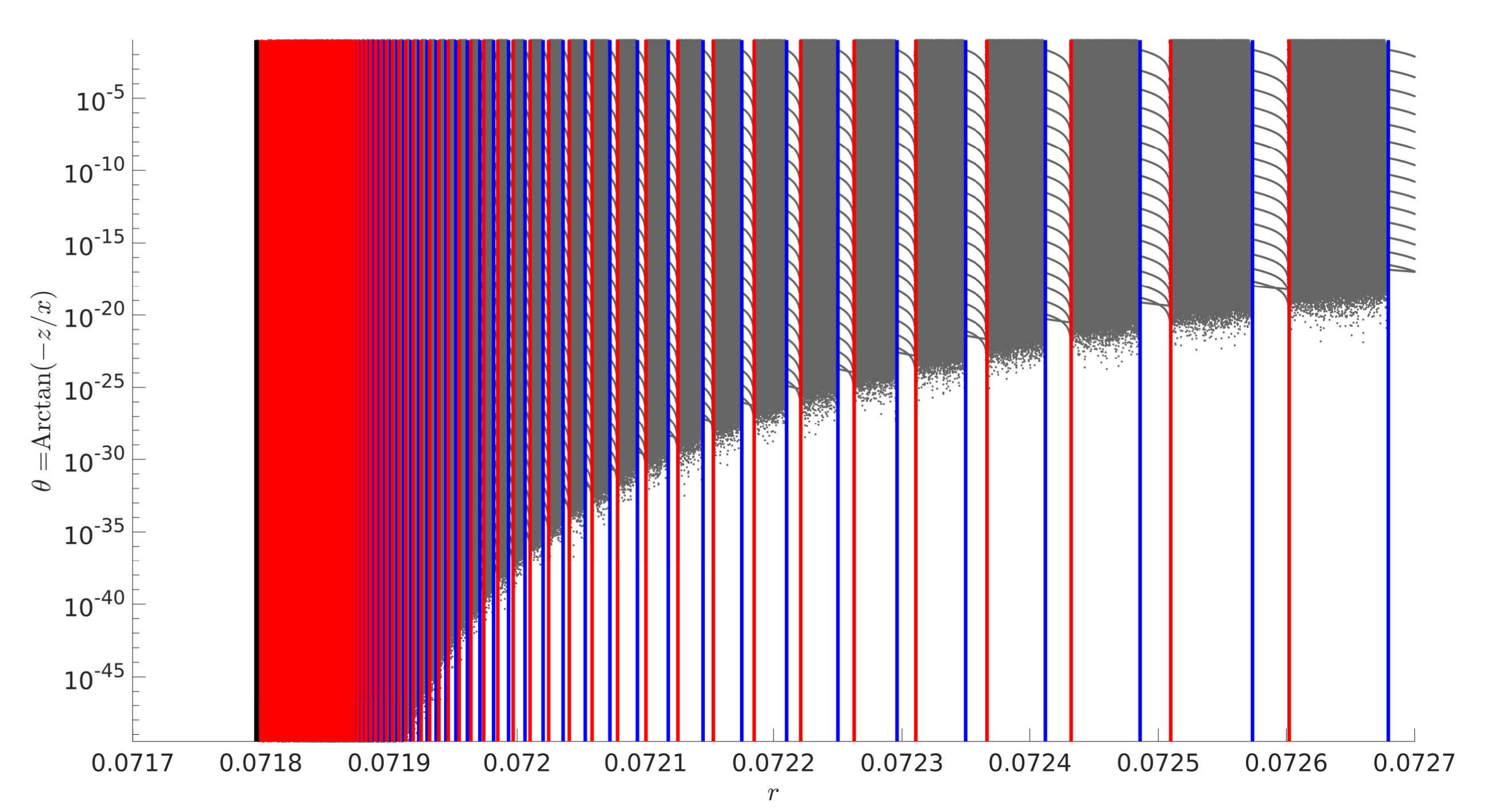} 
\caption{Plot of the tails of the orbits of $\widehat{\mathfrak{P}}_r$ for $0.0717 \leq r \leq 0.0727$ after $5000$ iterations, in logarithmic scale.}
\label{FIGURPlTy10.0717_____<r<_____0.0727Log__}
\end{figure}

\noindent
Repeating the numerical simulations between $0.0717$ and $ 0.0727$ ($\Delta_r = 2 \times 10^{-7}$), we obtain Figure \ref{FIGURPlTy10.0717_____<r<_____0.0727Log__}. On this Figure, the rightmost window of stability corresponds to $n = 15$, and we can observe the windows up to, at least, $n = 32$ (the first window below $r = 0.072$). We mentioned above that the computations were performed with $512$ digits in order to obtain the theoretical lower bounds of the windows of stability. If one computes them with a usual precision of $32$ digits, then beyond $n > 24$ the algorithm returns values that are not sharp enough. Once again, we observe on Figure \ref{FIGURPlTy10.0717_____<r<_____0.0727Log__} an excellent match with, on the one hand, the theoretical lower bounds of the windows represented in blue, and on the other hand with the upper bounds conjectured in \cite{CDKK999}.\\
\newline
Surprisingly, we can push the precision even more around the critical value $7 - 4\sqrt{3} \simeq 0.0718$. On Figures \ref{FIGURPlTy10.07179____<r<____0.07189Log__} and \ref{FIGURPlTy10.07183____<r<____0.07184Log__}, we take respectively $r_\text{min} = 0.07179$, $r_\text{max} = 0.07189$ ($\Delta_r = 2 \times 10^{-8}$) and $r_\text{min} = 0.07183$, $r_\text{max} = 0.07184$ ($\Delta_r = 2 \times 10^{-9}$).

\begin{figure}[h!]
\centering
\includegraphics[trim = 0cm 0cm 0cm 0cm, width=1\linewidth]{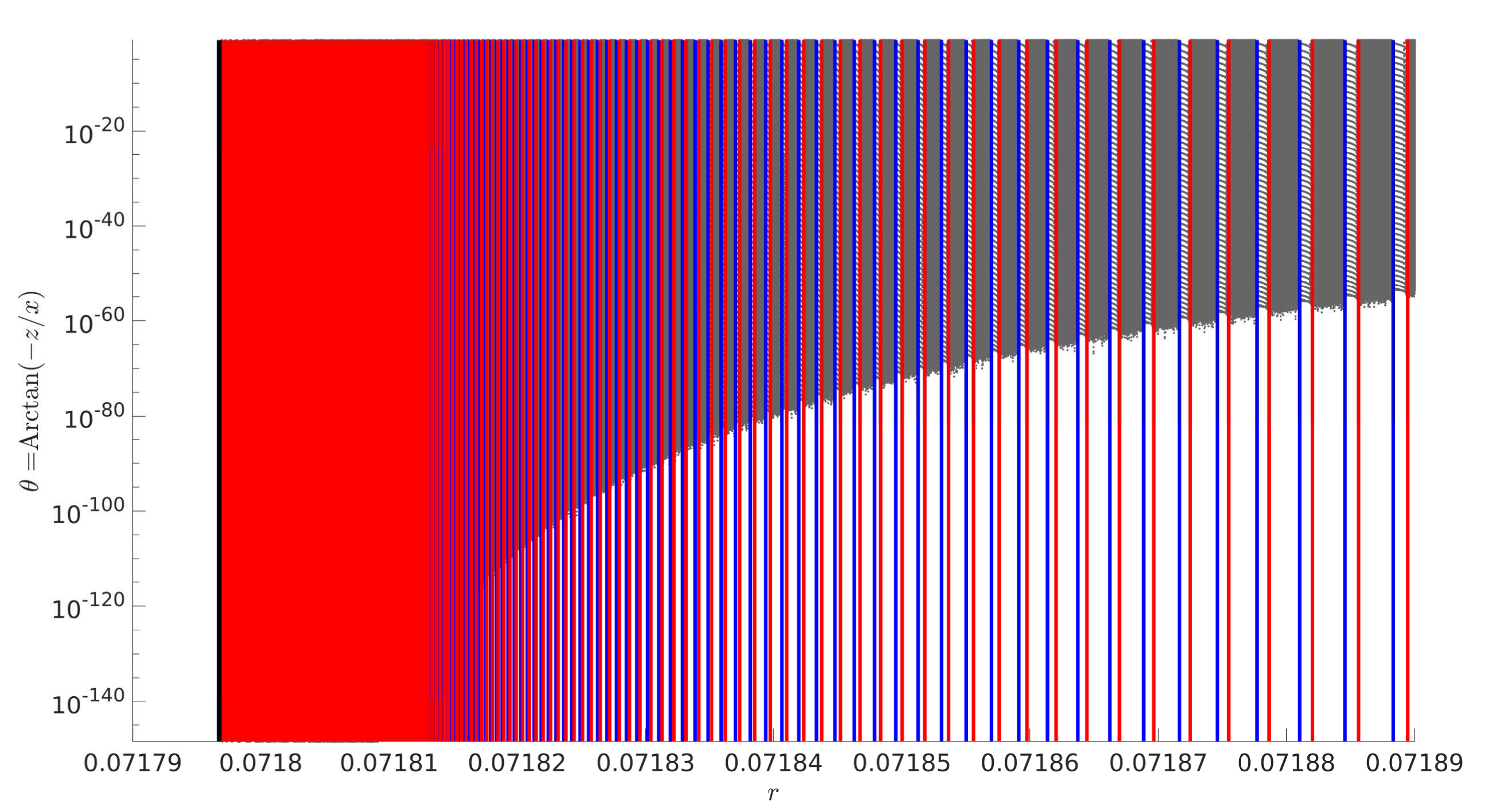} 
\caption{Plot of the tails of the orbits of $\widehat{\mathfrak{P}}_r$ for $0.07179 \leq r \leq 0.07189$ after $5000$ iterations, in logarithmic scale.}
\label{FIGURPlTy10.07179____<r<____0.07189Log__}
\end{figure}

\begin{figure}[h!]
\centering
\includegraphics[trim = 0cm 0cm 0cm 0cm, width=1\linewidth]{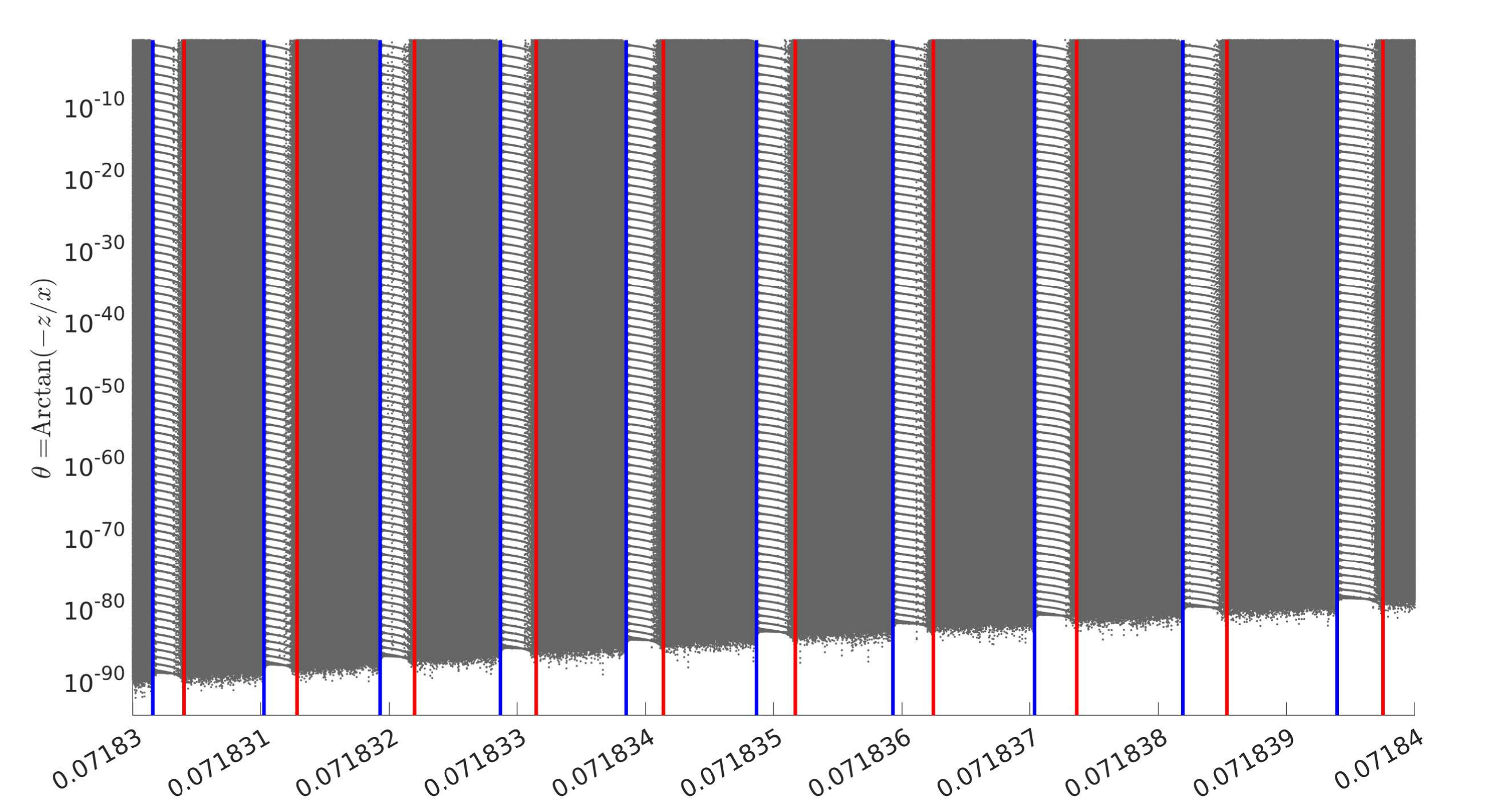} 
\caption{Plot of the tails of the orbits of $\widehat{\mathfrak{P}}_r$ for $0.07183 \leq r \leq 0.07184$ after $5000$ iterations, in logarithmic scale.}
\label{FIGURPlTy10.07183____<r<____0.07184Log__}
\end{figure}

\noindent
In particular, on Figure \ref{FIGURPlTy10.07179____<r<____0.07189Log__}, the rightmost window of stability corresponds to $n = 47$, which is in agreement with the numerical computations of the lower and upper bounds provided in Table \ref{TABLEDecimals_LowerUpperBounds}. All the windows of stability that can be seen (at least until $n = 75$ on Figure \ref{FIGURPlTy10.07179____<r<____0.07189Log__}) seem to be correctly delimited by the boundaries plotted in blue and red. Figure \ref{FIGURPlTy10.07183____<r<____0.07184Log__} is a magnification of Figure \ref{FIGURPlTy10.07179____<r<____0.07189Log__}, highlighting the $10$ windows of stability obtained for $69 \leq n \leq 78$, as it can be confirmed relying on Table \ref{TABLEDecimals_LowerUpperBounds}. With the same scale of precision, choosing $r_\text{min} = 0.07181$ and $r_\text{max} = 0.07182$, we can observe the $n$-th windows of stability with $94 \leq n \leq 124$, correctly delimited by the theoretical previsions.\\
In summary, the numerical simulations based on the piecewise linear expression of $\mathfrak{P}$ exhibits that the windows of stability are delimited from below by the roots of the polynomials $Q_n$, in agreement with the theoretical result of \cite{CDKK999}, at least up to $n = 125$. This observation supports that the numerical simulations based on the algorithm presented in Section \ref{SSECTAppenAlgoLinea} are reliable. In turn, our numerical simulations support the conjecture of \cite{CDKK999} concerning the upper bound of the windows of stability, at least up to $n = 125$. In comparison, the numerical simulations conducted before in the literature (\cite{CDKK999} and \cite{DoHR025}) supported the conjecture only up to $n = 6$.\\
Finally, the orbits of the spherical reduction mapping $\widehat{\mathfrak{P}}_r$, computed with the algorithm presented in \ref{SSECTAppenAlgoTrigo}, are represented on Figure \ref{FIGURPlot__Old_Algor}. The interval of the restitution coefficients that are considered is as in Figure \ref{FIGURPlTy10.0717_____<r<_____0.1717}. This algorithm was used in \cite{DoHR025}.\\

\begin{figure}[h!]
\centering
\includegraphics[trim = 0cm 0cm 0cm 0cm, width=0.65\linewidth]{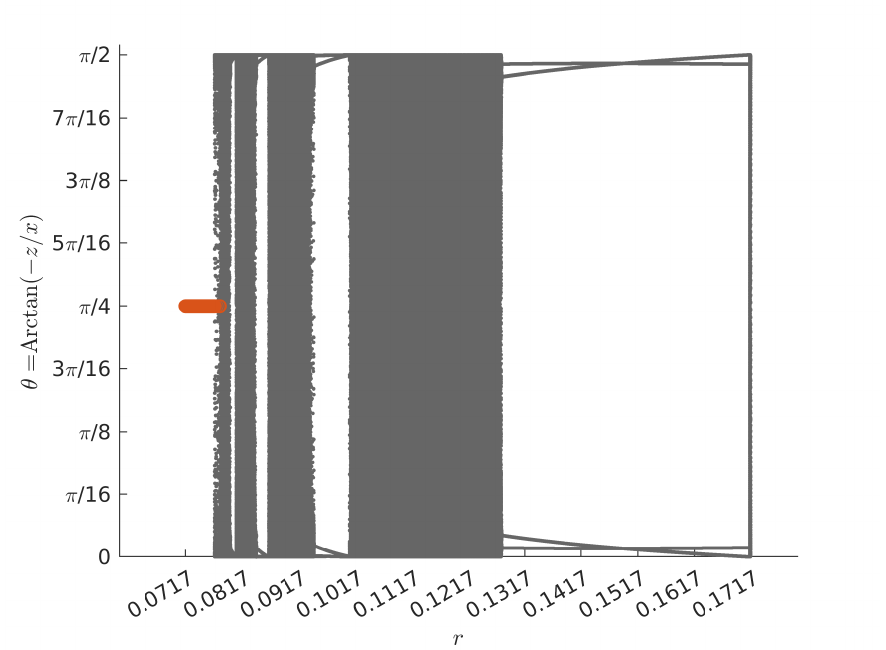} 
\caption{Plot of the tails of the orbits of $\widehat{\mathfrak{P}}_r$ for $0.0717 \leq r \leq 0.1717$ after $5000$ iterations, relying on the trigonometric algorithm of Section \ref{SSECTAppenAlgoTrigo}.}
\label{FIGURPlot__Old_Algor}
\end{figure}

\noindent
The limitations of the algorithm appear clearly on Figure \ref{FIGURPlot__Old_Algor}, on which the first windows of stability can be observed for $2 \leq n \leq 5$, but not beyond. The red dots that are plotted correspond to restitution coefficients for which the program was unable to compute enough iterations of $\widehat{\mathfrak{P}}_r$, due to the numerical singularities. In particular, along the orbits the angle $\theta$ is closer and closer to $0$ or $\pi/2$ as the restitution coefficient gets close to the critical value $7-4\sqrt{3}$. Therefore, the repeated use of the trigonometric functions and their inverses in the algorithm of Section \ref{SSECTAppenAlgoTrigo} leads to computations for which the numerical errors accumulate  (in particular, because the derivative of the arccosine function is singular at $\theta = 0$) and end up being the dominating observable effect, providing unusable results. This behaviour explains the apparent chaos observed in Figure \ref{FIGURPlot__Old_Algor} for $r \leq 0.08$.\\
The fact that the orbits concentrate around $\theta = 0$ and $\theta = \pi/2$ for $r$ close to $7-4\sqrt{3}$ is a simple consequence of the fact that the collapse of three particles, despite not yet possible as $r$ is still too large, attracts more and more the orbits of the system, leading to repeated sequence of collisions involving only three particles of the system, while the fourth particle remains spectator. When finally the sub-system of three particles separate, another sub-system of three particles starts another long sequence of collisions, until it separates, and so on, until the complete collapse of the whole system of four particles. The periodic patterns observed in the windows of stability, of the form $(\mathfrak{ab})^n(\mathfrak{cb})^n$ with $n$ larger and larger as $r$ is closer and closer to $7-4\sqrt{3}$, correspond precisely to this scenario.

\section{Spectral study and periodic orbits of the mapping $\widehat{\mathfrak{P}}_r$}
\label{SECTISpectStudy}

\noindent
In this section, we will investigate the general properties of the $\mathfrak{b}$-to-$\mathfrak{b}$ mapping $\widehat{\mathfrak{P}}_r$. In particular, we will start with a careful study of the matrices $P_i$, $1 \leq i \leq 4$, and their respective spectra, where the matrices $P_i$ are given in \eqref{EQUATCollisionMatric_P_i_}. Such a study will provide a qualitative understanding of the behaviour of the orbits of the dynamical system $(\widehat{\mathfrak{P}}_r^n(u))_n$. In a second time, another set of numerical simulations combined with the spectral study of the collision matrices $P_i$ will allow us to determine new stable periodic orbits of the dynamical system $(\widehat{\mathfrak{P}}_r^n(u))_n$. We emphasize that the present study exhibits the first stable periodic orbits associated to the four-particle collapsing system, different from the familiar pattern $(\mathfrak{ab})^n(\mathfrak{cb})^n$ of \cite{CDKK999}.

\subsection{Spectral study of the matrices $P_i$}
\label{SSECTSpectStudyMatri_P_i_}

\noindent
We start with the matrix $P_1$, that reads:
\begin{align}
P_1 = \begin{pmatrix} -r & r\alpha & 0 \\
-\alpha & \alpha^2 - r & r\alpha \\
0 & 0 & r^2 \end{pmatrix},
\end{align}
recalling that $\alpha = \frac{r+1}{2}$. We summarize the results on the eigenelements of $P_1$ in the following Proposition.

\begin{propo}[Eigenelements of $P_1$]
\label{PROPOEigen_P_1_}
If $r \in\ ]0,7-4\sqrt{3}]$, the three eigenvalues $\lambda_{P_1,i}$, $1 \leq i \leq 3$, of the matrix $P_1$ (defined in \eqref{EQUATCollisionMatric_P_i_}) are real and given by the expressions:
\begin{gather}
\lambda_{P_1,1} = r^2,\nonumber\\
\begin{align}
\lambda_{P_1,2} = \frac{r^2-6r+1 - (r+1)\sqrt{r^2-14r+1}}{8},\hspace{3mm} \lambda_{P_1,3} = \frac{r^2-6r+1 + (r+1)\sqrt{r^2-14r+1}}{8}.
\end{align}
\end{gather}
We have:
\begin{align}
0 < \lambda_{P_1,1} < \lambda_{P_1,2} < \lambda_{P_1,3}.
\end{align}
If $r \in\ ]7-4\sqrt{3},1]$, only one eigenvalue $\lambda_{P_1,1}$ of $P_1$ is real, while the two other eigenvalues $\lambda_{P_1,i}$, $i =2,3$ are complex conjugated. The eigenvalues are given by the expressions:
\begin{gather}
\lambda_{P_1,1} = r^2,\nonumber\\
\begin{align}
\lambda_{P_1,2} = \frac{r^2-6r+1 - i(r+1)\sqrt{ \vert r^2-14r+1 \vert }}{8},\hspace{3mm} \lambda_{P_1,3} = \frac{r^2-6r+1 + i(r+1)\sqrt{ \vert r^2-14r+1 \vert }}{8},
\end{align}
\end{gather}
and we have:
\begin{align}
0 < \lambda_{P_1,1} < \vert \lambda_{P_1,2} \vert = \vert \lambda_{P_1,3} \vert = r.
\end{align}
For any $r \in\ ]0,1]$, the eigenspace associated to $\lambda_{P_1,1}$ is $\text{Vect}\{(\alpha,r+1,3\alpha)\}$.\\
If $r \in\ ]0,7-4\sqrt{3}]$, the eigenspaces associated respectively to $\lambda_{P_1,i}$, $i = 2,3$, are subsets of the hyperplane $\{(x,y,z) \in \mathbb{R}^3\ /\ z=0 \}$, which is invariant under the action of $P_1$, and complementary in $\mathbb{R}^3$ with the eigenspace associated to $\lambda_{P_1,1}$. The eigenspaces respectively associated to $\lambda_{P_1,2}$ and $\lambda_{P_1,3}$ are:
\begin{align}
\label{EQUATEigenspace_P_1_Domin}
\text{Vect}\{(2r,\alpha \mp \sqrt{\alpha^2-4r},0)\} \hspace{5mm} \text{with } \mp = - \text{ if } i=2, \pm = + \text{ if } i=3.
\end{align}
If $r \in\ ]7-4\sqrt{3},1]$, the hyperplane $\{(x,y,z) \in \mathbb{R}^3\ /\ z=0 \}$ is invariant under the action of $P_1$, and complementary in $\mathbb{R}^3$ with the eigenspace associated to $\lambda_{P_1,1}$, and $P_1$ restricted to this hyperplane writes:
\begin{align}
\restr{P_1}{\{z=0\}} = \begin{pmatrix} \frac{r^2-6r+1}{8} & \frac{(r+1)\sqrt{ \vert r^2-14r+1 \vert }}{8} \\
-\frac{(r+1)\sqrt{ \vert r^2-14r+1 \vert }}{8} & \frac{r^2-6r+1}{8} \end{pmatrix}
\end{align}
in the basis $\big(\mathfrak{Re}(u_2),\mathfrak{Im}(u_2)\big)$, where $u_2$ is an eigenvector associated to the eigenvalue $\lambda_{P_1,2}$.
\end{propo}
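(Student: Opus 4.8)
The plan is to compute the eigenvalues of $P_1$ directly from its characteristic polynomial, exploiting the block-triangular structure of the matrix. Observe that $P_1$, written in the form
\begin{align}
P_1 = \begin{pmatrix} -r & r\alpha & 0 \\ -\alpha & \alpha^2 - r & r\alpha \\ 0 & 0 & r^2 \end{pmatrix},
\end{align}
has a zero in positions $(1,3)$ and $(3,1)$, and moreover the entry $(3,2)$ is zero. Hence the last row reads $(0,0,r^2)$, so $r^2$ is immediately an eigenvalue: the characteristic polynomial factors as $\det(P_1 - \lambda I) = (r^2 - \lambda)\cdot\chi(\lambda)$, where $\chi$ is the characteristic polynomial of the top-left $2\times 2$ block $M = \begin{pmatrix} -r & r\alpha \\ -\alpha & \alpha^2 - r\end{pmatrix}$. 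First I would compute $\operatorname{tr}(M) = -r + \alpha^2 - r = \alpha^2 - 2r$ and $\det(M) = -r(\alpha^2 - r) + r\alpha^2 = r^2$. Substituting $\alpha = (r+1)/2$ gives $\operatorname{tr}(M) = \frac{(r+1)^2}{4} - 2r = \frac{r^2 - 6r + 1}{4}$, which leads directly to the stated quadratic $\lambda^2 - \frac{r^2-6r+1}{4}\lambda + r^2 = 0$. Its discriminant is $\operatorname{tr}(M)^2 - 4\det(M) = \frac{(r^2-6r+1)^2}{16} - 4r^2$; a short factorization shows this equals $\frac{(r+1)^2(r^2-14r+1)}{16}$, which yields exactly the two expressions for $\lambda_{P_1,2}, \lambda_{P_1,3}$ announced in the statement.

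The sign of the discriminant is governed by $r^2 - 14r + 1$, whose roots are $7 \pm 4\sqrt{3}$; on $]0,7-4\sqrt{3}]$ this is nonnegative (real eigenvalues), and on $]7-4\sqrt{3},1]$ it is negative (complex conjugate eigenvalues). In the complex case, the product $\lambda_{P_1,2}\lambda_{P_1,3} = \det(M) = r^2$ forces $\vert \lambda_{P_1,2}\vert = \vert \lambda_{P_1,3}\vert = r$, giving the modulus claim for free. For the ordering $0 < r^2 < \lambda_{P_1,2} < \lambda_{P_1,3}$ in the real case, I would note $\lambda_{P_1,1} = r^2 < r$ is clear, and since the two roots of $\chi$ multiply to $r^2$ and both are positive (their sum $\operatorname{tr}(M)$ is positive on a neighbourhood of $0$, and they cannot straddle $0$ since the product is positive), the smaller root exceeds $r^2$ precisely when $\chi(r^2) > 0$; I would verify $\chi(r^2) = r^4 - \operatorname{tr}(M)r^2 + r^2 = r^2(r^2 - \operatorname{tr}(M) + 1) > 0$ by checking the bracket is positive on $]0,7-4\sqrt{3}]$, a routine sign check on a quadratic in $r$.

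For the eigenvectors, the strategy is to exploit the invariant hyperplane structure. The column structure of $P_1$ shows that $\{z = 0\}$ is invariant (the first two columns have zero third entry), and on this plane $P_1$ acts as $M$, which carries the two eigenvalues $\lambda_{P_1,2},\lambda_{P_1,3}$; solving $(M - \lambda I)v = 0$ for the $2\times 2$ block gives the eigenvectors $(2r,\ \alpha \mp \sqrt{\alpha^2 - 4r})$ after clearing denominators, which I would check against the announced formula \eqref{EQUATEigenspace_P_1_Domin}. For $\lambda_{P_1,1} = r^2$ I would solve $(P_1 - r^2 I)w = 0$ directly; the third equation is automatically satisfied, and the first two give a $2\times 2$ system whose solution, after simplification using $\alpha = (r+1)/2$, should reduce to the vector $(\alpha, r+1, 3\alpha)$ claimed in the statement. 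Finally, in the complex case, to obtain the real $2\times 2$ matrix representing $\restr{P_1}{\{z=0\}}$ in the basis $(\mathfrak{Re}(u_2),\mathfrak{Im}(u_2))$, I would use the standard fact that a real operator with complex eigenvalue $a + ib$ acts in this real basis as $\begin{pmatrix} a & b \\ -b & a\end{pmatrix}$, reading off $a = \frac{r^2-6r+1}{8}$ and $b = \frac{(r+1)\sqrt{\vert r^2-14r+1\vert}}{8}$ from $\lambda_{P_1,2}$.

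I expect no serious conceptual obstacle: the whole proposition is a sequence of determinant and discriminant computations made tractable by the block-triangular form. The main bookkeeping hazard is the repeated substitution $\alpha = (r+1)/2$ and the factorization of the discriminant into $(r+1)^2(r^2-14r+1)/16$; keeping these algebraic simplifications clean, and being careful about the orientation of the eigenvector (the sign of $\sqrt{\alpha^2 - 4r}$ matching $i=2$ versus $i=3$), is where an error is most likely to creep in.
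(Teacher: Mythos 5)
Your proposal is correct and follows essentially the same route as the paper: both factor the characteristic polynomial as $(\lambda-r^2)\bigl[\lambda^2-\tfrac{r^2-6r+1}{4}\lambda+r^2\bigr]$ using the invariance of $\{z=0\}$, analyse the sign of the discriminant $\tfrac{(r+1)^2}{16}(r^2-14r+1)$ to locate the threshold $7-4\sqrt{3}$, use the product of the roots to get the modulus $r$ in the complex case, and compute the eigenvectors by direct solution of the linear systems. Your verification of the ordering via $\chi(r^2)=\tfrac{3r^2(r+1)^2}{4}>0$ is in fact slightly more explicit than the paper's (which merely asserts $r^2\leq\lambda_{P_1,i}$); just make sure to note that $\operatorname{tr}(M)=\tfrac{r^2-6r+1}{4}$ is positive on the whole interval $]0,7-4\sqrt{3}]$ (since $7-4\sqrt{3}<3-2\sqrt{2}$), not only near $r=0$.
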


\begin{remar}
The critical restitution coefficient for which all the eigenvalues of $P_1$ are real coincides with the critical upper bound on the restitution coefficient for which the collapse of three particles is possible and stable (\cite{CoGM995}, \cite{CDKK999}). This is consistent with the fact that the matrix $P_1$ describes the consecutive collisions $\mathfrak{ab}$ in the particle system. Since the dominating eigenvalue is real only if $r \leq 7-4\sqrt{3}$, we deduce that this condition is equivalent to the existence of a stable fixed point of the mapping $\varphi_1$ corresponding to the action of $P_1$ on the unit sphere $\mathbb{S}^2$.
\end{remar}

\noindent
More generally, Proposition \ref{PROPOEigen_P_1_} provides a complete understanding of the dynamical system obtained by iterating $P_1$. The great circle $\{ z = 0 \} \cap \mathbb{S}^2$ is an invariant manifold, which is attracting all the orbits (as it is associated with the two dominating eigenvalues). In the case when $r < 7-4\sqrt{3}$, the orbits on this great circle converge towards the fixed points corresponding to the intersection between the eigenspace of $\lambda_{P_1,3}$ and $\mathbb{S}^2$.\\
In the case when $r \geq 7-4\sqrt{3}$, the matrix of $P_1$ restricted to $\{ z=0 \}$ is a similarity in a basis that is not necessarily orthogonal, therefore the iterates of $\varphi_1$ restricted to the associated great circle are obtained by iterating a certain rotation, and then by composing with the linear change of variables corresponding to the change of basis from the canonical basis to the basis of the real and imaginary parts of the eigenvector associated to the dominating eigenvalue. In other words, the orbits in the plane $z = 0$ rotate along ellipses or twisted spirals, and to deduce the orbits on the sphere it remains to renormalize the iterations $\big(P_1^n(u)\big)_n$. For any value of $r$, the intersection between $\mathbb{S}^2$ and $\text{Vect}\{(\alpha,r+1,3\alpha)\}$ (the eigenspace associated to $\lambda_{P_1,1} = r^2$) is a fixed point of $\varphi_1$, which is never stable. In addition, this fixed point is never crossed by a stable manifold, that is, a perturbation from this fixed point in any direction will lead to an orbit that will diverge from this point, and that will eventually be attracted by the great circle $\{ z= 0 \} \cap \mathbb{S}^2$.

\begin{proof}[Proof of Proposition \ref{PROPOEigen_P_1_}]
The characteristic polynomial of the matrix $P_1$ is $\chi_{P_1}(\lambda) = (\lambda - r^2) \big[ \lambda^2 + (2r-\alpha^2) \lambda + r^2 \big]$, and its determinant is $\det(P_1) = r^4$.\\
Therefore, $\lambda_{P_1,1} = r^2$ is an eigenvalue of $P_1$, associated to the one-dimensional eigenspace $\text{Vect}\{(\alpha,r+1,3\alpha)\}$ given by the intersection of the two hyperplanes $E_1 = \{(x,y,z) \in \mathbb{R}^3\ /\ (r+1)x = \alpha y\}$ and $E_2 = \{(x,y,z) \in \mathbb{R}^3\ /\ \alpha x + (r^2+r-\alpha^2) y - r \alpha z = 0\}$. Besides, the hyperplane $E_3 = \{(x,y,z) \in \mathbb{R}^3\ /\  z=0 \}$ is invariant under the action of $P_1$. Since $r \neq 0$, the intersection between the three planes $E_1 \cap E_2 \cap E_3$ is reduced to $\{0\}$, and so the action of $P_1$ on $\mathbb{R}^3$ can be decomposed into its action, on the one hand, on the invariant line $E_1 \cap E_2$ (associated to the eigenvalue $\lambda_{P_1,1} = r^2$), and on the other hand on invariant hyperplane $E_3$ (associated to the two other eigenvalues).\\
The three eigenvalues of $P_1$ are real if and only if the discriminant $\Delta_{P_1}$ of $\lambda^2 + (2r-\alpha^2) \lambda + r^2$ is non-negative. Since we have:
\begin{align*}
\Delta_{P_1} = \alpha^2(\alpha^2 -4r) = \frac{\alpha^2}{4}(r^2-14r+1),
\end{align*}
the two roots of $r^2 - 14r + 1$ are $7 \pm 4\sqrt{3}$, and only $7 - 4\sqrt{3} \simeq 0.0718$ belongs to $[0,1]$. The three eigenvalues of $P_1$ are then real if and only if $r \leq 7-4\sqrt{3}$. In this case, the eigenvalues different from $\lambda_{P_1,1} = r^2$ are:
\begin{align}
\lambda_{P_1,i} = \frac{r^2-6r+1 \mp(r+1)\sqrt{r^2-14r+1}}{8} \hspace{5mm} \text{with}\ i=2\ \text{or}\ 3.
\end{align}
We will denote by $\lambda_{P_1,3}$ the eigenvalue obtained when $\mp = +$. In the case when $\lambda_{P_1,i}$, $i =2,3$, are real, we have $0 \leq r^2 \leq \lambda_{P_1,i}$ for any $r \in [-1,7-4\sqrt{3}]$, with equality only if $r=0$ or $r=-1$. Therefore:
\begin{align}
0 < \lambda_{P_1,1} < \lambda_{P_1,i} \hspace{5mm} \forall r \in\ ]0,7-4\sqrt{3}],
\end{align}
so that $\lambda_{P_1,3}$ is the dominating eigenvalue for any $r \in\ ]0,7-4\sqrt{3}]$.\\
In the case when the eigenvalues $\lambda_{P_1,i}$ ($i = 2,3$) are complex, relying on the determinant of $P_1$ we have $\vert \lambda_{P_1,i} \vert = r$, so that $\lambda_{P_1,i}$, $i =2,3$, are the dominating eigenvalues in the case when $r \in\ ]7-4\sqrt{3},1]$.
\end{proof}

\noindent
It is important to observe that we can deduce the eigenelements of $P_3$ from the eigenelements of $P_1$, since we have:
\begin{align}
P_3 = \begin{pmatrix} r^2 & 0 & 0 \\
\alpha r & \alpha^2-r & -\alpha \\
0 & r\alpha & -r \end{pmatrix} = \begin{pmatrix} 0 & 0 & 1 \\
0 & 1 & 0 \\
1 & 0 & 0 \end{pmatrix}
\begin{pmatrix} -r & r\alpha & 0 \\
-\alpha & \alpha^2 - r & \alpha r \\
0 & 0 & r^2 \end{pmatrix}
\begin{pmatrix} 0 & 0 & 1 \\
0 & 1 & 0 \\
1 & 0 & 0 \end{pmatrix} = J P_1 J,
\end{align}
where $J$ is defined in \eqref{EQUATDefin__J__}, so that $P_3 J u = \lambda J u$ if and only if $P_1 u = \lambda u$ for any vector $u \in \mathbb{R}^3$ and complex number $\lambda$. Therefore, $P_1$ and $P_3$ have the same spectrum, and if $S$ denotes the set of the eigenvectors  of $P_1$, $JS = \{Ju\ /\ u \in S\}$ is the set of eigenvectors of $P_3$.\\
\newline
\noindent
We turn now to $P_2$, that reads:
\begin{align}
P_2 = \begin{pmatrix} r & -r\alpha & 0 \\
\alpha & -2\alpha^2 + r & \alpha \\
0 & -r\alpha & r \end{pmatrix}.
\end{align}

\noindent
We obtain the following result.

\begin{propo}[Eigenelements of $P_2$]
\label{PROPOEigen_P_2_}
If $r \in\ ]0,3-2\sqrt{2}]$, the three eigenvalues $\lambda_{P_2,i}$, $1 \leq i \leq 3$, of the matrix $P_2$ (defined in \eqref{EQUATCollisionMatric_P_i_}) are real and given by the expressions:
\begin{gather}
\lambda_{P_2,1} = r,\nonumber\\
\begin{align}
\lambda_{P_2,2} = r - \alpha^2 + \alpha \sqrt{\alpha^2 - 2r},\hspace{3mm} \lambda_{P_2,3} = r - \alpha^2 - \alpha \sqrt{\alpha^2 - 2r}.
\end{align}
\end{gather}
We have:
\begin{align}
\lambda_{P_2,3} < \lambda_{P_2,2} < 0 \hspace{5mm} \text{and} \hspace{5mm} \vert \lambda_{P_2,2} \vert < \lambda_{P_2,1} < \vert \lambda_{P_2,3} \vert.
\end{align}
If $r \in\ ]3-2\sqrt{2},1]$, only one eigenvalue $\lambda_{P_2,1}$ of $P_2$ is real, while the two other eigenvalues $\lambda_{P_2,i}$, $i =2,3$ are complex conjugated. The eigenvalues are given by the expressions:
\begin{gather}
\lambda_{P_2,1} = r,\nonumber\\
\begin{align}
\lambda_{P_2,2} = r - \alpha^2 + i \alpha \sqrt{\alpha^2 - 2r},\hspace{3mm} \lambda_{P_2,3} = r - \alpha^2 - i \alpha \sqrt{\alpha^2 - 2r},
\end{align}
\end{gather}
and we have:
\begin{align}
\lambda_{P_2,1} = \vert \lambda_{P_2,2} \vert = \vert \lambda_{P_2,3} \vert = r.
\end{align}
For any $r \in\ ]0,1]$, the eigenspace associated to $\lambda_{P_2,1}$ is $\text{Vect}\{(1,0,-1)\}$.\\
If $r \in\ ]0,3-2\sqrt{2}]$, the eigenspaces associated respectively to $\lambda_{P_2,i}$, $i = 2,3$, are subsets of the hyperplane $\{(x,y,z) \in \mathbb{R}^3\ /\ x = z \}$, which is invariant under the action of $P_2$, and orthogonal in $\mathbb{R}^3$ with the eigenspace associated to $\lambda_{P_2,1}$. The eigenspaces respectively associated to $\lambda_{P_2,2}$ and $\lambda_{P_2,3}$ are:
\begin{align}
\label{EQUATEigenspace_P_1_Domin}
\text{Vect}\{(r,\alpha \mp \sqrt{\alpha^2-2r},r)\} \hspace{5mm} \text{with } \mp = - \text{ if } i=2, \pm = + \text{ if } i=3.
\end{align}
If $r \in\ ]3-2\sqrt{2},1]$, the hyperplane $\{(x,y,z) \in \mathbb{R}^3\ /\ x = z \}$ is invariant under the action of $P_2$, and orthogonal in $\mathbb{R}^3$ with the eigenspace associated to $\lambda_{P_2,1}$, and $P_2$ restricted to this hyperplane writes:
\begin{align}
\restr{P_2}{\{x=z\}} = \begin{pmatrix} r-\alpha^2 & -\alpha \sqrt{ \vert \alpha^2-2r \vert } \\
\alpha \sqrt{ \vert \alpha^2-2r \vert } & r-\alpha^2 \end{pmatrix}
\end{align}
in the basis $\big(\mathfrak{Re}(u_2),\mathfrak{Im}(u_2)\big)$, where $u_2$ is an eigenvector associated to the eigenvalue $\lambda_{P_2,2}$.
\end{propo}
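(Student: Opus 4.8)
The plan is to mirror the proof of Proposition \ref{PROPOEigen_P_1_}, since $P_2$ has the same block structure: one ``trivial'' eigenvalue carrying an explicit eigenline, together with a two-dimensional invariant hyperplane supporting the remaining two eigenvalues. First I would compute the characteristic polynomial directly. Expanding $\det(P_2 - \lambda I)$ along the first column and exploiting the zeros in the corners factors out the obvious root $\lambda_{P_2,1} = r$, giving $\chi_{P_2}(\lambda) = (r - \lambda)\big(\lambda^2 + 2(\alpha^2 - r)\lambda + r^2\big)$, whence $\det(P_2) = r\cdot r^2 = r^3$. Solving the quadratic yields $\lambda_{P_2,2,3} = (r - \alpha^2) \pm \alpha\sqrt{\alpha^2 - 2r}$; its discriminant is $4\alpha^2(\alpha^2 - 2r)$, and since $\alpha^2 - 2r = \frac{r^2 - 6r + 1}{4}$ has its only root in $[0,1]$ at $3 - 2\sqrt{2}$, this produces the real/complex dichotomy at the announced threshold.

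Next I would extract the geometric data, where the structure is in fact cleaner than in the $P_1$ case. One checks that $P_2$ commutes with the involution $J$ of \eqref{EQUATDefin__J__} (equivalently $J P_2 J = P_2$); as $J$ is a symmetric involution, $P_2$ preserves each of its two orthogonal eigenspaces, namely the plane $\{x = z\}$ (the $+1$ eigenspace) and the line $\text{Vect}\{(1,0,-1)\}$ (the $-1$ eigenspace). This yields at once both the invariance of $\{x = z\}$ and the orthogonality claim. Solving $(P_2 - rI)v = 0$ confirms that $\text{Vect}\{(1,0,-1)\}$ is the eigenline for $\lambda_{P_2,1} = r$. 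Parametrizing $\{x = z\}$ by $(x,y)$, the restriction of $P_2$ is the $2 \times 2$ matrix $M = \begin{pmatrix} r & -r\alpha \\ 2\alpha & r - 2\alpha^2 \end{pmatrix}$, with $\mathrm{tr}(M) = 2(r - \alpha^2)$ and $\det(M) = r^2$; its characteristic polynomial is exactly the quadratic factor above, so its eigenvalues coincide with $\lambda_{P_2,2}, \lambda_{P_2,3}$, and solving $(M - \lambda I)(x,y)^t = 0$ recovers the eigenvectors $(r, \alpha \mp \sqrt{\alpha^2 - 2r}, r)$ after reinserting $z = x$.

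For the quantitative statements I would argue as follows. In the real case the key identity is $r - \alpha^2 = -\frac{(r-1)^2}{4} < 0$, so the common part $r - \alpha^2$ is negative; combined with $0 < \alpha\sqrt{\alpha^2 - 2r}$ and the inequality $\alpha^2(\alpha^2 - 2r) < (\alpha^2 - r)^2$ (equivalent to $0 < r^2$), this gives $\lambda_{P_2,3} < \lambda_{P_2,2} < 0$. Since $\lambda_{P_2,2}\lambda_{P_2,3} = r^2 > 0$ is the product of the two quadratic roots, and $|\lambda_{P_2,2}| < |\lambda_{P_2,3}|$, the two positive numbers $|\lambda_{P_2,2}|, |\lambda_{P_2,3}|$ with product $r^2$ must straddle their geometric mean, giving $|\lambda_{P_2,2}| < r < |\lambda_{P_2,3}|$. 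In the complex case the product of conjugate roots gives $|\lambda_{P_2,2}|^2 = r^2$ directly, so $|\lambda_{P_2,1}| = |\lambda_{P_2,2}| = |\lambda_{P_2,3}| = r$; writing $\lambda_{P_2,2} = a + ib$ with $a = r - \alpha^2$ and $b = \alpha\sqrt{|\alpha^2 - 2r|}$ and using the identities $M\,\mathfrak{Re}(u_2) = a\,\mathfrak{Re}(u_2) - b\,\mathfrak{Im}(u_2)$ and $M\,\mathfrak{Im}(u_2) = b\,\mathfrak{Re}(u_2) + a\,\mathfrak{Im}(u_2)$ puts the restriction in the stated real Jordan form. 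The computations are entirely elementary; the only points requiring care are the sign bookkeeping in the ordering of the real eigenvalues and the choice of $u_2$ (equivalently, the sign of $\mathfrak{Im}(\lambda_{P_2,2})$) that fixes the orientation of the rotation block, which is the sole place where a convention must be pinned down.
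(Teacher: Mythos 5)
Your proposal is correct and follows essentially the same route as the paper's proof: factor the characteristic polynomial as $(\lambda-r)\big(\lambda^2+2(\alpha^2-r)\lambda+r^2\big)$, locate the threshold $3-2\sqrt{2}$ from the discriminant $4\alpha^2(\alpha^2-2r)$, identify the eigenline $\text{Vect}\{(1,0,-1)\}$ and the invariant orthogonal plane $\{x=z\}$, and order the moduli via the product of the quadratic's roots. Your two small additions — deriving the invariance and orthogonality from $JP_2J=P_2$, and the geometric-mean argument giving $\vert\lambda_{P_2,2}\vert<r<\vert\lambda_{P_2,3}\vert$ — are clean refinements of steps the paper handles by direct assertion or computation, not a different method.
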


\begin{remar}
It is interesting to observe that the critical restitution coefficient for which all the eigenvalues of $P_2$ are real coincides with the upper bound for the restitution coefficients associated to the stable periodic pattern of collision $\mathfrak{ababcbcb}$, which is also the upper bound of the restitution coefficients associated to any of the stable periodic patterns that were observed in \cite{CDKK999}.\end{remar}

\noindent
As for $P_1$, Proposition \ref{PROPOEigen_P_2_} allows to understand completely the dynamical system on the unit sphere $\mathbb{S}^2$ obtained by iterating $P_2$. In the case when $r < 3-2\sqrt{2}$, there exists two unique stable fixed points, obtained as the intersection between the eigenspace associated to the dominating eigenvalue $\lambda_{P_2,3}$ and $\mathbb{S}^2$, and these two fixed points attract almost every orbit (the only orbits that are not attracted by these fixed points are those that start exactly from the invariant manifold obtained as the intersection between $\{ x = z \}$ and $\mathbb{S}^2$). More precisely, since $\lambda_{P_2,3} < 0$, along the convergence the orbits are oscillating between the two points of intersection between the eigenspace of $\lambda_{P_2,3}$ and $\mathbb{S}^2$. Nevertheless, when considering the action of $P_2$ on the projective space $\mathbb{P}_2(\mathbb{R})$, almost every orbit will converge towards the unique stable fixed point associated to the one-dimensional eigenspace of $\lambda_{P_2,3}$.\\
In the case when $r > 3-2\sqrt{2}$, we encounter a new situation: all the eigenvalues have the same modulus, and therefore no invariant manifold attracts the orbits. More precisely, the fixed points lying at the intersection between the line $\text{Vect}\{(1,0-1)\}$ with $\mathbb{S}^2$ are stable in the sense that an orbit close to this point will remain close but will never converge towards the fixed point (except of course if the orbits is reduced to the fixed point only), and similarly, an orbit close to the invariant manifold obtained as the intersection between $\{ x = z \}$ and $\mathbb{S}^2$ will remain close to this invariant manifold. In addition, if we consider the hyperplanes orthogonal to the eigenspace $\text{Vect}\{(1,0-1)\}$ associated to $\lambda_{P_2,1}$, the intersection between any of these hyperplanes and the unit sphere $\mathbb{S}^2$ constitutes an invariant manifold, so that we obtain a partition of $\mathbb{S}^2$ into invariant manifolds. The evolution of the orbits on any of these invariant manifolds is deduced from the fact that $P_2$ reduced to $\{ x = z \}$ is a similarity in a basis which is not necessarily orthogonal. The situation is therefore the same as for $P_1$ when this matrix has complex eigenvalues: the orbits can be written in terms of iterated rotations in a twisted basis.

\begin{proof}[Proof of Proposition \ref{PROPOEigen_P_2_}]
The characteristic polynomial of the matrix $P_2$ is $\chi_{P_2}(\lambda) = (\lambda - r) \big[ \lambda^2 + (2\alpha^2 - 2r) \lambda + r^2 \big]$, and its determinant is $\det(P_2) = r^3$. $\lambda_{P_2,1} = r$ is an eigenvalue of $P_2$, associated to the one-dimensional eigenspace $E_4 = \text{Vect}\{(1,0,-1)\}$. In addition, the hyperplane $E_5 = \{(x,y,z) \in \mathbb{R}^3\ /\ x=z \}$ is invariant under the action of $P_2$. In addition, $E_4$ and $E_5$ are orthogonal.\\
The eigenvalues of $P_2$ are all real if and only if the discriminant $\Delta_{P_2}$ of $\lambda^2 + (2\alpha^2 - 2r)\lambda + r^2$ is non-negative. We have here:
\begin{align}
\Delta_{P_2} = 4 \alpha^4 - 8 r \alpha^2 = 4 \alpha^2 (\alpha^2 - 2r) = \alpha^2 ( r^2 - 6r + 1),
\end{align}
the two roots of $r^2 - 6r + 1$ are $3 \pm 2\sqrt{2}$, and only $3 - 2\sqrt{2} \simeq 0.1716$ belongs to $[0,1]$. Therefore, the three eigenvalues of $P_2$ are real if and only if $r \leq 3 - 2\sqrt{2}$. In this case, the eigenvalues different from $\lambda_{P_2,1} = r$ are:
\begin{align}
\lambda_{P_2,i} = r - \alpha^2 \pm \alpha \sqrt{\alpha^2 - 2r} \hspace{5mm} \text{with}\ i=2\ \text{or}\ 3.
\end{align}
We will denote by $\lambda_{P_2,2}$ the eigenvalue obtained when $\pm = +$, and by $\lambda_{P_2,3}$ the eigenvalue obtained when $\pm = -$. Since $r \neq 0$, $r-\alpha^2 = -\frac{1}{4}(1-r)^2 \leq 0$ and $(r - \alpha^2)^2 > \alpha^2(\alpha^2 - 2r)$, we deduce that $\lambda_{P_2,i} < 0$ for $i = 2,3$. Writing in addition the eigenvalues as:
\begin{align}
\lambda_{P_2,i} = -r - \sqrt{\alpha^2-2r} \big[ \sqrt{\alpha^2 - 2r} \mp \alpha \big],
\end{align}
we deduce $\vert \lambda_{P_2,2} \vert \leq \lambda_{P_2,2} \leq \vert \lambda_{P_2,3} \vert$.\\
In the case when $r > 3 - 2\sqrt{2}$, since $\lambda_{P_2,1} = r$, the determinant of $P_2$ allows to deduce that the three eigenvalues have the same modulus, equal to $r$.
\end{proof}

\subsection{Application to the study of the mapping $\widehat{\mathfrak{P}}_r$}
\label{SSECTAppli_to_P}

\noindent
We saw in Section \ref{SSECTSpectStudyMatri_P_i_} that determining the eigenelements of the matrices $P_i$ allows to understand completely the dynamical systems $\big( P_i^n(u) /\vert P_i^n(u) \vert \big)_n$ induced on the unit sphere. Nevertheless, to apply these results to the dynamics obtained with iterating $\widehat{\mathfrak{P}}_r$, we need to take into account the different domains of the quadrant $X = \{x^2+y^2+z^2 = 1\}\cap \{x\geq 0, z\leq 0\}$ on which the piecewise linear mapping $\mathfrak{P}$ coincides with the different matrices $P_i$. We will discuss this question in the present section.

\paragraph{The domain where $\mathfrak{P}$ coincides with $P_1$ or $P_3$.} In the case when $r \leq 7-4\sqrt{3}$, that is, when all the eigenvalues of $P_1$ and $P_3$ are real, any orbit of the iteration $(P_i^n(u)/\vert P_i^n(u) \vert)_n$ converges towards one of the stable fixed points of the mappings induced by $P_i$, for $i = 1,3$ and for almost every $u \in \mathbb{R}^3$. Since the quadrant on which $\mathfrak{P}$ is defined is $\{x \geq 0, z \leq 0\}$, the eigenspaces associated to the dominating eigenvalues of $P_1$ and $P_3$ belong to the boundary of the quadrant, according to \eqref{EQUATEigenspace_P_1_Domin}. In the case of $P_1$, the domain on which $\mathfrak{P}$ coincides with this matrix is defined by the conditions $x > 0$, $z < 0$, $y > 0$ and $\alpha y - x > 0$. Since $(2r,\alpha + \sqrt{\alpha^2-4r},0)$ is an eigenvector associated to the dominating eigenvalue of $P_1$, and since we have $\alpha (\alpha + \sqrt{\alpha^2-4r}) - 2r > 0$, the stable fixed point that attracts all the orbits of the dynamical system induced by $P_1$ belongs to the boundary of the domain in which $\mathfrak{P}$ coincides with $P_1$. We recover a result that is consistent with the existence and stability of the collapse of three particles (that is, obtained with the infinite repetition of the period $\mathfrak{ab}$) described in \cite{CoGM995}. The fact that the attracting fixed point belongs to $\{z=0\}$ is also consistent with the collapse of three particles, for which two pairs of particles are eventually in contact, while a fourth particle (here \footnotesize{\circled{4}}\normalsize{}) remains at a positive distance from the three-particle cluster \footnotesize{\circled{1}}\normalsize{}-\footnotesize{\circled{2}}\normalsize{}-\footnotesize{\circled{3}}\normalsize{}.\\
In the case when $r > 7-4\sqrt{3}$, considering $P_1$ restricted to $\{ z=0 \}$ it is possible to show that the orbits leave the domain given by the conditions $x > 0$, $z < 0$, $y > 0$ and $\alpha y - x > 0$ in finite time, since the condition $y > 0$ will eventually be violated. In the case when $r > 3-2\sqrt{2}$, a direct computation shows that the condition $\alpha y - x > 0$ is violated after a single iteration of $P_1$. After a certain number of iterations of $P_1$, the orbits eventually enter the domain, either on which $\mathfrak{P}$ coincides with $P_2$, or with $P_3$.\\
By symmetry, the same conclusions hold for the domain of $P_3$.

\paragraph{The domain where $\mathfrak{P}$ coincides with $P_2$.} In the case when $r < 3 - 2\sqrt{2}$, all the eigenvalues of $P_2$ are real, and $\lambda_{P_2,3} = r-\alpha^2 - \alpha\sqrt{\alpha^2 - 2r}$ is dominating. The associated eigenvector reads $(r,\alpha+\sqrt{\alpha^2-2r},r)$, and does not belong to the quadrant $x \geq 0$, $z \leq 0$ on which $\mathfrak{P}$ is defined. In particular, since the dominating eigenvector of $P_2$ belongs to the interior of the complement of the domain $\{ x > 0, z < 0, y > 0, \alpha y - x < 0 \} \cup \{ x > 0, z < 0, y < 0, \alpha y - z > 0 \}$ where $\mathfrak{P}$ coincides with $P_2$, we deduce that, starting from this domain, any orbit leaves the domain after finitely many iterations of the matrix $P_2$, except for the orbit that is reduced to the single fixed point in the domain, given by the intersection between $\text{Vect}\{(1,0,-1)\}$ and the unit sphere $\mathbb{S}^2$. Since in addition the quadrant $x \geq 0$, $z \leq 0$ is invariant under the action of $\mathfrak{P}$, we deduce that almost every orbit reaches, either the domain on which $\mathfrak{P}$ coincides with $P_1$, or the domain on which $\mathfrak{P}$ coincides with $P_3$, after finitely many iterations. The case when an orbit reaches in finite time the boundary between the respective domains of $P_2$ and $P_1$, or between the respective domains of $P_2$ and $P_3$, corresponds indeed to a Lebesgue-negligible set of initial configurations.\\
Let us mention that the fixed point associated to $\text{Vect}\{(1,0,-1)\}$, which is associated to the eigenvalue $r$, is not stable, but lies at the crossing between two invariant manifolds, the first constituting the stable direction of the fixed point, the other the unstable direction. On the unstable direction, the orbits of $\widehat{\mathfrak{P}}_r$ converge towards the stable fixed point, given by the intersection between $\text{Vect}\{(1,0,-1)\}$ and the unit sphere. On the stable direction, the orbits of $\widehat{\mathfrak{P}}_r$ converge from the intersection between $\text{Vect}\{(r,\alpha-\sqrt{\alpha^2-2r},r)\}$ and the unit sphere, towards the intersection between $\text{Vect}\{(1,0,-1)\}$ and the unit sphere.\\
In the case when $r > 3-2\sqrt{2}$, $P_2$ has two complex eigenvalues, and the three eigenvalues have the same modulus. In this case, the fixed point at the intersection between $\text{Vect}\{(1,0,-1)\}$ and the unit sphere $\mathbb{S}^2$ belongs to the domain $\{ x > 0, z < 0, y > 0, \alpha y - x < 0 \} \cup \{ x > 0, z < 0, y < 0, \alpha y - z > 0 \}$. This fixed point is stable, in the sense that orbits close to it remain close to it, but without converging. The orbits rotate around this fixed point. Therefore, there are only two possible scenarios. In the first case, the orbits remain on curves on the sphere that are close enough to the fixed point, so that they remain in the domain, in which case such orbits constitute quasi-periodic orbits of $\mathfrak{P}$. By quasi-periodic, we mean that for any positive number $\varepsilon > 0$ and any point $\mathfrak{P}^n(x_0)$ of an orbit on such invariant curves, there exists $k = k(n,\varepsilon)$ such that $\vert \mathfrak{P}^n(x_0) - \mathfrak{P}^{n+k}(x_0) \vert \leq \varepsilon$. Such orbits can even be periodic, provided that the restitution coefficient $r$ is chosen in an appropriate manner. In the second case, the orbits of $P_2$ are on curves on the sphere that are not contained in the domain $\{ x > 0, z < 0, y > 0, \alpha y - x < 0 \} \cup \{ x > 0, z < 0, y < 0, \alpha y - z > 0 \}$, in which case such orbits can leave this domain in finite time, and enter the domain in which $\mathfrak{P}$ coincides either with $P_1$, or with $P_3$.

\section{Second numerical simulations: search for periodic orbits}
\label{SECTISeconNumerSimul}

\noindent
We mentioned already that the only stable periodic patterns leading to the collapse of $4$ particles, up to now, were discovered in \cite{CDKK999}, and are all of the form $(\mathfrak{ab})^n(\mathfrak{cb})^n$. It seems that for any $n \geq 2$, one can obtain an open interval of restitution coefficients, and an open set of initial configurations of the four particle system leading to the collapse, as it was discussed in \cite{CDKK999}. Our numerical simulations presented in Section \ref{SECTIFirstNumerSimul} support also this conjecture. The largest restitution coefficient for which a periodic pattern of the form $(\mathfrak{ab})^n(\mathfrak{cb})^n$ is stable is $r = 3-2\sqrt{2} \simeq 0.1716$, for the particular choice of $n = 2$.\\
Besides, the feasible periodic pattern (feasible in the sense that there exist initial configurations leading to a periodic collapse according to such a pattern) associated to the largest known restitution coefficient is $\mathfrak{abcb}$, feasible until $r$ approximately equal to $0.1917$ (see \cite{CDKK999}, and also \cite{HuRo023} for more details). Nevertheless, this pattern is not stable. Another periodic pattern which does not belong to the family of the patterns of the form $(\mathfrak{ab})^n(\mathfrak{cb})^n$, and that is not stable, is found in \cite{DoHR025}, namely $\mathfrak{ababcb}$, feasible until $r = 5-2\sqrt{6} \simeq 0.1010$.

\paragraph{Description of the second numerical simulations.} Our objective in the present section is to find periodic patterns beyond the maximal restitution coefficients we discussed: we aim to find stable periodic patterns for $r$ larger than $3-2\sqrt{2}$, and feasible periodic patterns for $r$ larger than $0.1917$. To do so, we start with plotting bifurcation diagrams. This time, we will represent the angle $\varphi$, orientating the velocity vector right after a collision of type $\mathfrak{b}$, instead of representing the angle $\theta$, as we did on Figures \ref{FIGURPlTy10.0717_____<r<_____0.1717}-\ref{FIGURPlot__Old_Algor}. The reason is that the periodic orbits do not accumulate on the boundary of the intervals in the $\varphi$ variable, contrary to what happens when projecting the orbits on the $\theta$ variable. For a normal vector $u =\hspace{0.25mm} ^t \hspace{-0.25mm} (x,y,z)$ to a plane $\mathcal{P}$ describing the post-collisional configuration of the system right after a collision of type $\mathfrak{b}$, the angle $\varphi \in [0,\pi]$ is given by the following formula (see \cite{DoHR025}):
\begin{align}
\cos\varphi = - \frac{y \sqrt{x^2+z^2}}{\vert q \vert} = -\frac{y}{\vert u \vert},
\end{align}
where $\sqrt{x^2 + z^2}$ is the norm of the position vector $p = \hspace{0.25mm} ^t \hspace{-0.25mm} (x,0,z)$, and $\vert q \vert$ is the norm of the velocity vector $q = \hspace{0.25mm} ^t \hspace{-0.25mm} (-xy,x^2+z^2,-yz)$.

\paragraph{First bifurcation diagram: indication of a chaotic behaviour.} In order to confirm the behaviour described in \cite{CDKK999} concerning the loss of stability for the patterns $(\mathfrak{ab})^n(\mathfrak{cb})^n$, and in order to detect periodic patterns in the range of restitution coefficients already discussed in the literature, we start with plotting the velocity angle $\varphi$ obtained along the $p$-th iterations, for $4900 \leq p \leq 5000$, of the mapping $\widehat{\mathfrak{P}}_r$, for $0.1 \leq r \leq 0.102$ and with an interval of $\Delta_r = 5\times 10^{-7}$ between two consecutive restitution coefficients that are considered. For each restitution coefficient, we repeat the procedure for $32$ different initial configurations, all taken from a fixed $8\times 4$ grid of the phase space. We obtain Figure \ref{FIGUR_r_=__0.1_0.102}.

\begin{figure}[h!]
\centering
\begin{subfigure}{0.48\textwidth}
    \includegraphics[trim = 0cm 0cm 0cm 0cm, width=\linewidth]{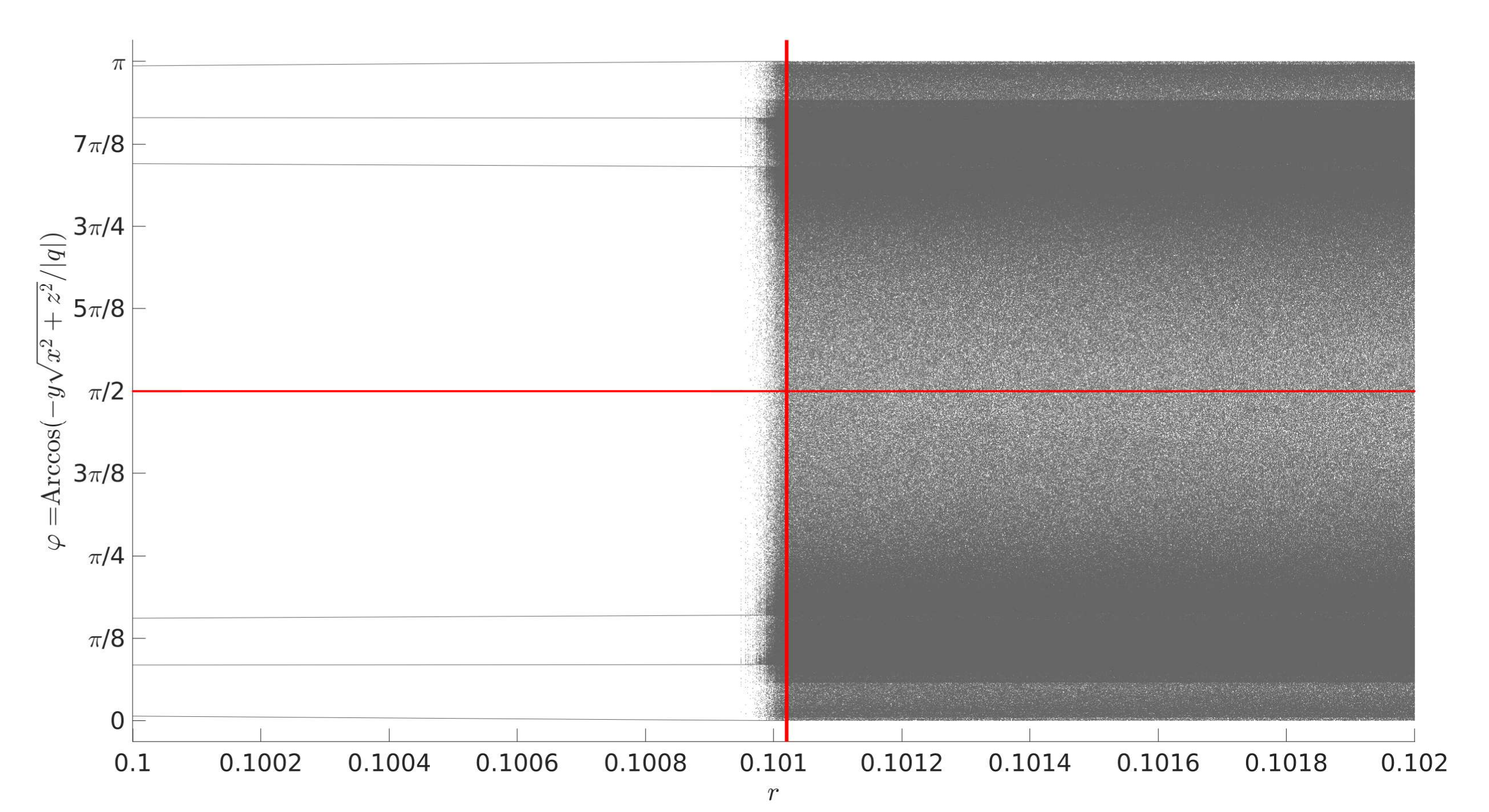}
    \caption{$p$-th iterations of $\widehat{\mathfrak{P}}_r$ for $4900 \leq p \leq 5000$.}
    \label{FIGUR_r_=__0.1_0.102}
\end{subfigure}
\hfill
\begin{subfigure}{0.48\textwidth}
    \includegraphics[trim = 0cm 0cm 0cm 0cm, width=\linewidth]{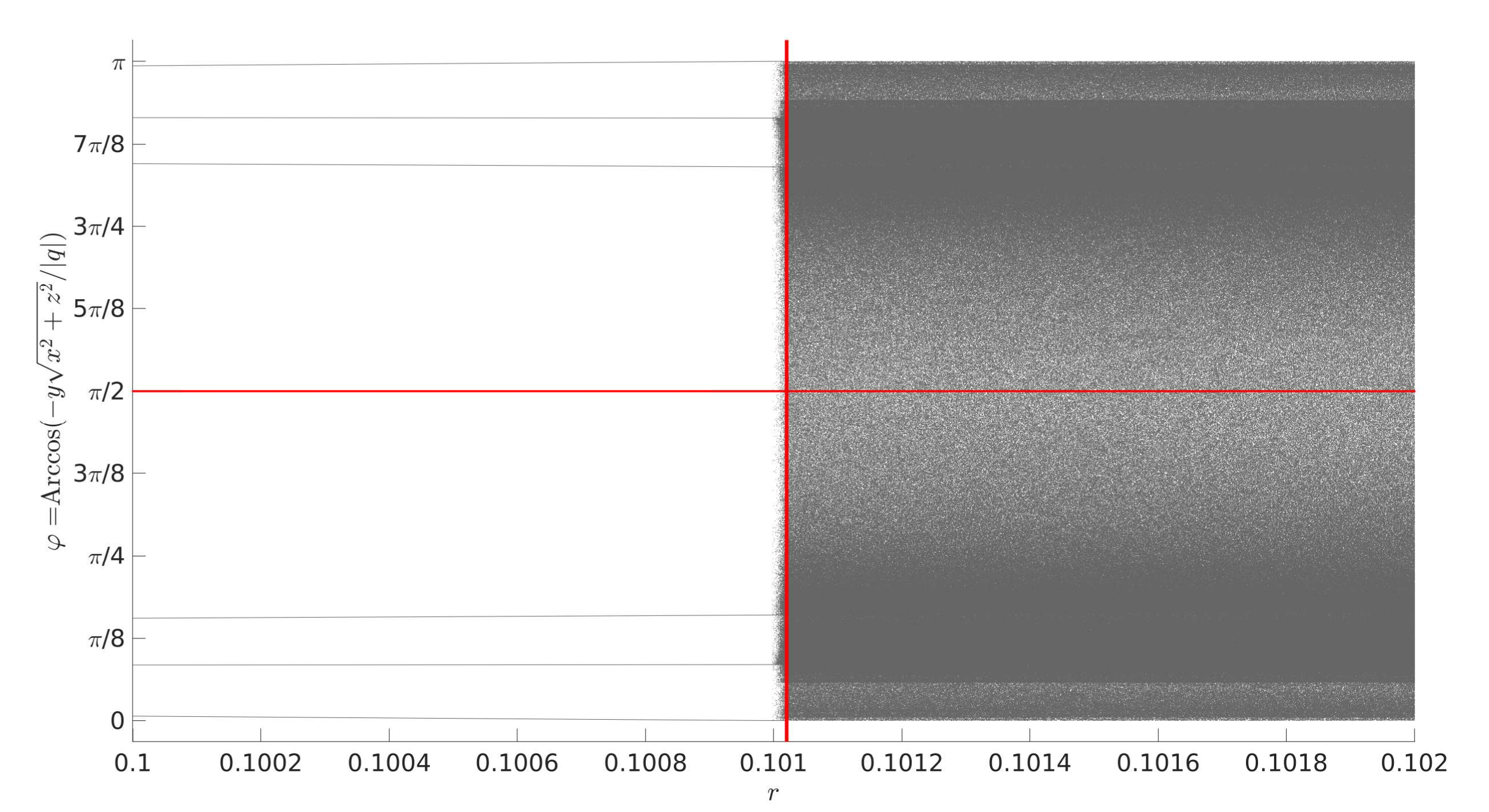}
    \caption{$p$-th iterations of $\widehat{\mathfrak{P}}_r$ for $24900 \leq p \leq 25000$.}
    \label{FIGUR_r_=__0.1_0.102_25000}
\end{subfigure}
\caption{Plot of the tails (last $100$ iterations) of the orbits of $\widehat{\mathfrak{P}}_r$ for $0.1 \leq r \leq 0.102$.}
\label{FIGUR_ab^3_cb^3}
\end{figure}

\noindent
On Figure \ref{FIGUR_r_=__0.1_0.102}, we observe in particular on the left hand side, below the critical value $5-2\sqrt{6} \simeq 0.1010\ 2051$ (represented by the vertical red line on Figure \ref{FIGUR_r_=__0.1_0.102}), the accumulation of the orbits on six particular values of the angle $\varphi$. This corresponds to the six different collisions of type $\mathfrak{b}$ per period in the collision pattern $(\mathfrak{ab})^3(\mathfrak{cb})^3$. This time, in terms of the angle $\varphi$, the fact that three accumulation points lie below the line $\varphi = \pi/2$, and three above, is a clear consequence of the fact that if $\cos\varphi > 0$, then the next collision to follow is of type $\mathfrak{c}$, while if $\cos\varphi < 0$, the next collision is of type $\mathfrak{a}$.\\
Interestingly, for restitution coefficients $r$ slightly below $5-2\sqrt{6}$, the orbits do not concentrate completely on the six accumulation points. The theoretical study conducted in \cite{CDKK999} ensures that configurations leading to the self-similar collapse $(\mathfrak{ab})^3(\mathfrak{cb})^3$ are stable, for any $0.0946 < r < 5-2\sqrt{6}$. Nevertheless, the problem of characterizing the basin of attraction of such self-similar configurations remains an open question. Since we do not observe another attracting orbit in this range of restitution coefficients, this lack of convergence might indicate the presence of another pattern, which is unstable, but which prevents the orbits to converge fast towards the stable self-similar configuration. Indeed, if we repeat the numerical simulations, considering this time the $p$-th iterates, with $24900 \leq p \leq 25000$ (see Figure \ref{FIGUR_r_=__0.1_0.102_25000}), less restitution coefficients than in Figure \ref{FIGUR_r_=__0.1_0.102} are associated to orbits that do not clearly concentrate on the six accumulation points. We observe also that the basin of attraction of the self-similar configuration associated to $(\mathfrak{ab})^3(\mathfrak{cb})^3$ seems to be very large: as it can be seen by comparing Figures \ref{FIGUR_r_=__0.1_0.102} and \ref{FIGUR_r_=__0.1_0.102_25000}, for $r$ slightly below $5-2\sqrt{6}$, a large proportion of the interval $[0,\pi]$ on which the orbits are projected seems to not converge before the $5000$-th iteration, but seems also to converge eventually towards the six accumulation points.\\
For $r \geq 5-2\sqrt{6}$, an apparent chaotic regime takes place, in which the orbits seem to cover completely the phase space, in a manner that looks uniform, or at least without any apparent order.

\paragraph{Bifurcation diagrams beyond $r = 3-2\sqrt{2} \simeq 0.1716$.} We turn now to larger restitution coefficients. In order to have a broader understanding of the dynamics of $\widehat{\mathfrak{P}}_r$, we consider now restitution coefficients such that $0.16 \leq r \leq 0.56$, with $\Delta_r = 10^{-4}$, and computing still the $5000$ first iterations, plotting only the last $100$ of these. We proceed in a similar way as before, considering orbits obtained from the same $32$ initial configurations. We obtain Figure \ref{FIGUR_r_=__0.16_0.56}.

\begin{figure}[h!]
\centering
\includegraphics[trim = 0cm 0cm 0cm 0cm, width=1\linewidth]{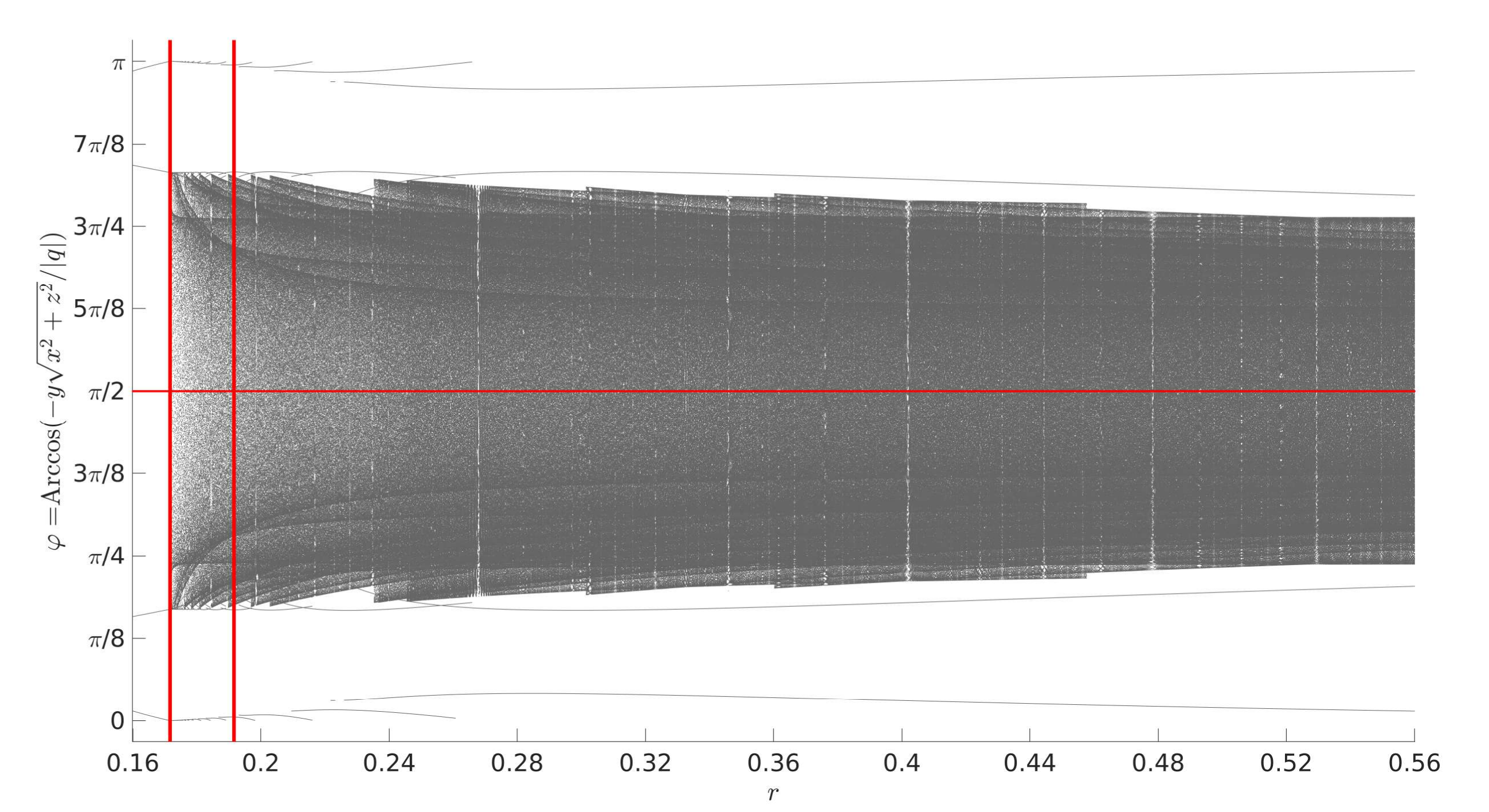} 
\caption{Plot of the tails (last $100$ iterations) of the $32$ orbits of $\widehat{\mathfrak{P}}_r$ for $0.16 \leq r \leq 0.56$ after $5000$ iterations.}
\label{FIGUR_r_=__0.16_0.56}
\end{figure}

\noindent
On Figure \ref{FIGUR_r_=__0.16_0.56}, except for $r < 3-2\sqrt{2} \simeq 0.1716$, which is associated to the range of existence and stability of the pattern $\mathfrak{ababcbcb}$, it seems at first glance that the whole range of restitution coefficients is exhibiting a chaotic behaviour. Nevertheless, above $3-2\sqrt{2}$, as already reported in \cite{DoHR025}, we observe around few peculiar values of $r$ an accumulation of the trajectories on subparts of the interval $[0,\pi]$: for instance, for $r \simeq 0.27$ or $r \simeq 0.405$. We called these regions ``thin stripes of stability'' in \cite{DoHR025}. A closer look at Figure \ref{FIGUR_r_=__0.16_0.56} shows that there are many of these stripes, on the whole interval $3-2\sqrt{2} \leq r \leq 0.56$ of restitution coefficients. If we consider now magnifications of Figure \ref{FIGUR_r_=__0.16_0.56} at different places of the interval $[0.16,0.56]$, we obtain Figures \ref{FIGUR_r_=0.191_0.195} ($0.191 \leq r \leq 0.195$), Figure \ref{FIGUR_r_=___0.5__0.6} ($0.5 \leq r \leq 0.6$) and Figure \ref{FIGUR_r_=___0.1916____0.1917__} ($0.1916 \leq r \leq 0.1917$). On Figures \ref{FIGUR_r_=0.191_0.195} and \ref{FIGUR_r_=___0.5__0.6}, we observe that the thin stripes of stability occur at different scales, and on Figure \ref{FIGUR_r_=___0.1916____0.1917__} we observe a very regular structure. More precisely, to each iteration that is plotted is associated a curve parametrized by the restitution coefficient, which is consistent with the fact that the collision matrices depend continuously on $r$, and that the initial data are chosen on a fixed grid which does not depend on $r$. The interesting observation is that the curves associated to different iterations are similar, so that the bifurcation diagrams obtained on Figures \ref{FIGUR_r_=0.191_0.195}-\ref{FIGUR_r_=___0.1916____0.1917__} appear to be superposition of such curves, as it is particularly clear on Figure \ref{FIGUR_r_=___0.1916____0.1917__}.

\begin{figure}[h!]
\centering
\includegraphics[trim = 0cm 0cm 0cm 0cm, width=1\linewidth]{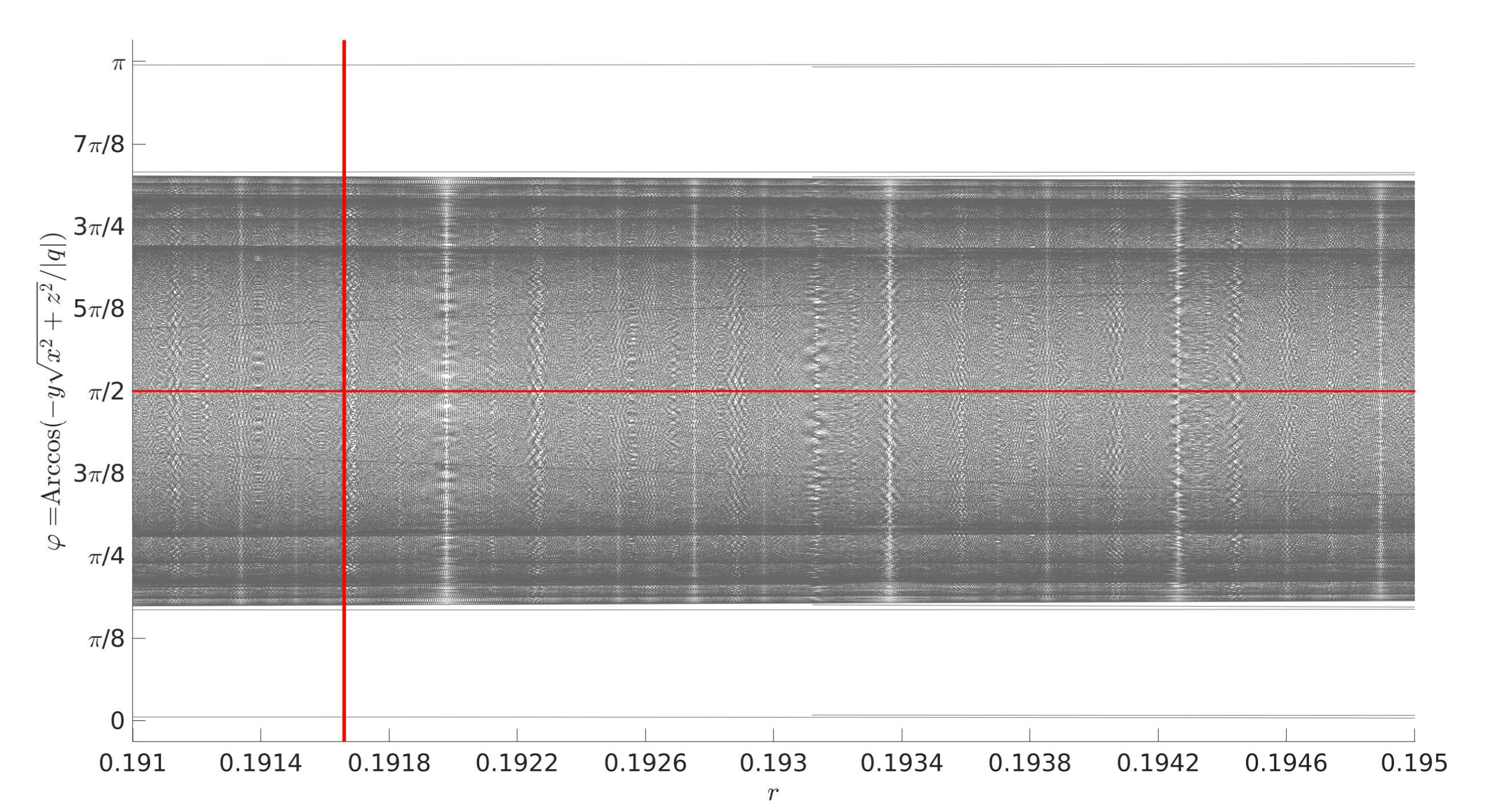} 
\caption{Plot of the tails (last $100$ iterations) of the orbits of $\widehat{\mathfrak{P}}_r$ for $0.191 \leq r \leq 0.195$ after $5000$ iterations.}
\label{FIGUR_r_=0.191_0.195}
\end{figure}

\begin{figure}[h!]
\centering
\includegraphics[trim = 0cm 0cm 0cm 0cm, width=1\linewidth]{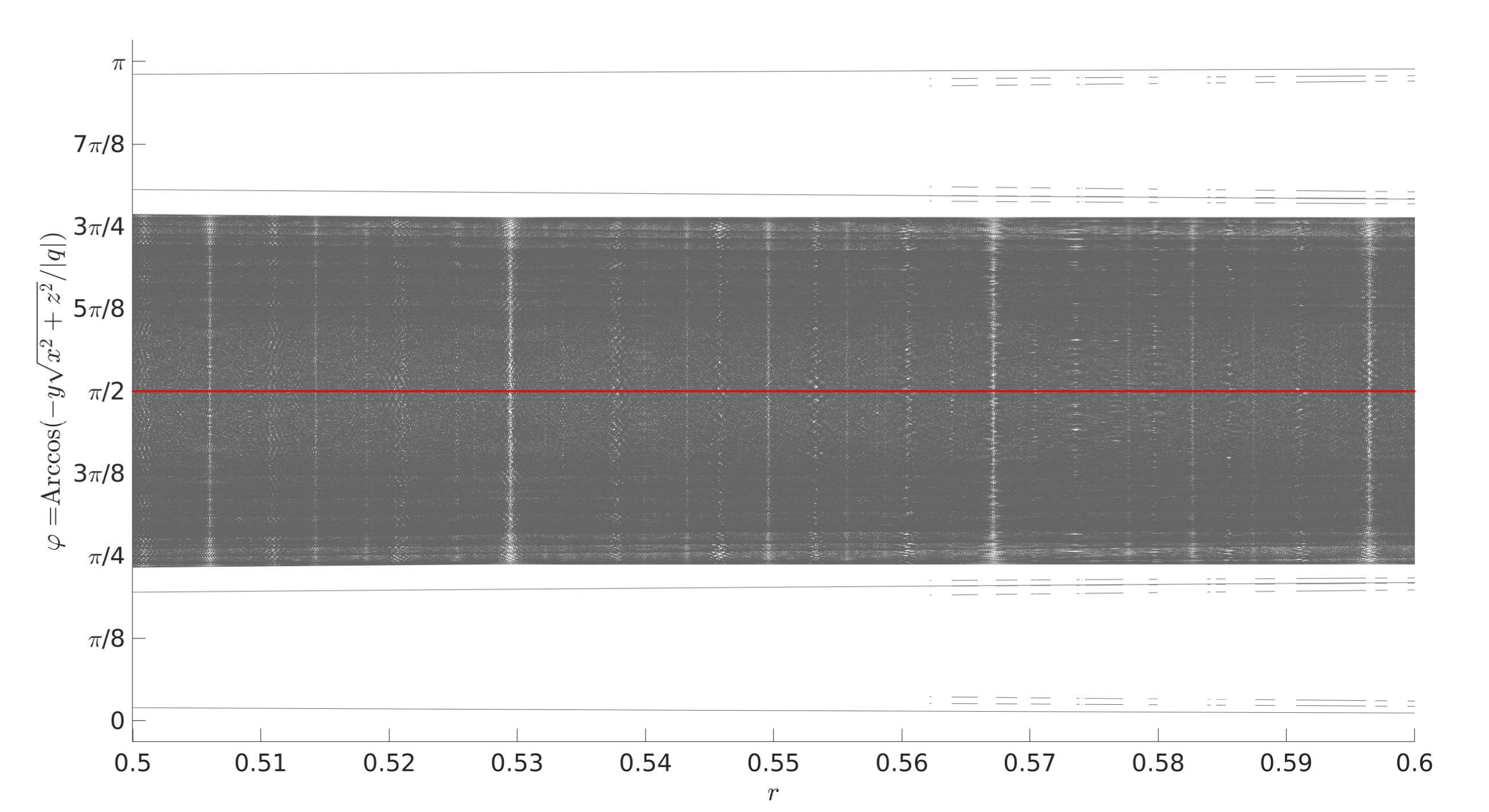} 
\caption{Plot of the tails (last $100$ iterations) of the orbits of $\widehat{\mathfrak{P}}_r$ for $0.5 \leq r \leq 0.6$ after $5000$ iterations.}
\label{FIGUR_r_=___0.5__0.6}
\end{figure}

\begin{figure}[h!]
\centering
\includegraphics[trim = 0cm 0cm 0cm 0cm, width=1\linewidth]{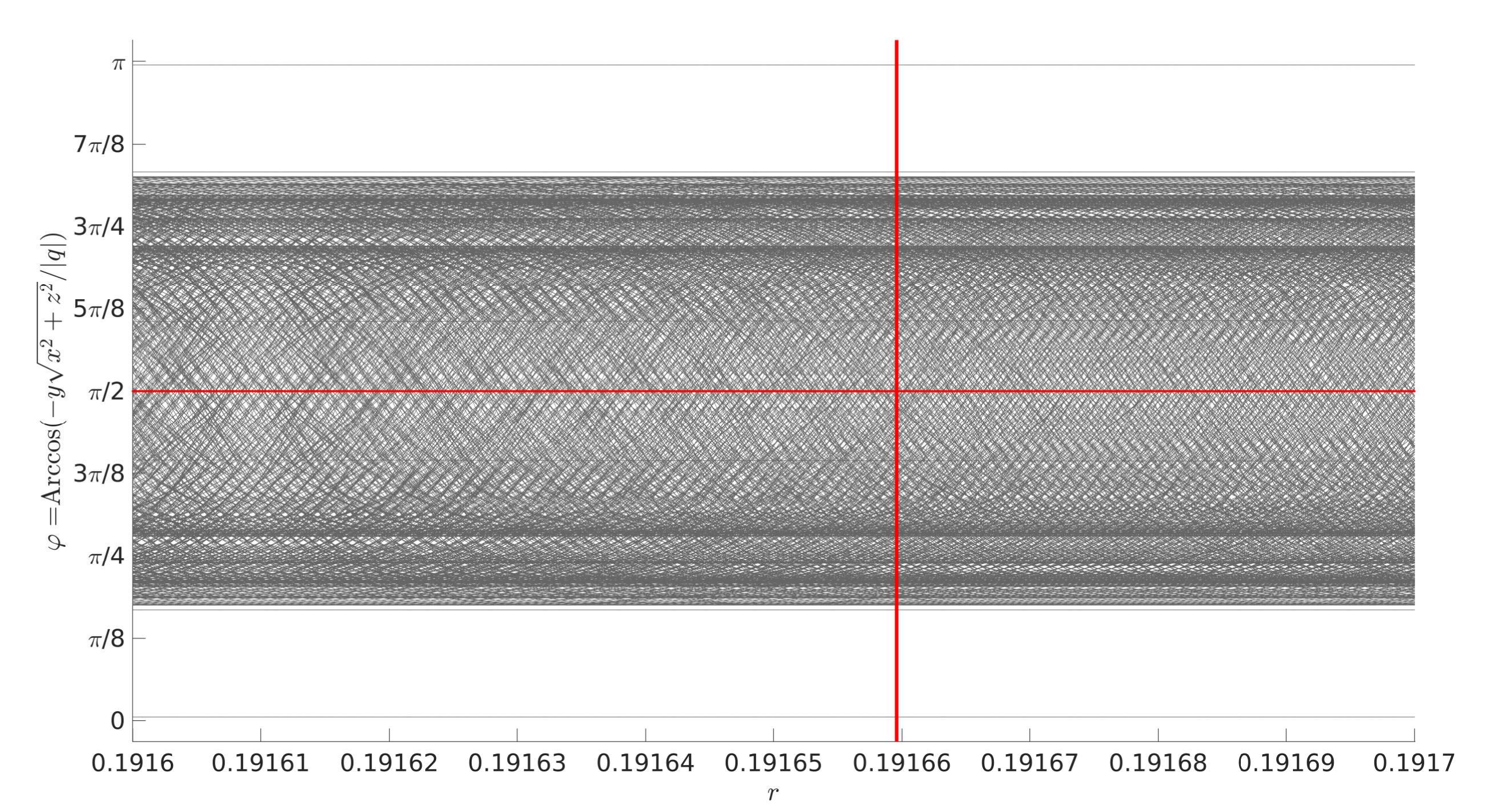} 
\caption{Plot of the tails (last $100$ iterations) of the orbits of $\widehat{\mathfrak{P}}_r$ for $0.1916 \leq r \leq 0.1917$ after $5000$ iterations.}
\label{FIGUR_r_=___0.1916____0.1917__}
\end{figure}

\noindent
In order to observe the very regular structure of the orbits, we plot on Figure \ref{FIGUR_r_=__0.19_0.21} the $p$-th iterations, with $4900 \leq p \leq 5000$, of one single orbit, obtained from a single initial configuration, chosen independently from the restitution coefficient $r$. On Figure \ref{FIGUR_r_=__0.19_0.21}, we observe together the two phenomena we described above: the thin stripes of stability appear at different scales, and the whole bifurcation diagram exhibits many symmetries, and appears to be the union of entangled curves of similar nature.

\begin{figure}[h!]
\centering
\includegraphics[trim = 0cm 0cm 0cm 0cm, width=1\linewidth]{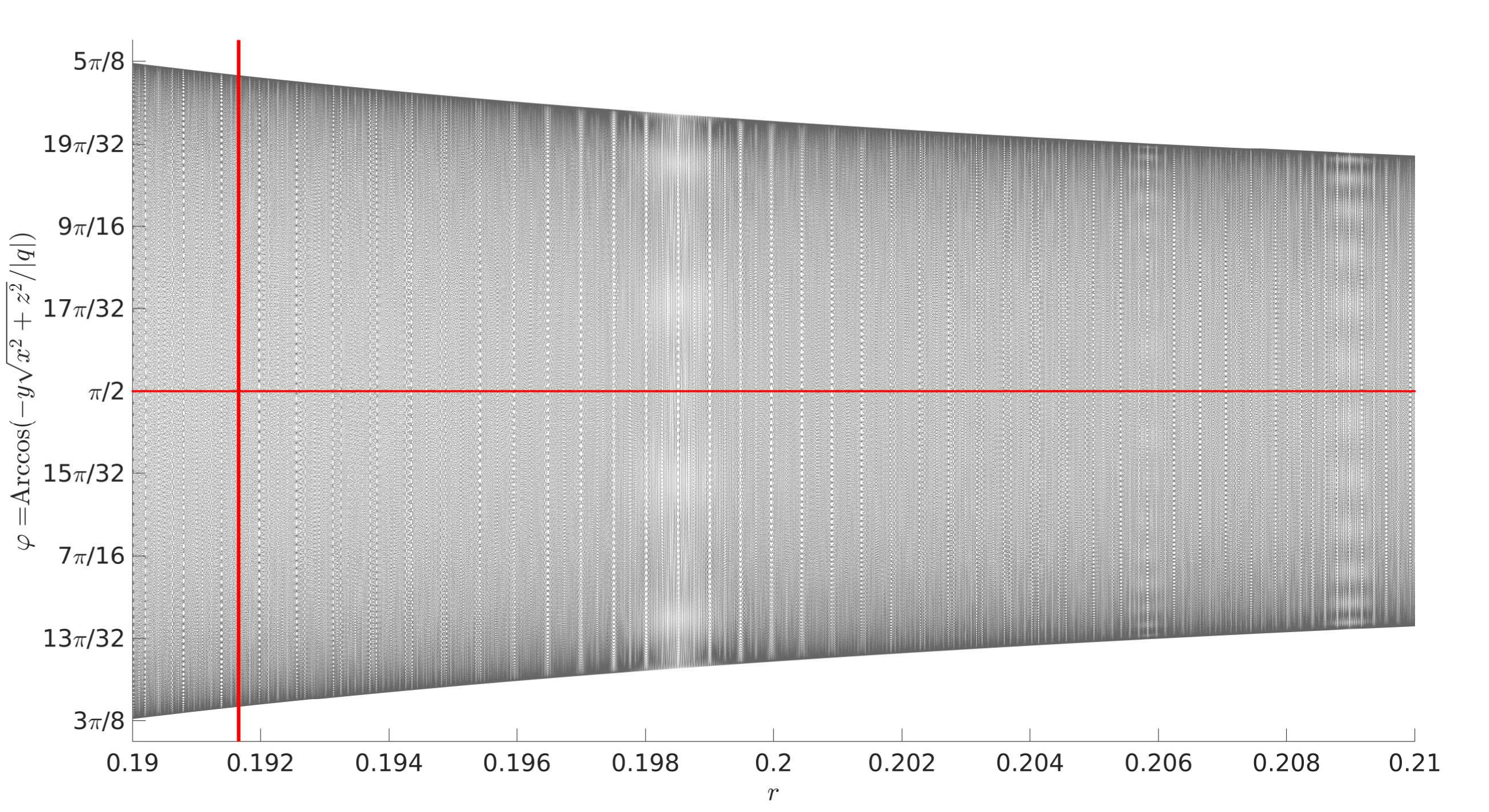} 
\caption{Plot of the tails (last $100$ iterations) of the orbits of $\widehat{\mathfrak{P}}_r$ for $0.19 \leq r \leq 0.21$ after $5000$ iterations.}
\label{FIGUR_r_=__0.19_0.21}
\end{figure}

\paragraph{Indication of existence of periodic orbits.} Finally, besides the union of the similar curves, we observe also the concentration of certains orbits (or of some particular iterations of these orbits) on few accumulation points that seem to be independent from $r$ (the four horizontal lines we can see outside the interval on which the curves accumulate on Figures \ref{FIGUR_r_=0.191_0.195}, \ref{FIGUR_r_=___0.5__0.6} and \ref{FIGUR_r_=___0.1916____0.1917__}, supplemented with the four/eight intermittent lines that appear or disappear depending on the value of $r$). If these accumulation points are not independent from $r$, at least, some orbits seem to partially concentrate on accumulation points that vary slowly with respect to $r$ (the four unions of curved segments outside the accumulation interval on Figure \ref{FIGUR_r_=__0.16_0.56}). In comparison, the similar curves composing the main accumulation interval appear to oscillate very fast as $r$ is varying, as already observed in \cite{DoHR025}.\\
If now we plot the bifurcation diagram slightly above the critical restitution coefficient $r = 3-2\sqrt{2}$ (Figure \ref{FIGUR_r=_0.171_0.175}), we observe such accumulation points, which lie on a grid that becomes thinner and thinner as $r$ converges from above to $3-2\sqrt{2}$ (the vertical red line on Figure \ref{FIGUR_r=_0.171_0.175}). We recover this way the observation already formulated in \cite{DoHR025} (see in particular Figure $9$ and the associated discussion). Such accumulation points seem to be associated to periodic orbits, with a non-trivial basin of attraction, since we observe such an accumulation on the numerical simulations of the bifurcation diagrams. Nevertheless, and contrary to the case when $r < 3-2\sqrt{2}$, it seems that above the critical restitution coefficients already discussed in the literature, stable periodic orbits of the dynamical system $(\mathfrak{P}^n)_n$ coexist with orbits presenting a more complicated dynamics.

\begin{figure}[h!]
\centering
\includegraphics[trim = 0cm 0cm 0cm 0cm, width=1\linewidth]{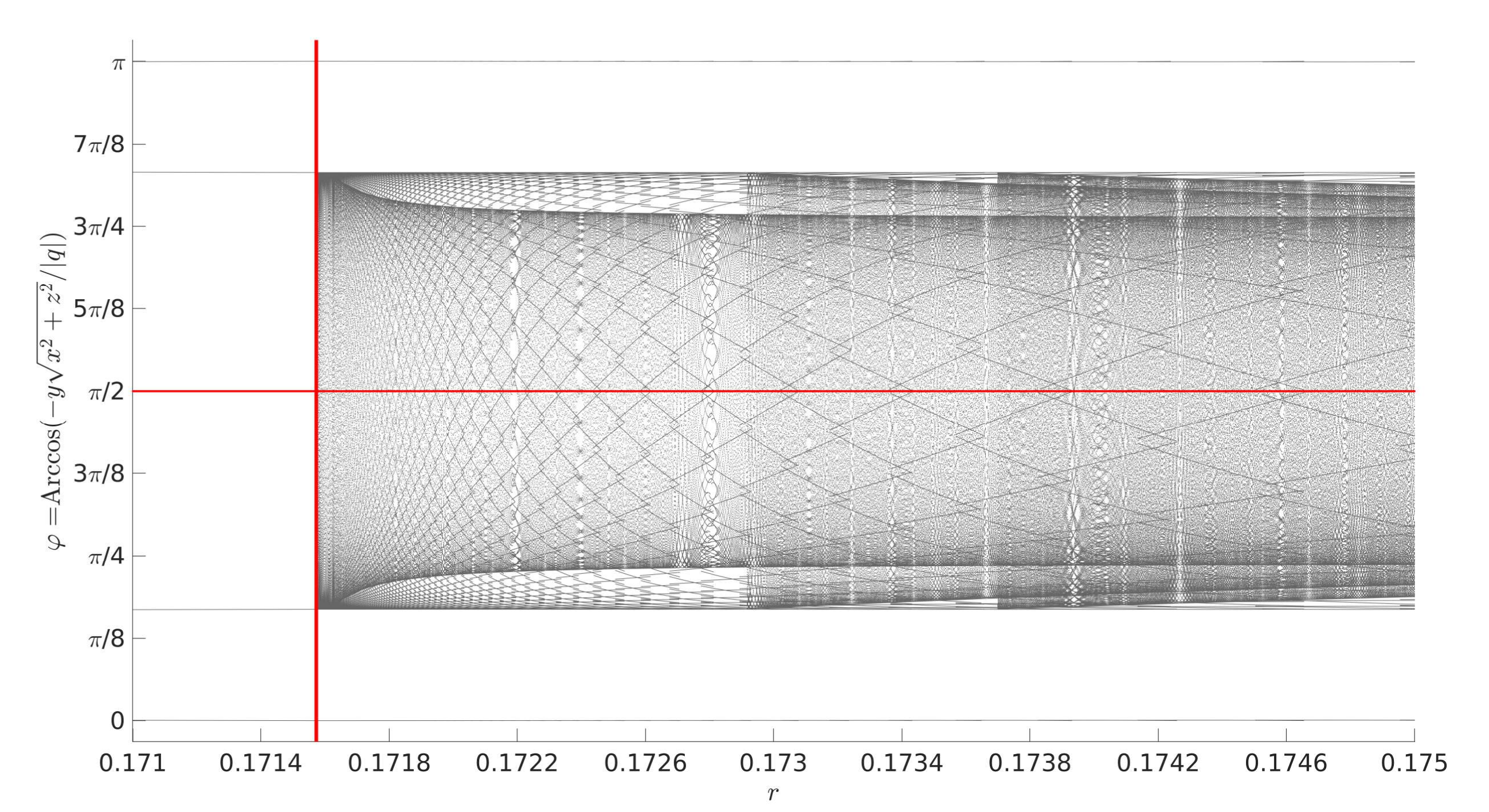} 
\caption{Plot of the tails (last $100$ iterations) of the orbits of $\widehat{\mathfrak{P}}_r$ for $0.171 \leq r \leq 0.175$ after $5000$ iterations.}
\label{FIGUR_r=_0.171_0.175}
\end{figure}

\subsection{Interpretation of the bifurcation diagrams}

\noindent
The fact that the bifurcation diagrams appear to be unions of similar curves (together with the possible periodic orbits) is consistent with the fact that the collision matrix $P_2$ induces a rotation (in appropriate coordinates) on the phase space, which then suggests that some orbits of the dynamical system $\big( \widehat{\mathfrak{P}}_r^n \big)_n$ can be quasi-periodic. This observation, in turn, provides an explanation concerning the onset of the ``thin stripes of stability'' already observed in \cite{DoHR025}. Indeed, for particular choices of $r$, the matrix $P_2$ may induce a rotation with an angle of the form $2\pi l/m$, with $l \in \mathbb{Z}, m \in \mathbb{N}^*$, so that periodic orbits of $\big( \widehat{\mathfrak{P}}^n \big)_n$ would exist for such particular restitution coefficients, provided that these orbits remain inside the domain on which $\mathfrak{P}$ coincides with $P_2$. To be more precise, according to Proposition \ref{PROPOEigen_P_2_}, when $r > 3 - 2\sqrt{2}$ the collision matrix $P_2$ restricted to $\{x = z\}$ can be written as:
\begin{align}
\begin{pmatrix}
r \cos \beta & - r \sin \beta \\
r \sin \beta & r \cos \beta
\end{pmatrix},
\end{align}
where $\cos \beta = - \frac{(1-r)^2}{4r}$. Therefore, choosing $r$ as:
\begin{align}
\label{EQUATThinStripe__r__}
r(l/m) = 1 + 2\xi - 2\sqrt{\xi + \xi^2} \hspace{3mm} \text{with} \hspace{3mm} \xi = \cos\Big( \pi - 2\pi \frac{l}{m} \Big),
\end{align}
with $\frac{1}{4} \leq \frac{l}{m} \leq \frac{3}{4}$, the angle $\beta$ is by construction of the form $2\pi l/m$. It turns out that this observation allows to predict the restitution coefficients associated to thin stripes of stability, as it can be seen on Figure \ref{FIGUR_r=_0.191_0.195_ThinStrip}, in which we reproduce the bifurcation diagram presented in Figure \ref{FIGUR_r_=0.191_0.195}, highlighting in addition all the restitution coefficients of the form \eqref{EQUATThinStripe__r__}, for $1 \leq m \leq 120$ (represented as vertical blue lines).\\
Therefore, due to this very likely quasi-periodic behaviour, it seems that chaos does not take place for $r \geq 3-2\sqrt{2}$ (since quasi-periodic orbits with different initial data would not diverge).

\begin{figure}[h!]
\centering
\includegraphics[trim = 0cm 0cm 0cm 0cm, width=1\linewidth]{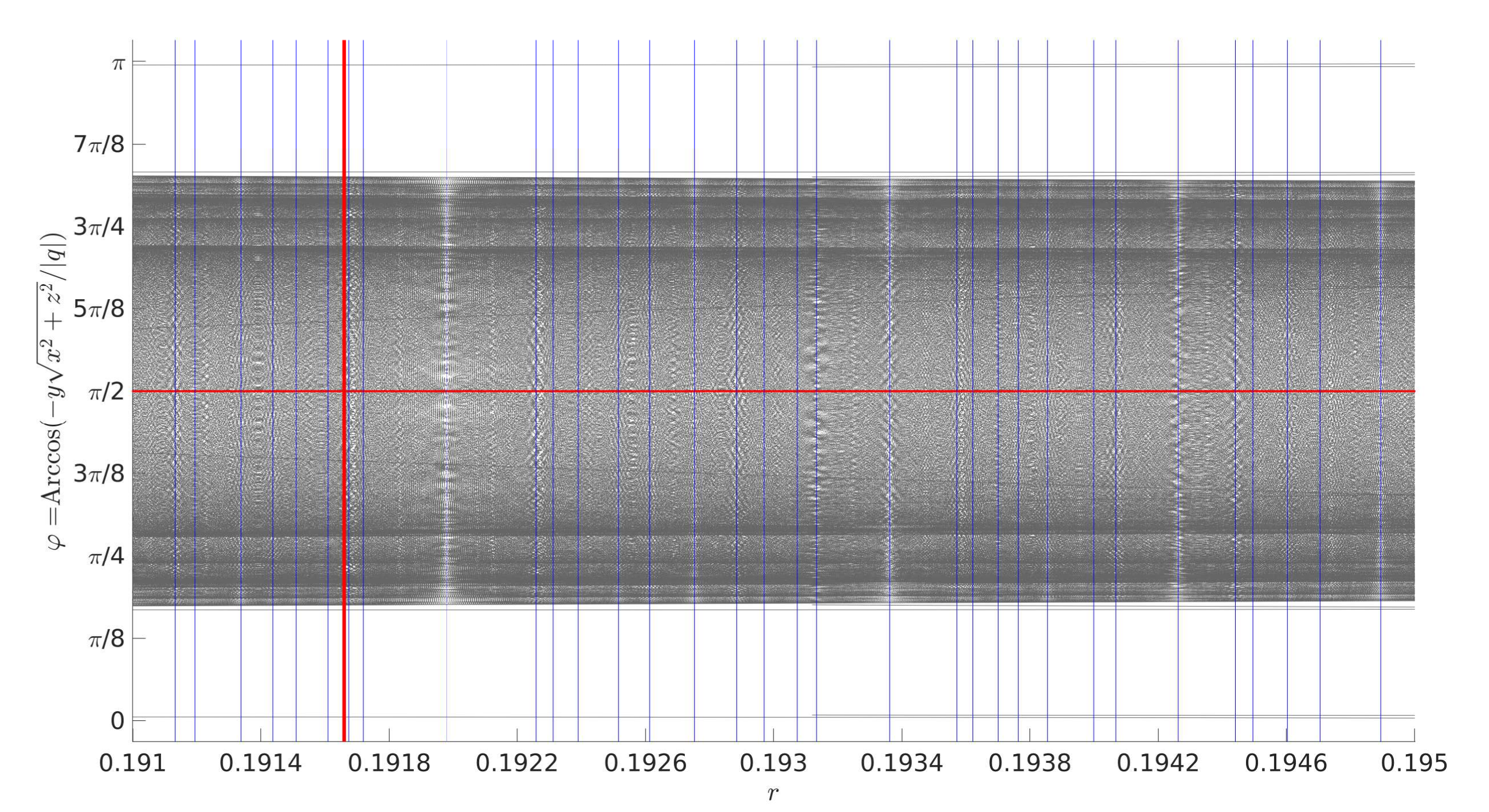} 
\caption{Plot of the tails (last $100$ iterations) of the orbits of $\widehat{\mathfrak{P}}_r$ for $0.191 \leq r \leq 0.195$ after $5000$ iterations, together with the restitution coefficients of the form \eqref{EQUATThinStripe__r__}, with $1 \leq m \leq 120$.}
\label{FIGUR_r=_0.191_0.195_ThinStrip}
\end{figure}

\subsection{Individual simulations of orbits, at $r$ fixed}
\label{SSECTSimulIndivOrbit}

\noindent
The bifurcation diagrams obtained in Figures \ref{FIGUR_r_=__0.16_0.56}-\ref{FIGUR_r=_0.171_0.175} suggest that the dynamical system $\big( \widehat{\mathfrak{P}}_r^n \big)_n$ admits stable periodic orbits, although their respective basins of attraction seem to be in general strict subsets of the phase space. In this section, we will represent, at $r$ fixed, individual orbits of the dynamical system $\big( \widehat{\mathfrak{P}}_r^n \big)_n$. Contrary to the bifurcation diagrams, we will not represent the orbits via one-dimensional projections, but we will represent the orbits completely. This procedure will allow us to detect and describe precisely periodic trajectories.

\paragraph{Two-dimensional representation of the orbits.} The fact that the dynamical system $\widehat{\mathfrak{P}}_r$ is acting on the projective plane $\mathbb{P}_2(\mathbb{R})$ allows a complete representation of its orbits: only two real variables are necessary to represent an element of $\mathbb{P}_2(\mathbb{R})$. We will proceed as follows. Denoting by $u(n) = \mathfrak{P}^n\big(u(0)\big) \in \mathbb{R}^3$ the $n$-th iteration of the mapping $\mathfrak{P}$, acting on an initial datum $u(0) \in \mathbb{R}^3$, and whose only relevant information is the orientation, we will represent the orbits of $\big( \widehat{\mathfrak{P}}_r^n \big)_n$ via the action of $\mathfrak{P}$ on the plane $\{x-z = 1\}$. More precisely, if we write the $n$-th iteration $u(n) = \big(u_x(n),u_y(n),u_z(n)\big)$ of a given orbit in coordinates, we will consider the intersection of $\text{Span}\big( u(n) \big)$ with $\{x-z = 1\}$, that is represented in the two-dimensional strip $]-\infty,+\infty[\, \times [-1,1]$ as:
\begin{align}
\label{EQUATFormuRepre_2_d_}
\frac{1}{u_x(n)-u_z(n)} \big( u_y(n), u_x(n)+u_z(n) \big).
\end{align}
Up to a rescaling that we will describe, the two-dimensional strip corresponds to the intersection between the plane $\{x-z = 1\}$ and the domain $x \geq 0, z \leq 0$ of the mapping $\mathfrak{P}$. We will denote by $(w_1,w_2)$ the coordinates of a generic point in the strip. \eqref{EQUATFormuRepre_2_d_} is obtained as follows: we will represent the plane $\{x-z=1\}$ (subset of $\mathbb{R}^3$) such that the three-dimensional axis along the $y$ direction is the first component in the planar representation (that is, in the strip). The intersection between $\text{Span}\big( u(n) \big)$ and $\{x-z = 1\}$ is the vector:
\begin{align}
\frac{1}{u_x(n)-u_z(n)} u(n),
\end{align}
providing directly the first component of \eqref{EQUATFormuRepre_2_d_}. The second component is the signed distance between $\frac{1}{u_x(n)-u_z(n)} \big( u_x(n),u_y(n),u_z(n) \big)$ and $\big(1/2,\frac{u_y(n)}{u_x(n)-u_z(n)},-1/2\big)$, given by $\frac{\sqrt{2}}{2}\big(u_x(n) + u_z(n)\big)$. The intersection between the domain $x \geq 0, z \leq 0$ and the plane $\{x-z = 1\}$ is a strip of width $\sqrt{2}$, for the sake of simplicity, we will rescale the second component in the strip so that it ranges between $-1$ and $1$, providing the second coordinate in \eqref{EQUATFormuRepre_2_d_}.\\
In the strip $]-\infty,+\infty[\, \times [-1,1]$, representing the plane $\{ x-z = 1 \}$, there are three remarkable segments:
\begin{itemize}
\item $\{w_1 = 0\}$, corresponding to the intersection of the three-dimensional planes $\{x-z = 1\}$ and $\{ y = 0 \}$ in $\mathbb{R}^3$: if $w_1 > 0$, that is, if $y > 0$, the first collision to take place from the initial configuration $^t (x,y,z)$ is of type $\mathfrak{a}$, and of type $\mathfrak{c}$ if $w_1 < 0$.
\item $\{ w_2 = 2\alpha w_1 - 1\}$, corresponding to the intersection of the three dimensional planes $\{x-z = 1\}$ and $\{ \alpha y - x = 0\}$: if $w_2 < 2\alpha w_1 - 1$, the two first collisions to take place after the initial configuration are $\mathfrak{ab}$ in this order, and if $w_2 > 2\alpha w_1 - 1$ and if $w_1 > 0$, the three first collisions to take place are $\mathfrak{acb}$ in this order,
\item $\{ w_2 = 2\alpha w_1 + 1\}$, corresponding to the intersection of the three dimensional planes $\{x-z = 1\}$ and  $\{ \alpha y - z = 0\}$: if $w_2 > 2\alpha w_1 + 1$, the two first collisions to take place after the initial configuration are $\mathfrak{cb}$ in this order, and if $w_2 < 2\alpha w_1 + 1$ and if $w_1 < 0$, the three first collisions to take place are $\mathfrak{cab}$ in this order.
\end{itemize}
In other words, the domain on which $\mathfrak{P}$ coincides with $P_2$ (see Proposition \ref{THEOR__P__FukngLinea}) is a parallelogram centered in the strip, contained between the two segments $\{ w_2 = 2\alpha w_1 \mp 1\}$. In the parallelogram, an iteration of $\mathfrak{P}$ describes either the three consecutive collisions $\mathfrak{acb}$ (if $w_1 > 0$) or $\mathfrak{cab}$ (if $w_1 < 0$). On the right hand side of the parallelogram, $\mathfrak{P}$ coincides with $P_1$ (one iteration describing the two consecutive collisions $\mathfrak{ab}$), and with $P_3$ on the left hand side (one iteration describing the collisions $\mathfrak{cb}$).

\paragraph{Notation $1,2,3$ for the collision sequences.} In order to discuss in an efficient manner the collision sequences, we introduce the following notation: $1$ will denote the two consecutive collisions $\mathfrak{ab}$ ($\mathfrak{P}$ coincides with $P_1$), $3$ the two consecutive collisions $\mathfrak{cb}$ ($\mathfrak{P}$ coincides with $P_3$), and $2$, indistinctly, the three consecutive collisions $\mathfrak{acb}$ or $\mathfrak{cab}$ ($\mathfrak{P}$ coincides with $P_2$).\\
For example, the sequence $1133$ stands for the collision sequence $(\mathfrak{ab})(\mathfrak{ab})(\mathfrak{cb})(\mathfrak{cb})$.

\paragraph{First orbits, for $r = 0.2$.} We start with the representation of ten orbits, in the particular case when $r = 0.2$, that is, slightly above any restitution coefficient associated to a feasible pattern discussed in the literature. To produce each of these orbits, we choose randomly the initial configuration in the strip, we compute $5000$ iterations of $\mathfrak{P}$, and we represent the last $2000$ iterations. We obtain Figure \ref{FIGUR10Orb_r=.2}.

\begin{figure}[h!]
\centering
\begin{subfigure}{0.45\textwidth}
    \includegraphics[trim = 0cm 0cm 0cm 0cm, width=\linewidth]{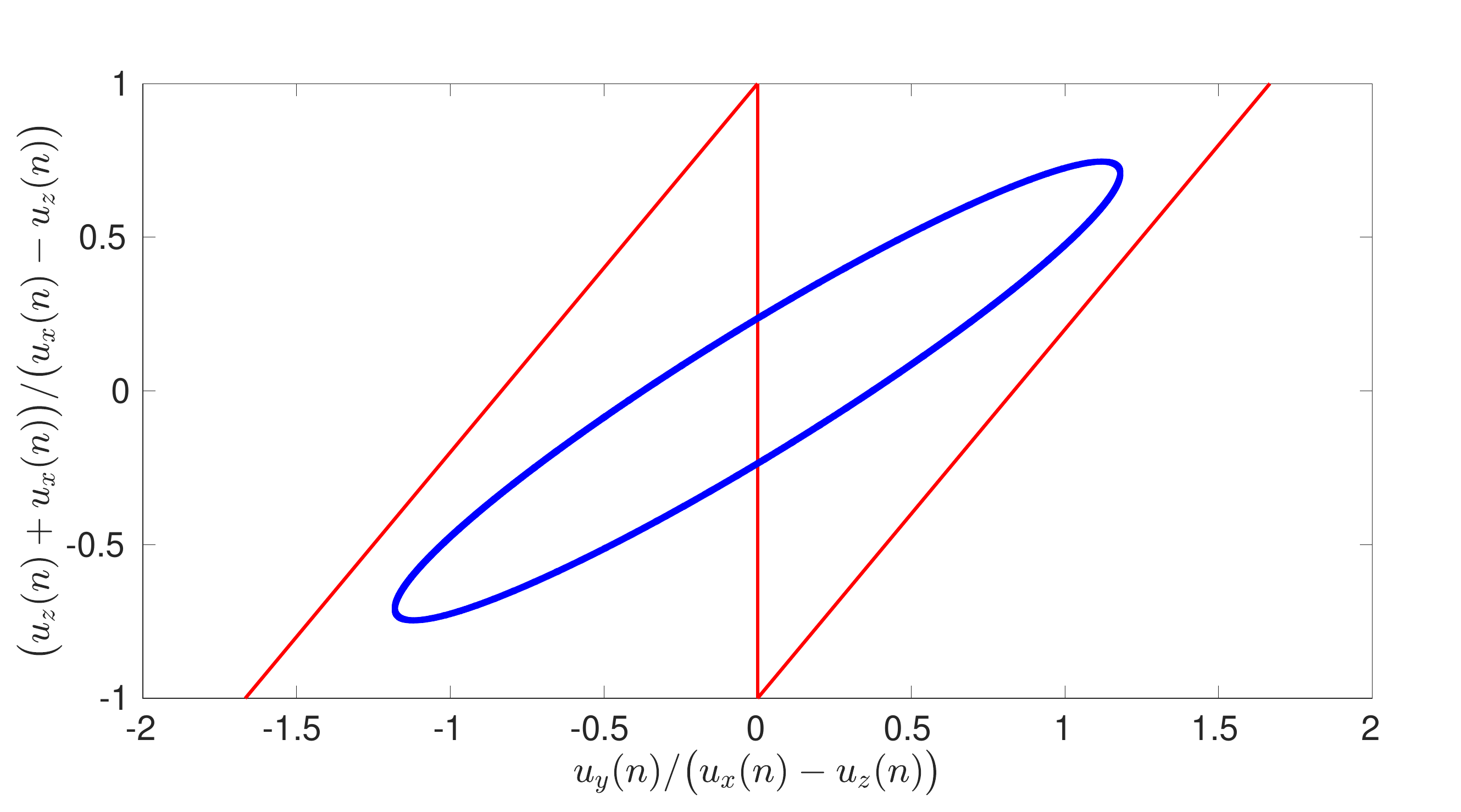}
    \caption{First simulation, quasi-periodic orbit.}
    \label{FIGURSngl1}
\end{subfigure}
\hfill
\begin{subfigure}{0.45\textwidth}
    \includegraphics[trim = 0cm 0cm 0cm 0cm, width=\linewidth]{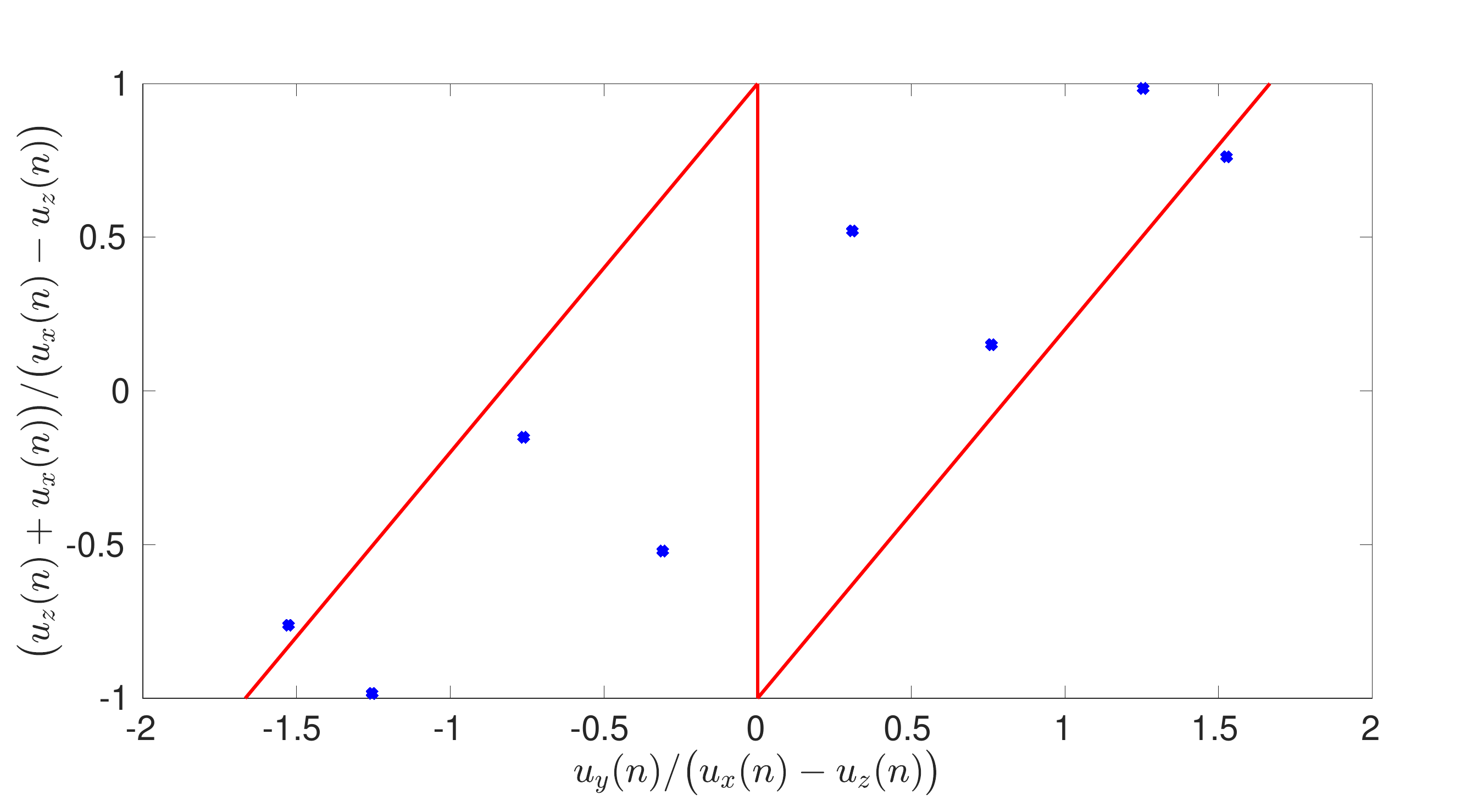}
    \caption{Second simulation, periodic orbit.}
    \label{FIGURSngl2}
\end{subfigure}

\centering
\begin{subfigure}{0.45\textwidth}
    \includegraphics[trim = 0cm 0cm 0cm 0cm, width=\linewidth]{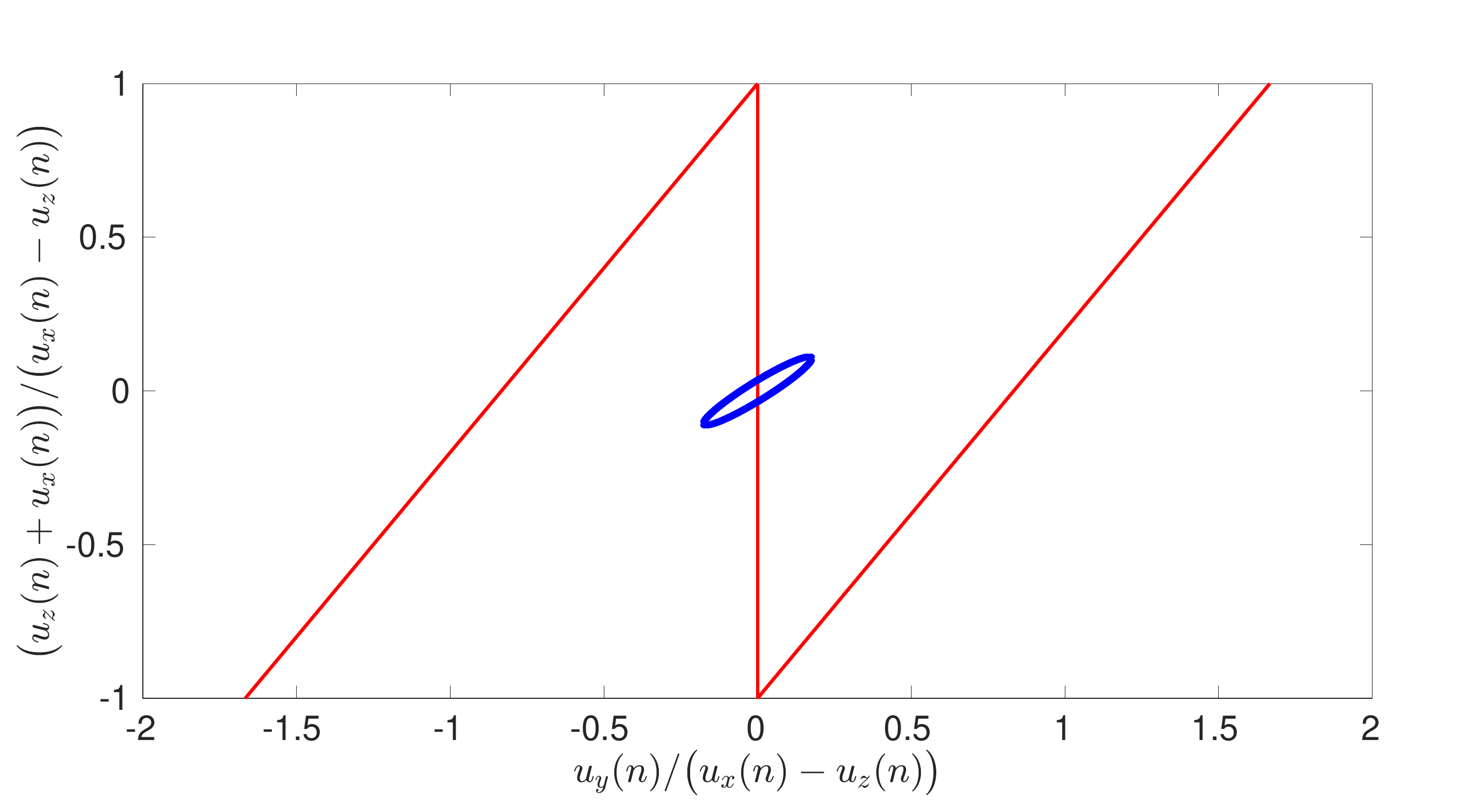}
    \caption{Third simulation, quasi-periodic orbit.}
    \label{FIGURSngl3}
\end{subfigure}
\hfill
\begin{subfigure}{0.45\textwidth}
    \includegraphics[trim = 0cm 0cm 0cm 0cm, width=\linewidth]{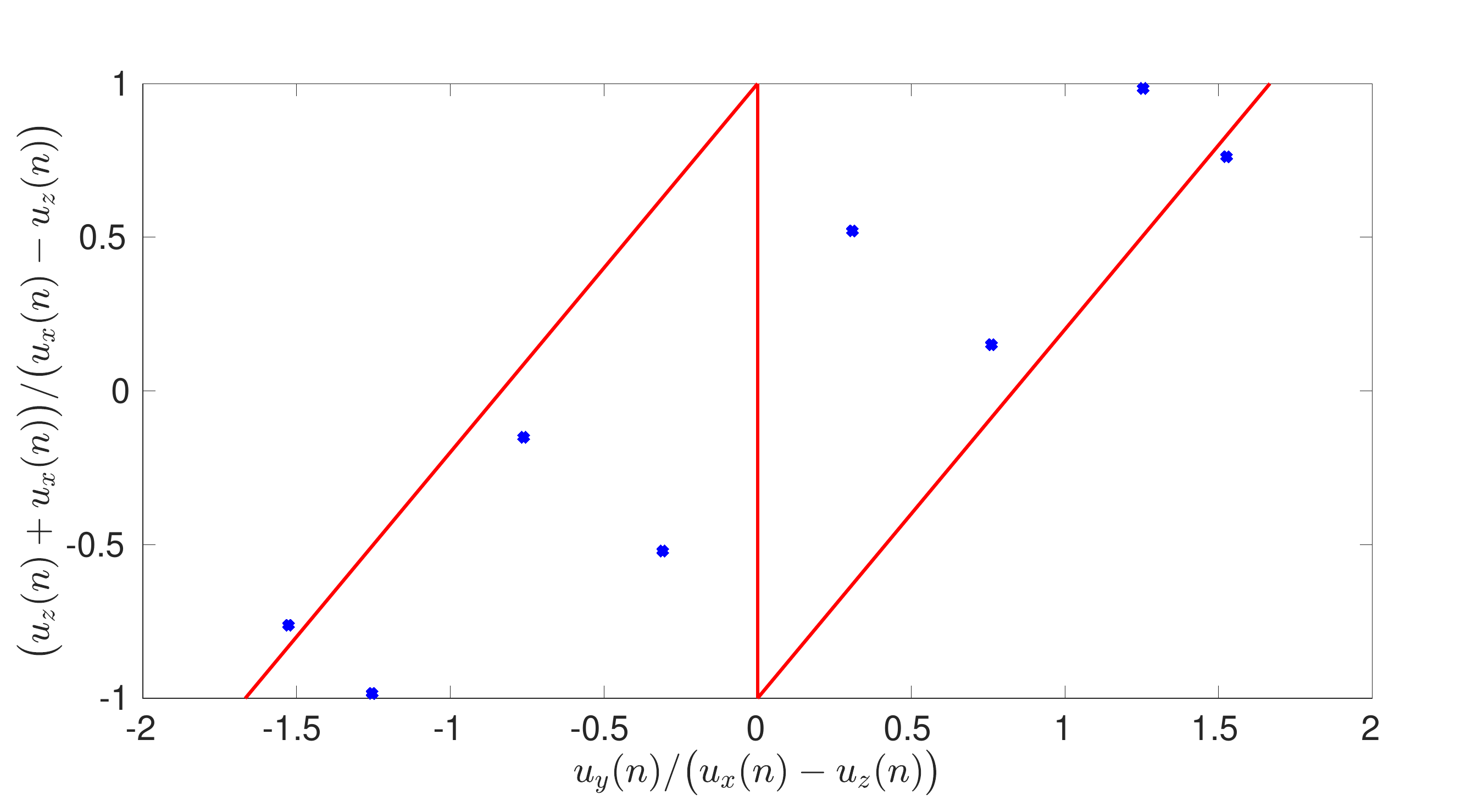}
    \caption{Fourth simulation, periodic orbit.}
    \label{FIGURSngl4}
\end{subfigure}

\centering
\begin{subfigure}{0.45\textwidth}
    \includegraphics[trim = 0cm 0cm 0cm 0cm, width=\linewidth]{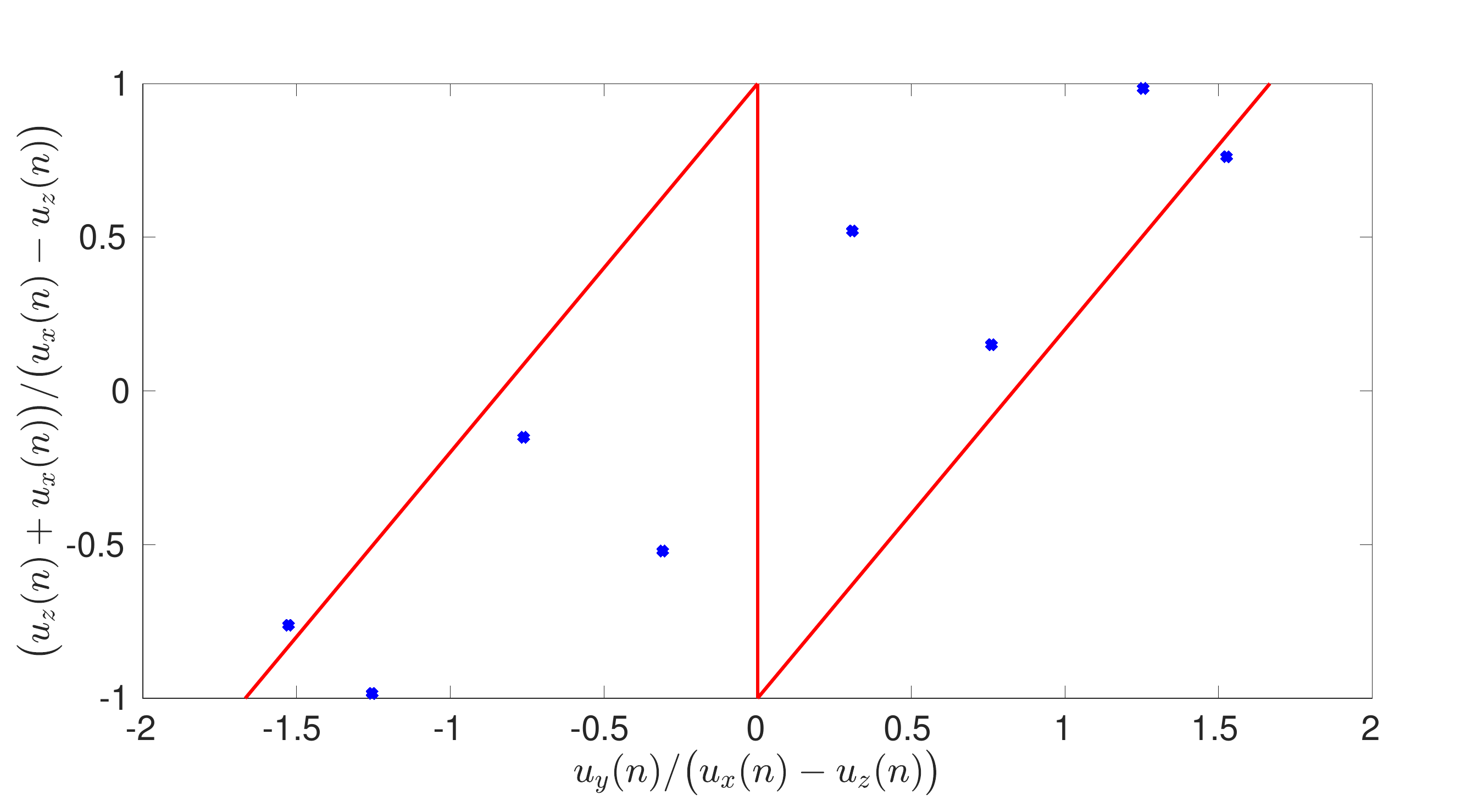}
    \caption{Fifth simulation, periodic orbit.}
    \label{FIGURSngl5}
\end{subfigure}
\hfill
\begin{subfigure}{0.45\textwidth}
    \includegraphics[trim = 0cm 0cm 0cm 0cm, width=\linewidth]{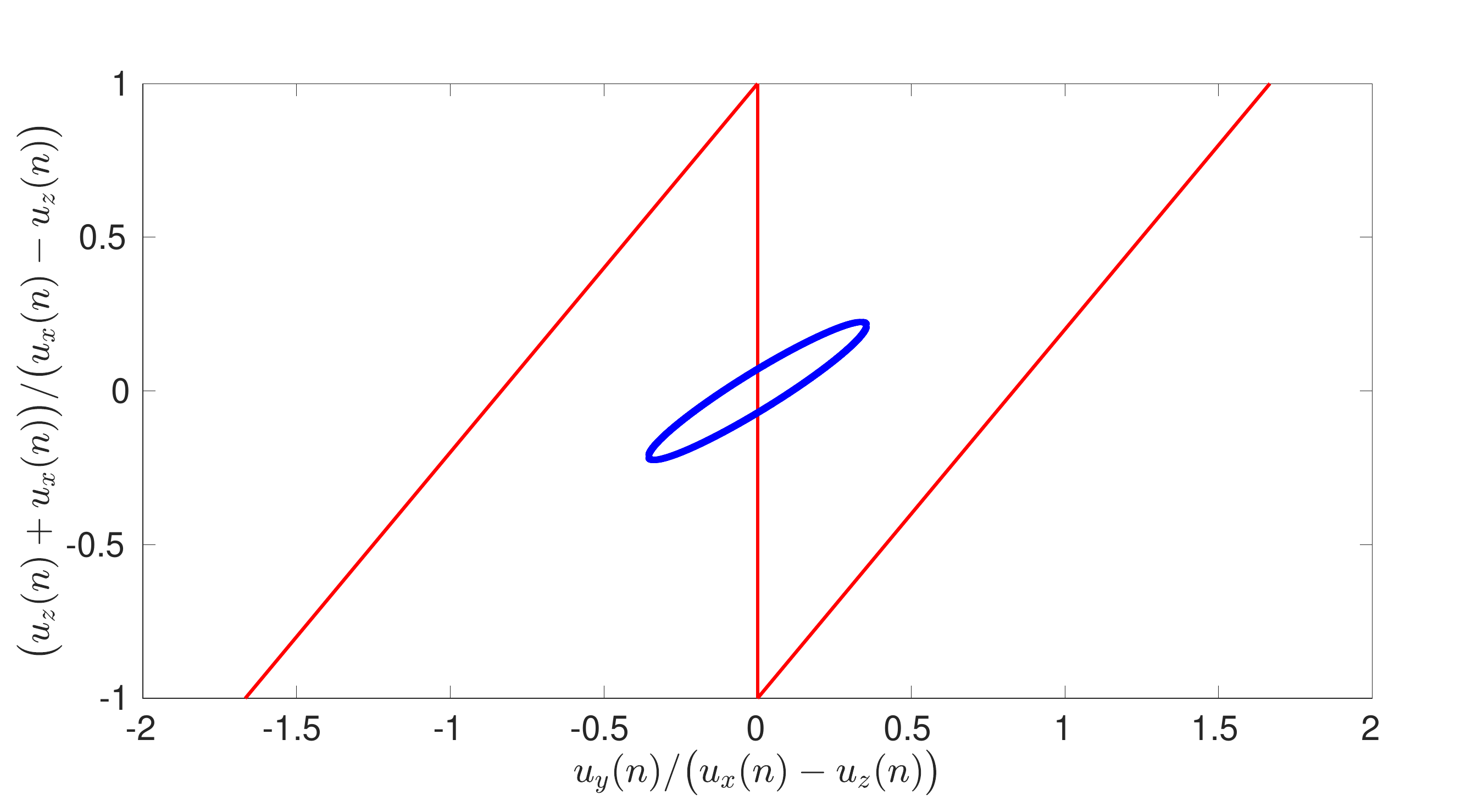}
    \caption{Sixth simulation, quasi-periodic orbit.}
    \label{FIGURSngl6}
\end{subfigure}

\centering
\begin{subfigure}{0.45\textwidth}
    \includegraphics[trim = 0cm 0cm 0cm 0cm, width=\linewidth]{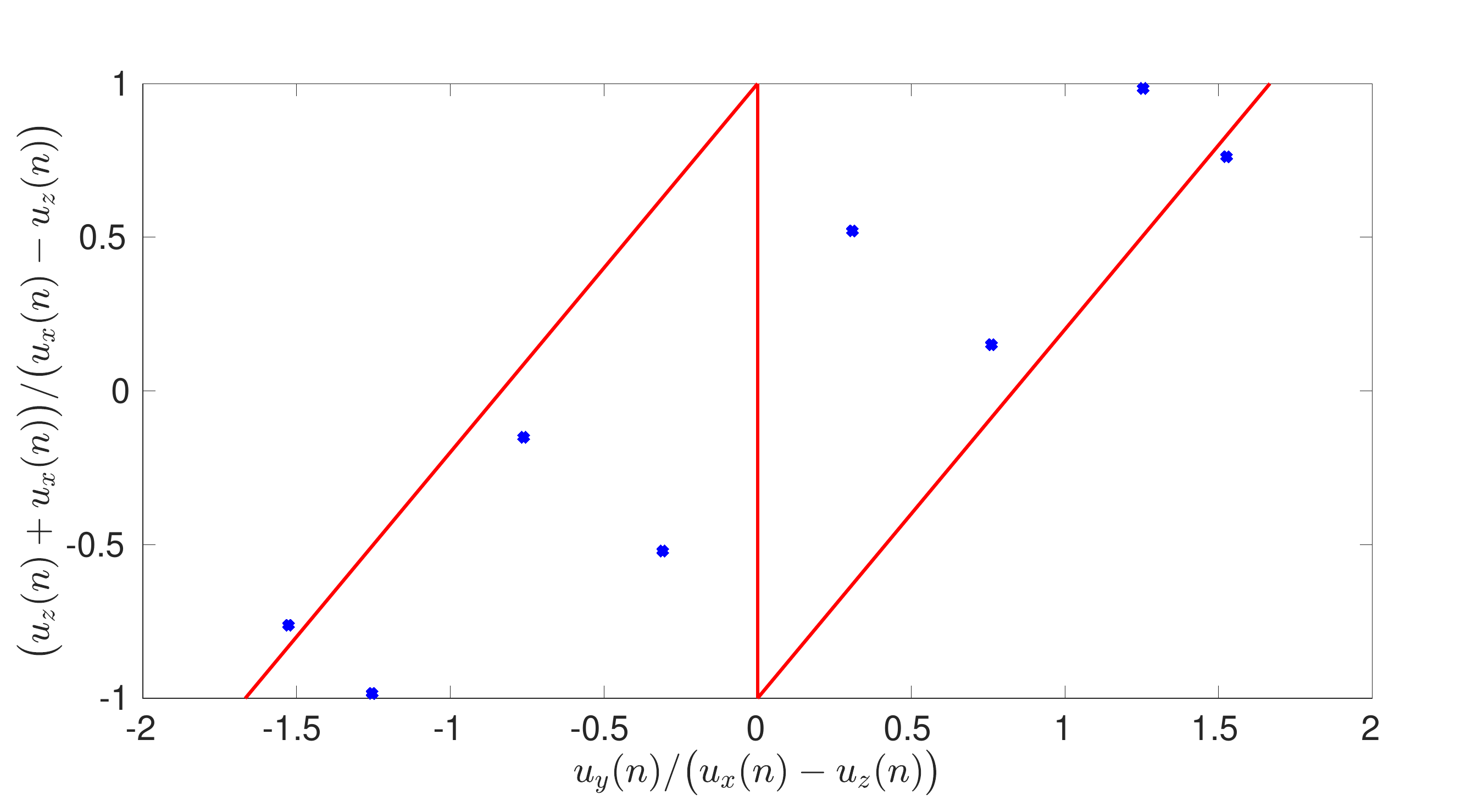}
    \caption{Seventh simulation, periodic orbit.}
    \label{FIGURSngl7}
\end{subfigure}
\hfill
\begin{subfigure}{0.45\textwidth}
    \includegraphics[trim = 0cm 0cm 0cm 0cm, width=\linewidth]{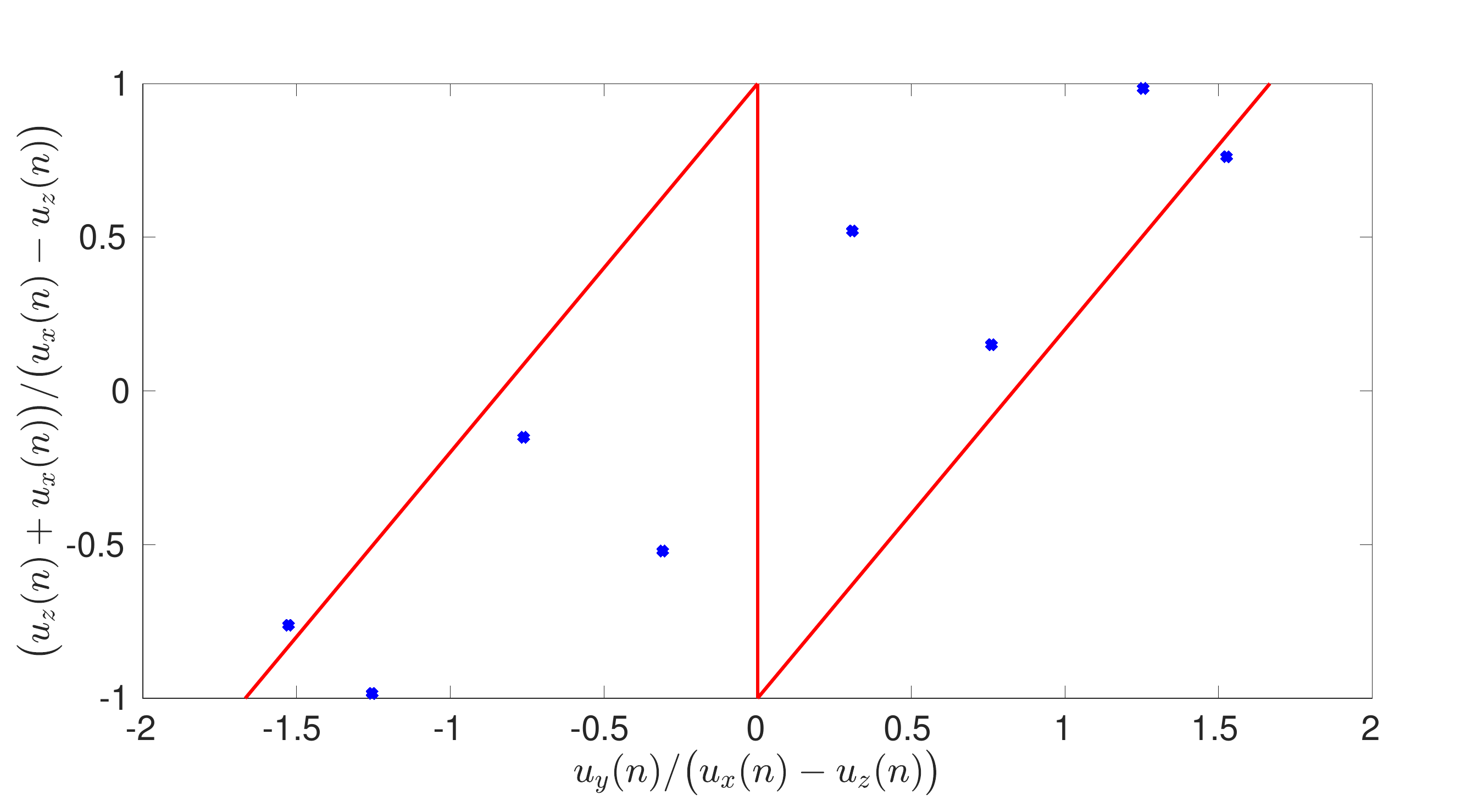}
    \caption{Eighth simulation, periodic orbit.}
    \label{FIGURSngl8}
\end{subfigure}

\centering
\begin{subfigure}{0.45\textwidth}
    \includegraphics[trim = 0cm 0cm 0cm 0cm, width=\linewidth]{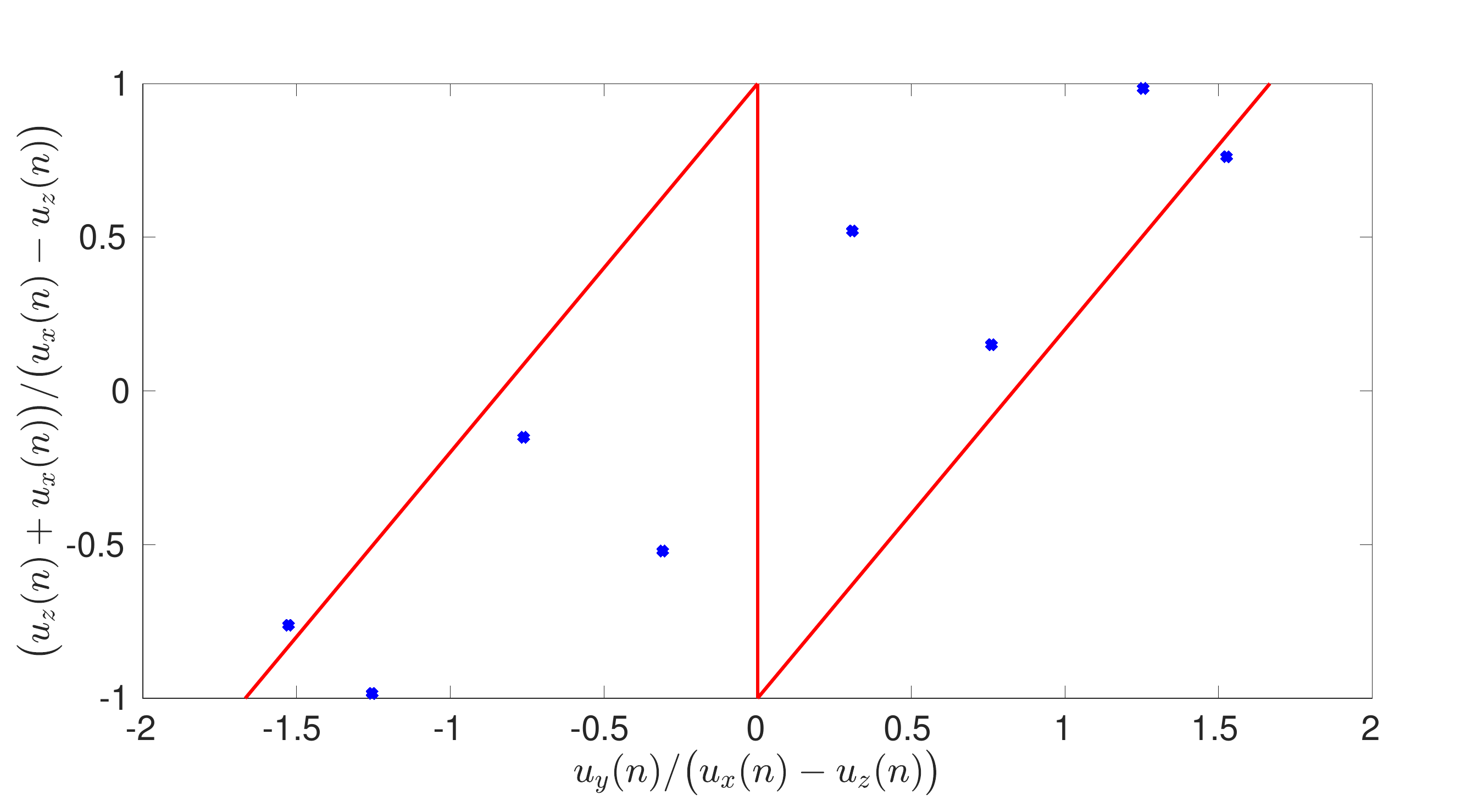}
    \caption{Ninth simulation, periodic orbit.}
    \label{FIGURSngl9}
\end{subfigure}
\hfill
\begin{subfigure}{0.45\textwidth}
    \includegraphics[trim = 0cm 0cm 0cm 0cm, width=\linewidth]{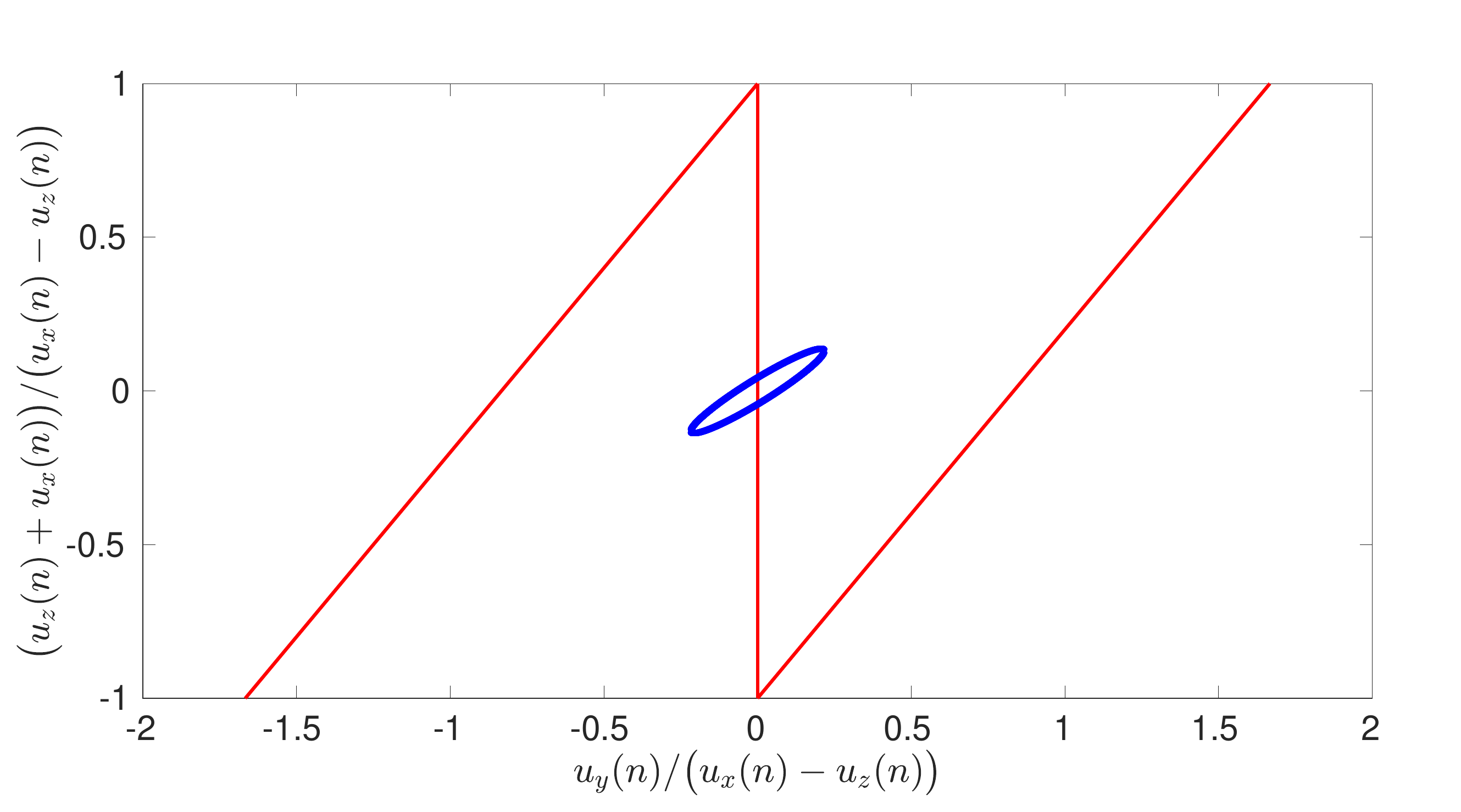}
    \caption{Tenth simulation, quasi-periodic orbit.}
    \label{FIGURSngl10}
\end{subfigure}
\caption{Plot of the tails (last $2000$ iterations) of the orbits of $\widehat{\mathfrak{P}}_r$ after $5000$ iterations, for $ r = 0.2$.}
\label{FIGUR10Orb_r=.2}
\end{figure}

\noindent
On the ten simulations, six of them exhibit orbits that seem to accumulate on a finite number of points, this set of points being independent from the initial datum (Figures \ref{FIGURSngl2}, \ref{FIGURSngl4}, \ref{FIGURSngl5}, \ref{FIGURSngl7}, \ref{FIGURSngl8} and \ref{FIGURSngl9}). In the case of these figures, the accumulation set seems to be composed of $8$ points. Nevertheless, when considering the trajectory, we discern the following period:
\begin{align}
1322231222.
\end{align}
In particular, at each period, the domains $\{w_2 < 2\alpha w_1 - 1\}$ and $\{w_2 > 2\alpha w_1 + 1\}$ are both visited twice by the seemingly periodic orbits. It is indeed the case: one has to notice that the orbits present two points per period with large $w_1$ coordinate (in absolute value) with respect to the other points, as it can be seen on Figure \ref{FIGURSngl2Resca}, which is another plot of the orbit obtained in \ref{FIGURSngl2}, with $-30 \leq w_1 \leq 30$, instead of $-2 \leq w_1 \leq 2$. 

\begin{figure}[h!]
\centering
\includegraphics[trim = 0cm 0cm 0cm 0cm, width=0.5\linewidth]{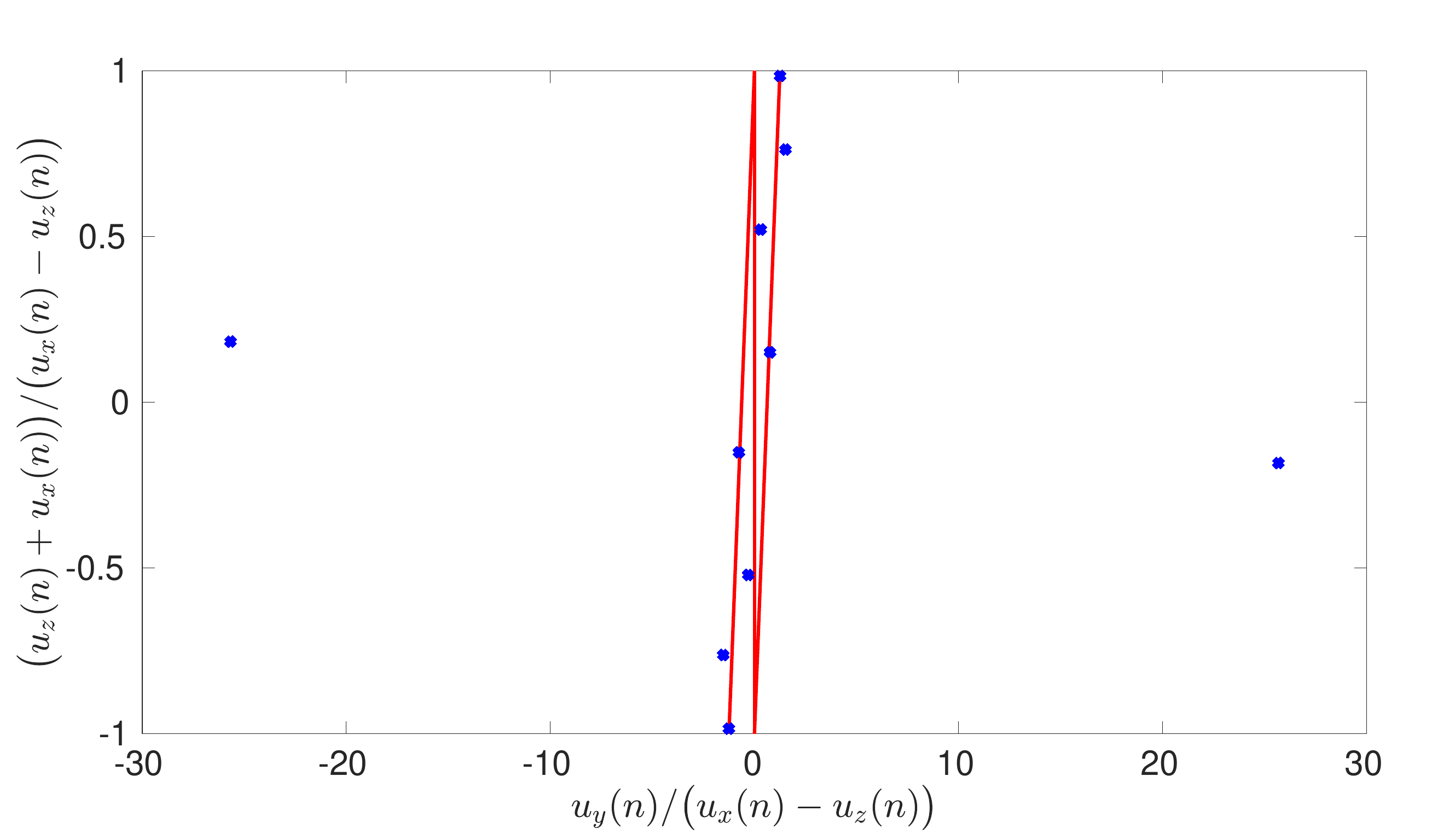} 
\caption{Rescaled plot of the orbit represented on Figure \ref{FIGURSngl2}.}
\label{FIGURSngl2Resca}
\end{figure}

\noindent
We detected therefore a candidate for a new periodic orbit, which was never discussed in the literature to the best of our knowledge, given by the collision sequence:

\begin{align}
\label{EQUATColli13(2^3)31(2^3)}
(\mathfrak{ab})(\mathfrak{cb})(\mathfrak{acb})(\mathfrak{acb})(\mathfrak{acb})(\mathfrak{cb})(\mathfrak{ab})(\mathfrak{acb})(\mathfrak{acb})(\mathfrak{acb}).
\end{align}

\noindent
Observe that this collision sequence is composed with $26$ consecutive collisions. The length of the collision sequence made it very hard to detect by brute force methods: indeed, without knowing a priori the length and the composition of a candidate periodic collision sequence, there are too many possible cases to test.\\
We observe also that the writing \eqref{EQUATColli13(2^3)31(2^3)} of the collision sequence might be inaccurate and is presented only for illustrative purposes: we did not differentiate between the collision sequences $\mathfrak{acb}$ and $\mathfrak{cab}$ in the recording of the trajectories, as they both correspond to an iteration of $\mathfrak{P}$ in the form of the matrix $P_2$. We remark that this non-differentiation occurs also at the level of the original particle system, at least, concerning the evolution of the velocity vector along the different collisions, since the two collision matrices $A$ and $C$ commute.

\paragraph{Individual orbits, for different restitution coefficients.} We repeated the procedure we described in the previous paragraph for several values of the restitution coefficient, focusing on the ``critical values'', that is, around $3-2\sqrt{2} \simeq 0.1716$ ($0.172 \leq r \leq 0.18$) and $0.1917$ ($0.19 \leq r \leq 0.25$), but not only: the interval $[0.3,0.34]$ presents interestingly pronounced thin stripes of stability, and the interval $[0.5,0.6]$ contains the value $2^{2/3}-1 \simeq 0.5874\ 0105\ 1968$, which is known to be an upper bound on the restitution coefficient, above which no collapse can take place in the original one-dimensional four particle system (see \cite{BeCa999} for a proof of this result).\\
Here again, at $r$ fixed we simulated ten different orbits, all generated from initial data randomly chosen in the strip. For each of the trajectories, we computed $5000$ iterations of $\widehat{\mathfrak{P}}_r$, and represented the last $2000$. When the orbits seemed to accumulate on only a finite number of points, we considered that a periodic orbit was detected. The complete description of our investigations is summarized in Table \ref{TABLEPerioPatteDetec}.\\

\begin{table}[h!]
\vspace{1cm}
\hspace{-3cm}
\begin{center}
  \begin{tabular}{c|c|c|c} 
    \toprule
    $r$ & Periodic orbits (\%) & Patterns detected & Length of the patterns \\
    \toprule
    0.1 & 100\% & $111333$ & 12\\
    0.101 & 50\% & $111333$ & 12 \\
    0.102 & 0\% & none & -- \\
    0.103 & 0\% & none & -- \\
    \midrule
    0.15 & 100\% & $1133$ & 8 \\
    0.16 & 100\% & $1133$ & 8 \\
    0.17 & 100\% & $1133$ & 8 \\
    \midrule
    0.172 & 90\% & $132^{36}$,\ $312^{36}$ & 112 \\
    0.174 & 70\% & $132^{14}$,\ $312^{14}$ & 46 \\
    0.176 & 70\% & $132^{10}$,\ $312^{10}$ & 34 \\
    0.178 & 70\% & $132^8$,\ $312^8$ & 28 \\
    0.18 & 70\% & $132^7312^7$ & 50 \\
    \midrule
    0.19 & 70\% & $132^4$,\ $312^4$ & 16 \\
    0.2 & 60\% & $132^3312^3$ & 26 \\
    0.21 & 50\% & $132^2$,\ $312^2$,\ $132^3312^3$ & 10,\ 26 \\
    0.22 & 50\% & $132^2$,\ $312^2$ & 10 \\
    0.23 & 40\% & $132^2$,\ $312^2$ & 10 \\
    0.24 & 50\% & $132^2$,\ $312^2$,\ $132312$ & 10,\ 14 \\
    0.25 & 40\% & $312^2$,\ $132312$ & 10,\ 14 \\
    \midrule
    0.3 & 30\% & $132312$ & 14 \\
    0.31 & 40\% & $132312$ & 14 \\
    0.32 & 20\% & $132312$ & 14 \\
    0.33 & 40\% & $132312$ & 14 \\
    0.34 & 30\% & $132312$ & 14 \\
    \midrule
    0.5 & 30\% & $132312$ & 14 \\
    0.51 & 30\% & $132312$ & 14 \\
    0.52 & 10\% & $132312$ & 14 \\
    0.53 & 30\% & $132312$ & 14 \\
    0.54 & 10\% & $132312$ & 14 \\
    \midrule
    0.57 & 10\% & $132312$ & 14 \\
    0.58 & 30\% & $132312$, $131312^3313132^3$ & 14,\ 38 \\
    0.59 & 30\% & $132312$ & 14 \\
    0.6 & 10\% & $132312$ & 14 \\
    \bottomrule
  \end{tabular}
   \caption{Periodic patterns detected along the two-dimensional simulations of individual orbits of $\big( \widehat{\mathfrak{P}}_r^n \big)_n$.}
  \label{TABLEPerioPatteDetec}
\end{center}
\end{table}

\noindent
The orbits obtained for $0.1 \leq r \leq 0.103$ and $0.15 \leq r \leq 0.17$ are consistent with the results of \cite{CDKK999} and \cite{HuRo023}: we identified the familiar periodic patterns $(\mathfrak{ab})^2(\mathfrak{cb})^2$ and $(\mathfrak{ab})^3(\mathfrak{cb})^3$. For these ranges of restitution coefficients, we mention also that in the case when these patterns were not detected, we were unable to discover any regularity in the order of the collisions. This last observation supports once again the conjecture of the existence of a chaotic behaviour of the dynamical system between the windows of stability of the patterns $(\mathfrak{ab})^n(\mathfrak{cb})^n$.\\
\newline
In addition to the new pattern $132^3312^3$ already discussed in the previous paragraph concerning $r = 0.2$, our numerical investigations provided other new patterns, that we can regroup in three distinct families:

\begin{align}
\begin{split}
13&2^{n_1} \hspace{3mm} \text{(and its ``symmetry'' } 312^{n_1}\text{)}, \hspace{10mm} n_1 \in \mathbb{N}^*, \label{EQUATPattr_AntiPalin}
\end{split}\\
\begin{split}
132^{n_2}&312^{n_2}, \hspace{52.5mm} n_2 \in \mathbb{N}^*, \label{EQUATPattrPalinSimpl}
\end{split}\\
\begin{split}
131312^{n_3}&313132^{n_3}, \hspace{47.35mm} n_3 \in \mathbb{N}^* \label{EQUATPattrPalinCmplx}.
\end{split}
\end{align}

\noindent
More precisely, we found the pattern \eqref{EQUATPattr_AntiPalin} for the particular values of $n_1 = 2$, $4$, $8$, $10$, $14$ and $36$. We found the pattern \eqref{EQUATPattrPalinSimpl} for the particular values of $n_2 = 1$, $3$ and $7$. Finally, we detected the more complex pattern \eqref{EQUATPattrPalinCmplx} only for the particular value of $n_3 = 3$.\\
In order to discuss the question of the larger values of the exponents $n_i$, it is interesting to consider the case when $r$ is taken close to the critical value $3-2\sqrt{2}$. Taking in particular $r = 0.1717$, we obtain the orbit represented on Figure \ref{FIGURSingl_r=0.1717_}.

\begin{figure}[h!]
\centering
\includegraphics[trim = 0cm 0cm 0cm 0cm, width=0.75\linewidth]{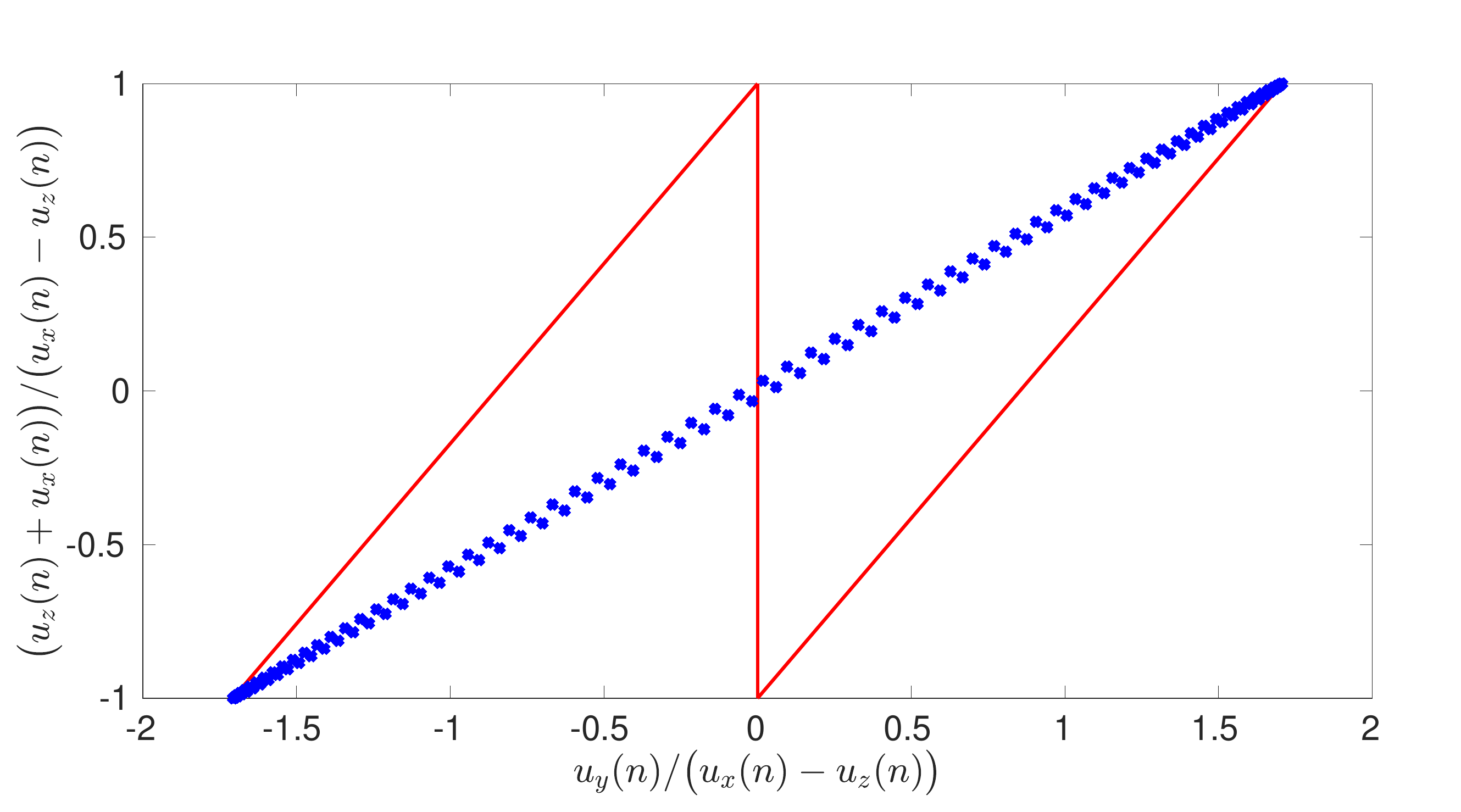} 
\caption{Plot of the tails (last $2000$ iterations) of the orbits of $\widehat{\mathfrak{P}}_r$ after $5000$ iterations, for $r = 0.1717$.}
\label{FIGURSingl_r=0.1717_}
\end{figure}

\noindent
Despite the large number of accumulation points, a careful inspection of Figure \ref{FIGURSingl_r=0.1717_} shows that the orbit appears to be periodic. When considering the trajectory, it is indeed possible to see that the collision pattern $132^{67}312^{67}$ takes place repeatedly. Such a collision pattern presents $138$ collisions of type $\mathfrak{b}$, and $410$ collisions in total, considering the types $\mathfrak{a}$, $\mathfrak{b}$ and $\mathfrak{c}$ together. At least for the family \eqref{EQUATPattrPalinSimpl} of patterns, it seems therefore that we can find configurations leading to collapsing sequences with arbitrarily large values of $n_2$.

\paragraph{Comments and conjectures.} Since we observed the patterns recorded in Table \ref{TABLEPerioPatteDetec} in numerical simulations, they should be associated with respective basins of attraction with non empty interiors. It is remarkable that different (stable) collision patterns can coexist for a fixed restitution coefficient, answering an open question formulated in \cite{CDKK999}. The fact that $132^{n_1}$ and $312^{n_1}$ coexist is not surprising, since if one exists, the other should also exist by symmetry. It is more surprising that patterns of two different families can coexist, for instance $132^2$ and $132^3312^3$, in the case when $r = 0.21$. This coexistence exhibits the rich behaviour of the dynamical system $\big( \widehat{\mathfrak{P}}_r^n \big)_n$.\\
We observe also that all the patterns regrouped in Table \ref{TABLEPerioPatteDetec} are symmetric, in the sense introduced in \cite{DoHR025}, meaning that along a period, the collisions of type $\mathfrak{a}$ and $\mathfrak{c}$ take place the same number of times. However, we discovered patterns that are not palindromic, in the sense that reversing the collision order provides a different period. This is the case with the family $132^{n_1}$. It is possible nevertheless that periodic collision patterns that are not symmetric, and that are different from $113$ (discussed in \cite{DoHR025}) exist. But since $113$ was not stable, it is likely that if other asymmetric collision patterns exist, they would also be unstable. This general, albeit speculative, property of the asymmetric collision patterns would explain why none of them were detected in our numerical simulations.\\
Finally, the length of the detected collision patterns is larger and larger as $r$ gets closer to the critical restitution coefficient $3-2\sqrt{2}$. This is perfectly consistent with the intricate structure of the apparent periodic orbits observed for similar restitution coefficients, as pictured on Figure \ref{FIGUR_r=_0.171_0.175}.\\
Based on the patterns discovered in the simulations and summarized in Table \ref{TABLEPerioPatteDetec}, we conjecture that each of the three families of patterns \eqref{EQUATPattr_AntiPalin}, \eqref{EQUATPattrPalinSimpl} and \eqref{EQUATPattrPalinCmplx} should contain infinitely many different patterns. Nevertheless, we will see in Section \ref{SSECTPatter_13223122} that some choices of the exponents $n_i \in \mathbb{N}^*$ ($1 \leq i \leq 3$) correspond to patterns that can never be realized by a periodic orbit in a stable manner.

\section{Mathematical study of some fixed points of $\widehat{\mathfrak{P}}_r$}
\label{SECTIMatheStudyPatte}

\noindent
In this section, based on the periodic patterns detected by the numerical investigations presented in Section \ref{SSECTSimulIndivOrbit}, we will study rigorously such periodic patterns, corresponding to fixed points of the mapping $\big(\widehat{\mathfrak{P}}_r\big)^n$, for a certain period $n \in \mathbb{N}^*$. More precisely, we will investigate the respective ranges of existence and stability of two different patterns, the first being $132312$, observed for many restitution coefficients stretching from $0.24$ and $0.6$, the second being $132^2312^2$, which was not observed in the simulations of individual orbits.\\
The objective of the present section is then twofold: first, confirm that the numerical observations reflects indeed the existence of stable periodic orbits for $\big(\widehat{\mathfrak{P}}_r\big)^n$, and second, to understand better the candidate periodic patterns, belonging to the countable families \eqref{EQUATPattr_AntiPalin}-\eqref{EQUATPattrPalinCmplx}, that were not observed in numerical simulations. In both cases, we will rely on a rigorous mathematical study of particular orbits of $\widehat{\mathfrak{P}}_r$.\\
We observe that we will adapt the tools developed in \cite{CDKK999} to study the self-similar collapses. In our case, the study turns out to be simpler, as the stability of the periodic orbits of $\widehat{\mathfrak{P}}_r$ can be performed by studying only the piecewise linear mapping $\mathfrak{P}$. In other words, in the present case we have only to study the properties of the products of matrices $P_i$ (which corresponds essentially to the study of the matrices $A$, $B$ and $C$ in \cite{CDKK999}), whereas in \cite{CDKK999} this study has to be supplemented with a careful investigation of the evolution of the relative positions between the physical particles.

\subsection{The pattern $132312$}
\label{SSECTPatte__132312__}

\noindent
The periodic pattern $132312$ is associated to the product of matrices $\widetilde{P}_{132312} = P_2P_1P_3P_2P_3P_1$. This periodic pattern for the mapping $\widehat{\mathfrak{P}}_r$ is realized if there exists a fixed point of the mapping:
\begin{align}
u \in \mathbb{S}^2 \cap X \mapsto \frac{\widetilde{P}_{132312}u}{\vert \widetilde{P}_{132312}u \vert} \in \mathbb{S}^2 \cap X.
\end{align}
Such a fixed point is then necessarily an eigenvector of the collision matrix $\widetilde{P}_{132312}$. Therefore, we will focus our attention on the eigenvectors of such a matrix. Nevertheless, not all the eigenvectors correspond to a fixed point, because such an eigenvector needs first to belong to the domain $X$ of the mapping $\mathfrak{P}_r$ in order to correspond to a meaningful initial configuration of the particle system. In addition, $X$ is partitioned into the respective domains of the three collisions matrices $P_i$, $1 \leq i \leq 3$, and so, depending on where the eigenvector $u \in \mathbb{S}^2$ lies in $X$, $\widehat{\mathfrak{P}}_r(u)$ is necessarily equal to $\frac{P_i u}{\vert P_i u \vert}$, for a certain $i$ depending on $u$. This first collision matrix needs necessarily to be $P_1$ in order to have a fixed point of $\widetilde{P}_{132312}$, which provides another necessary condition to be fulfilled by the eigenvector $u$. Similarly, the image of $\frac{P_1 u}{\vert P_1 u \vert}$ by $\widehat{\mathfrak{P}}_r$ has to match with $\frac{P_3 P_1 u}{\vert P_3 P_1 u \vert}$, that is, $\frac{P_1 u}{\vert P_1 u \vert}$ has to belong to the domain of the collision matrix $P_3$, and so on. As a consequence, an eigenvector of $\widetilde{P}_{132312}$ will generate the periodic orbit of period $132312$ only if $6$ conditions are fulfilled, one for each iteration of the mapping $\widehat{\mathfrak{P}}_r$. These conditions are given by the inequalities defining the respective domains of the matrices $P_i$, $1 \leq i \leq 3$. We will call such conditions the \emph{feasibility inequalities}.

\paragraph{Reduction of the feasibility inequalities for palindromic patterns.} In the case of the period $132312$, as well as any period of the two families of patterns \eqref{EQUATPattrPalinSimpl} and \eqref{EQUATPattrPalinCmplx}, we can simplify the study of the fixed points. We will detail the argument only in the case of the period $132312$, but the argument can be applied directly also to any palindromic pattern such that its collision matrix $P$ satisfies $P = \big( J P_\text{reduc} \big)^2$ for a certain matrix $P_\text{reduc}$ corresponding to half of the collisions of the period, and where $J$ is defined in \eqref{EQUATDefin__J__}. In the case of $132312$, the simplification is as follows. We have:
\begin{align}
P_2 P_1 P_3 P_2 P_3 P_1 = \big( J P_2 P_3 P_1 \big)^2.
\end{align}
We will therefore study the spectrum of the matrix $J P_2 P_3 P_1$, instead of $\widetilde{P}_{132312}$, as an eigenvector of $J P_2 P_3 P_1$ is also an eigenvector of $\widetilde{P}_{132312}$. The advantage of this approach is that there are less constraints to be checked in order to verify that an eigenvector of $J P_2 P_3 P_1$ generates a periodic orbit of $\widehat{\mathfrak{P}}_r$ of period $132312$. Indeed, if we prove that such an eigenvector $u$ is such that it belongs to the domain of $P_1$, such that $P_1 u$ belongs to the domain of $P_3$, and such that $P_3 P_1 u$ belongs to the domain of $P_2$, then:
\begin{align*}
\big(\widehat{\mathfrak{P}}_r\big)^3(u) = \frac{P_2 P_3 P_1 u}{\vert P_2 P_3 P_1 u \vert}.
\end{align*}
If in addition $u = (u_x,u_y,u_z)$ (assuming without loss of generality that $u_x > 0$) is an eigenvector of $J P_2 P_3 P_1$, associated to a certain eigenvalue $\lambda_u$, we have:
\begin{align}
P_2 P_3 P_1 u = \lambda_u J u = \lambda_u \begin{pmatrix} u_z \\ u_y \\ u_x \end{pmatrix}.
\end{align}
Assuming that $u$ belongs to the domain of $P_1$ implies in particular that we have $u_z > 0$, $u_y > 0$, and $\alpha u_y - u_x > 0$. Therefore, in the case when $\lambda_u < 0$, $\lambda_u Ju$ satisfies $\lambda_u (Ju)_y \leq 0$, and $\alpha \lambda_u (Ju)_y - \lambda_u (Ju)_z = \lambda_u \big( \alpha u_y - u_x \big) < 0$, and therefore $\lambda_u J u$ belongs to the domain of the collision matrix $P_3$. Considering $P_3 P_2 P_3 P_1 u = \lambda_u P_3 J u = \lambda_u J P_1 u$, the same arguments provides that if $P_1 u$ belongs to the domain of $P_3$, then $P_3 P_2 P_3 P_1 u$ belongs to the domain of the matrix $P_1$, and similarly $P_1 P_3 P_2 P_3 P_1 u$ belongs to the domain of the collision matrix $P_2$ if $P_3 P_1 u$ belongs to the domain of $P_2$. In other words, if an eigenvector $u$ of the matrix $J P_2 P_3 P_1$, associated to a negative eigenvalue, satisfies the three first feasibility inequalities of the period $132312$, then $u$ satisfies also the last three feasibility inequalities, ensuring therefore that $u$ generates the periodic orbit $132312$ when applying repeatedly the mapping $\widehat{\mathfrak{P}}_r$.\\
Concerning the stability of the periodic orbits, choosing the eigenvector of $\widetilde{P}_{132312}$ associated to the dominating eigenvalue (if such a dominating eigenvalue is real), we obtain a fixed point to $\big(\widehat{\mathfrak{P}}_r\big)^6$ which is locally stable, in the sense that any orbit starting close to the fixed point remains close to this fixed point. If in addition there exists a unique dominating eigenvalue, the fixed point is locally attracting. Therefore, we will study only the eigenvector of $J P_2 P_3 P_1$ associated to the dominating eigenvalue of this matrix.\\
We obtain then the following result.

\begin{theor}[Stability of the periodic orbit $132312$]
\label{THEORStabi____132312}
Let $\lambda_{132^J}^\text{dom}$ be the dominating eigenvalue of the matrix $J P_2 P_3 P_1$, always real for any $0 \leq r \leq 1$, and let $u_{\lambda_{132^J}^\text{dom}}$ be its associated eigenvector
. Let $r_{\text{crit},132^J}$ be the only root in $]0,1[$ of the equation:
\begin{align}
\alpha \big( u_{\lambda_{132^J}^\text{dom}} \big)_y - \big( u_{\lambda_{132^J}^\text{dom}} \big)_x = 0,
\end{align}
so that we have:
\begin{align}
\label{EQUATTheorr_cr2_132J}
r_{\text{crit},132^J} \simeq 0.2200\ 6978\ 6146\ 3104\ 7521.
\end{align}
Then, for any $r \in\ ]r_{\text{crit},132^J},1[$, there exists a single point in the domain of $\widehat{\mathfrak{P}}_r$ which generates the periodic orbit $132312$ and this orbit is locally stable. This point is given by $u_{\lambda_{132^J}^\text{dom}} / \vert u_{\lambda_{132^J}^\text{dom}} \vert$.
\end{theor}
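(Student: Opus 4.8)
The plan is to exploit the palindromic structure already isolated in the discussion preceding the statement, reducing the entire question to the spectral analysis of the single matrix $M := J P_2 P_3 P_1$. First I would record the algebraic identities $P_3 = J P_1 J$ (equivalently $P_1 J = J P_3$ and $P_3 J = J P_1$) and $J P_2 = P_2 J$, the latter being exactly the invariance $J P_2 J = P_2$ that reflects the symmetry $(x,y,z)\mapsto(-z,-y,-x)$ of $P_2=P_4$. Using these I would verify the factorisation $\widetilde{P}_{132312} = P_2 P_1 P_3 P_2 P_3 P_1 = M^2$, so that every eigenvector of $M$ is an eigenvector of $\widetilde{P}_{132312}$. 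By the negative-eigenvalue argument given just before the theorem, it then suffices to exhibit a single eigenvector $u$ of $M$, associated to a negative eigenvalue, satisfying only the feasibility conditions of the first half-period, namely $u \in \text{dom}(P_1)$, $P_1 u \in \text{dom}(P_3)$ and $P_3 P_1 u \in \text{dom}(P_2)$; the remaining three conditions then hold automatically.

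Next I would carry out the spectral study of $M$. Its characteristic polynomial $\chi_M(\lambda)$ is a cubic whose coefficients are explicit polynomials in $r$ (through $\alpha = (r+1)/2$), computed directly from the matrices of \eqref{EQUATCollisionMatric_P_i_}. The key structural fact to establish is that, on the relevant range of restitution coefficients, $M$ has a real eigenvalue $\lambda_{132^J}^{\text{dom}}$ that is negative and strictly dominant in modulus; I expect to obtain this by analysing the sign pattern of $\chi_M$ together with $\det M$ and $\operatorname{tr} M$, after controlling via the discriminant whether the two subdominant eigenvalues are real or form a complex-conjugate pair. Having isolated $\lambda_{132^J}^{\text{dom}}$, I would solve $(M - \lambda_{132^J}^{\text{dom}} I)u = 0$ and normalise so that $u_x = 1$, obtaining $u_y$ and $u_z$ as rational functions of $\lambda_{132^J}^{\text{dom}}$ and $r$.

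With $u$ in hand I would turn to the feasibility conditions. Substituting $u$ into the defining inequalities of the domains of $P_1$, $P_3$ and $P_2$, I expect the conditions $u_y > 0$, $u_z < 0$ and those issued from $P_1 u \in \text{dom}(P_3)$ and $P_3 P_1 u \in \text{dom}(P_2)$ to hold throughout $]0,1[$, while the single binding constraint is $\alpha u_y - u_x > 0$, i.e.\ $\alpha u_y > 1$. Setting $g(r) := \alpha u_y(r) - 1$ and eliminating $\lambda_{132^J}^{\text{dom}}$ between $g = 0$ and $\chi_M\big(\lambda_{132^J}^{\text{dom}}\big)=0$ (by a resultant, after clearing denominators) produces a polynomial in $r$ whose only root in $]0,1[$ is $r_{\text{crit},132^J} \simeq 0.2200\,6978\ldots$; a sign analysis of $g$ on $]0,1[$ then shows $g(r) > 0$ precisely for $r > r_{\text{crit},132^J}$. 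I anticipate that proving uniqueness of this root and confirming the non-binding character of the remaining inequalities is the main technical obstacle, since the eigenvector entries are algebraic in $r$ and everything must be reduced to checkable polynomial inequalities; the explicit cubic should make this tractable.

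Finally I would settle uniqueness and stability. For uniqueness, any point generating the period $132312$ must be an eigendirection of $\widetilde{P}_{132312} = M^2$ lying in $\text{dom}(P_1)\cap X$ and meeting the feasibility conditions; I would show that among the eigendirections of $M^2$ only $\text{Vect}\{u\}$ survives, the subdominant ones either leaving the quadrant $\{x\ge 0,\ z\le 0\}$ or violating a feasibility inequality. For stability, since all six feasibility conditions are strict at $u$ and the pieces on which $\mathfrak{P}$ coincides with a fixed $P_i$ are open, they persist on an open neighbourhood of $[u]$, so there $\big(\widehat{\mathfrak{P}}_r\big)^6$ coincides with the projectivisation of the fixed linear map $\widetilde{P}_{132312}$. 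Because $|\lambda_{132^J}^{\text{dom}}|$ is strictly dominant, the dominant eigenvalue of $M^2$ is $(\lambda_{132^J}^{\text{dom}})^2$ and the ratios of the remaining eigenvalues to it have modulus $<1$; hence $[u]$ is an attracting fixed point of this projective map in an affine chart around it, which yields the asserted local stability.
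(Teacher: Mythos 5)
Your proposal follows essentially the same route as the paper's proof: the palindromic factorisation $\widetilde{P}_{132312}=(JP_2P_3P_1)^2$, the reduction of the six feasibility inequalities to the first three via the negative-eigenvalue argument, the spectral analysis of $JP_2P_3P_1$ isolating a real negative eigenvalue that is dominant for all $r\in\,]0,1[$, identification of $\alpha u_y-u_x>0$ as the binding constraint defining $r_{\text{crit},132^J}$, and local stability from spectral dominance on the open piece where $\big(\widehat{\mathfrak{P}}_r\big)^6$ coincides with the projectivisation of $\widetilde{P}_{132312}$. The only notable differences are that you propose to certify the feasibility inequalities symbolically via resultants where the paper verifies them by plotting the relevant algebraic quantities as functions of $r$, and that your expectation that $u_z<0$ holds throughout $]0,1[$ is slightly off (it fails below $r_{\text{crit},132^J,1}\simeq 0.1826$), which is harmless since that threshold lies below the binding one.
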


\begin{remar}
Below $r_{\text{crit},132^J}$, approximately given by \eqref{EQUATTheorr_cr2_132J}, the periodic orbit $132312$ cannot be stable. The existence of such an unstable orbit can be studied by investigating the feasibility inequalities of the eigenvectors that are associated with the eigenvalues of $J P_2 P_3 P_1$ that are not dominating.\\
The result of Theorem \ref{THEORStabi____132312} is consistent with the numerical simulations recorded in Table \ref{TABLEPerioPatteDetec}. Indeed, the periodic pattern $132312$ is not observed for $r$ below $0.24$, but is observed for any $r \geq 0.24$. The fact that $132312$ was not observed for $r = 0.23$ in Table \ref{THEORStabi____132312} can be explained by the fact that the basin of attraction of the periodic orbit might be relatively small compared to the other orbits that were detected for this particular restitution coefficient.
\end{remar}

\begin{proof}[Proof of Theorem \ref{THEORStabi____132312}]
The characteristic polynomial $\chi_{\widetilde{P}_{132^J}}$ of the reduced collision matrix $\widetilde{P}_{132^J} = J P_2 P_3 P_1$ (with $J$ defined in \eqref{EQUATDefin__J__}) is given by the expression:
\begin{align}
\chi_{\widetilde{P}_{132^J}}(\lambda) &= \lambda^3 + \frac{\big(7r^6-24r^5-29r^4+21r^2-8r+1\big)}{32} \lambda^2 \nonumber\\
&\hspace{50mm}+ \frac{\big(r^{11}-8r^{10}+21r^9-29r^7-24r^6+7r^5\big)}{32} \lambda + r^{11}.
\end{align}
The eigenvalues of the collision matrix $\widetilde{P}_{132^J}$ are plotted on Figure \ref{FIGUREigenvalue_132J} when they are real, as well as their respective moduli on Figure \ref{FIGURModulEigen_132J} in the case when they are real or complex.

\begin{figure}[h!]
\centering
\begin{subfigure}{1\textwidth}
    \includegraphics[trim = 0cm 0cm 0cm 0cm, width=\linewidth]{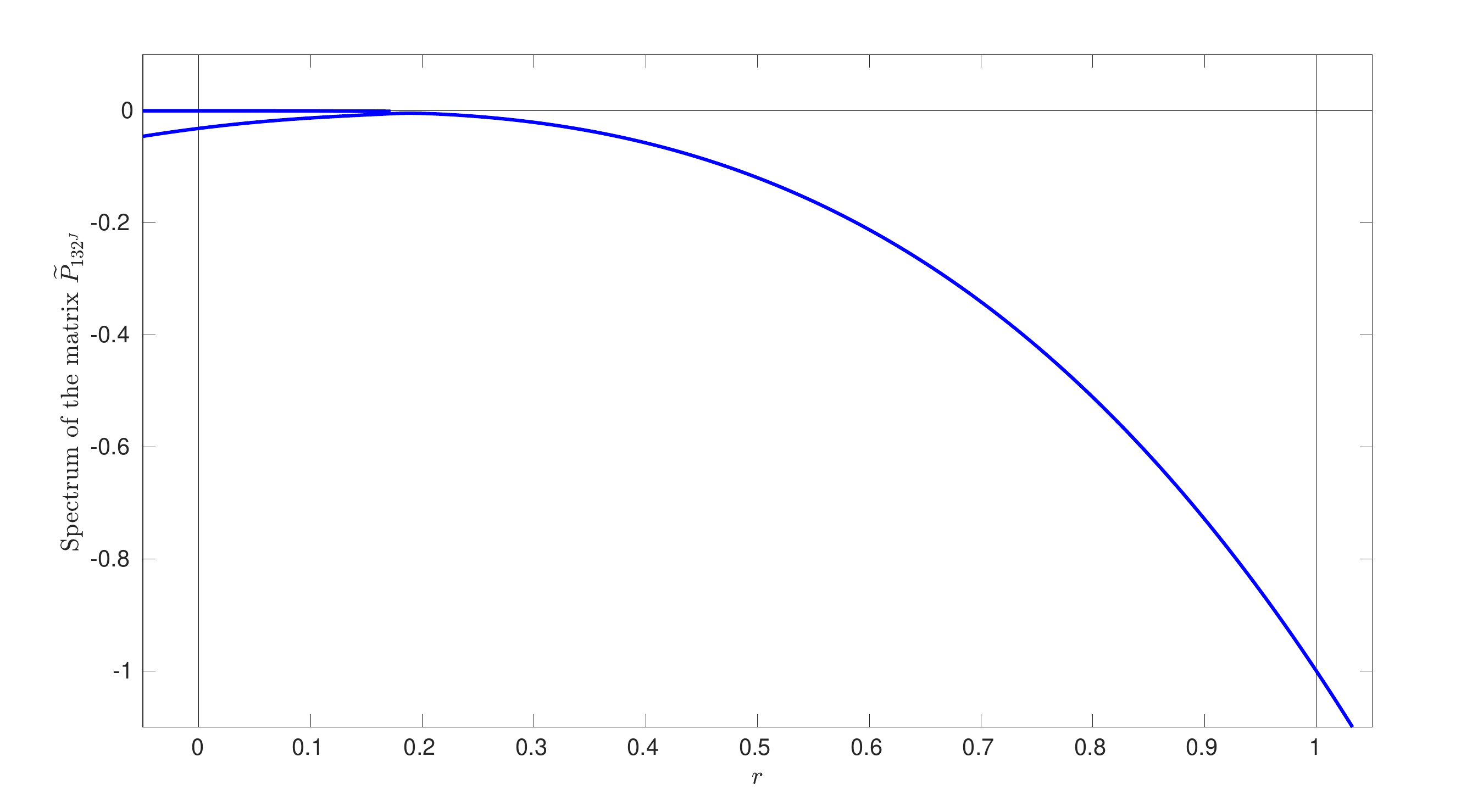}
    \caption{Spectrum (when real) for $0 \leq r \leq 1$.}
    \label{FIGUREigenvalue_132J_Glob}
\end{subfigure}
\begin{subfigure}{1\textwidth}
    \includegraphics[trim = 0cm 0cm 0cm 0cm, width=\linewidth]{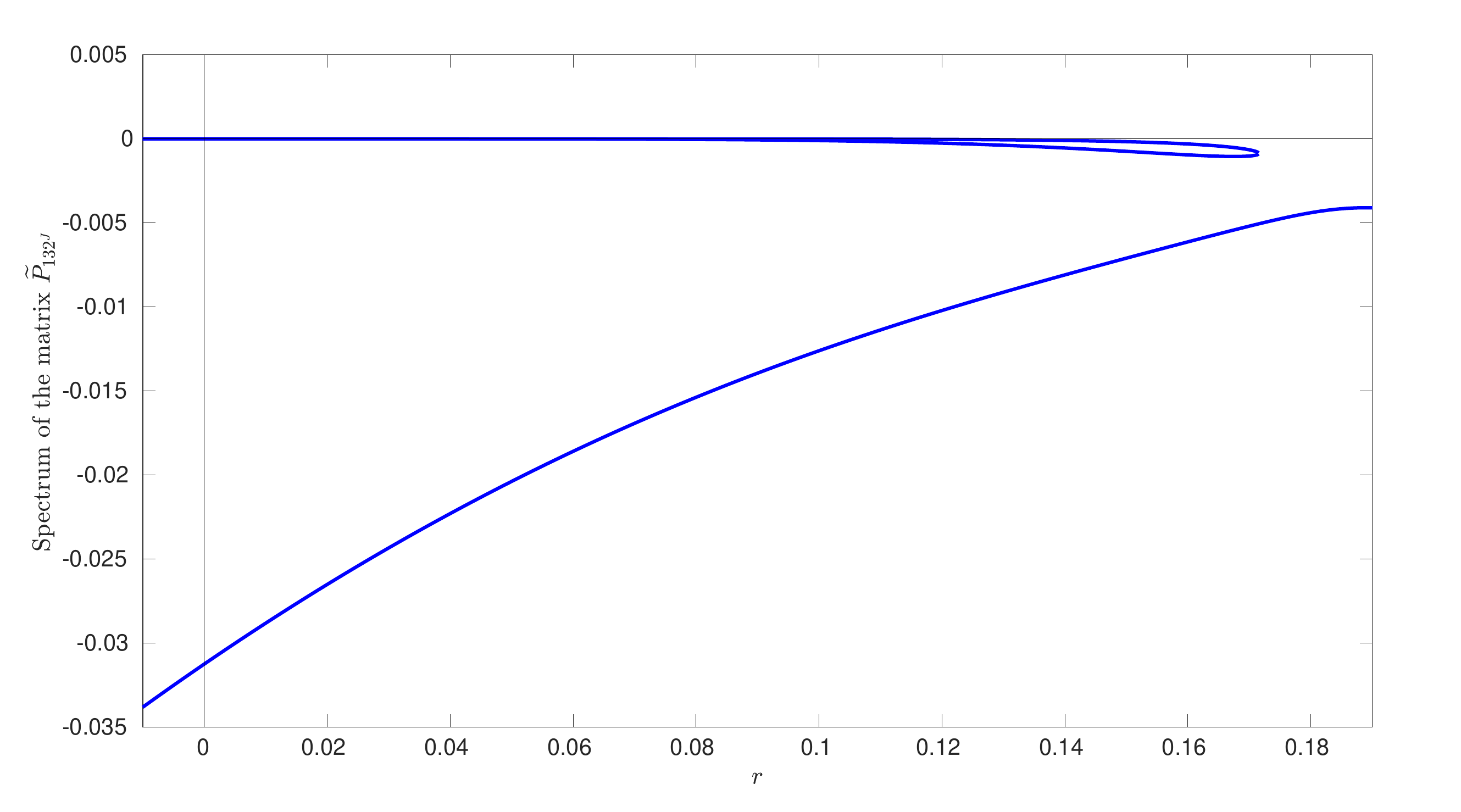}
    \caption{Magnification of the plot of the spectrum (when real), for $0 \leq r \leq 0.19$.}
    \label{FIGUREigenvalue_132J_Zoom}
\end{subfigure}
\caption{Eigenvalues of the collision matrix $\widetilde{P}_{132^J}$, as functions of the restitution coefficient $r$, in the case when these eigenvalues are real.}
\label{FIGUREigenvalue_132J}
\end{figure}

\begin{figure}[h!]
\centering
\begin{subfigure}{1\textwidth}
    \includegraphics[trim = 0cm 0cm 0cm 0cm, width=\linewidth]{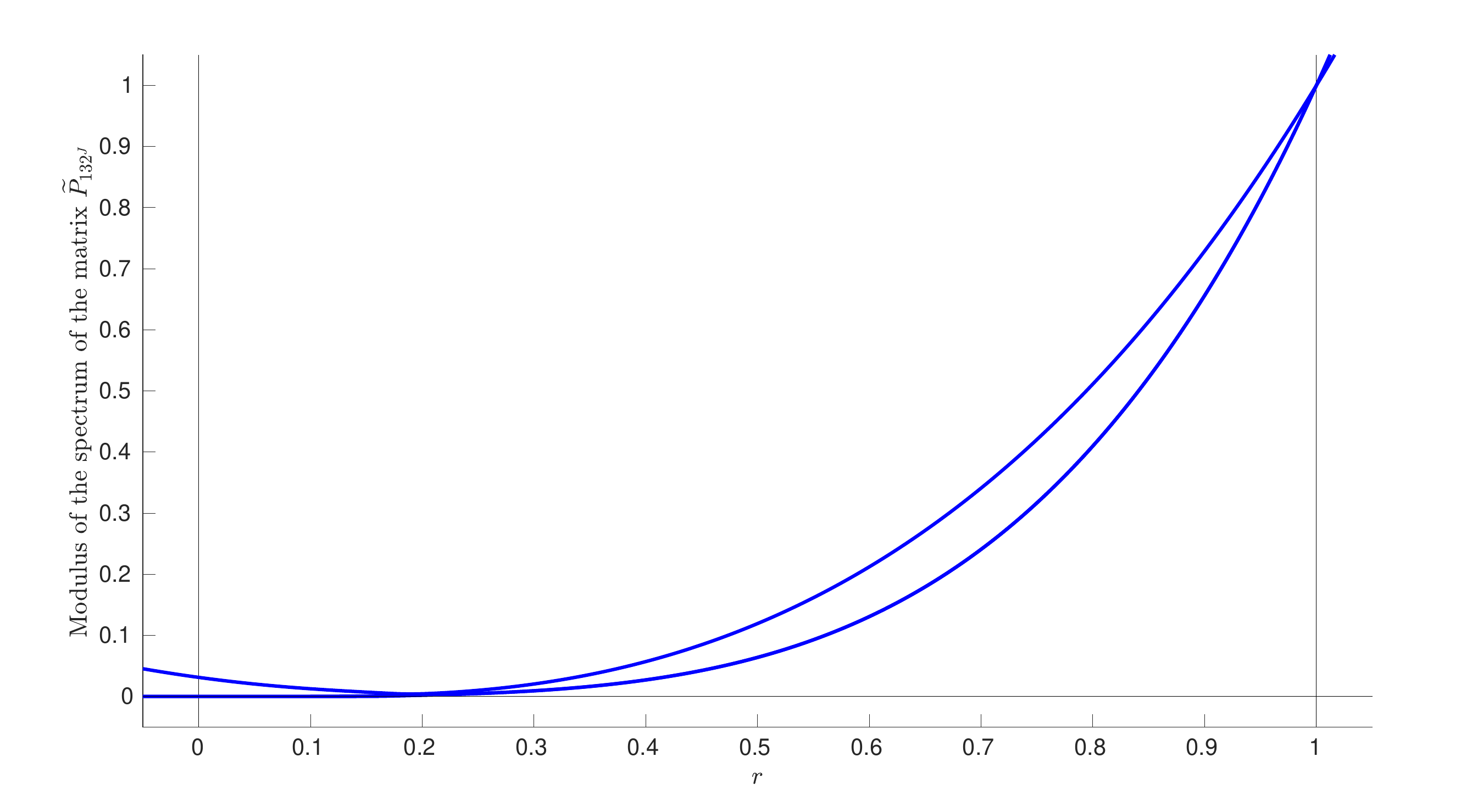}
    \caption{Moduli of the eigenvalues for $0 \leq r \leq 1$.}
    \label{FIGURModulEigen_132J_Glob}
\end{subfigure}
\begin{subfigure}{1\textwidth}
    \includegraphics[trim = 0cm 0cm 0cm 0cm, width=\linewidth]{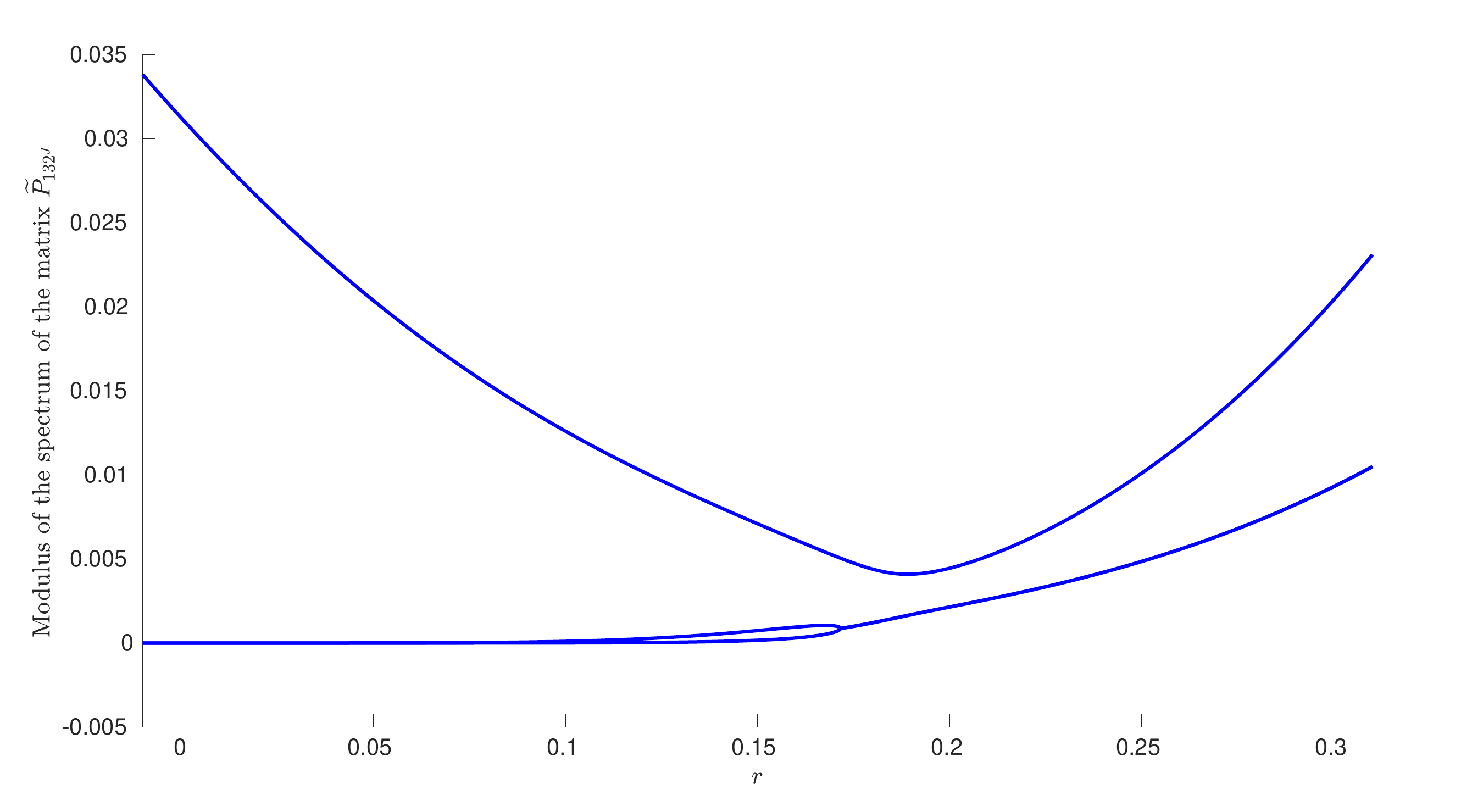}
    \caption{Magnification of the plot of the moduli of the eigenvalues, for $0 \leq r \leq 0.3$.}
    \label{FIGURModulEigen_132J_Zoom}
\end{subfigure}
\caption{Moduli of the eigenvalues of the collision matrix $\widetilde{P}_{132^J}$, as functions of the restitution coefficient $r$.}
\label{FIGURModulEigen_132J}
\end{figure}

\noindent
The discriminant of the characteristic polynomial, denoted by $\Delta_{\chi_{\widetilde{P}_{132^J}}}$, is itself a polynomial in the restitution coefficient $r$, and is given by the expression:
\begin{align}
\Delta_{\chi_{\widetilde{P}_{132^J}}}(r) &= -\frac{r^{10}(r - 1)^4(r + 1)^2}{113246208} Q_{\Delta_{132^J}}(r),\nonumber\\
\\
\text{with} \hspace{5mm} Q_{\Delta_{132312}}(r) &= 49 r^{18} - 1150 r^{17} + 11561 r^{16} - 64272 r^{15} + 210772 r^{14} - 445384 r^{13} + 907556 r^{12} \nonumber\\
&\hspace{8mm} - 1807728 r^{11} + 1753646 r^{10} - 3227252 r^9 + 1753646 r^8 - 1807728 r^7 + 907556 r^6  \nonumber\\
&\hspace{20mm} - 445384 r^5 + 210772 r^4 - 64272 r^3 + 11561 r^2 - 1150 r + 49.\nonumber
\end{align}
The discriminant $\Delta_{\chi_{\widetilde{P}_{132^J}}}$ cancels clearly for $r = -1$, $0$ and $1$, so that for such values the characteristic polynomial $\chi_{\widetilde{P}_{132^J}}$ has a root with a multiplicity at least $2$. In addition, the discriminant has one single other real root $r_{\Delta,132^J}$ in $]0,1[$, that can be approximated as:
\begin{align}
r_{\Delta,132^J} \simeq 0.1715\ 7287\ 5253\ 8099\ 0240.
\end{align}
For $0 < r < r_{\Delta,132^J}$, we have $\Delta_{\chi_{\widetilde{P}_{132^J}}}(r) < 0$, indicating that the characteristic polynomial $\chi_{\widetilde{P}_{132^J}}$ has three distinct real roots, while $\Delta_{\chi_{\widetilde{P}_{132^J}}}(r) > 0$ for $r_{\Delta,132^J} < r <1$, so that in this case $\chi_{\widetilde{P}_{132^J}}$ has one single real root, and two complex conjugated roots. For $0 < r < 1$, all the roots of the characteristic polynomial, when real, are negative. In addition, the only eigenvalue that remains real for any $0 < r < 1$, that we denote by $\lambda_{132^J}^\text{dom}$, is dominating the two other eigenvalues, both in the cases when they are real, or complex (see Figure \ref{FIGURModulEigen_132J_Zoom}). We observe that it is possible to give an explicit expression of $\lambda_{132^J}^\text{dom}$ relying on Cardano's cubic formula.\\
\newline
We choose now the eigenvector $u_{\lambda_{132^J}^\text{dom}}$ associated to the dominating eigenvalue $\lambda_{132^J}^\text{dom}$ such that $\big(u_{\lambda_{132^J}^\text{dom}}\big)_x =~1$, which can always be done except for $r_{\text{crit},132^J,1}$, with:
\begin{align}
\label{EQUATr_cr1_132J}
r_{\text{crit},132^J,1} \simeq 0.1826\ 1545\ 3792\ 6080\ 6055,
\end{align}
in which case the first component of the eigenvector is zero. On Figures \ref{FIGURFeasiIneq_132J___1__}-\ref{FIGURFeasiIneq_132J___3__}, we always highlight the critical value $r_{\text{crit},132^J,1}$ of the restitution coefficient, represented as a vertical red line. Under the assumption that $\big(u_{\lambda_{132^J}^\text{dom}}\big)_x =~1$, the third component $\big(u_{\lambda_{132^J}^\text{dom}}\big)_z$ of the eigenvector $u_{\lambda_{132^J}^\text{dom}}$ is plotted for $r \in [0,1]$ on Figure \ref{FIGURFeasiIneq_132J_SFig1}, and is negative on $[0,1]$ if and only if $r > r_{\text{crit},132^J,1}$.\\

\begin{figure}[h!]
\centering

\begin{subfigure}{1\textwidth}
\centering
    \includegraphics[trim = 0cm 0cm 0cm 0cm, width=\linewidth]{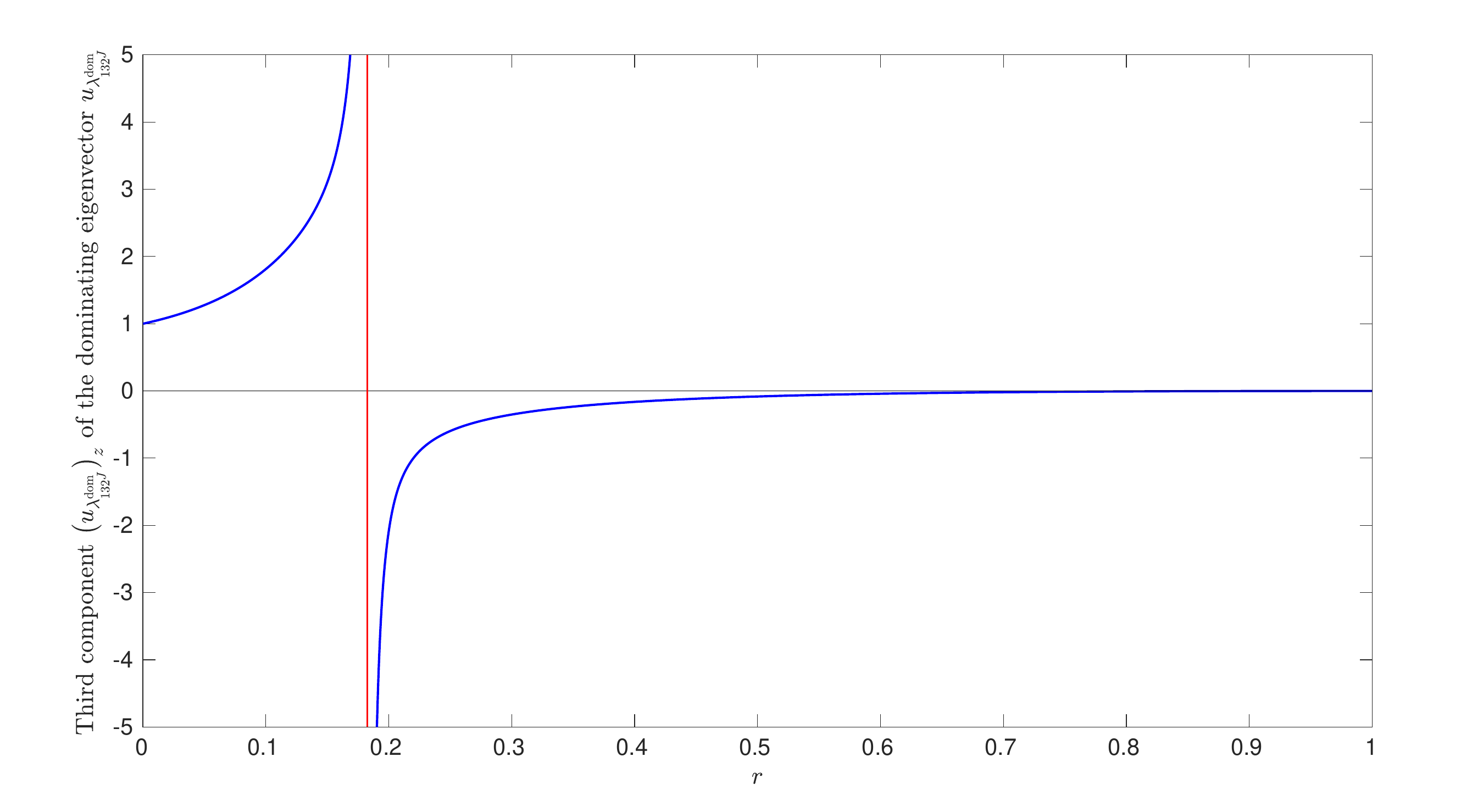}
    \caption{First feasibility inequality for 
    $132312$: plot of $\big(u_{\lambda_{132^J}^\text{dom}}\big)_z$ as a function of $r$.}
    \label{FIGURFeasiIneq_132J_SFig1}
\end{subfigure}

\begin{subfigure}{1\textwidth}
\centering
    \includegraphics[trim = 0cm 0cm 0cm 0cm, width=\linewidth]{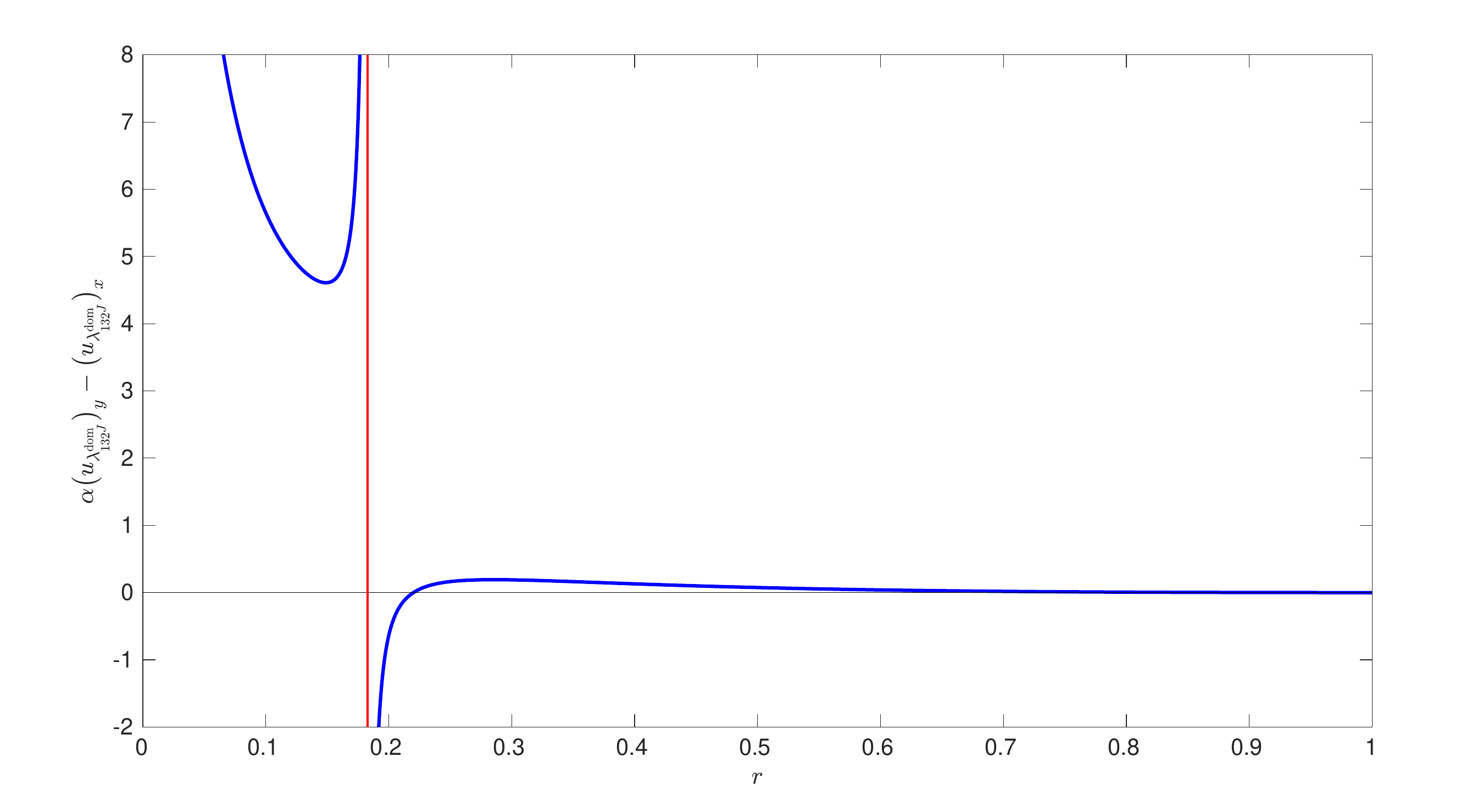}
    \caption{Second feasibility inequality for 
    $132312$: plot of $\alpha \big(u_{\lambda_{132^J}^\text{dom}}\big)_y - \big(u_{\lambda_{132^J}^\text{dom}}\big)_x$ as a function of $r$.}
    \label{FIGURFeasiIneq_132J_SFig2}
\end{subfigure}

\caption{The two first feasibility inequalities associated to the pattern $132312$.}
\label{FIGURFeasiIneq_132J___1__}
\end{figure}

\begin{figure}[h!]
\centering

\begin{subfigure}{1\textwidth}
\centering
    \includegraphics[trim = 0cm 0cm 0cm 0cm, width=\linewidth]{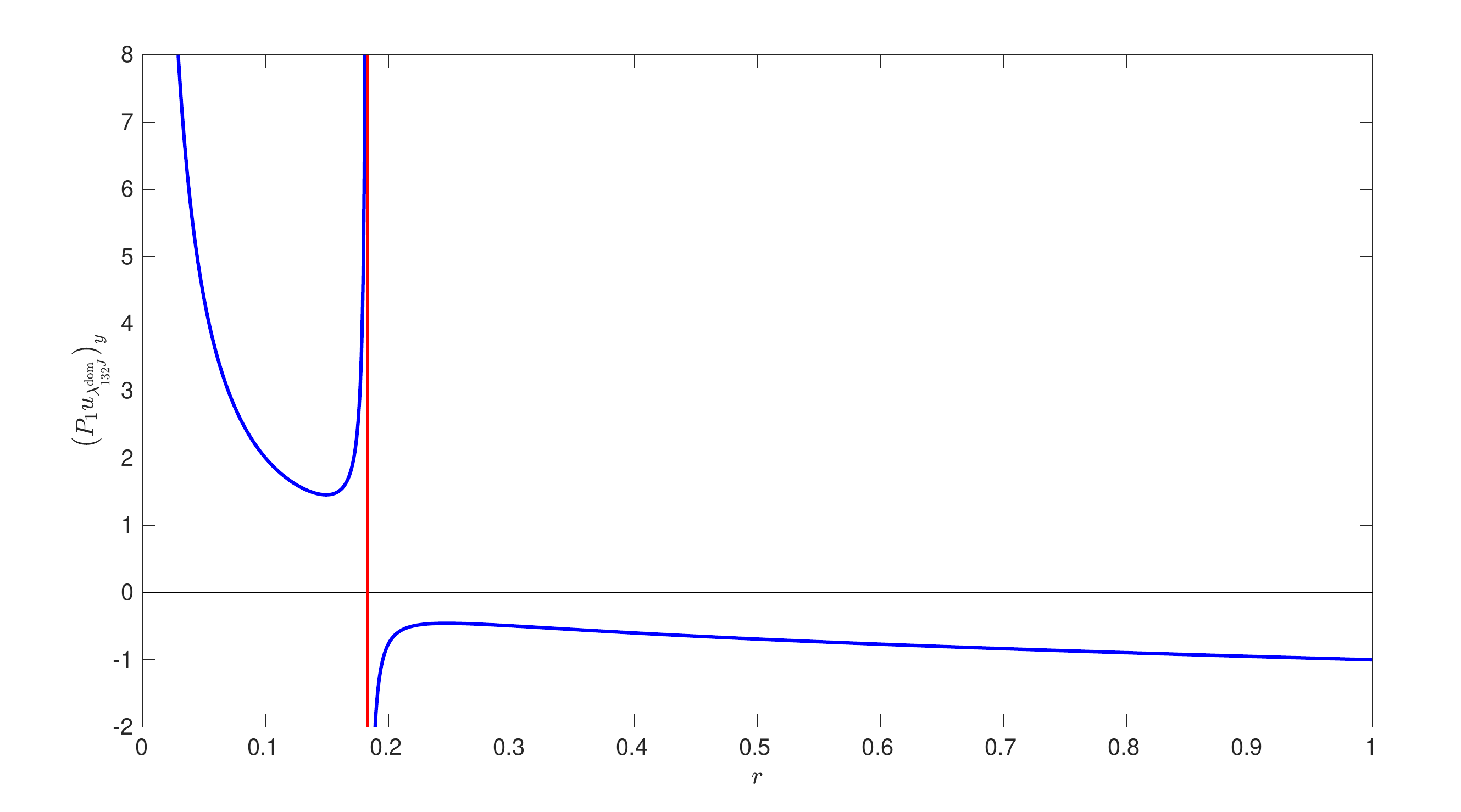}
    \caption{Third feasibility inequality for 
    $132312$: plot of $\big( P_1 u_{\lambda_{132^J}^\text{dom}}\big)_y$ as a function of $r$.}
    \label{FIGURFeasiIneq_132J_SFig3}
\end{subfigure}

\begin{subfigure}{1\textwidth}
\centering
    \includegraphics[trim = 0cm 0cm 0cm 0cm, width=\linewidth]{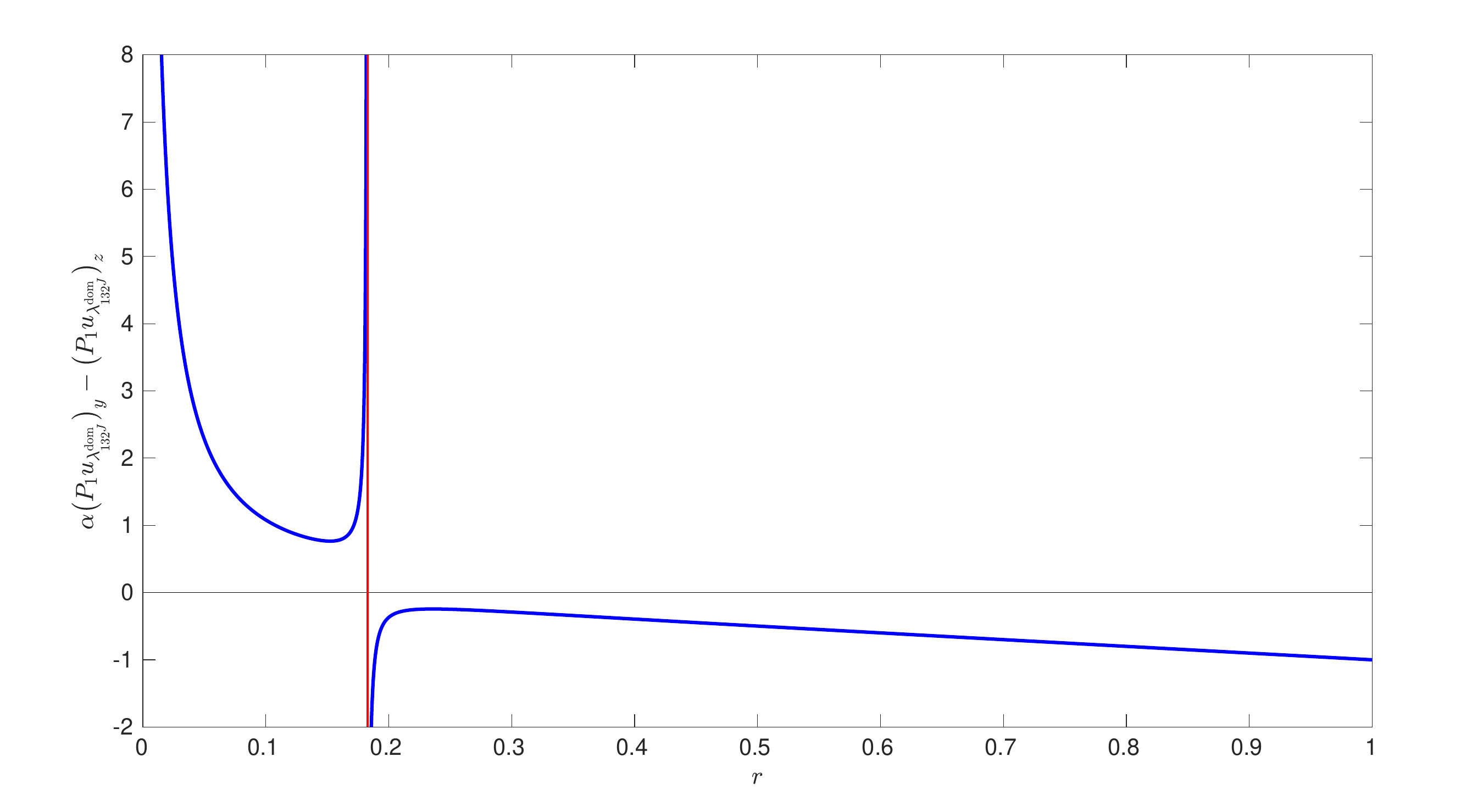}
    \caption{Fourth feasibility inequality for 
    $132312$: plot of $\alpha \big( P_1 u_{\lambda_{132^J}^\text{dom}}\big)_y - \big( P_1 u_{\lambda_{132^J}^\text{dom}}\big)_z$ as a function of $r$.}
    \label{FIGURFeasiIneq_132J_SFig4}
\end{subfigure}

\caption{Third and fourth feasibility inequalities associated to the pattern $132312$.}
\label{FIGURFeasiIneq_132J___2__}
\end{figure}

\begin{figure}
\centering

\begin{subfigure}{1\textwidth}
\centering
    \includegraphics[trim = 0cm 0cm 0cm 0cm, width=\linewidth]{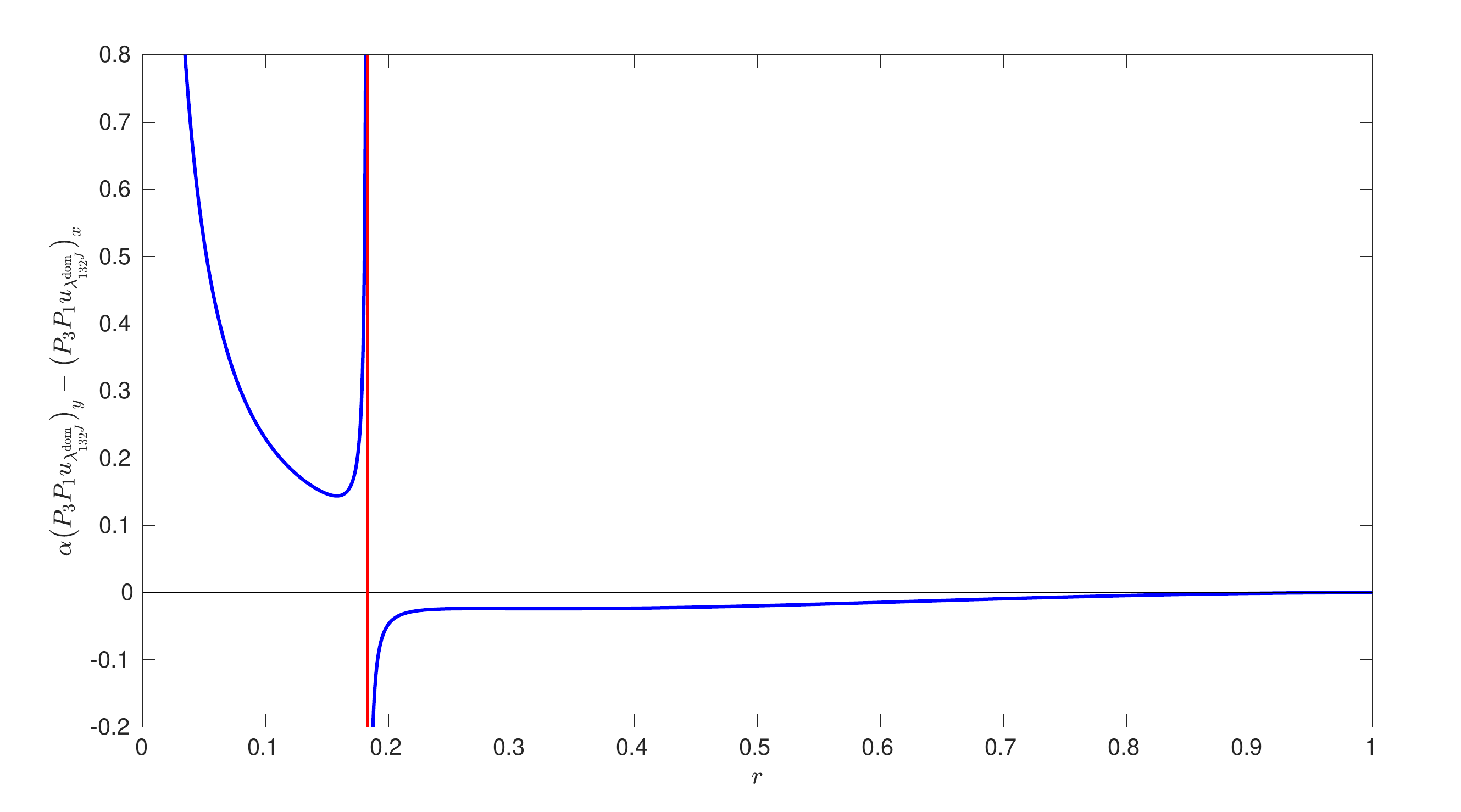}
    \caption{Fifth feasibility inequality for 
    $132312$: plot of $\alpha \big( P_3 P_1 u_{\lambda_{132^J}^\text{dom}}\big)_y - \big( P_3 P_1 u_{\lambda_{132^J}^\text{dom}}\big)_x$ as a function of $r$.}
    \label{FIGURFeasiIneq_132J_SFig5}
\end{subfigure}

\begin{subfigure}{1\textwidth}
\centering
    \includegraphics[trim = 0cm 0cm 0cm 0cm, width=\linewidth]{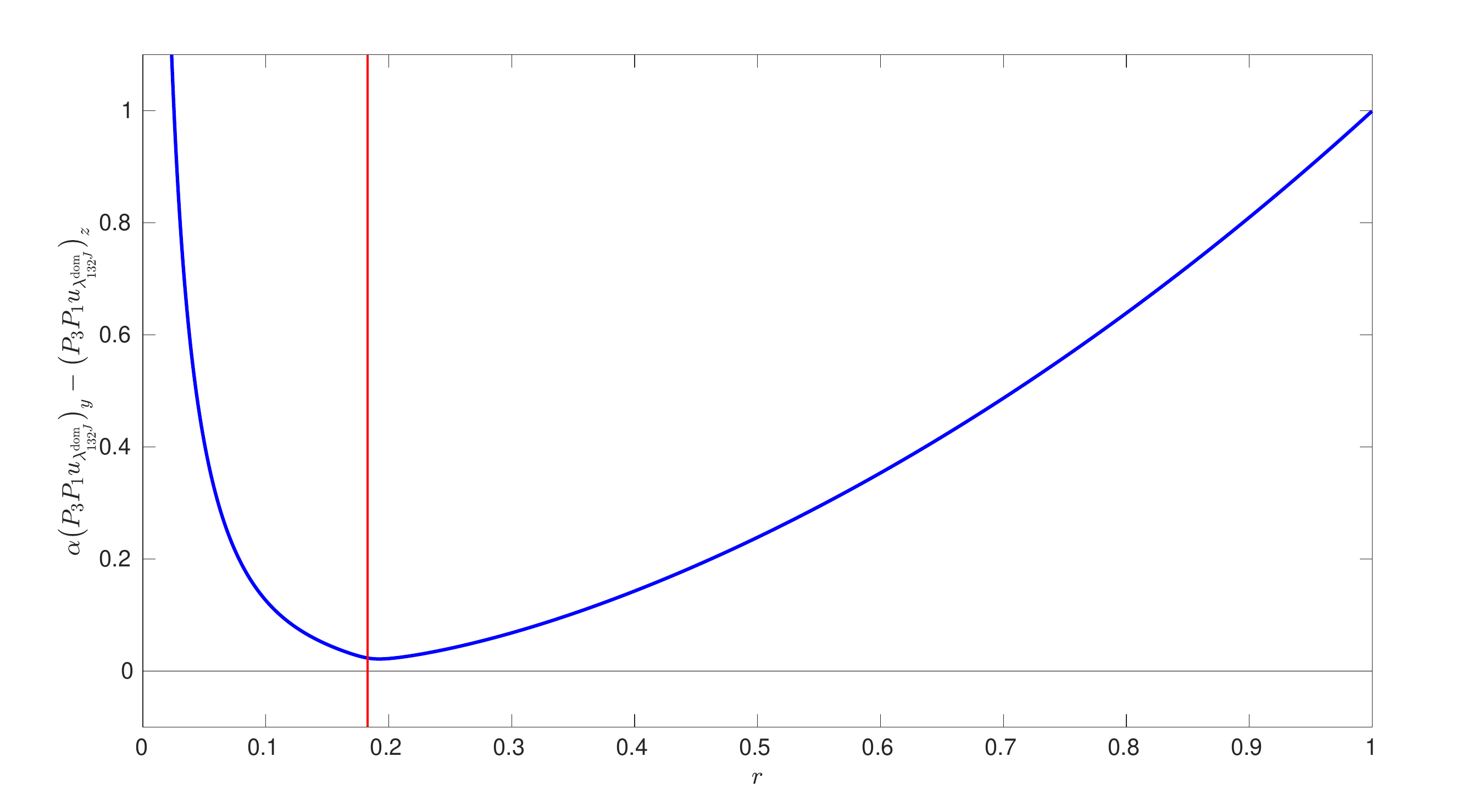}
    \caption{Sixth feasibility inequality for 
    $132312$: plot of $\alpha \big( P_3 P_1 u_{\lambda_{132^J}^\text{dom}}\big)_y - \big( P_3 P_1 u_{\lambda_{132^J}^\text{dom}}\big)_z$ as a function of $r$.}
    \label{FIGURFeasiIneq_132J_SFig6}
\end{subfigure}
\caption{The two last feasibility inequalities associated to the pattern $132312$.}
\label{FIGURFeasiIneq_132J___3__}
\end{figure}

\noindent
Starting from the initial configuration $u_{\lambda_{132^J}^\text{dom}}$, in order to detect if the first block of collisions is $\mathfrak{ab}$, that is, if $\mathfrak{P}\big( u_{\lambda_{132^J}^\text{dom}} \big) = P_1 u_{\lambda_{132^J}^\text{dom}}$, we plot on Figure \ref{FIGURFeasiIneq_132J_SFig2} the quantity $\alpha \big( u_{\lambda_{132^J}^\text{dom}} \big)_y - \big( u_{\lambda_{132^J}^\text{dom}} \big)_x$. We observe in particular that $\alpha \big( u_{\lambda_{132^J}^\text{dom}} \big)_y - \big( u_{\lambda_{132^J}^\text{dom}} \big)_x$ is positive on $]r_{\text{crit},132^J,1},1]$ if and only if $r > r_{\text{crit},132^J,2}$, with $r_{\text{crit},132^J,2}$ defined such that:
\begin{align}
\label{EQUATr_cr2_132J}
\alpha \big( u_{\lambda_{132^J}^\text{dom}} \big)_y - \big( u_{\lambda_{132^J}^\text{dom}} \big)_x = 0
\end{align}
(where the quantities $\alpha$ and $u_{\lambda_{132^J}^\text{dom}}$ both depend on $r$), and that we can numerically evaluate as:
\begin{align}
r_{\text{crit},132^J,2} \simeq 0.2200\ 6978\ 6146\ 3104\ 7521.
\end{align}
Since $r_{\text{crit},132^J,2} > r_{\text{crit},132^J,1}$, for any $r \in\ ]r_{\text{crit},132^J,2},1]$ the dominating eigenvector $u_{\lambda_{132^J}^\text{dom}}$ belongs to the domain of $\mathfrak{P}$, and even to the domain in which $\mathfrak{P}$ coincides with $P_1$.\\
We turn now to the feasibility inequalities concerning $P_1 u_{\lambda_{132^J}^\text{dom}}$. As it can be observed on Figure \ref{FIGURFeasiIneq_132J_SFig3}, the second component $\big( P_1 u_{\lambda_{132^J}^\text{dom}} \big)_y$ is always negative for any $r \in\ ]r_{\text{crit},132^J,1},1]$, ensuring that $\mathfrak{P}$ coincides either with $P_2$ or $P_3$ when evaluated on $P_1 u_{\lambda_{132^J}^\text{dom}}$. In order to distinguish between these two cases, we represent on Figure \ref{FIGURFeasiIneq_132J_SFig4} the quantity $\alpha \big( P_1 u_{\lambda_{132^J}^\text{dom}} \big)_y - \big( P_1 u_{\lambda_{132^J}^\text{dom}} \big)_z$. Since this quantity is negative for any $r \in\ ]r_{\text{crit},132^J,1},1]$, we have: $\mathfrak{P}\big( P_1 u_{\lambda_{132^J}^\text{dom}} \big) = P_3 P_1 u_{\lambda_{132^J}^\text{dom}}$.\\
We turn now to the last feasibility inequalities. In order to have $\mathfrak{P}\big( P_3 P_1 u_{\lambda_{132^J}^\text{dom}} \big) = P_2 P_3 P_1 u_{\lambda_{132^J}^\text{dom}}$, we need to have $\alpha \big( P_3 P_1 u_{\lambda_{132^J}^\text{dom}} \big)_y - \big( P_3 P_1 u_{\lambda_{132^J}^\text{dom}} \big)_x < 0$ and $\alpha \big( P_3 P_1 u_{\lambda_{132^J}^\text{dom}} \big)_y - \big( P_3 P_1 u_{\lambda_{132^J}^\text{dom}} \big)_z > 0$. We plot respectively on Figures \ref{FIGURFeasiIneq_132J_SFig5} and \ref{FIGURFeasiIneq_132J_SFig6} the two quantities $\alpha \big( P_3 P_1 u_{\lambda_{132^J}^\text{dom}} \big)_y - \big( P_3 P_1 u_{\lambda_{132^J}^\text{dom}} \big)_x$ and $\alpha \big( P_3 P_1 u_{\lambda_{132^J}^\text{dom}} \big)_y - \big( P_3 P_1 u_{\lambda_{132^J}^\text{dom}} \big)_z$. We see on Figure \ref{FIGURFeasiIneq_132J_SFig5} that  $\alpha \big( P_3 P_1 u_{\lambda_{132^J}^\text{dom}} \big)_y - \big( P_3 P_1 u_{\lambda_{132^J}^\text{dom}} \big)_x < 0$ for any $r \in\ ]r_{\text{crit},132^J,1},1]$. On the other hand, on Figure \ref{FIGURFeasiIneq_132J_SFig6} we see also that $\alpha \big( P_3 P_1 u_{\lambda_{132^J}^\text{dom}} \big)_y - \big( P_3 P_1 u_{\lambda_{132^J}^\text{dom}} \big)_z > 0$ for any $r \in\ ]r_{\text{crit},132^J,1},1]$.\\
In the end, for any $r \in\ ]r_{\text{crit},132^J,2},1]$, where $r_{\text{crit},132^J,2}$ is defined in \eqref{EQUATr_cr2_132J}, all the feasibility inequalities are fulfilled for the dominating eigenvector $u_{\lambda_{132^J}^\text{dom}}$, so that:
\begin{align}
\mathfrak{P}\big( u_{\lambda_{132^J}^\text{dom}} \big) = P_1 u_{\lambda_{132^J}^\text{dom}},\hspace{3mm} \mathfrak{P}^2\big( u_{\lambda_{132^J}^\text{dom}} \big) = P_3 P_1 u_{\lambda_{132^J}^\text{dom}}, \hspace{3mm} \mathfrak{P}^3\big( u_{\lambda_{132^J}^\text{dom}} \big) = P_2 P_3 P_1 u_{\lambda_{132^J}^\text{dom}},
\end{align}
and since by definition $u_{\lambda_{132^J}^\text{dom}}$ is an eigenvector of $J P_2 P_3 P_1$, the feasibility inequalities for the next three iterations of $\mathfrak{P}$ are fulfilled and we have:
\begin{align}
\mathfrak{P}^4\big( u_{\lambda_{132^J}^\text{dom}} \big) &= P_3 P_2 P_3 P_1 u_{\lambda_{132^J}^\text{dom}},\hspace{3mm} \mathfrak{P}^5\big( u_{\lambda_{132^J}^\text{dom}} \big) = P_1 P_3 P_2 P_3 P_1 u_{\lambda_{132^J}^\text{dom}}, \nonumber\\
&\hspace{-5mm} \mathfrak{P}^6\big( u_{\lambda_{132^J}^\text{dom}} \big) = P_2 P_1 P_3 P_2 P_3 P_1 u_{\lambda_{132^J}^\text{dom}} = \big(\lambda_{132^J}^\text{dom}\big)^2 u_{\lambda_{132^J}^\text{dom}}.
\end{align}
Since $u_{\lambda_{132^J}^\text{dom}}$ is the eigenvector associated to the dominating eigenvalue of $J P_2 P_3 P_1$, it is also locally stable under the repeated iterations of $\mathfrak{P}$. The proof of Theorem \ref{THEORStabi____132312} is complete.
\end{proof}

\subsection{The pattern $13223122$}
\label{SSECTPatter_13223122}

\noindent
In this section, we will investigate another palindromic pattern, which is the period $13223122$. After the study of $132312$ in the previous section, this pattern constitutes the second element of the family of periods \eqref{EQUATPattrPalinSimpl} of the form $132^{n_2}312^{n_2}$. We will actually show that the period $13223122$ can never be observed.

\begin{theor}[Unstability of the periodic orbit $13223122$]
\label{THEORUnstabi13223122}
Let $r \in\ ]0,1[$ be any restitution coefficient.\\
Then, there exists no point in the domain of $\widehat{\mathfrak{P}}_r$ which generates the periodic orbit $13223122$ in a stable manner.
\end{theor}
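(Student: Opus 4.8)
The plan is to reproduce, step by step, the machinery developed for Theorem \ref{THEORStabi____132312}, but to drive the feasibility analysis towards a negative conclusion. First I would record the palindromic reduction. Since the collision sequence $13223122$ is $132^2312^2$, its collision matrix is $\widetilde{P}_{13223122} = P_2^2 P_1 P_3 P_2^2 P_3 P_1$, and using the relations $J P_1 J = P_3$, $J P_3 J = P_1$ and $J P_2 J = P_2$ (the last one checked directly from the explicit form of $P_2$ in \eqref{EQUATCollisionMatric_P_i_}, $J$ being defined in \eqref{EQUATDefin__J__}), one obtains
\[
\widetilde{P}_{13223122} = \big( J P_2^2 P_3 P_1 \big)^2 .
\]
Exactly as in the previous section, it therefore suffices to study the reduced matrix $R := J P_2^2 P_3 P_1$: a \emph{stable} periodic orbit of period $13223122$ can only be generated by an eigenvector of $R$ associated to a strictly dominating eigenvalue, and — by the palindromic argument already used for $132312$ — such an eigenvector, once it is associated to a negative eigenvalue and satisfies the four feasibility inequalities of the half-period $1,3,2,2$, automatically satisfies the four remaining ones.

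Next I would carry out the spectral study of $R$, in the spirit of Propositions \ref{PROPOEigen_P_1_} and \ref{PROPOEigen_P_2_}. I would compute the characteristic polynomial $\chi_R$ (a cubic in $\lambda$ with coefficients polynomial in $r$), together with $\det R = -r^{14}$ and the discriminant $\Delta_{\chi_R}(r)$, in order to locate the unique persistently real eigenvalue $\lambda_1(r)$ of $R$ and to decide, as a function of $r \in\ ]0,1[$, on which subintervals $\lambda_1$ is the dominating eigenvalue and on which a complex-conjugate pair dominates. This step isolates the only possible candidate for a stable fixed point, namely the real eigenvector $u_1(r)$ of $R$ normalized by $\big(u_1\big)_x = 1$, since a dominating complex pair produces no real attracting fixed point of the projective map $\widehat{\mathfrak{P}}_r$.

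The decisive step is then the feasibility analysis performed on $u_1(r)$. I would evaluate the four half-period feasibility inequalities — that $u_1$ lie in the domain of $P_1$, that $P_1 u_1$ lie in the domain of $P_3$, that $P_3 P_1 u_1$ lie in the domain of $P_2$, and that $P_2 P_3 P_1 u_1$ again lie in the domain of $P_2$ — each being a sign condition on a rational expression in the coordinates of $u_1$. The statement to establish is that, \textbf{on every subinterval where $\lambda_1$ dominates}, at least one of these sign conditions fails for all $r$, while on the complementary subintervals the dominating eigenvalue is complex and no real attracting fixed point exists. Together these two facts forbid a stable orbit, in sharp contrast with the case $n_2 = 1$ of Theorem \ref{THEORStabi____132312}, where all six inequalities eventually hold for $r > r_{\text{crit},132^J,2}$.

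The main obstacle is precisely this global sign control over the whole interval $]0,1[$. Because the conclusion must hold for every $r$ and not merely at sampled values, I cannot rely on the plots used in the previous proof; instead I would clear the Cardano radicals appearing in $\lambda_1(r)$ and $u_1(r)$ (or, more robustly, take the resultant of $\chi_R$ with the numerator of the offending feasibility expression) so as to reduce each sign condition to the non-vanishing, with a fixed sign, of an explicit polynomial in $r$ on $]0,1[$. Proving that this polynomial has no root in $]0,1[$ — via Sturm sequences or certified high-precision interval arithmetic, as was already required for the polynomials $Q_n$ — is the delicate, computation-heavy part of the argument. Once a single feasibility inequality is shown to fail throughout the dominating regime, the non-existence of a stable $13223122$ orbit follows at once.
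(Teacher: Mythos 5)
Your overall architecture matches the paper's: the palindromic reduction $\widetilde{P}_{13223122} = \big(J P_2^2 P_3 P_1\big)^2$ (and your verification $JP_2J = P_2$ is correct), the spectral study of the reduced matrix isolating the unique persistently real, negative eigenvalue as the only candidate for a stable fixed point, and the restriction of the feasibility check to the half-period. Up to that point you are on the paper's track.

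The gap is in the decisive step. You plan to show that \emph{a single} feasibility inequality fails throughout the regime where the real eigenvalue dominates, and to certify this by proving that an explicit polynomial has no root in $]0,1[$. That is not what happens, and the computation would not close this way: on the dominating interval (approximately $]0.1603,\,0.2511[$), the first feasibility condition $\big(u_{\lambda^\text{real}}\big)_z < 0$ holds precisely for $r$ \emph{above} a threshold, while the second condition $\alpha\big(u_{\lambda^\text{real}}\big)_y - \big(u_{\lambda^\text{real}}\big)_x > 0$ holds precisely for $r$ \emph{below} another threshold; neither fails globally, and the associated polynomials do have roots in $]0,1[$. The argument only closes because these two thresholds \emph{coincide exactly} at $r = 3-2\sqrt{2}$, so that the two validity windows are disjoint. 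The paper establishes this coincidence by observing that the vector $u_\alpha = {}^t(\alpha,1,0)$ --- which simultaneously saturates both feasibility conditions --- is an eigenvector of $J P_2^2 P_3 P_1$ exactly when two explicit polynomial conditions hold, both of which factor through $r^2 - 6r + 1$, whose unique root in $]0,1[$ is $3-2\sqrt{2}$; uniqueness of the roots of the feasibility expressions on the relevant interval then forces the two critical values to agree. Your resultant/Sturm machinery is in principle capable of detecting this common factor, but as written your plan anticipates the wrong shape of conclusion (constant sign of one polynomial) and would stall once the computation reveals sign changes; the missing idea is the exact identification of the two complementary thresholds at $3-2\sqrt{2}$.
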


\begin{remar}
The result of Theorem \ref{THEORUnstabi13223122} is a negative result. The fact that the periodic pattern $13223122$ can never be realized in a stable manner is consistent with the results of the numerical simulations presented in Section \ref{SSECTSimulIndivOrbit}, since this pattern was never observed. Indeed, a periodic orbit that has a basin of attraction of zero Lebesgue measure cannot be observed in numerical simulations.\\
In order to determine if the period $13223122$ can be realized in an unstable manner, it would be necessary to investigate for $r \in\ ]0,1[$ the feasibility inequalities related to the eigenvector of $\widetilde{P}_{1322^J} = J P_2 P_2 P_3 P_1$ associated to the eigenvalue that is real for any value of $r$. Indeed, the other eigenvalues being either positive or complex, they cannot be associated with periodic orbits (either the associated eigenvector is not real-valued, or it cannot fulfill the feasibility inequalities because of the signs of such eigenvalues).
\end{remar}

\begin{proof}[Proof of Theorem \ref{THEORUnstabi13223122}]
In the case of the collision pattern $13223122$, which is also palindromic, we study the eigenvalues and eigenvectors of the matrix $\widetilde{P}_{1322^J} = J P_2 P_2 P_3 P_1$ (with $J$ defined in \eqref{EQUATDefin__J__}). The characteristic polynomial $\chi_{\widetilde{P}_{1322^J}}$ of $\widetilde{P}_{1322^J}$ is:
\begin{align}
\chi_{\widetilde{P}_{1322^J}}(\lambda) &= \lambda^3 + \frac{\big( -7r^8 + 42 r^7 - 18 r^6 - 58 r^5 + 24 r^4 + 38 r^3 - 30 r^2 + 10 r - 1\big)}{64} \lambda^2 \nonumber\\
&\hspace{8mm}+ \frac{\big( -r^{14} + 10 r^{13} - 30 r^{12} + 38 r^{11} + 24 r^{10} - 58 r^9 - 18 r^8 + 42 r^7 - 7 r^6 \big)}{64} \lambda + r^{14}.
\end{align}
We plot the eigenvalues of $\widetilde{P}_{1322^J}$ for $r \in [0,1]$ in the case when they are real on Figures \ref{FIGUREigenvalue1322J_Glob} and \ref{FIGUREigenvalue1322J_Zoom}. We also plot the moduli of the three eigenvalues on Figures \ref{FIGURModulEigen1322J_Glob} and \ref{FIGURModulEigen1322J_Zoom}.\\

\begin{figure}[h!]
\centering
\begin{subfigure}{1\textwidth}
    \includegraphics[trim = 0cm 0cm 0cm 0cm, width=\linewidth]{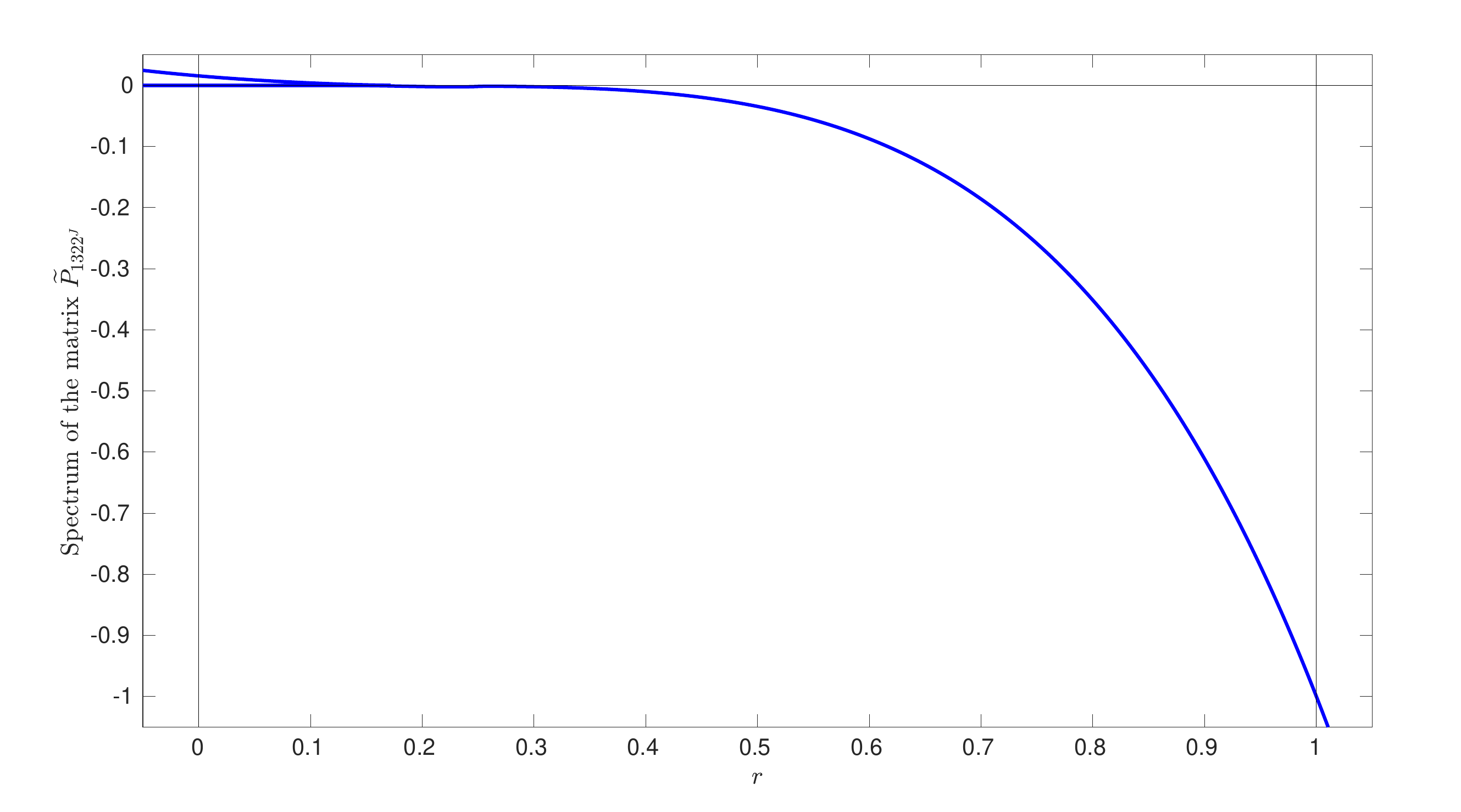}
    \caption{Spectrum (when real) for $0 \leq r \leq 1$.}
    \label{FIGUREigenvalue1322J_Glob}
\end{subfigure}
\begin{subfigure}{1\textwidth}
    \includegraphics[trim = 0cm 0cm 0cm 0cm, width=\linewidth]{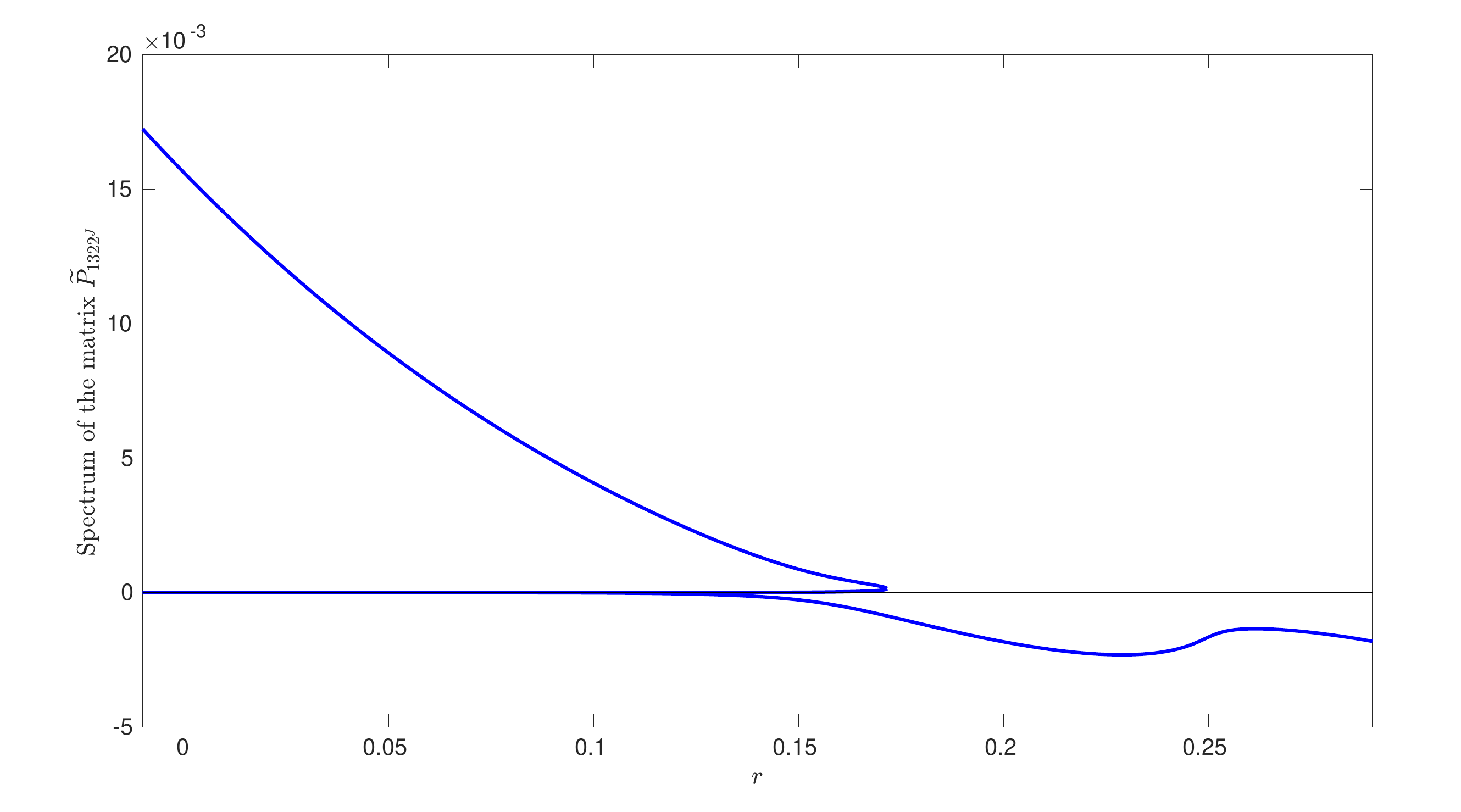}
    \caption{Magnification of the plot of the spectrum (when real), for $0 \leq r \leq 0.3$.}
    \label{FIGUREigenvalue1322J_Zoom}
\end{subfigure}
\caption{Eigenvalues of the collision matrix $\widetilde{P}_{1322^J}$, as functions of the restitution coefficient $r$, in the case when these eigenvalues are real.}
\label{FIGUREigenvalue1322J}
\end{figure}

\begin{figure}[h!]
\centering
\begin{subfigure}{1\textwidth}
    \includegraphics[trim = 0cm 0cm 0cm 0cm, width=\linewidth]{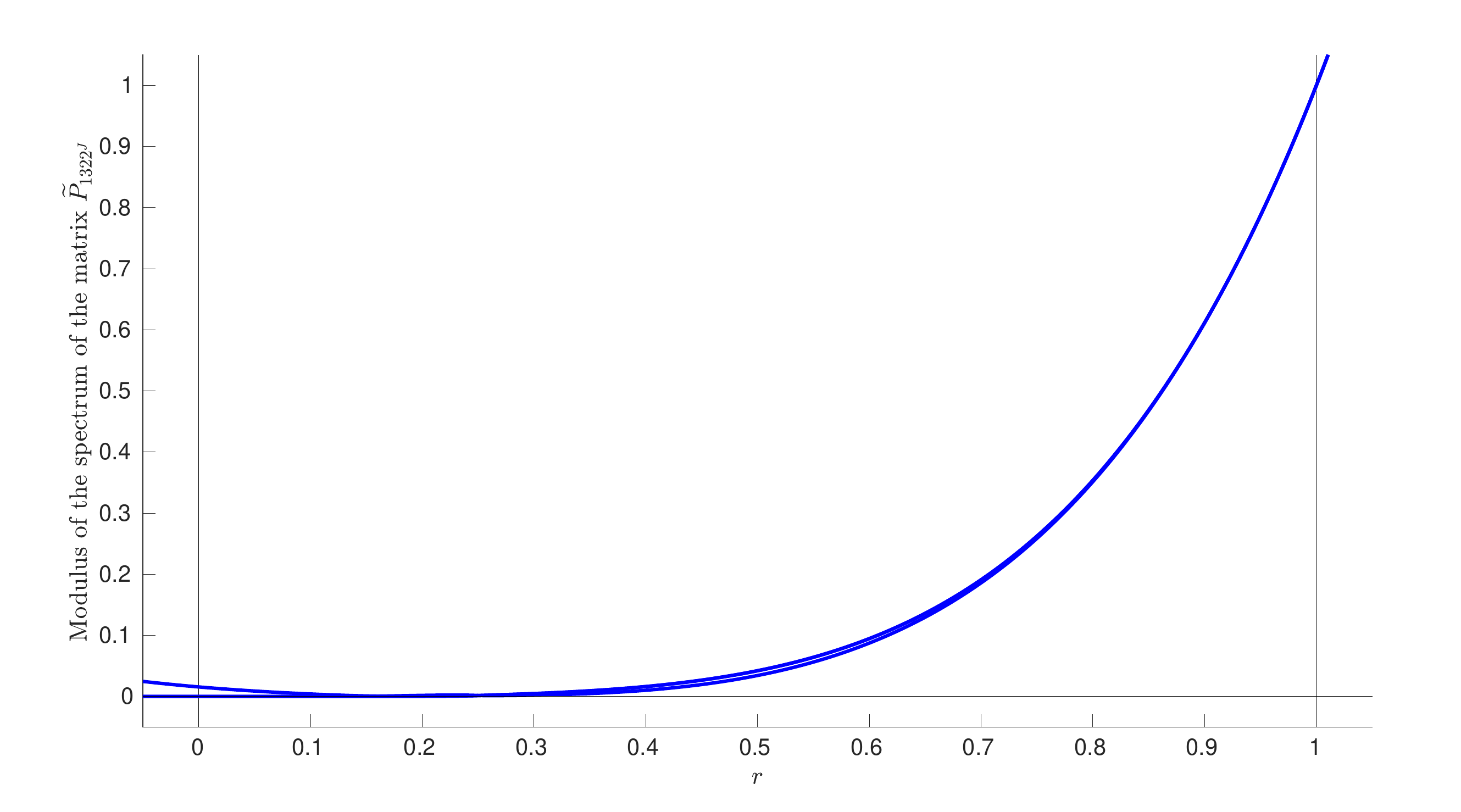}
    \caption{Moduli of the eigenvalues for $0 \leq r \leq 1$.}
    \label{FIGURModulEigen1322J_Glob}
\end{subfigure}
\begin{subfigure}{1\textwidth}
    \includegraphics[trim = 0cm 0cm 0cm 0cm, width=\linewidth]{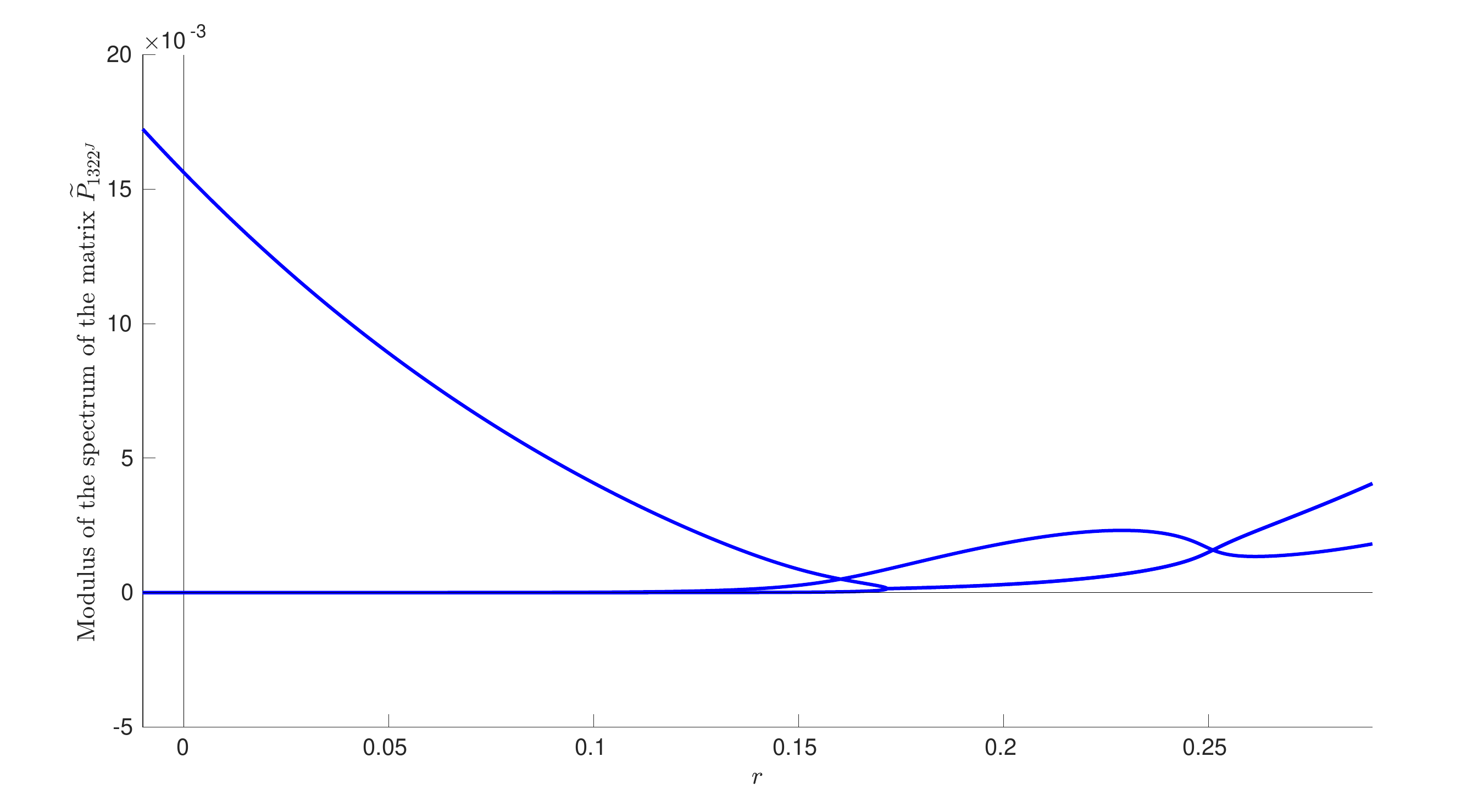}
    \caption{Magnification of the plot of the moduli of the eigenvalues, for $0 \leq r \leq 0.3$.}
    \label{FIGURModulEigen1322J_Zoom}
\end{subfigure}
\caption{Moduli of the eigenvalues of the collision matrix $\widetilde{P}_{1322^J}$, as functions of the restitution coefficient $r$.}
\label{FIGURModulEigen1322J}
\end{figure}

\noindent
The discriminant $\Delta_{\chi_{\widetilde{P}_{1322^J}}}$ of the characteristic polynomial $\chi_{\widetilde{P}_{1322^J}}$ is:
\begin{align}
\Delta_{\chi_{\widetilde{P}_{1322^J}}}(r) = - \frac{r^{12} \big( r^3 + r^2 + r + 1 \big)^2}{1811939328} Q_{\Delta_{1322^J}}(r)
\end{align}
with
\begin{align}
Q_{\Delta_{1322^J}}(r) &= 49 r^{26} - 1666 r^{25} + 25057 r^{24} - 221216 r^{23} + 1281770 r^{22} - 5145140 r^{21} + 14717738 r^{20} \nonumber\\
&\hspace{5mm} - 31143328 r^{19} + 54462799 r^{18} - 92352670 r^{17} + 144006079 r^{16} - 174418496 r^{15} \nonumber\\
&\hspace{6mm}+ 197859020 r^{14} - 246451544 r^{13} + 197859020 r^{12} - 174418496 r^{11} + 144006079 r^{10} \\
&\hspace{7mm} - 92352670 r^9 + 54462799 r^8 - 31143328 r^7 + 14717738 r^6 - 5145140 r^5 + 1281770 r^4 \nonumber\\
&\hspace{40mm} - 221216 r^3 + 25057 r^2 - 1666 r + 49. \nonumber
\end{align}
$Q_{\Delta_{1322^J}}$ has one single real root $r_{\Delta,1322^J}$ in $]0,1[$, which satisfies:
\begin{align}
r_{\Delta,1322^J} \simeq 0.1715\ 7287\ 5253\ 8099\ 0240.
\end{align}
Since $\Delta_{\chi_{\widetilde{P}_{1322^J}}}(r) < 0$ for any $r \in\ ]0,r_{\Delta,1322^J}[$ and $\Delta_{\chi_{\widetilde{P}_{1322^J}}}(r) > 0$ for any $r \in\ ]r_{\Delta,1322^J},1[$, the characteristic polynomial $\chi_{\widetilde{P}_{1322^J}}$ has three distinct real roots if $r < r_{\Delta,1322^J}$, and one single real root and two complex conjugated roots if $r > r_{\Delta,1322^J}$.\\
According to the discussion at the beginning of Section \ref{SSECTPatte__132312__} concerning the feasibility inequalities for palindromic collision sequences, a stable periodic orbit, with period $13223122$, corresponds necessarily to a dominating eigenvector of the reduced collision matrix $\widetilde{P}_{1322^J}$, and such an eigenvector has necessarily to be associated to a negative eigenvalue. Considering Figures \ref{FIGUREigenvalue1322J_Glob} and \ref{FIGUREigenvalue1322J_Zoom}, we denote by $\lambda^\text{real}_{1322^J}$ the only root of the characteristic polynomial $\chi_{\widetilde{P}_{1322^J}}$ which is real for any value of $r$. $\lambda^\text{real}_{1322^J}$  is always negative, but contrary to the case of the pattern $132312$, such an eigenvalue is not always dominating (see Figures \ref{FIGURModulEigen1322J_Glob} and \ref{FIGURModulEigen1322J_Zoom}). More precisely, since the two other roots $\lambda^j_{1322^J}$, $2 \leq j \leq 3$, are positive for any $r \in\ ]0,r_{\Delta,1322^J}[$, we can determine the restitution coefficient for which $\lambda^\text{real}_{1322^J} = - \lambda^j_{1322^J}$ by determining when the characteristic polynomial of $\big( \widetilde{P}_{1322^J} \big)^2$ has a double root (since the eigenvalues of $\big( \widetilde{P}_{1322^J} \big)^2$ are the squares of the eigenvalues of $\widetilde{P}_{1322^J}$). There is only one real root of the discriminant of the characteristic polynomial of $\big( \widetilde{P}_{1322^J} \big)^2$ in $]0,r_{\Delta,1322^J}[$, that we denote by $r_{\text{crit},1322^J,1}$, and which is approximately equal to:
\begin{align}
r_{\text{crit},1322^J,1} \simeq 0.1602\ 8989\ 1518\ 0595\ 0668.
\end{align}
The determinant of $\widetilde{P}_{1322^J}$ is equal to $-r^{14}$. Therefore, in the case when the eigenvalues $\lambda^j_{1322^J}$ are complex conjugated, they have the same modulus, and so the real eigenvalue $\lambda^\text{real}_{1322^J}$ has the same modulus as the complex eigenvalues when $r$ solves the equation:
\begin{align}
\label{EQUAT1322J_r_crit_2_}
\lambda^\text{real}_{1322^J}(r) = - r^{14/3}.
\end{align}
Equation \eqref{EQUAT1322J_r_crit_2_} has one single real root $r_{\text{crit},1322^J,2} \in\ ]r_{\text{crit},1322^J,1},1[$, such that:
\begin{align}
r_{\text{crit},1322^J,2} \simeq 0.2511\ 2749\ 5442\ 8146\ 2755.
\end{align}
Therefore, for restitution coefficients $r \in\ ]0,1[$, the dominating eigenvalue of $\widetilde{P}_{1322^J}$ is real and negative if and only if $r_{\text{crit},1322^J,1} < r < r_{\text{crit},1322^J,2}$. We will emphasize the two critical values $r_{\text{crit},1322^J,1}$ and $r_{\text{crit},1322^J,2}$ on Figures \ref{FIGURFeasiIneq1322J_SFig1} and \ref{FIGURFeasiIneq1322J_SFig2} when dealing with the feasibility inequalities associated to the period $13223122$. They will be represented as dashed red vertical lines.\\
\newline
Turning to these feasibility inequalities, we consider the eigenvector $u_{\lambda^\text{real}_{1322^J}}$ associated to the eigenvalue $\lambda^\text{real}_{1322^J}$ (the dominating eigenvalue if $r \in\ ]r_{\text{crit},1322^J,1},r_{\text{crit},1322^J,2}[$) such that $\big( u_{\lambda^\text{real}_{1322^J}} \big)_x = 1$, which is always possible except for $r = r_{\text{crit},1322^J,3}$, which is such that:
\begin{align}
r_{\text{crit},1322^J,3} \simeq 0.2363\ 6180\ 6783\ 6584\ 8104,
\end{align}
in which case we have necessarily $\big( u_{\lambda^\text{real}_{1322^J}} \big)_x = 0$. This critical value will be represented on Figures \ref{FIGURFeasiIneq1322J_SFig1}-\ref{FIGURFeasiIneq1322J_SFig2} as a solid red vertical line.\\

\begin{figure}
\centering

\begin{subfigure}{1\textwidth}
\centering
    \includegraphics[trim = 0cm 0cm 0cm 0cm, width=\linewidth]{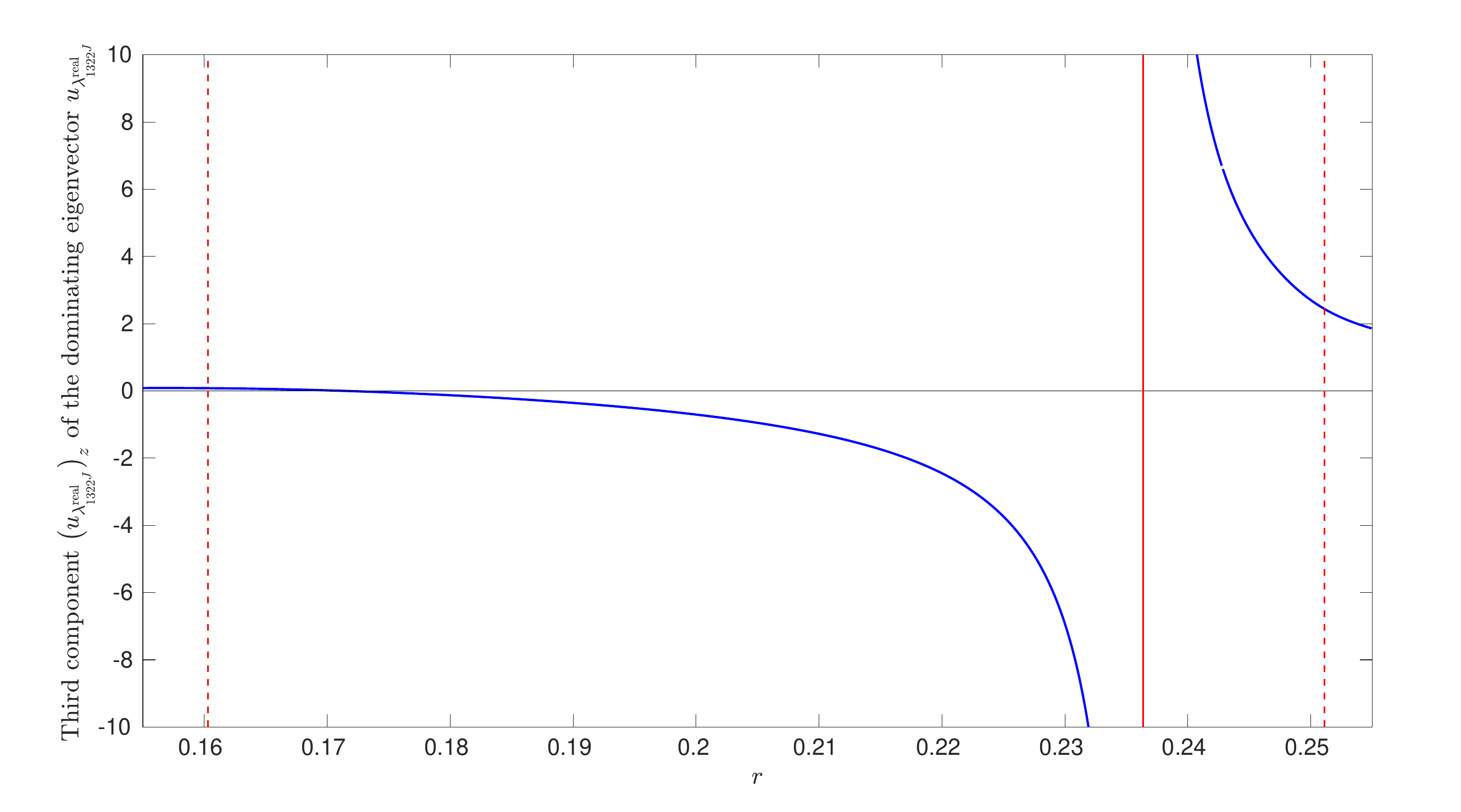}
    \caption{First feasibility inequality for 
    $13223122$: plot of $\big( u_{\lambda_{1322^J}^\text{dom}}\big)_z$ as a function of $r$.}
    \label{FIGURFeasiIneq1322J_SFig1}
\end{subfigure}

\begin{subfigure}{1\textwidth}
\centering
    \includegraphics[trim = 0cm 0cm 0cm 0cm, width=\linewidth]{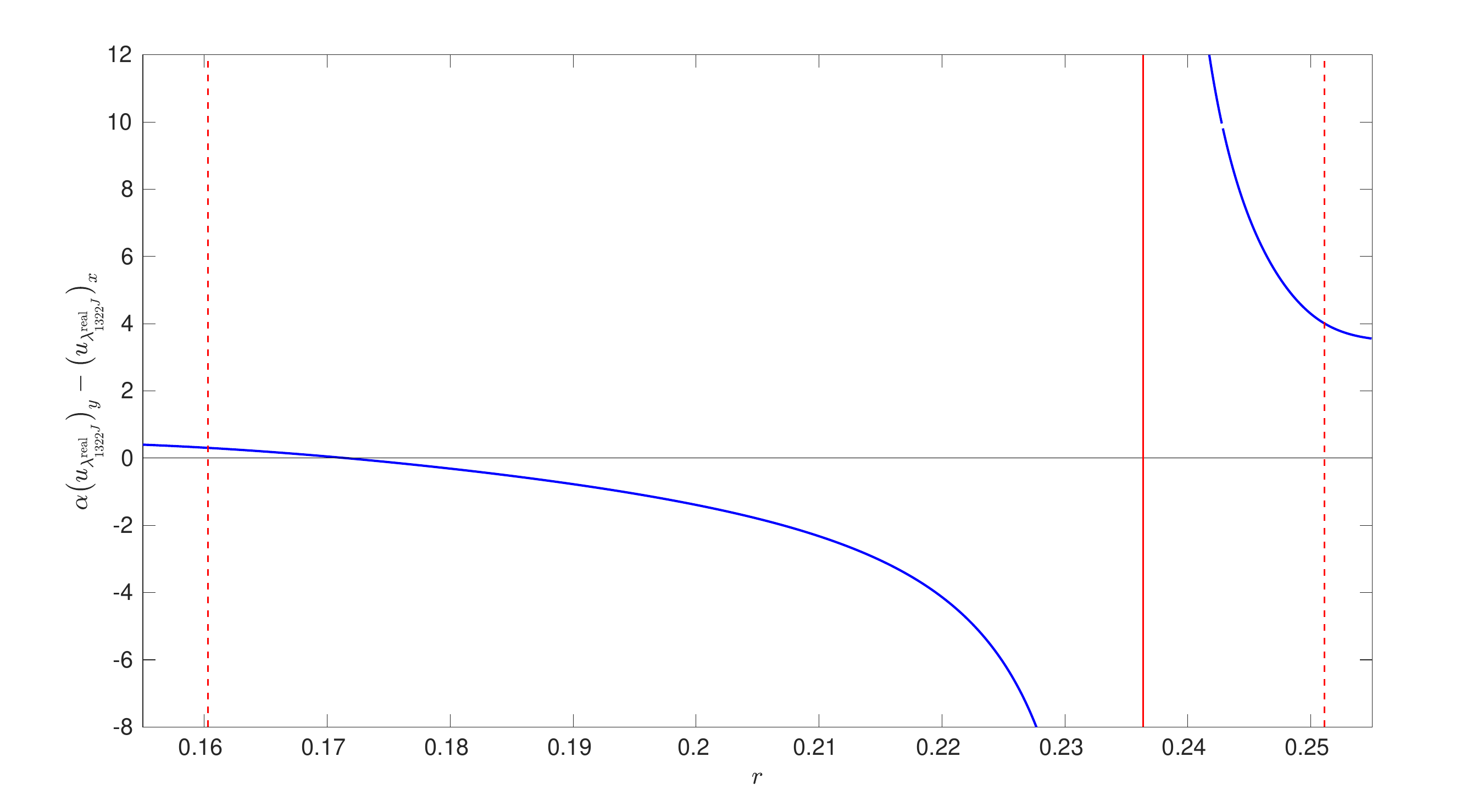}
    \caption{Second feasibility inequality for 
    $13223122$: plot of $\alpha \big( u_{\lambda_{1322^J}^\text{dom}}\big)_y - \big( u_{\lambda_{1322^J}^\text{dom}}\big)_x$ as a function of $r$.}
    \label{FIGURFeasiIneq1322J_SFig2}
\end{subfigure}
\caption{The two first feasibility inequalities associated to the pattern $13223122$.}
\label{FIGURFeasiIneq1322J___1__}
\end{figure}

\noindent
On Figure \ref{FIGURFeasiIneq1322J_SFig1}, we start with plotting the third component $\big( u_{\lambda^\text{real}_{1322^J}} \big)_z$, which has to be negative in order to have that $u_{\lambda^\text{real}_{1322^J}}$ belongs to the quadrant $\{x \geq 0, z \leq 0\}$, which is the domain of $\mathfrak{P}$. On $]r_{\text{crit},1322^J,1},r_{\text{crit},1322^J,2}[$, the third component vanishes only once, at $r_{\text{crit},1322^J,4}$ that is approximately given by:
\begin{align}
r_{\text{crit},1322^J,4} \simeq 0.1715\ 7287\ 5253\ 8099\ 0240.
\end{align}
Therefore, on $]r_{\text{crit},1322^J,1},r_{\text{crit},1322^J,2}[$, $\big( u_{\lambda^\text{real}_{1322^J}} \big)_z < 0$ if and only if $r \in\ ]r_{\text{crit},1322^J,4},r_{\text{crit},1322^J,2}[$. We plot now the quantity $\alpha \big( u_{\lambda^\text{real}_{1322^J}} \big)_y - \big( u_{\lambda^\text{real}_{1322^J}} \big)_x$ on Figure \ref{FIGURFeasiIneq1322J_SFig2}, which, when positive, ensures that $\mathfrak{P}\big( u_{\lambda^\text{real}_{1322^J}} \big) = P_1 u_{\lambda^\text{real}_{1322^J}}$. This quantity vanishes only once on $]r_{\text{crit},1322^J,1},r_{\text{crit},1322^J,2}[$, at $r_{\text{crit},1322^J,5}$, approximately given by:
\begin{align}
r_{\text{crit},1322^J,5} \simeq 0.1715\ 7287\ 5253\ 8099\ 0240.
\end{align}
We deduce then that $\alpha \big( u_{\lambda^\text{real}_{1322^J}} \big)_y - \big( u_{\lambda^\text{real}_{1322^J}} \big)_x$ is positive on $]r_{\text{crit},1322^J,1},r_{\text{crit},1322^J,2}[$ if and only if $r < r_{\text{crit},1322^J,5}$.\\
\newline
We now show that $r_{\text{crit},1322^J,4} = r_{\text{crit},1322^J,5}$, which implies that the periodic pattern $13223122$ cannot take place starting from the eigenvector $u_{\lambda^\text{real}_{1322^J}}$ associated to the negative eigenvalue $\lambda^\text{real}_{1322^J}$, at least when such an eigenvalue is dominating. To do so, we will determine two conditions for which the vector $u_\alpha = \hspace{0.5mm}^t\hspace{-0.25mm} (\alpha,1,0)$ is an eigenvector of the reduced collision matrix $\widetilde{P}_{1322^J}$.\\
On the one hand, $u_\alpha$ is an eigenvector of the reduced collision matrix $\widetilde{P}_{1322^J}$ only if $\alpha \big( \widetilde{P}_{1322^J} u_\alpha \big)_y = \big( \widetilde{P}_{1322^J} u_\alpha \big)_x$. This condition can be written in terms of the following polynomial equation on the restitution coefficient~$r$:
\begin{align}
Q_{1322^J,1}(r) = 0 \hspace{5mm} \text{with} \hspace{5mm} Q_{1322^J,1}(r) &= - \frac{r(r - 1)^2}{32} \big(- r^5 + 5r^4 + 4r^3 + 4r^2 + 5r - 1 \big) \\
&= \frac{r(r - 1)^2}{32} \big(r^3 + r^2 + r + 1 \big)\big( r^2 - 6r + 1 \big). \nonumber
\end{align}
On the other hand, $u_\alpha$ is an eigenvector of the reduced collision matrix $\widetilde{P}_{1322^J}$ only if $\big( \widetilde{P}_{1322^J} u_\alpha \big)_z = 0$. This condition can also be written as a polynomial equation on $r$, which reads:
\begin{align}
Q_{1322^J,2}(r) = 0 \hspace{5mm} \text{with} \hspace{5mm} Q_{1322^J,2}(r) &= -4r \big( r^5 - 5r^4 - 4r^3 - 4r^2 - 5r + 1 \big) \\
&= - 4r \big(r^3 + r^2 + r + 1 \big)\big( r^2 - 6r + 1 \big). \nonumber
\end{align}
As a side comment, we observe that the polynomial $r^2 - 6r + 1$ is also a factor of $Q_{\Delta_{1322^J}}$, involved in the discriminant of the characteristic polynomial of $\widetilde{P}_{1322^J}$.\\
The two conditions $Q_{1322^J,1}(r) = 0$ and $Q_{1322^J,2}(r) = 0$ are necessary and sufficient to ensure that $u_\alpha$ is an eigenvector of $\widetilde{P}_{1322^J}$. On $]0,1[$, each of these two conditions is fulfilled for the same restitution coefficient, which is $3-2\sqrt{2} \simeq 0.1716$, and there is no other solution on $]0,1[$. Therefore, for this particular choice of $r$, $u_\alpha$ is indeed an eigenvector of $\widetilde{P}_{1322^J}$, and a direct computation shows that $\big( \widetilde{P}_{1322^J} u_\alpha \big)_x < 0$, so that $u_\alpha$ is actually an eigenvector associated to the only eigenvalue of $\widetilde{P}_{1322^J}$ which is real for any $r$. $u_\alpha$ is therefore an eigenvector associated to the dominating eigenvalue. Since for $r = 3-2\sqrt{2}$, $u_\alpha$ is such that $\big( u_\alpha \big)_z = 0$ on the one hand, and such that $\alpha \big( u_\alpha \big)_y - \big( u_\alpha \big)_x = 0$ on the other hand, by uniqueness of the roots of the feasibility inequalities we deduce that:
\begin{align}
r_{\text{crit},1322^J,4} = r_{\text{crit},1322^J,5} = 3 - 2\sqrt{2}.
\end{align}
In the end, on the one hand, we need to have $r > r_{\text{crit},1322^J,4} = 3 - 2\sqrt{2}$ in order to fulfill the first feasibility inequality $\big( u_{\lambda^\text{real}_{1322^J}} \big)_z < 0$. On the other hand, we need to have $r < r_{\text{crit},1322^J,5} = 3 - 2\sqrt{2}$ in order to fulfill the second feasibility inequality $\alpha \big( u_{\lambda^\text{real}_{1322^J}} \big)_y - \big( u_{\lambda^\text{real}_{1322^J}} \big)_x > 0$. We deduce that there exists no value of the restitution coefficient $r$ for which the periodic orbit $13223122$ can be realized in a stable manner for the dynamical system $\big(\widehat{\mathfrak{P}}_r^n \big)_n$. The proof of Theorem \ref{THEORUnstabi13223122} is complete.
\end{proof}

\subsection{Concluding remarks}

\noindent
The writing of the dynamical system $\big( \widehat{\mathfrak{P}}_r^n \big)_n$ as the iteration of a piecewise projective linear transformation, together with a careful study of the three different matrices with which $\mathfrak{P}$ coincides on the different parts of its domain allowed us to observe numerically new periodic patterns. We also used the underlying piecewise linear structure of $\mathfrak{P}$ in order to study rigorously two different periodic orbits. In the first case, we have shown that $132312$, which corresponds to the following sequence of collisions $\mathfrak{abcbacbcbabacb}$, can indeed be realized in a stable manner. We emphasize that Theorem \ref{THEORStabi____132312} establishes the existence of stable periodic orbits for the $\mathfrak{b}$-to-$\mathfrak{b}$ mapping $\widehat{\mathfrak{P}}$ for (any) restitution coefficients larger than $r_{\text{crit},132^J} \simeq 0.2201$, which represents a substantial improvement with respect to the stable patterns discussed in \cite{CDKK999}, that can be realized for restitution coefficients at most equal to $3-2\sqrt{2} \simeq 0.1716$. In the second case, we proved that the pattern $13223122$ can never be realized in a stable manner, which is consistent with the fact that such a pattern was never detected in our numerical simulations.\\
The study of the patterns $132312$ and $13223122$ conducted respectively in Sections \ref{SSECTPatte__132312__} and \ref{SSECTPatter_13223122} can clearly be repeated to any other periodic pattern. The approach consists only in identifying the dominating eigenvalue of a certain collision matrix, and to verify if the associated eigenvectors fulfill a finite number of conditions. It remains to investigate if all the possible periodic patterns can be characterized.\\
Finally, we emphasize that we focused in the present article on the $\mathfrak{b}$-to-$\mathfrak{b}$ mapping $\mathfrak{P}$, which encodes all the possible collision patterns of the original four one-dimensional particle system. It remains to verify if the patterns that we detected are associated to non-trivial sets of initial configurations of the original particle system, leading to a ``physical'' inelastic collapse. Answering such a question is out of the scope of the present article, but the tools developed in \cite{CDKK999} allow certainly to address this question.

\section{Perspectives: the projective dynamics and the statistical properties}
\label{SECTIperspectives}

\noindent
The explicit formulae of Theorem~\ref{THEOR__P__FukngLinea} show that the $\mathfrak{b}$--to--$\mathfrak{b}$ map $\mathfrak{P}_r:\mathbb{R}^3\to\mathbb{R}^3$ is piecewise linear and, for every fixed $r\in(0,1)$, is in fact a strict contraction (in each branch) in the Euclidean metric. Hence the piecewise linear map $\mathfrak{P}_r$ itself has trivial global dynamics: all trajectories converge exponentially to the origin, which is the unique fixed point.\\
\noindent
The nontrivial behaviour observed in Section~\ref{SSECTSpherReduc} arises only after a projective renormalization. Namely, one considers the map
\[
\widehat{\mathfrak{P}}_r \;:\;
X \subset \mathbb{S}^2 \longrightarrow X,\qquad
\widehat{\mathfrak{P}}_r(x)
   = \frac{\mathfrak{P}_r(x)}{\|\mathfrak{P}_r(x)\|},
\]
where $X$ is the invariant region of \eqref{EQUATDefinDomai_X_P_}
corresponding to the physically admissible signs of the variables
$u_1,u_3$. This is the map that determines the evolution of the projective direction of the plane $\mathcal{P}(k)$, and therefore the symbolic sequence of collisions.\\
\newline
\noindent
Unlike $\mathfrak{P}_r$, the normalized map
$\widehat{\mathfrak{P}}_r$ is not linear, not globally Lipschitz,
and not obviously covered by any classical framework for piecewise-smooth contractions. In particular, on each branch $X_i\subset X$ the map has the form
\[
\widehat{\mathfrak{P}}_r(x) = \frac{P_i(r)x}{\|P_i(r)x\|},
\]
which is smooth inside each region but has discontinuities along the boundaries $\partial X_i$ where the normalization factor varies
non-smoothly ; even though $P_i(r)$ is strictly contracting on $\mathbb{R}^3$, the quotient map $x\mapsto P_i(r)x/\|P_i(r)x\|$ need not be contracting on $X$ ; indeed, numerically $\widehat{\mathfrak{P}}_r$ appears to exhibit non-uniform hyperbolicity for several values of $r$, that is, there are regions in which the derivative of the map has norm equal to $1$ ; the invariant domain $X$ is forward-invariant, but standard projective-dynamics theorems
(e.g.\ classical Perron–Frobenius theory for positive matrices,
Birkhoff–Hopf theorems) do not apply.\\
\newline
\noindent
Moreover, near the boundaries of the partition the normalization introduces strong nonlinearities, and the local Jacobian $D\widehat{\mathfrak{P}}_r(x)$ can have eigenvalues larger than or equal to $1$ in modulus for some $x$, suggesting intermittent or non-uniformly hyperbolic behaviour. The numerical plots of Section~\ref{SECTISeconNumerSimul} are consistent with the picture that, depending on $r$, the system displays an alternation between
parameter intervals where all sampled trajectories converge to a finite periodic orbit, and intermediate intervals where orbits appear to accumulate on more complicated sets (for instance, apparently quasi-periodic invariant curves) and exhibit sensitive dependence on initial conditions within their basin.\\
\newline
\noindent
From the statistical point of view, the rigorous analysis of the mapping $\mathfrak{P}$, when its orbits remain in the domain of the matrix $P_2$ on the one hand (see Section \ref{SSECTSpectStudyMatri_P_i_}), and the study of the stable periodic orbits on the other hand (see Theorem \ref{THEORStabi____132312}), implies that the simplest scenario of a single global attractor with a unique physical measure is impossible in certain parameter ranges of $r$. The simulations in the previous section, and in particular the two-dimensional plots at fixed restitution coefficient (see, e.g., Figure~\ref{FIGUR10Orb_r=.2}), show that for certain values of $r$ the dynamics of $\widehat{\mathfrak{P}}_r$ displays a genuine coexistence of different types of attracting objects: several periodic sinks, together with
a whole family of quasi-periodic invariant curves (quasi-periodic in the sense that, for any positive number $\varepsilon > 0$ and any point $\mathfrak{P}^n(x_0)$ of an orbit on such invariant curves, there exists $k = k(n,\varepsilon)$ such that $\vert \mathfrak{P}^n(x_0) - \mathfrak{P}^{n+k}(x_0) \vert \leq \varepsilon$). Varying the initial condition, one obtains different invariant ellipses for the same parameter $r$, and the union of these curves appears to fill a region of positive two-dimensional Lebesgue measure in the $(w_1,w_2)$-representation.\\
\newline
\noindent
Each individual invariant curve is a one-dimensional set and therefore has zero Lebesgue measure in the ambient section, but their union occupies a positive-measure region. This picture is consistent with the existence of an {uncountable} family of invariant curves $\{\Lambda_t\}_{t\in T}$, possibly forming a lamination of this region. On each curve $\Lambda_t$, typical time-averages converge to an ergodic invariant probability $\mu_t$ supported on that curve (often equivalent to arc-length), while periodic sinks support atomic invariant measures. Thus, for a fixed $r$, one should expect \emph{infinitely many} invariant probability measures for $\widehat{\mathfrak{P}}_r$, parametrized at least by the label $t$ of the invariant curve. None of these measures is absolutely continuous with respect to the two-dimensional Lebesgue measure, and a global absolutely continuous invariant probability is unlikely to exist in such regimes (see the next section for more details).\\
\newline
\noindent
Nevertheless, the statistical behaviour of typical orbits within the positive-measure region can still be described: for Lebesgue-almost every $x$ in that region, we expect that:
\[
\frac{1}{N}\sum_{k=0}^{N-1}
\varphi\!\left(\widehat{\mathfrak{P}}_r^{\,k}(x)\right)
\;\xrightarrow[N\to\infty]{}\;
\int \varphi \, d\mu_{t(x)},
\]
where $t(x)$ is the label of the unique invariant curve or periodic orbit containing the tail of the orbit of $x$, and $\mu_{t(x)}$ is its associated ergodic invariant measure. Thus typical orbits still have well-defined statistical behaviour, but this behaviour depends sensitively on the leaf of the invariant lamination on which the orbit lands.\\
\newline
\noindent
A natural long-term goal is therefore to place the family
$\{\widehat{\mathfrak{P}}_r\}_{r\in(0,1)}$ within a rigorous dynamical-systems framework. One possible approach is to study the transfer operator associated to $\widehat{\mathfrak{P}}_r$, formally defined by
\[
(\mathcal{L}_r f)(x)
   = \sum_{y\in \widehat{\mathfrak{P}}_r^{-1}(x)}
      \frac{f(y)}{|\det D\widehat{\mathfrak{P}}_r(y)|}.
\]
However, the Jacobian factors involve derivatives of the normalization
map, so that $|\det D\widehat{\mathfrak{P}}_r|$ is not constant along symbolic cylinders\footnote{Given a piecewise-defined map $F : X \to X$ with a finite partition
$X = \bigsqcup_{i\in \mathcal{A}} X_i$ of its domain into regions on which $F$ is smooth, the {itinerary} of a point $x$ is the sequence
$(i_0,i_1,i_2,\ldots)$ such that $F^k(x)\in X_{i_k}$. For a finite word $(i_0,i_1,\ldots,i_{n-1})$ of symbols in $\mathcal{A}$,
the corresponding {symbolic cylinder} is the set
\[
[i_0 i_1\ldots i_{n-1}]
 \;=\; \{\, x\in X : F^k(x)\in X_{i_k} \text{ for } 0\le k<n \,\}.
\]
Thus a cylinder is the set of points sharing the same first $n$ steps of their symbolic dynamics.  In the present setting, cylinders correspond to trajectories experiencing the same sequence of collisions for $n$ iterates of $\widehat{\mathfrak{P}}_r$.} (unlike for $\mathfrak{P}_r$), and the usual theories for uniformly expanding or contracting maps do not apply.  To the best of our knowledge, there is currently no ready-made theory that directly covers piecewise-smooth maps on a two-dimensional manifold that arise as projectivizations of contracting linear maps with this kind of discontinuity structure. Furthermore, the coexistence of a continuum of invariant curves shows that a spectral gap for $\mathcal{L}_r$, uniformly in $r$, on any space of densities with respect to the Lebesgue measure is not realistic: such a gap would imply finiteness (or at least strong regularity) of ergodic components for each $r$, contradicting the observed lamination. Nevertheless we conjecture that, for Lebesgue-almost every $r$ inside the windows of stability of the periodic patterns $(\mathfrak{ab})^n(\mathfrak{cb})^n$ (that is, in particular for $r < 3-2\sqrt{2}$), there are finitely many attractors, suggesting a possible quasi-compactness scenario for the associated transfer operator. On the other hand, for $r > 3-2\sqrt{2}$, the existence of an uncountable family of invariant curves prevents this scenario to happen.\\
In general, a more plausible programme, compatible with the numerical evidence, is to study \emph{induced} transfer operators (see e.g. \cite{Aaro997}) or geometric structures on each invariant leaf, or on suitable transverse sections. For parameters in which $\widehat{\mathfrak{P}}_r$ exhibits a laminated region, one might aim to:
\begin{itemize}
  \item[(i)] construct an invariant foliation or lamination of the positive-measure region by invariant curves,
        and estimate the probability of landing on such a curve, that is, understanding the basins of attraction\footnote{That is the set: \[
\left\{x: \frac 1n \sum_{k=0}^{n-1}\delta_{\hat{\mathfrak{P}}^k_r(x)} \quad \text{converges weakly to} \quad \mu_t  \right\}.
\]} of the disintegrated invariant measures $\{ \mu_t \}$;
  \item[(ii)] understand the disintegration of Lebesgue measure along this lamination and the associated family
        of invariant measures $\{\mu_t\}$;
  \item[(iii)] investigate whether certain transverse or induced maps admit quasi-compact transfer operators on anisotropic Banach spaces,\footnote{Roughly speaking, an anisotropic Banach space is a functional space in which different directions of the dynamics are measured with different strengths: smoothness is enforced along expanding directions, while more irregular behaviour is allowed along contracting or neutral directions, so that the transfer operator becomes well behaved even when ordinary function spaces fail. See e.g. \cite{babook} for a more detailed discussion on the argument.} even if the global operator $\mathcal{L}_r$ does not;
  \item[(iv)] study $\mu_t$ and the geometry of the leaves when the restitution parameter $r$ changes.
\end{itemize}

\subsection{Coexistence of periodic sinks and quasi-periodic attractors}

\noindent
Our results show that for some values of $r$ the dynamics of $\widehat{\mathfrak{P}}_r$ exhibits coexistence of several types of attractors. For a given value of $r$, different initial data can converge to  attracting periodic orbits (finite sets of points in the $(w_1,w_2)$-representation) or quasi-periodic invariant curves (ellipses) surrounding the rotation centre associated with $P_2$ in the regime where it has a pair of complex conjugate eigenvalues.\\
\newline
\noindent
In the plots at $r=0.2$, some of the ten randomly chosen initial conditions produce orbits that accumulate on a finite set of points, with a relatively small cardinal (periodic orbits), while others produce orbits that lie on smooth closed curves (Figure~\ref{FIGUR10Orb_r=.2})%
\footnote{We refer here to the planar representation on the strip $\{x-z=1\}$ in coordinates $(w_1,w_2)$ defined by \eqref{EQUATFormuRepre_2_d_}. Each invariant curve in that representation corresponds to a closed curve on the projective sphere, obtained as the intersection of the sphere with a two-dimensional invariant manifold of the projective dynamics of $P_2$.}.
When one explores a grid of initial conditions for the same value of $r$, the union of such invariant curves that are actually visited by the dynamics turns out to form a region of positive two-dimensional Lebesgue measure in the $(w_1,w_2)$-plane, so this picture is consistent with a foliation (or lamination) of a positive-measure region by invariant one-dimensional curves.\\
\newline
\noindent
From the point of view of invariant measures, this suggests the following scenario. For a fixed $r$ in such a regime there are: invariant probability measures supported on attracting periodic orbits (Dirac measures); for each invariant ellipse, at least one natural invariant probability measure obtained by time-averaging along orbits on that curve, typically equivalent to arc-length on the curve;  convex combinations and weak limits of the measures above, giving rise to a rich family of invariant measures. Thus, at a fixed $r$ one should expect \emph{infinitely many} invariant probability measures for $\widehat{\mathfrak{P}}_r$, and there is no obvious reason to expect finiteness of ergodic components: the quasi-periodic region is more naturally described as a continuum of invariant curves, each
supporting its own ergodic measure. With respect to the two-dimensional Lebesgue measure on the section, these invariant measures are singular (as these measures are supported on one-dimensional sets), so there is no indication of absolutely continuous invariant probabilities in the usual sense. Nevertheless, for Lebesgue-almost
every initial condition in that region, the time-averages converge to the invariant measure supported on the specific curve (or periodic orbit) that contains the orbit, so that a typical orbit still has a well-defined statistical behaviour, which depends on its leaf of the invariant lamination. In particular, 
\[
\text{for Lebesgue-a.e.\ } x \in R(r), \qquad
\frac{1}{N}\sum_{k=0}^{N-1}
\varphi\!\left(\widehat{\mathfrak{P}}_r^{\,k}(x)\right)
\;\xrightarrow[N\to\infty]{}\;
\int \varphi \, d\mu_{t(x)},
\]
{where:}
\begin{itemize}
    \item[-] $R(r)\subset X$ is the region (of positive two-dimensional Lebesgue measure) filled by invariant quasi-periodic curves and attracting periodic orbits at the fixed parameter $r$;
    \item[-] $\Lambda_{t}$ denotes the invariant curve or periodic orbit labelled by the parameter $t$;
    \item[-] $\mu_{t}$ is the ergodic invariant probability measure supported on $\Lambda_{t}$ (arc-length measure on an invariant curve, or the uniform measure on a periodic orbit);
    \item[-] $t(x)$ is the label of the unique invariant curve or periodic orbit such that $\widehat{\mathfrak{P}}_r^{\,n}(x)\in \Lambda_{t(x)}$ for all sufficiently large $n$.
\end{itemize}

\noindent
The numerical data already support a concrete research plan to investigate the statistical properties of $\widehat{\mathfrak{P}}_r$ at fixed $r$. We briefly outline several steps that can be implemented in future works to support the possible theoretical framework outlined in the previous section.

\paragraph{(1) Improve what we know: identification and classification of attractors.} For a fixed $r$, one can sample a large grid of initial conditions, iterate $\widehat{\mathfrak{P}}_r$ for a long time, and classify the limiting
behaviour of the trajectories
computing the relative frequency (with respect to the grid of initial conditions) of each basin.

\paragraph{(2) Approximation of physical measures via time averages.}
For each detected attractor $A_j$ and for several initial conditions in its basin, one can compute empirical measures
\[
\mu_{N,x}
  \;=\; \frac{1}{N}\sum_{k=0}^{N-1} \delta_{\widehat{\mathfrak{P}}_r^k(x)}
\]
and compare them for different $x$ in the same basin. If these empirical measures converge and are close for all $x$ in the basin of $A_j$, this provides numerical evidence for the existence and uniqueness of (a natural notion of) a physical measure\footnote{That is, a measure $\mu_j$ which is invariant for the map and such that its basin of attraction has a positive Lebesgue measure.} $\mu_j$ supported on $A_j$. For quasi-periodic ellipses,
one expects $\mu_j$ to be equivalent to arc-length measure on the curve, while for periodic sinks $\mu_j$ is simply the uniform measure on the finitely many periodic points. For each physical measure $\mu_j$ one can select a representative orbit and compute time correlations of observables along the orbit, e.g., for suitable observable functions $\varphi, \psi$,
\[
C_N(\varphi,\psi;n)
   \;=\; \frac{1}{N}\sum_{k=0}^{N-1}
          \Big(\varphi(\widehat{\mathfrak{P}}_r^k(x))
               \,\psi(\widehat{\mathfrak{P}}_r^{k+n}(x))\Big)
          \;-\;
          \Big(\frac{1}{N}\sum_{k=0}^{N-1}\varphi(\widehat{\mathfrak{P}}_r^k(x))\Big)
          \Big(\frac{1}{N}\sum_{k=0}^{N-1}\psi(\widehat{\mathfrak{P}}_r^k(x))\Big).
\]
The asymptotic behaviour of $C_N(\varphi,\psi;n)$ as $n\to\infty$ (for fixed large $N$) can help distinguish quasi-periodic regimes (no decay, oscillatory behaviour) from mixing regimes (decay of
correlations at some rate), even if a rigorous theory is not yet available.

\paragraph{(3) Lyapunov exponents and (non-)hyperbolicity.} Finite-time Lyapunov exponents along long trajectories can be computed numerically. For the projective collision map 
\[
x_{k+1} = \widehat{\mathfrak{P}}_r(x_k), \qquad x_0 \in X\subset \mathbb{S}^2,
\]
the derivative 
\[
D\widehat{\mathfrak{P}}_r(x): T_x\mathbb{S}^2 \longrightarrow 
T_{\widehat{\mathfrak{P}}_r(x)}\mathbb{S}^2
\]
is well-defined and smooth on each branch of the partition.  
The linearised dynamics along an orbit 
\[
x,\; \widehat{\mathfrak{P}}_r(x),\; \widehat{\mathfrak{P}}_r^2(x),\;\ldots
\]
is given by the derivative cocycle
\[
D\widehat{\mathfrak{P}}_r^n(x)
   = D\widehat{\mathfrak{P}}_r\big(\widehat{\mathfrak{P}}_r^{\,n-1}(x)\big)
     \cdots
     D\widehat{\mathfrak{P}}_r\big(\widehat{\mathfrak{P}}_r(x)\big)
     D\widehat{\mathfrak{P}}_r(x).
\]
Let $v \in T_x\mathbb{S}^2$ be a unit tangent vector. The \emph{finite-time Lyapunov exponent} at $x$ over $n$ iterates is
\[
\lambda_n(x,v)
   = \frac{1}{n}\log 
     \left\| D\widehat{\mathfrak{P}}_r^n(x)\, v \right\|.
\]
The maximal exponent at time $n$ is
\[
\lambda_n(x)
   = \frac{1}{n}\log 
     \left\| D\widehat{\mathfrak{P}}_r^n(x) \right\|,
\]
and the (maximal) Lyapunov exponent is the limit
\[
\lambda(x) = \lim_{n\to\infty} \lambda_n(x),
\]
whenever the limit exists. If $x$ belongs to an attracting periodic orbit of period $p$, then all eigenvalues of $D\widehat{\mathfrak{P}}_r^p(x)$ have modulus $<1$, hence $\lambda(x) < 0$. If the orbit of $x$ lies on an invariant ellipse $\Lambda_t$, then $\lambda_{\mathrm{tan}}(x) = 0$ and $\lambda_{\perp}(x) < 0$, corresponding respectively to the tangent direction along the ellipse
and the transverse contracting direction. A positive exponent, $\lambda(x) > 0$, would indicate sensitive dependence on initial conditions and chaotic behaviour.\\
Along a numerically computed trajectory $x_k=\widehat{\mathfrak{P}}_r^{\,k}(x)$, set
\[
A_k = D\widehat{\mathfrak{P}}_r(x_k),
\]
choose a unit vector $v_0\in T_x\mathbb{S}^2$, and iterate
\[
v_{k+1} = \frac{A_k v_k}{\|A_k v_k\|}.
\]
The finite-time Lyapunov exponent is then approximated by
\[
\lambda_n(x)
   = \frac{1}{n}\sum_{k=0}^{n-1} 
        \log \|A_k v_k\|.
\]
For large $n$, $\lambda_n(x)$ provides a reliable numerical estimate of the maximal Lyapunov exponent at $x$.\\
Such computations provide additional information on the type of
statistical behaviour to be expected (e.g.\ quasi-periodic vs chaotic) and on the regularity class of potential invariant densities.

\subsection{Period-Adding Cascades and Border-Collision Structure}

\noindent
Collapse patterns are much richer than expected. There are infinitely many stable patterns indexed by \(n\), each appearing in narrow parameter windows, with highly irregular behavior between them.  
Unlike the three-particle case, the four-particle system shows strong dependence on both the restitution coefficient and the initial configuration. Statistical observables do not converge to deterministic limits in the irregular regimes.\\ Therefore, the statistical properties depend discontinuously on the restitution coefficient as well: crossing a root of \(Q_n(r)\) (see \eqref{EQUATPolynLowerBound}) causes sudden changes in the attractor and its statistical behavior, showing the appearance of infinitely many stability windows, separated by parameter intervals where the dynamics appears irregular. The sequence of stability windows observed in our numerical simulations, each associated to a periodic collision pattern of the form \((\mathfrak{ab})^n(\mathfrak{cb})^n\), is strongly reminiscent of the period–adding structure that arises in piecewise-linear dynamical systems. In classical border-collision models one encounters a sequence of parameter intervals in which the attracting orbit has symbolic itinerary \(L^nR^n\), with \(n=1,2,3,\dots\), separated by parameter regions where the dynamics becomes irregular or chaotic (see \cite[Sections 4.4–4.7]{Bro}). The correspondence in our setting is direct, with \(L\) playing the role of the collision pair \(\mathfrak{ab}\) and \(R\) that of \(\mathfrak{cb}\), so that the stable periodic patterns \((\mathfrak{ab})^n(\mathfrak{cb})^n\) constitute a period–adding cascade. The appearance of infinitely many such windows, separated by intervals where no periodic stabilization is observed, suggests a bifurcation structure reminiscent of mode-locking and devil’s staircase phenomena well known in piecewise-smooth systems (\cite{Bro}), thereby motivating further investigation in this direction.
\newpage

\begin{appendices}
\section{Algorithms used in the numerical simulations}

\subsection{Computation of $\mathfrak{P}$ introduced in \cite{DoHR025}}
\label{SSECTAppenAlgoTrigo}

\noindent
In \cite{DoHR025}, we rewrote the discrete dynamical system introduced in \eqref{EQUATDiscrDynamSyste}. More precisely, we took advantage of the spherical reduction discussed in Section \ref{SSECTSpherReduc}, and we wrote the explicit action of a collision of type $\mathfrak{a}$, $\mathfrak{b}$ or $\mathfrak{c}$ on the two vectors $p$ and $q$, that were parametrized only in terms of the angles $\theta$ and $\varphi$ (the so-called \emph{trigonometric representation} in \cite{DoHR025}), as it is done for instance in \eqref{EQUATParamInitiaDataAngle} in the case when the pair of particles \footnotesize{\circled{2}}\normalsize{}-\footnotesize{\circled{3}}\normalsize{} are colliding. In the case when one of the two other pairs is colliding, the expression of the position and velocity vectors is of course different from \eqref{EQUATParamInitiaDataAngle}, but remains very similar (for instance, see (4.8) and (4.11) in \cite{DoHR025} when the collision involves the pair \footnotesize{\circled{1}}\normalsize{}-\footnotesize{\circled{2}}\normalsize{}).\\
We provide here the code, written in MATLAB, of such an algorithm. The inputs of the program consist in three data, $\theta_0$ (denoted by \texttt{theta0} in the algorithm below), $\varphi_0$ (denoted by \texttt{phi0}) and \texttt{NN}, where $\theta_0 \in\ ]0,\pi/2[$ and $\varphi_0\ \in\ ]0,\pi[\backslash\{\pi/2\}$ are used to parametrize the initial configuration \texttt{X}, \texttt{V} of the system, assuming that the pair of particles \footnotesize{\circled{2}}\normalsize{}-\footnotesize{\circled{3}}\normalsize{} are initially in contact (\texttt{X} being the vector of initial relative positions, and \texttt{V} the vector of initial relative velocities) as:
\begin{align}
\texttt{X} = \begin{pmatrix} \sin\theta_0 \\ 0 \\ \cos\theta_0 \end{pmatrix},\hspace{5mm} \texttt{V} = \cos(\varphi_0) \begin{pmatrix} \cos\theta_0 \\ 0 \\ -\sin\theta_0 \end{pmatrix} + \sin\varphi_0 \begin{pmatrix} 0 \\ 1 \\ 0 \end{pmatrix}.
\end{align}
\texttt{NN} is the number of collisions that will be computed. One has to be careful here: one iteration of the algorithm below \emph{does not} describe one iteration of the $\mathfrak{b}$-to-$\mathfrak{b}$ mapping, but computes instead the evolution of the particle between two consecutive collisions. In other words, the algorithm computes the iterates of the discrete dynamical system \eqref{EQUATDiscrDynamSyste}, with orbits represented in the variables of the spherical reduction. Therefore, in order to compute a certain number \texttt{N} of iterations of the $\mathfrak{b}$-to-$\mathfrak{b}$ mapping, it is necessary (and sufficient) to choose \texttt{NN} larger than 3\texttt{N}.\\
The output of the program consists in two matrices \texttt{Trajectory} and \texttt{SphericalTrajectory}, of respective sizes $7 \times$(\texttt{NN}+1) and $3 \times$(\texttt{NN}+1). The first matrix contains the normalized position and velocity vectors corresponding to the trigonometric representation of the spherical billiard (respectively \texttt{Trajectory(1:3,j)} and \texttt{Trajectory(4:6,j)}, both with 3 entries) at the \texttt{j}-th collision (where \texttt{j}=1 corresponds to the initial configuration). The second matrix contains the angles $\theta$ (\texttt{SphericalTrajectory(1,j)}) and $\varphi$ (\texttt{SphericalTrajectory(2,j)}) of the trigonometric representation of the spherical reduction mapping. Finally, the last lines \texttt{Trajectory(7,j)} and \texttt{SphericalTrajectory(2,j)} of the two matrices encode which pair of particles is in contact at the time of the \texttt{j}-th collision.\\
In the code, the matrices \texttt{A}, \texttt{B} and \texttt{C} correspond respectively to $A$, $B$ and $C$ given by \eqref{EQUATCollision_Matri}.\\
\newline
\texttt{Trajectory = zeros(7,NN+1)}\\
\texttt{SphericalTrajectory = zeros(3,NN+1)}\\
\texttt{X = [sin(theta0);0;cos(theta0)]}\\
\texttt{V = cos(phi0)*[cos(theta0);0;-sin(theta0)] + sin(phi0)*[0;1;0]}\\
\texttt{Contact\_pair = 2}\\
\texttt{Trajectory(:,1) = [X;V;Contact\_pair]}\\
\texttt{SphericalTrajectory(:,1) = [theta0;phi0;Contact\_pair]}\\
\newline
\texttt{j = 2}\\
\texttt{\textcolor{blue}{while} j < NN+2}\\
\newline
\textcolor{white}{bla}\texttt{\textcolor{blue}{if} Contact\_pair == 1}\\
\textcolor{white}{blabla}\texttt{theta = acos(X(2,1));}\\
\textcolor{white}{blabla}\texttt{CosPhi = V(3,1)/cos(theta);}\\
\textcolor{white}{blabla}\texttt{phi = acos(CosPhi);}\\
\textcolor{white}{blabla}\texttt{\textcolor{blue}{if} phi < pi/2}\\
\textcolor{white}{blablabla}\texttt{X = [cos(theta)*sin(phi);0;cos(phi)];}\\
\textcolor{white}{blablabla}\texttt{X = X/norm(X);}\\
\textcolor{white}{blablabla}\texttt{V = B*V;}\\
\textcolor{white}{blablabla}\texttt{V = V - dot(V,X)*X;}\\
\textcolor{white}{blablabla}\texttt{V = V/norm(V);}\\
\textcolor{white}{blablabla}\texttt{Contact\_pair = 2;}\\
\textcolor{white}{blabla}\texttt{\textcolor{blue}{elseif} phi > pi/2}\\ 
\textcolor{white}{blablabla}\texttt{X = [sin(theta)*sin(phi);-cos(phi);0];}\\
\textcolor{white}{blablabla}\texttt{X = X/norm(X);}\\
\textcolor{white}{blablabla}\texttt{V = C*V;}\\
\textcolor{white}{blablabla}\texttt{V = V - dot(V,X)*X;}\\
\textcolor{white}{blablabla}\texttt{V = V/norm(V);}\\
\textcolor{white}{blablabla}\texttt{Contact\_pair = 3;}\\
\textcolor{white}{blabla}\texttt{\textcolor{blue}{end}}\\
\newline
\textcolor{white}{bla}\texttt{\textcolor{blue}{elseif} Contact\_pair == 2}\\
\textcolor{white}{blabla}\texttt{theta = acos(X(3,1));}\\
\textcolor{white}{blabla}\texttt{CosPhi = V(1,1)/cos(theta);}\\
\textcolor{white}{blabla}\texttt{phi = acos(CosPhi);}\\
\textcolor{white}{blabla}\texttt{\textcolor{blue}{if} phi < pi/2}\\
\textcolor{white}{blablabla}\texttt{X = [cos(phi);cos(theta)*sin(phi);0];}\\
\textcolor{white}{blablabla}\texttt{X = X/norm(X);}\\
\textcolor{white}{blablabla}\texttt{V = C*V;}\\
\textcolor{white}{blablabla}\texttt{V = V - dot(V,X)*X;}\\
\textcolor{white}{blablabla}\texttt{V = V/norm(V);}\\
\textcolor{white}{blablabla}\texttt{Contact\_pair = 3;}\\
\textcolor{white}{blabla}\texttt{\textcolor{blue}{elseif} phi > pi/2}\\\textcolor{white}{blablabla}\texttt{X = [0;sin(theta)*sin(phi);-cos(phi)];}\\
\textcolor{white}{blablabla}\texttt{X = X/norm(X);}\\
\textcolor{white}{blablabla}\texttt{V = A*V;}\\
\textcolor{white}{blablabla}\texttt{V = V - dot(V,X)*X;}\\
\textcolor{white}{blablabla}\texttt{V = V/norm(V);}\\
\textcolor{white}{blablabla}\texttt{Contact\_pair = 1;}\\
\textcolor{white}{blabla}\texttt{\textcolor{blue}{end}}\\
\newline
\textcolor{white}{bla}\texttt{\textcolor{blue}{elseif} Contact\_pair == 3}\\
\textcolor{white}{blabla}\texttt{theta = acos(X(1,1));}\\
\textcolor{white}{blabla}\texttt{CosPhi = V(2,1)/cos(theta);}\\
\textcolor{white}{blabla}\texttt{phi = acos(CosPhi);}\\
\textcolor{white}{blabla}\texttt{\textcolor{blue}{if} phi < pi/2}\\
\textcolor{white}{blablabla}\texttt{X = [0;cos(phi);cos(theta)*sin(phi)];}\\
\textcolor{white}{blablabla}\texttt{X = X/norm(X);}\\
\textcolor{white}{blablabla}\texttt{V = A*V;}\\
\textcolor{white}{blablabla}\texttt{V = V - dot(V,X)*X;}\\
\textcolor{white}{blablabla}\texttt{V = V/norm(V);}\\
\textcolor{white}{blablabla}\texttt{Contact\_pair = 1;}\\
\textcolor{white}{blabla}\texttt{\textcolor{blue}{elseif} phi > pi/2}\\ \textcolor{white}{blablabla}\texttt{X = [-cos(phi);0;sin(theta)*sin(phi)];}\\
\textcolor{white}{blablabla}\texttt{X = X/norm(X);}\\
\textcolor{white}{blablabla}\texttt{V = B*V;}\\
\textcolor{white}{blablabla}\texttt{V = V - dot(V,X)*X;}\\
\textcolor{white}{blablabla}\texttt{V = V/norm(V);}\\
\textcolor{white}{blablabla}\texttt{Contact\_pair = 2;}\\
\textcolor{white}{blabla}\texttt{\textcolor{blue}{end}}\\
\newline
\textcolor{white}{bla}\texttt{\textcolor{blue}{end}}\\
\newline
\textcolor{white}{bla}\texttt{Trajectory(:,j) = [X; V; Contact\_pair];}\\
\textcolor{white}{bla}\texttt{SphericalTrajectory(:,j) = [theta; phi; Contact\_pair]];}\\
\textcolor{white}{bla}\texttt{j=j+1;}\\
\newline
\texttt{\textcolor{blue}{end}}

\newpage
\subsection{Computation of $\mathfrak{P}$ relying on its piecewise linear expression}
\label{SSECTAppenAlgoLinea}

\noindent
Relying on the result of Theorem \ref{THEOR__P__FukngLinea}, the algorithm to compute the orbits of the mapping $\mathfrak{P}$ is the following, written in MATLAB.\\
The inputs of the program are \texttt{X} and \texttt{N}. \texttt{X} is a $3 \times 1$ matrix such that \texttt{X(1,1)X(3,1)} $< 0$, corresponding to the initial configuration $X = \hspace{0.5mm}^t\hspace{-0.25mm} (x,y,z)$ on which $\mathfrak{P}$ is acting according to the expressions given by Theorem \ref{THEOR__P__FukngLinea}. \texttt{N} $\in \mathbb{N}$ is the number of iterations of $\mathfrak{P}$ that will be computed.\\
The program will return the \texttt{3 x N} matrix called \texttt{Trajectory}, storing the initial configuration \texttt{X} as well as its \texttt{N} iterates obtained by applying recursively $\mathfrak{P}$.\\
In the code, the matrices \texttt{P1}, \texttt{P2} and \texttt{P3} correspond respectively to $P_1$, $P_2$ and $P_3$ given by \eqref{EQUATCollisionMatric_P_i_}.\\
\newline
\texttt{Trajectory = zeros(3,N+1)};\\
\newline
\texttt{\textcolor{blue}{if} X(1,1) < 0}\\ 
\textcolor{white}{bla}\texttt{X = -X};\\
\texttt{\textcolor{blue}{end}}\\
\newline
\texttt{Trajectory(:,1)} = \texttt{X/norm(X)};\\
\newline
\texttt{\textcolor{blue}{for} j = 1:1:N}\\
\newline
\textcolor{white}{bla}\texttt{Next\_coll\_cond2 = a*X(2,1) - X(1,1);}\\
\textcolor{white}{bla}\texttt{Next\_coll\_cond3 = a*X(2,1) - X(3,1);}\\
\newline
\textcolor{white}{bla}\texttt{\textcolor{blue}{if} X(2,1) > 0 \&\& Next\_coll\_cond2 > 0}\\
\textcolor{white}{blabla}\texttt{X = P1*X;}\\
\textcolor{white}{bla}\texttt{\textcolor{blue}{elseif} X(2,1) < 0 \&\& Next\_coll\_cond3 < 0}\\
\textcolor{white}{blabla}\texttt{X = P3*X;}\\
\textcolor{white}{bla}\texttt{\textcolor{blue}{else}}\\
\textcolor{white}{blabla}\texttt{X = P2*X;}\\
\textcolor{white}{bla}\texttt{\textcolor{blue}{end}}\\
\newline
\textcolor{white}{bla}\texttt{X=X/norm(X);}\\
\textcolor{white}{bla}\texttt{Trajectory(:,j+1) = X/norm(X);}\\
\newline
\texttt{\textcolor{blue}{end}}

\vspace{1cm}
\noindent
The simplicity of the previous algorithm makes it much more efficient than the code presented in Section \ref{SSECTAppenAlgoTrigo} and previously used in \cite{DoHR025}. In particular, besides the renormalization, only multiplications with fixed matrices are used.
\vspace{20mm}

\section{Remarkable values and polynomials}

\subsection{Lower and upper bounds of the windows of stability}
\label{SSECTAppenLowerUpperBoundWindoStabi}

\noindent
In this section, we provide the numerical approximations of the lower and upper bounds of the windows of stability of the periodic patterns $(\mathfrak{ab})^n(\mathfrak{cb})^n$ with $12$ decimals, for $1 \leq n \leq 100$.

\begin{table}
\vspace{-1cm}
\hspace{-3cm}
\begin{center}
  \begin{tabular}{l|cc||l|cc}
    \toprule
    \multirow{2}{*}{$n$} & {Lower bounds} & {Upper bounds}  &  \multirow{2}{*}{$n$} & {Lower bounds} & {Upper bounds} \\
    &  \multicolumn{2}{c||}{(12 decimals)} & &  \multicolumn{2}{c}{(12 decimals)} \\
    \toprule
    1 & 0.1715\ 7287\ 5254 & {not defined} & 51 & 0.0718\ 7459\ 2447 & 0.0718\ 7547\ 6957 \\
    2 & 0.1275\ 4409\ 7592 & 0.1715\ 7287\ 5254 & 52 & 0.0718\ 7164\ 2069 & 0.0718\ 7247\ 6597 \\
    3 & 0.0945\ 1940\ 7247 & 0.1010\ 2051\ 4434 & 53 & 0.0718\ 6885\ 6382 & 0.0718\ 6964\ 4629 \\
    4 & 0.0841\ 8702\ 7764 & 0.0864\ 2723\ 3726 & 54 & 0.0718\ 6622\ 3352 & 0.0718\ 6696\ 8677 \\
    5 & 0.0796\ 4890\ 9753 & 0.0807\ 0090\ 3149 & 55 & 0.0718\ 6373\ 2022 & 0.0718\ 6443\ 7486 \\
    \midrule
    6 & 0.0772\ 3623\ 3508 & 0.0778\ 1868\ 2882 & 56 & 0.0718\ 6137\ 2401 & 0.0718\ 6204\ 0798 \\
    7 & 0.0757\ 9451\ 9545 & 0.0761\ 5218\ 9750 & 57 & 0.0718\ 5913\ 5365 & 0.0718\ 5976\ 9248 \\
    8 & 0.0748\ 6202\ 3787 & 0.0750\ 9797\ 4967 & 58 & 0.0718\ 5701\ 2566 & 0.0718\ 5761\ 4270 \\
    9 & 0.0742\ 2322\ 3822 & 0.0743\ 8730\ 7127 & 59 & 0.0718\ 5499\ 6355 & 0.0718\ 5556\ 8022 \\
    10 & 0.0737\ 6606\ 0903 & 0.0738\ 8488\ 9127 & 60 & 0.0718\ 5307\ 9711 & 0.0718\ 5362\ 3309 \\
    \midrule
    11 & 0.0734\ 2741\ 7001 & 0.0735\ 1628\ 9652 & 61 & 0.0718\ 5125\ 6184 & 0.0718\ 5177\ 3522 \\
    12 & 0.0731\ 6946\ 8752 & 0.0732\ 3770\ 5419 & 62 & 0.0718\ 4951\ 9838 & 0.0718\ 5001\ 2580 \\
    13 & 0.0729\ 6840\ 0785 & 0.0730\ 2194\ 9003 & 63 & 0.0718\ 4786\ 5201 & 0.0718\ 4833\ 4882 \\
    14 & 0.0728\ 0859\ 7801 & 0.0728\ 5140\ 1470 & 64 & 0.0718\ 4628\ 7226 & 0.0718\ 4673\ 5263 \\
    15 & 0.0726\ 7946\ 8887 & 0.0727\ 1422\ 8709 & 65 & 0.0718\ 4478\ 1247 & 0.0718\ 4520\ 8951 \\
    \midrule
    16 & 0.0725\ 7362\ 1759 & 0.0726\ 0223\ 8447 & 66 & 0.0718\ 4334\ 2950 & 0.0718\ 4375\ 1532 \\
    17 & 0.0724\ 8576\ 7872 & 0.0725\ 0961\ 1075 & 67 & 0.0718\ 4196\ 8339 & 0.0718\ 4235\ 8923 \\
    18 & 0.0724\ 1204\ 1721 & 0.0724\ 3211\ 8927 & 68 & 0.0718\ 4065\ 3708 & 0.0718\ 4102\ 7336 \\
    19 & 0.0723\ 4956\ 4256 & 0.0723\ 6663\ 0089 & 69 &  0.0718\ 3939\ 5617 & 0.0718\ 3975\ 3256 \\
    20 & 0.0722\ 9615\ 5120 & 0.0723\ 1078\ 3959 & 70 & 0.0718\ 3819\ 0869 & 0.0718\ 3853\ 3419 \\
    \midrule
    21 & 0.0722\ 5013\ 8390 &  0.0722\ 6277\ 3735 & 71 & 0.0718\ 3703\ 6491 & 0.0718\ 3736\ 4789 \\
    22 & 0.0722\ 1020\ 8587 &  0.0722\ 2119\ 7327 & 72 & 0.0718\ 3592\ 9712 & 0.0718\ 3624\ 4538 \\
    23 & 0.0721\ 7533\ 6495 & 0.0721\ 8495\ 3132 & 73 & 0.0718\ 3486\ 7951 & 0.0718\ 3517\ 0032 \\
    24 & 0.0721\ 4470\ 1792 & 0.0721\ 5316\ 5860 & 74 & 0.0718\ 3384\ 8799 & 0.0718\ 3413\ 8814 \\
    25 & 0.0721\ 1764\ 4119 & 0.0721\ 2513\ 2894 & 75 & 0.0718\ 3287\ 0006 & 0.0718\ 3314\ 8589 \\
    \midrule
    26 & 0.0720\ 9362\ 7029 & 0.0721\ 0028\ 4938 & 76 & 0.0718\ 3192\ 9468 & 0.0718\ 3219\ 7212 \\
    27 & 0.0720\ 7221\ 1072 & 0.0720\ 7815\ 6740 & 77 & 0.0718\ 3102\ 5217 & 0.0718\ 3128\ 2678 \\
    28 & 0.0720\ 5303\ 3474 & 0.0720\ 5836\ 5070 & 78 & 0.0718\ 3015\ 5411 & 0.0718\ 3040\ 3108 \\
    29 & 0.0720\ 3579\ 2610 & 0.0720\ 4059\ 1941 & 79 & 0.0718\ 2931\ 8321 & 0.0718\ 2955\ 6743 \\
    30 & 0.0720\ 2023\ 6037 & 0.0720\ 2457\ 1718 & 80 & 0.0718\ 2851\ 2327 & 0.0718\ 2874\ 1930 \\
    \midrule
    31 & 0.0720\ 0615\ 1165 & 0.0720\ 1008\ 1092 & 81 & 0.0718\ 2773\ 5907 & 0.0718\ 2795\ 7122 \\
    32 & 0.0719\ 9335\ 7937 & 0.0719\ 9693\ 1238 & 82 & 0.0718\ 2698\ 7633 & 0.0718\ 2720\ 0862 \\
    33 & 0.0719\ 8170\ 3029 & 0.0719\ 8496\ 1610 & 83 & 0.0718\ 2626\ 6159 & 0.0718\ 2647\ 1783 \\
    34 & 0.0719\ 7105\ 5220 & 0.0719\ 7403\ 5005 & 84 & 0.0718\ 2557\ 0221 & 0.0718\ 2576\ 8597 \\
    35 & 0.0719\ 6130\ 1677 & 0.0719\ 6403\ 3605 & 85 & 0.0718\ 2489\ 8628 & 0.0718\ 2509\ 0093 \\
    \midrule
    36 & 0.0719\ 5234\ 4946 & 0.0719\ 5485\ 5776 & 86 & 0.0718\ 2425\ 0256 & 0.0718\ 2443\ 5128 \\
    37 & 0.0719\ 4410\ 0499 & 0.0719\ 4641\ 3474 & 87 & 0.0718\ 2362\ 4049 & 0.0718\ 2380\ 2626 \\
    38 & 0.0719\ 3649\ 4734 & 0.0719\ 3863\ 0118 & 88 & 0.0718\ 2301\ 9006 & 0.0718\ 2319\ 1572 \\
    39 & 0.0719\ 2946\ 3316 & 0.0719\ 3143\ 8845 & 89 & 0.0718\ 2243\ 4185 & 0.0718\ 2260\ 1006 \\
    40 & 0.0719\ 2294\ 9819 & 0.0719\ 2478\ 1064 & 90 & 0.0718\ 2186\ 8695 & 0.0718\ 2203\ 0023 \\
    \midrule
    41 & 0.0719\ 1690\ 4578 & 0.0719\ 1860\ 5262 & 91 & 0.0718\ 2132\ 1692 & 0.0718\ 2147\ 7766 \\
    42 & 0.0719\ 1128\ 3746 & 0.0719\ 1286\ 5997 & 92 & 0.0718\ 2079\ 2379 & 0.0718\ 2094\ 3425 \\
    43 & 0.0719\ 0604\ 8493 & 0.0719\ 0752\ 3062 & 93 & 0.0718\ 2028\ 0002 & 0.0718\ 2042\ 6233 \\
    44 & 0.0719\ 0116\ 4330 & 0.0719\ 0254\ 0774 & 94 & 0.0718\ 1978\ 3846 & 0.0718\ 1992\ 5465 \\
    45 & 0.0718\ 9660\ 0542 & 0.0718\ 9788\ 7381 & 95 & 0.0718\ 1930\ 3233 & 0.0718\ 1944\ 0431 \\
    \midrule
    46 & 0.0718\ 9232\ 9701 & 0.0718\ 9353\ 4549 & 96 & 0.0718\ 1883\ 7521 & 0.0718\ 1897\ 0481 \\
    47 & 0.0718\ 8832\ 7254 & 0.0718\ 8945\ 6934 & 97 & 0.0718\ 1838\ 6099 & 0.0718\ 1851\ 4994 \\
    48 & 0.0718\ 8457\ 1168 & 0.0718\ 8563\ 1808 & 98 & 0.0718\ 1794\ 8390 & 0.0718\ 1807\ 3383 \\
    49 & 0.0718\ 8104\ 1625 & 0.0718\ 8203\ 8740 & 99 & 0.0718\ 1752\ 3843 & 0.0718\ 1764\ 5091 \\
    50 & 0.0718\ 7772\ 0764 & 0.0718\ 7865\ 9328 & 100 & 0.0718\ 1711\ 1936 & 0.0718\ 1722\ 9586 \\
    \bottomrule
  \end{tabular}
    \end{center}
  \caption{Numerical approximation ($12$ first decimals) of the lower and upper bounds of the $100$ first windows of stability of the patterns $(\mathfrak{ab})^n(\mathfrak{cb})^n$.}
  \label{TABLEDecimals_LowerUpperBounds}
\end{table}

\newpage

\subsection{Table of the first polynomials $r^{2n}P_n(r)P_n(1/r)-r^{2n}$}
\label{SSECTAppenPolynLowerBoundStabi}

\noindent
We provide here the explicit expressions of the polynomials $r^{2n}P_n(r)P_n(1/r)-r^{2n}$, with $P_n = \text{Tr}\big(J(BA)^n\big)$, whose one of the roots of each is the lower bound of the interval of stability of the patterns $(\mathfrak{ab})^n(\mathfrak{cb})^n$.

\begin{table}[h!]
    \hspace{-2.5cm}
    \renewcommand{\arraystretch}{2} 
    \setlength{\tabcolsep}{10pt} 
    \begin{tabular}{|c|m{8.5cm}|m{8.5cm}|} 
        \hline
        \footnotesize $n$ & \footnotesize $Q_n$ (developed version) & \footnotesize $Q_n$ (factorized version) \\ \hline
        \footnotesize 1 & \vspace{-0.2cm}\tiny $\frac{1}{16}r^4 - \frac{1}{4}r^3 - \frac{5}{8}r^2 - \frac{1}{4}r + \frac{1}{16}$ & \vspace{-0.2cm}\tiny $\frac{1}{16}(r+1)^2(r^2-6r+1)$ \\ \hline
        \footnotesize 2 & \vspace{-0.2cm}\tiny $\frac{5}{256}r^8 - \frac{5}{32}r^7 + \frac{3}{64}r^6 - \frac{3}{32}r^5 + \frac{81}{128}r^4 - \frac{3}{32}r^3 + \frac{3}{64}r^2 - \frac{5}{32}r + \frac{5}{256}$  & \vspace{-0.2cm}\tiny $\frac{1}{256}(r+1)^2(5r^6-50r^5+107r^4-188r^3+107r^2-50r+5)$ \\ \hline
        \footnotesize 3 & \vspace{-0.2cm}\tiny $\frac{21}{4096}r^{12} -\frac{73}{1024}r^{11} +\frac{357}{2048}r^{10} +\frac{59}{1024}r^9 +\frac{283}{4096}r^8 +\frac{7}{512}r^7 -\frac{509}{1024}r^6 +\frac{7}{512}r^5 +\frac{283}{4096}r^4 + \frac{59}{1024}r^3 + \frac{357}{2048}r^2 - \frac{73}{1024}r + \frac{21}{4096}$ & \vspace{-0.2cm}\tiny $\frac{1}{4096}(r^2-1)^2(21r^8 - 292 r^7 + 756 r^6 - 348 r^5 + 1774 r^4 - 348 r^3 + 756 r^2 - 292 r + 21)$ \\ \hline
        \footnotesize 4 & \vspace{-0.2cm}\tiny $\frac{85}{65536}r^{16} - \frac{53}{2048}r^{15} + \frac{1095}{8192}r^{14} - \frac{211}{2048}r^{13} - \frac{1277}{16384}r^{12} - \frac{137}{2048}r^{11} - \frac{303}{8192}r^{10} - \frac{111}{2048}r^9 - \frac{17697}{32768}r^8 - \frac{111}{2048}r^7 - \frac{303}{8192}r^6 - \frac{137}{2048}r^5 - \frac{1277}{16384}r^4 - \frac{211}{2048}r^3 + \frac{1095}{8192}r^2 - \frac{53}{2048}r + \frac{85}{65536}$ & \vspace{-0.2cm}\tiny $\frac{1}{65536}(r + 1)^2(85 r^{14} - 1866 r^{13} + 12407 r^{12} - 29700 r^{11} + 41885 r^{10} - 58454 r^9 + 72599 r^8 - 90296 r^7 + 72599 r^6 - 58454 r^5 + 41885 r^4 - 29700 r^3 + 12407 r^2 - 1866 r + 85)$ \\ \hline
        \footnotesize 5 & \vspace{-0.2cm}\tiny $\frac{341}{1048576}r^{20} - \frac{2215}{262144}r^{19} + \frac{35547}{524288}r^{18} - \frac{42543}{262144}r^{17} - \frac{10319}{1048576}r^{16} + \frac{1325}{65536}r^{15} + \frac{2977}{131072}r^{14} - \frac{159}{65536}r^{13} - \frac{17155}{524288}r^{12} - \frac{12721}{131072}r^{11} - \frac{156383}{262144}r^{10} - \frac{12721}{131072}r^9 - \frac{17155}{524288}r^8 - \frac{159}{65536}r^7 + \frac{2977}{131072}r^6 + \frac{1325}{65536}r^5 - \frac{10319}{1048576}r^4 - \frac{42543}{262144}r^3 + \frac{35547}{524288}r^2 - \frac{2215}{262144}r + \frac{341}{1048576}$ & \vspace{-0.2cm}\tiny $\frac{1}{1048576} (r + 1)^2(341r^{18} - 9542r^{17} + 89837r^{16} - 340304r^{15} + 580452r^{14} - 799400r^{13} + 1042164r^{12} - 1287472r^{11} + 1498470r^{10} - 1811236r^9 + 1498470r^8 - 1287472r^7 + 1042164r^6 - 799400r^5 + 580452r^4 - 340304r^3 + 89837r^2 - 9542r + 341)$ \\ \hline
        \footnotesize 6 & \vspace{-0.2cm}\tiny $\frac{1365}{16777216}r^{24} - \frac{5459}{2097152}r^{23} + \frac{119361}{4194304}r^{22} - \frac{249245}{2097152}r^{21} + \frac{1046837}{8388608}r^{20} + \frac{178783}{2097152}r^{19} + \frac{198397}{4194304}r^{18} + \frac{41649}{2097152}r^{17} + \frac{324955}{16777216}r^{16} + \frac{32985}{1048576}r^{15} + \frac{74977}{2097152}r^{14} - \frac{15849}{1048576}r^{13} - \frac{2145421}{4194304}r^{12} - \frac{15849}{1048576}r^{11} + \frac{74977}{2097152}r^{10} + \frac{32985}{1048576}r^9 + \frac{324955}{16777216}r^8 + \frac{41649}{2097152}r^7 + \frac{198397}{4194304}r^6 + \frac{178783}{2097152}r^5 + \frac{1046837}{8388608}r^4 - \frac{249245}{2097152}r^3 + \frac{119361}{4194304}r^2 - \frac{5459}{2097152}r + \frac{1365}{16777216}$ & \vspace{-0.2cm}\tiny $\frac{1}{16777216}(r^2 - 1)^2(1365r^{20} - 43672r^{19} + 480174r^{18} - 2081304r^{17} + 3052657r^{16} - 2688672r^{15} + 6418728r^{14} - 2962848r^{13} + 10109754r^{12} - 2709264r^{11} + 14400596r^{10} - 2709264r^9 + 10109754r^8 - 2962848r^7 + 6418728r^6 - 2688672r^5 + 3052657r^4 - 2081304r^3 + 480174r^2 - 43672r + 1365)$ \\ \hline
        \footnotesize 7 & \vspace{-0.2cm}\tiny $\frac{5461}{268435456}r^{28} - \frac{51877}{67108864}r^{27} + \frac{1438737}{134217728}r^{26} - \frac{4339241}{67108864}r^{25} + \frac{40022743}{268435456}r^{24} - \frac{1129031}{33554432}r^{23} - \frac{5192403}{67108864}r^{22} - \frac{2073279}{33554432}r^{21} - \frac{6301571}{268435456}r^{20} - \frac{655231}{67108864}r^{19} - \frac{2823425}{134217728}r^{18} - \frac{1836819}{67108864}r^{17} - \frac{4944041}{268435456}r^{16} - \frac{872357}{16777216}r^{15} - \frac{18088117}{33554432}r^{14} - \frac{872357}{16777216}r^{13} - \frac{4944041}{268435456}r^{12} - \frac{1836819}{67108864}r^{11} - \frac{2823425}{134217728}r^{10} - \frac{655231}{67108864}r^9 - \frac{6301571}{268435456}r^8 - \frac{2073279}{33554432}r^7 - \frac{5192403}{67108864}r^6 - \frac{1129031}{33554432}r^5 + \frac{40022743}{268435456}r^4 - \frac{4339241}{67108864}r^3 + \frac{1438737}{134217728}r^2 - \frac{51877}{67108864}r + \frac{5461}{268435456}$ & \vspace{-0.2cm}\tiny $\frac{1}{268435456}(r + 1)^2(5461r^{26} - 218430r^{25} + 3308873r^{24} - 23756280r^{23} + 84226430r^{22} - 153728828r^{21} + 202461614r^{20} - 267780632r^{19} + 326798079r^{18} - 388436450r^{17} + 444427971r^{16} - 507766768r^{15} + 566161524r^{14} - 638513992r^{13} + 566161524r^{12} - 507766768r^{11} + 444427971r^{10} - 388436450r^9 + 326798079r^8 - 267780632r^7 + 202461614r^6 - 153728828r^5 + 84226430r^4 - 23756280r^3 + 3308873r^2 - 218430r + 5461)$ \\ \hline
        \footnotesize 8 & \vspace{-0.2cm}\tiny $\frac{21845}{4294967296}r^{32} - \frac{60073}{268435456}r^{31} + \frac{1010289}{268435456}r^{30} - \frac{8010651}{268435456}r^{29} + \frac{58278227}{536870912}r^{28} - \frac{34802629}{268435456}r^{27} - \frac{15076669}{268435456}r^{26} + \frac{2386713}{268435456}r^{25} + \frac{18759667}{1073741824}r^{24} - \frac{3118757}{268435456}r^{23} - \frac{3881071}{268435456}r^{22} + \frac{3019329}{268435456}r^{21} + \frac{9472389}{536870912}r^{20} - \frac{1744673}{268435456}r^{19} - \frac{8984773}{268435456}r^{18} - \frac{24778123}{268435456}r^{17} - \frac{1259891681}{2147483648}r^{16} - \frac{24778123}{268435456}r^{15} - \frac{8984773}{268435456}r^{14} - \frac{1744673}{268435456}r^{13} + \frac{9472389}{536870912}r^{12} + \frac{3019329}{268435456}r^{11} - \frac{3881071}{268435456}r^{10} - \frac{3118757}{268435456}r^9 + \frac{18759667}{1073741824}r^8 + \frac{2386713}{268435456}r^7 - \frac{15076669}{268435456}r^6 - \frac{34802629}{268435456}r^5 + \frac{58278227}{536870912}r^4 - \frac{8010651}{268435456}r^3 + \frac{1010289}{268435456}r^2 - \frac{60073}{268435456}r + \frac{21845}{4294967296}$ & \vspace{-0.2cm}\tiny $\frac{1}{4294967296}(r + 1)^2(21845r^{30} - 1004858r^{29} + 18152495r^{28} - 163470548r^{27} + 775014417r^{26} - 1943400350r^{25} + 2870559579r^{24} - 3759531400r^{23} + 4723541889r^{22} - 5737452490r^{21} + 6689265955r^{20} - 7592770156r^{19} + 8572053469r^{18} - 9579251550r^{17} + 10442693263r^{16} - 11702584944r^{15} + 10442693263r^{14} - 9579251550r^{13} + 8572053469r^{12} - 7592770156r^{11} + 6689265955r^{10} - 5737452490r^9 + 4723541889r^8 - 3759531400r^7 + 2870559579r^6 - 1943400350r^5 + 775014417r^4 - 163470548r^3 + 18152495r^2 - 1004858r + 21845)$ \\ \hline
        \footnotesize 9 & \vspace{-0.2cm}\tiny $\frac{87381}{68719476736}r^{36} - \frac{1092259}{17179869184}r^{35} + \frac{43209447}{34359738368}r^{34} - \frac{212853619}{17179869184}r^{33} + \frac{4269713485}{68719476736}r^{32} - \frac{296590343}{2147483648}r^{31} + \frac{252213133}{4294967296}r^{30} + \frac{188040453}{2147483648}r^{29} + \frac{905801569}{17179869184}r^{28} + \frac{159965871}{4294967296}r^{27} + \frac{416297761}{8589934592}r^{26} + \frac{116931159}{4294967296}r^{25} - \frac{243450859}{17179869184}r^{24} - \frac{30883381}{2147483648}r^{23} + \frac{81855995}{4294967296}r^{22} + \frac{79821671}{2147483648}r^{21} + \frac{1070869059}{34359738368}r^{20} - \frac{208374721}{8589934592}r^{19} - \frac{8911424427}{17179869184}r^{18} - \frac{208374721}{8589934592}r^{17} + \frac{1070869059}{34359738368}r^{16} + \frac{79821671}{2147483648}r^{15} + \frac{81855995}{4294967296}r^{14} - \frac{30883381}{2147483648}r^{13} - \frac{243450859}{17179869184}r^{12} + \frac{116931159}{4294967296}r^{11} + \frac{416297761}{8589934592}r^{10} + \frac{159965871}{4294967296}r^9 + \frac{905801569}{17179869184}r^8 + \frac{188040453}{2147483648}r^7 + \frac{252213133}{4294967296}r^6 - \frac{296590343}{2147483648}r^5 + \frac{4269713485}{68719476736}r^4 - \frac{212853619}{17179869184}r^3 + \frac{43209447}{34359738368}r^2 - \frac{1092259}{17179869184}r + \frac{87381}{68719476736}$ & \vspace{-0.2cm}\tiny $\frac{1}{68719476736}(r^2 - 1)^2(87381r^{32} - 4369036r^{31} + 86593656r^{30} - 860152548r^{29} + 4442813416r^{28} - 11206827036r^{27} + 12834443304r^{26} - 15536207028r^{25} + 24849279468r^{24} - 17306133084r^{23} + 40194497720r^{22} - 17205160596r^{21} + 54565912536r^{20} - 18092456300r^{19} + 70247023272r^{18} - 16425458532r^{17} + 88069872126r^{16} - 16425458532r^{15} + 70247023272r^{14} - 18092456300r^{13} + 54565912536r^{12} - 17205160596r^{11} + 40194497720r^{10} - 17306133084r^9 + 24849279468r^8 - 15536207028r^7 + 12834443304r^6 - 11206827036r^5 + 4442813416r^4 - 860152548r^3 + 86593656r^2 - 4369036r + 87381)$ \\ \hline
        \footnotesize 10 & \vspace{-0.2cm}\tiny $\frac{349525}{1099511627776}r^{40} - \frac{2446673}{137438953472}r^{39} + \frac{111323415}{274877906944}r^{38} - \frac{656403031}{137438953472}r^{37} + \frac{16860878795}{549755813888}r^{36} - \frac{13880529647}{137438953472}r^{35} + \frac{35431771167}{274877906944}r^{34} + \frac{3945563655}{137438953472}r^{33} - \frac{51933796943}{1099511627776}r^{32} - \frac{1902386801}{34359738368}r^{31} - \frac{3442320253}{68719476736}r^{30} - \frac{1723679415}{34359738368}r^{29} - \frac{1719740815}{137438953472}r^{28} + \frac{1000292645}{34359738368}r^{27} + \frac{834339215}{68719476736}r^{26} - \frac{947288877}{34359738368}r^{25} - \frac{18459801603}{549755813888}r^{24} - \frac{1260871583}{68719476736}r^{23} - \frac{1215137471}{137438953472}r^{22} - \frac{3475964857}{68719476736}r^{21} - \frac{148356134671}{274877906944}r^{20} - \frac{3475964857}{68719476736}r^{19} - \frac{1215137471}{137438953472}r^{18} - \frac{1260871583}{68719476736}r^{17} - \frac{18459801603}{549755813888}r^{16} - \frac{947288877}{34359738368}r^{15} + \frac{834339215}{68719476736}r^{14} + \frac{1000292645}{34359738368}r^{13} - \frac{1719740815}{137438953472}r^{12} - \frac{1723679415}{34359738368}r^{11} - \frac{3442320253}{68719476736}r^{10} - \frac{1902386801}{34359738368}r^9 - \frac{51933796943}{1099511627776}r^8 + \frac{3945563655}{137438953472}r^7 + \frac{35431771167}{274877906944}r^6 - \frac{13880529647}{137438953472}r^5 + \frac{16860878795}{549755813888}r^4 - \frac{656403031}{137438953472}r^3 + \frac{111323415}{274877906944}r^2 - \frac{2446673}{137438953472}r + \frac{349525}{1099511627776}$ & \vspace{-0.2cm}\tiny $\frac{1}{1099511627776}(r + 1)^2(349525r^{38} - 20272434r^{37} + 485489003r^{36} - 6201929820r^{35} + 45640128227r^{34} - 196122563810r^{33} + 488332084061r^{32} - 748977095072r^{31} + 957688309140r^{30} - 1227275900840r^{29} + 1441786368492r^{28} - 1711454577424r^{27} + 1967364859836r^{26} - 2191265777608r^{25} + 2428516122820r^{24} - 2696079712096r^{23} + 2926723698166r^{22} - 3177541629564r^{21} + 3418638461194r^{20} - 3715350730536r^{19} + 3418638461194r^{18} - 3177541629564r^{17} + 2926723698166r^{16} - 2696079712096r^{15} + 2428516122820r^{14} - 2191265777608r^{13} + 1967364859836r^{12} - 1711454577424r^{11} + 1441786368492r^{10} - 1227275900840r^9 + 957688309140r^8 - 748977095072r^7 + 488332084061r^6 - 196122563810r^5 + 45640128227r^4 - 6201929820r^3 + 485489003r^2 - 20272434r + 349525)$ \\ \hline
    \end{tabular}
    \caption{Expression of the $10$ first polynomials $r^{2n}P_n(r)P_n(1/r) - r^{2n}$, where $P_n$ is the trace of $J(BA)^n$.}
    \label{TABLEPolynCritiValueLower}
\end{table}

\end{appendices}

\newpage

\noindent\textbf{Acknowledgements.} 
The authors gratefully acknowledge the support of the UMI Group DinAmicI (www.dinamici.org), the INdAM group GNFM, and the project PRIN 2022 (Research Projects of National Relevance) - Project code 202277WX43.\\
The first author acknowledges the hospitality of the Gran Sasso Science Institute, where part of this work was carried out, as well as the support of the project PRIN 2022 directed by Alessia Nota (Research Projects of National Relevance) - Project code 202277WX43. The second author acknowledges the hospitality of the Scuola Normale Superiore di Pisa, where another part of this work was carried out, as well as the support of the projects ``Dynamics and Information
Research Institute - Quantum Information (Teoria dell'Informazione), Quantum Technologies'' directed by Stefano Marmi.

\def\adresse{
\begin{description}

\vspace{0.5cm}

\item[R.~Castorrini] {Classe di Scienze,\\
Scuola Normale Superiore di Pisa\\
and\\
Dipartimento di Economia, Ingegneria,\\
Società e Impresa (DEIM),\\
Università della Tuscia,\\
01100, Viterbo, Italy\\
E-mail: \texttt{roberto.castorrini@gmail.com}}

\vspace{0.5cm}

\item[T.~Dolmaire]
{Dipartimento di Ingegneria e Scienze\\ dell'Informazione e Matematica (DISIM),\\ Università degli Studi dell'Aquila, \\ 67100  L'Aquila, Italy \\  E-mail: \texttt{theophile.dolmaire@univaq.it}} 

\end{description}
}

\vspace{5mm}
\adresse

\end{document}